\definecolor{green(munsell)}{rgb}{0.0, 0.66, 0.47}
\definecolor{BlueGreenn}{rgb}{0.3,0.5,0.8}
\definecolor{darkblue}{rgb}{0.1,0.12,0.24}
\definecolor{DB}{rgb}{0.3,0.3,0.3}
\definecolor{DOr}{rgb}{0.7,0.3,0.3}
\definecolor{DGr}{rgb}{0.3,0.7,0.3}
\definecolor{DBl}{rgb}{0.1,0.3,0.5}
\definecolor{arylideyellow}{rgb}{0.91, 0.84, 0.42}
\definecolor{burntorange}{rgb}{0.8, 0.33, 0.0}
\definecolor{chromeyellow}{rgb}{1.0, 0.65, 0.0}
\numberwithin{equation}{section}
\newtheorem{theorem}{Theorem}[section]
\newtheorem{proposition}[theorem]{Proposition}
\newtheorem{lemma}[theorem]{Lemma}
\newtheorem{corollary}[theorem]{Corollary}
\newtheorem{definition}[theorem]{Definition}
\newtheorem{definitionlemma}[theorem]{Definition and Lemma}
\theoremstyle{remark}
\newtheorem{remark}[theorem]{Remark}
\newtheorem{example}[theorem]{Example}
\def\Aut{\operatorname{Aut}}
\def\End{\operatorname{End}}
\def\newspan{\operatorname{span}}
\def\ker{\operatorname{ker}}
\def\dim{\operatorname{dim}}
\def\id{\operatorname{id}}
\def\Ind{\operatorname{Ind}}
\DeclareMathOperator*{\wlim}{w-lim}
\DeclareMathOperator*{\slim}{s-lim}
\def\id{\operatorname{id}}
\DeclareMathOperator{\Tr}{Tr}
\DeclareMathOperator{\ad}{ad}
\newcommand{\ot}{\otimes}
\newcommand{\tp}[1]{^{\otimes #1}}    
\newcommand{\Cl}{\mathbb{C}}
\newcommand{\Rl}{\mathbb{R}}
\newcommand{\Nl}{\mathbb{N}}
\newcommand{\Zl}{\mathbb{Z}}
\def\C{\mathbb{C}}
\def\N{\mathbb{N}}
\def\Z{\mathbb{Z}}
\def\T{\mathbb{T}}
\newcommand{\Hil}{\mathcal{H}}
\newcommand{\Om}{\Omega}
\newcommand{\om}{\omega}
\newcommand{\la}{\lambda}
\newcommand{\eps}{\varepsilon}
\newcommand{\CA}[0]{\mathcal{A}} \newcommand{\CB}[0]{\mathcal{B}}
 \renewcommand{\CD}[0]{\mathcal{D}}
 \newcommand{\CF}[0]{\mathcal{F}}
\newcommand{\CK}[0]{\mathcal{K}} \newcommand{\CL}[0]{\mathcal{L}}
\newcommand{\CM}[0]{\mathcal{M}} \newcommand{\CN}[0]{\mathcal{N}}
\newcommand{\CO}[0]{\mathcal{O}} 
 \newcommand{\CR}[0]{\mathcal{R}}
\newcommand{\CU}[0]{\mathcal{U}}
\def\apeqA{\SavedStyle\sim}
\def\Msim{\setstackgap{L}{\dimexpr.5pt+1.5\LMpt}\ensurestackMath{%
  \ThisStyle{\mathrel{\Centerstack{{\apeqA} {\apeqA} {\apeqA}}}}}}
\def\Nsim{\approx}
\newcommand{\bigboxplus}{
  \mathop{
    \vphantom{\bigoplus} 
    \mathchoice
      {\vcenter{\hbox{\resizebox{\widthof{$\displaystyle\bigoplus$}}{!}{$\boxplus$}}}}
      {\vcenter{\hbox{\resizebox{\widthof{$\bigoplus$}}{!}{$\boxplus$}}}}
      {\vcenter{\hbox{\resizebox{\widthof{$\scriptstyle\oplus$}}{!}{$\boxplus$}}}}
      {\vcenter{\hbox{\resizebox{\widthof{$\scriptscriptstyle\oplus$}}{!}{$\boxplus$}}}}
  }\displaylimits 
}
\@date \else {\vskip3ex \centering\footnotesize\@date\par\vskip1ex}\fi
\else \@footnotetext{\@setdate}\fi}
\begin{document}

 \vspace*{-7mm}
\title[Yang-Baxter endomorphisms]{Yang-Baxter endomorphisms}

\author[R.~Conti]{Roberto Conti}
\address[RC]{Dipartimento SBAI
\\ Sapienza Universit\`{a} di Roma \\ Italy}
\email{roberto.conti@sbai.uniroma1.it}

\author[G.~Lechner]{Gandalf Lechner}
\address[GL]{ School of Mathematics\\ Cardiff University  \\ UK}
\email{LechnerG@cardiff.ac.uk}

\begin{abstract}
    Every unitary solution of the Yang-Baxter equation (R-matrix) in dimension $d$ can be viewed as a unitary element of the Cuntz algebra $\CO_d$ and as such defines an endomorphism of $\CO_d$. These Yang-Baxter endomorphisms restrict and extend to endomorphisms of several other $C^*$- and von Neumann algebras and furthermore define a II$_1$ factor associated with an extremal character of the infinite braid group. This paper is devoted to a detailed study of such Yang-Baxter endomorphisms.
    
    Among the topics discussed are characterizations of Yang-Baxter endomorphisms and the relative commutants of the various subfactors they induce, an endomorphism perspective on algebraic operations on R-matrices such as tensor products and cabling powers, and properties of characters of the infinite braid group defined by R-matrices. In particular, it is proven that the partial trace of an R-matrix is an invariant for its character by a commuting square argument.
    
    Yang-Baxter endomorphisms also supply information on R-matrices themselves, for example it is shown that the left and right partial traces of an R-matrix coincide and are normal, and that the spectrum of an R-matrix can not be concentrated in a small disc. Upper and lower bounds on the minimal and Jones indices of Yang-Baxter endomorphisms are derived, and a full characterization of R-matrices defining ergodic endomorphisms is given.
    
    As examples, so-called simple R-matrices are discussed in any dimension~$d$, and the set of all Yang-Baxter endomorphisms in $d=2$ is completely analyzed.
\end{abstract}

\date{September 9, 2019}
\maketitle

\vspace*{-8mm}
\tableofcontents

\section{Introduction}

This article is motivated by two circles of questions --- one pertaining to the Yang-Baxter equation and one to endomorphisms of the Cuntz algebras and related operator algebras --- that are brought into contact by the so-called Yang-Baxter endomorphisms. As the name suggests, these are endomorphisms of various $C^*$- and von Neumann algebras, as explained below, defined by unitary solutions of the Yang-Baxter equation.

To introduce the subject, recall that the Yang-Baxter equation (YBE) is a cubic equation for an endomorphism $R\in\End(V\ot V)$ of the tensor square of a vector space $V$, namely
\begin{align}\label{eq:YBE--intro}
    (R\ot\id_V)(\id_V\ot R)(R\ot \id_V)=(\id_V\ot R)(R\ot \id_V)(\id_V\ot R).
\end{align}
This equation and its solutions play a prominent role in many different areas of physics and mathematics. It has its origins in statistical mechanics and quantum mechanics \cite{Yang:1967,Baxter:1972}, but is long since known to also be closely connected to braid group representations and knot theory \cite{Jones:1987,Turaev:1988}, von Neumann algebras and subfactors \cite{Jones:1983_2}, and braided categories 
\cite{Longo:1992,TubaWenzl:2005_2,EvansPugh:2012,GiorgettiRehren:2018}. Representations of quantum groups \cite{Drinfeld:1986,Jimbo:1986} are a rich source of solutions for the Yang-Baxter equation.

In many of these fields, one is mostly interested in the case that $V$ is a finite-dimensional Hilbert space and $R$ is a {\em unitary} solution of \eqref{eq:YBE--intro}. Also in the present article, we will only be concerned with such {\em R-matrices}, henceforth always assumed to be unitary, and refer to $d:=\dim V$ as the dimension of $R$. The set of all R-matrices of dimension $d$ will be denoted $\CR(d)$.

Unitary R-matrices are of great interest in several applications to quantum physics. For example, in topological quantum computation they serve as quantum gates \cite{KauffmanLomonacoJr:2004_2,BurtonGould:2006,RowellWang:2012}, and in the context of integrable quantum field theories on two-dimensional Minkowski space, unitary solutions of a more involved Yang-Baxter equation involving a spectral parameter play the role of two-particle collision operators \cite{AbdallaAbdallaRothe:2001}. Unitary solutions of \eqref{eq:YBE--intro}, without spectral parameter, then describe the structure of short distance scaling limits of such theories \cite{LechnerScotford:2019}. 

Furthermore, as will be explained further below, R-matrices give rise to certain endomorphisms of von Neumann algebras that share many structural properties with endomorphisms appearing in quantum field theories with braid group statistics \cite{Frohlich:1988_3,FredenhagenRehrenSchroer:1989,Longo:1992}.

\medskip

Despite this widespread interest in the Yang-Baxter equation, only relatively little is known about its solutions, and in particular about its unitary solutions, which are very difficult to find in general. In dimension $d=2$, all solutions are known \cite{Hietarinta:1992_8} but already for $d=3$, this is no longer the case. For special classes of solutions, see e.g. \cite{GoldschmidtJones:1989,Bytsko:2019}.

The only general class of R-matrices that seems to be under good control are the {\em involutive} R-matrices (that is, $R^2=1$) which have recently been completely classified by one of us \cite{LechnerPennigWood:2019} up to an equivalence relation originating from algebraic quantum field theory \cite{AlazzawiLechner:2016}. This classification relied crucially on the fact that involutive R-matrices define extremal characters of the infinite symmetric group, a classification of which is known \cite{Thoma:1964}.

This state of affairs provides one of the main motivations for this article: To develop tools that can be used to understand the set of R-matrices in the vastly more general non-involutive case. Although often times the braid group representations associated with an R-matrix are emphasized, these are by no means the only interesting algebraic structure attached to an R-matrix, and in this article, our focus is on certain endomorphisms and subfactors defined by~$R$. 

\medskip

In order to introduce these endomorphisms, we recall some facts about the Cuntz algebras, see Section~\ref{section:RandCuntz} for precise definitions and details. The Cuntz algebras $\CO_d$, $d\in\{2,3,\ldots\}$ \cite{Cuntz:1977} are a family of $C^*$-algebras that play a prominent role in various fields -- for example, in superselection theory and duality for compact groups \cite{DoplicherRoberts:1987}, wavelets \cite{BratteliJorgensen:1999}, and twisted cyclic cocycles in noncommutative geometry \cite{CareyPhillipsRennie:2008_2}, to name just a very few samples from different areas.

There are two fundamental features of $\CO_d$ that underlie the main concept of this article: First, its unitary elements $u\in\CU(\CO_d)$ are in bijection with its (unital, ${}^*$-) endomorphisms $\la_u\in\End(\CO_d)$ \cite{Cuntz:1980}. As $\CO_d$ is a simple $C^*$-algebra, these are automatically injective. Second, the Cuntz algebra $\CO_d$ can be thought of as being generated by a $d$-dimensional Hilbert space $V$, namely it contains all linear maps $V\tp{n}\to V\tp{m}$, $n,m\in\Nl_0$. In particular, there is a UHF subalgebra $\CF_d$ isomorphic to the infinite $C^*$-tensor product of $\End V$. 

In view of these facts, we may view an R-matrix $R$, which is in particular a unitary element of $\End (V\ot V)$, as a unitary in $\CO_d$ (with $d=\dim V$) and consider the corresponding endomorphism $\la_R\in\End\CO_d$.They will be called {\em Yang-Baxter endomorphisms}, and their analysis is the main subject of this paper.

The Cuntz algebra $\CO_d$ can be completed in a natural way to a type III$_{1/d}$ factor $\CM$, and its subalgebra $\CF_d$ completes to a type II$_1$ factor $\CN\subset\CM$. Any endomorphism of the form $\la_u$ with $u\in\CU(\CF_d)$ leaves the UHF subalgebra $\CF_d\subset\CO_d$ invariant, extends to endomorphisms of their weak closures $\CM$ and $\CN$ (all denoted by the same symbol $\la_u$), and thus provides us with the subfactors
\begin{align}\label{eq:subfactor1-intro}
	\la_u(\CM)\subset\CM,\qquad 
	\la_u(\CN)\subset\CN.
\end{align}
These and related subfactors have been studied by several researchers, often times with the aim of determining their indices \cite{ContiPinzari:1996,Akemann:1997,Izumi:1993}.

Whereas general endomorphisms of Cuntz algebras have a very rich structure with many different facets \cite{ContiRordamSzymanski:2010,ContiSzymanski:2011}, Yang-Baxter endomorphisms (that is, $u=R\in\CR(d)$) and their subfactors have more special properties. For instance, as an additional structure present in the Yang-Baxter case there is a von Neumann algebra $\CL_R\subset\CN$ generated by the braid group representation associated with $R$, and $\la_R$ restricts to the canonical endomorphism $\varphi$ on $\CL_R$. We will show that $\CL_R$ is a factor, so that any R-matrix $R$ provides us with yet another subfactor
\begin{align}\label{eq:subfactor2-intro}
    \varphi(\CL_R)\subset\CL_R.
\end{align}
We are thus in a situation where to any R-matrix we may associate various operator-algebraic structures, derived from their endomorphisms. On the one hand, these data provide interesting invariants of R-matrices (such as Jones indices, commuting squares, fixed point algebras, etc.) that go beyond the trivial spectral and dimension data of the R-matrix itself. On the other hand, the analysis of Yang-Baxter endomorphisms contributes to the understanding of endomorphisms of $\CO_d$ in general, which is an area in full swing on its own.

\bigskip

Since this is a long article, we now give a fairly detailed overview of its contents and main results. 

{\bf Section~\ref{section:RandCuntz}} introduces R-matrices, Cuntz algebras, and the associated von Neumann algebras $\CL_R\subset\CN\subset\CM$ in more detail. We recall in particular that if one takes $R$ to be one of the most basic R-matrices, namely the tensor flip~$F$, one obtains the canonical endomorphism $\varphi=\la_F\in\End\CO_d$, acting as a shift on the UHF subalgebra. Drawing on the interplay of $\la_R$ and $\varphi$, we give three different characterizations of the subset of Yang-Baxter endomorphisms of $\End\CO_d$ (Prop.~\ref{prop:ybe-od}), two of which are due to Cuntz \cite{Cuntz:1998} and one of us \cite{ContiHongSzymanski:2012}, respectively. A notable feature is that a Yang-Baxter endomorphism is an automorphism if and only if $R$ is a multiple of the identity (Cor.~\ref{cor:no-auto}).

With the framework set up in this manner, we consider in {\bf Section~\ref{section:towers}} the three towers of relative commutants of the subfactors \eqref{eq:subfactor1-intro} (for $u=R\in\CR(d)$) and \eqref{eq:subfactor2-intro}. We give explicit characterizations of all three relative commutants. The characterizations of the relative commutants of \eqref{eq:subfactor1-intro} rely strongly on results from \cite{ContiPinzari:1996,Akemann:1997,Longo:1994_2}, but the characterization of the relative commutant $\CL_{R,n}:=\varphi^n(\CL_R)'\cap\CL_R$ (Prop.~\ref{prop:LRn}) is new: We characterize it as an intersection of $\CL_R$ with a matrix algebra, and as the fixed point algebra of~$\CL_R^{\la_{\varphi^n(R)}}$, reminiscent of work of Gohm and K\"ostler in noncommutative probability \cite{GohmKostler:2009_2}.

The section concludes with a structural result on the algebras $\CL_{R,n}$: For any $n\in\Nl$, the diagrams
\begin{align}\label{eq:commutingsquare2-pre}
    \begin{array}{ccc}
        \CF_d^n &\subset &\CN
        \\
        \cup & & \cup
        \\
        \CL_{R,n} &\subset &\CL_R
    \end{array}
    \qquad\qquad\qquad
    \begin{array}{ccc}
        \varphi^n(\CN) &\subset &\CN
        \\
        \cup & & \cup
        \\
        \varphi^n(\CL_R) &\subset &\CL_R
    \end{array}
\end{align}
are commuting squares (Thm.~\ref{thm:commuting-squares}), where $\CF_d^n$ is the subalgebra of $\CF_d$ isomorphic to $\End V\tp{n}$. This implies in particular that the left inverses of $\la_R$ and $\varphi$ coincide on $\CL_R$, and is later used as a basic tool for computing braid group characters and invariants for $R$.
    
{\bf Section~\ref{section:operations}} discusses three algebraic operations on the set of all R-matrices: A tensor product, Wenzl's cabling powers \cite{Wenzl:1990}, and a kind of direct sum. We relate these operations on R-matrices $R$ to operations on Yang-Baxter endomorphisms $\la_R$: The tensor product of R-matrices turns out to correspond to the tensor product of endomorphisms (on the level of the II$_1$ factor $\CN$) and the cabling power $R^{(n)}$ turns out to correspond to the $n$-fold power $\la_R^n$ (again, on the type II$_1$ factor). At the time of writing, our understanding of the ``box sum'' $R\boxplus S$ on the level of endomorphisms is still incomplete, but we show how it is reflected in the relative commutant of $\la_{R\boxplus S}$.
    
In {\bf Section~\ref{section:equivalence}} we introduce three equivalence relations on R-matrices $R,S\in\CR(d)$, each of which formalizes that one of their subfactors \eqref{eq:subfactor1-intro}, \eqref{eq:subfactor2-intro} are equivalent. Several different scenarios for these equivalences are discussed. 
The equivalence relation relating to the $\CL_R$-subfactor \eqref{eq:subfactor2-intro}, denoted $\sim$, is taken from \cite{LechnerPennigWood:2019} and shown to exactly capture the braid group character defined by~$R$. We compare with the classification of involutive R-matrices in {\bf Section~\ref{subsection:EquivalencesAndPartialTraces}} and prove that equivalent R-matrices $R\sim S$ have similar partial traces. In this context, we also show that the left and right partial traces of an R-matrix always coincide and are normal (Thm.~\ref{theorem:phiR}), which provides direct information on the R-matrices themselves.

{\bf Section~\ref{section:irreducibility}}: As a unital normal endomorphism of the type III factor $\CM$ with finite-dimensional relative commutant, a Yang-Baxter endomorphism can be decomposed into finitely many irreducible endomorphisms of $\CM$, unique up to inner automorphisms (i.e. as sectors in quantum field theory language) \cite{Longo:1989,Longo:1991}. The main difficulty is that the decomposition of a Yang-Baxter endomorphism does typically not respect the Yang-Baxter equation, that is, its irreducible components are no longer of Yang-Baxter form. Nonetheless, such a decomposition provides information on the underlying R-matrix; for example we find upper and lower bounds on the minimal and Jones indices of the subfactors \eqref{eq:subfactor1-intro} in terms of spectral data of $R$ and its partial trace (Cor.~\ref{corollary:spectralindexbounds}). Another corollary is that an R-matrix whose eigenvalues are concentrated in a sufficiently small disk around 1 is necessarily the identity (Cor.~\ref{corollary:no-spectral-concentration}).

In {\bf Section~\ref{section:reduction-of-involutives}}, we present a reduction scheme that does respect the Yang-Baxter structure and works directly on the level of the R-matrix by restricting it to tensor product subspaces defined by projections in the relative commutant $\la_R(\CM)'\cap\CM$. This scheme is currently under control for the special class of involutive R-matrices and sheds new light on the classification of involutive R-matrices from the point of view of endomorphisms.

{\bf Section~\ref{section:ergodicity}} is about fixed points of Yang-Baxter endomorphisms. Our first result in this direction is that on the level of the type II factor $\CN$, the relative commutant $\CL_R'\cap\CN$ coincides with the fixed point algebra $\CN^{\la_R}$ (Prop.~\ref{proposition:generalfixedpointresults}). Moreover, $\la_R$ is ergodic as an endomorphism of $\CM$ if and only if it is ergodic in restriction to $\CN$ (Prop.~\ref{prop:ergodicityFvsO}). This structure enables us to obtain a clear picture of ergodicity and fixed point algebras for Yang-Baxter endomorphisms which is not known for general elements of $\End\CO_d$ or $\End\CM$. In particular, we give a complete characterization of ergodic Yang-Baxter endomorphisms in Thm.~\ref{thm:ergodicity} in terms of a condition that only involves the adjoint action of $R$ on $\End V$. We also explain that ergodicity on the level of the $C^*$-algebras $\CO_d$ or $\CF_d$ is quite different from ergodicity on the level of the corresponding von Neumann algebras $\CM$ or $\CN$.

The article concludes in {\bf Section~\ref{section:2dRmatrices}}, devoted to an analysis of the family of all R-matrices of dimension $d=2$. Strengthening a theorem of Dye \cite{Dye:2003_2} (building on Hietarinta's classical \cite{Hietarinta:1993_3_2}), we show that $\CR(2)$ is the disjoint union of four families that could be called trivial R-matrices, diagonal R-matrices, off-diagonal R-matrices, and a special case (see Thm.~\ref{thm:2dRmatrices} for details). We then use the results of the previous sections to analyse the properties of the corresponding endomorphisms in detail. In particular, we discuss the special case, an R-matrix that has appeared in various places in the literature (see, for example \cite{ContiFidaleo:2000,FrankoRowellWang:2006,RehrenSchroer:1987}), explain why it is special from the point of view of endomorphisms, and compute its (infinite-dimensional) fixed point algebra~$\CN^{\la_R}$.

\bigskip

As mentioned before, we expect that the results in this article will be important for the classification of R-matrices, or a more detailed analysis of the structure of $\CR=\bigcup_{d\in\Nl}\CR(d)$, a topic that is not touched upon in the present work. Another interesting aspect not covered is the $C^*$-tensor category naturally generated by an R-matrix, to which we hope to return in a future investigation.

\section{R-matrices and Cuntz algebras}\label{section:RandCuntz}

The algebraic structures investigated in this article are all derived from unitary solutions of the Yang-Baxter equation (YBE), which we will refer to as {\em R-matrices}. 

\begin{definition}\label{def:R}
    Let $V$ be a finite dimensional Hilbert space. An {\em R-matrix on $V$} is a unitary $R:V\ot V\to V\ot V$ such that
    \begin{align}\label{eq:YBE}
        (R\ot\id_V)(\id_V\ot R)(R\ot \id_V)=(\id_V\ot R)(R\ot \id_V)(\id_V\ot R).
    \end{align}
    The {\em dimension of $R$} is defined as $\dim R:=\dim V$. The set of all R-matrices on Hilbert spaces of dimension $d\in\Nl$ is denoted $\CR(d)$, and the set of all R-matrices (of any dimension) is denoted $\CR$.
\end{definition}
Many examples of R-matrices exist, but the general structure of $\CR$ is not known. Very simple R-matrices that can be produced in any dimension are multiples of the identity, $R=q\cdot1$ (such R-matrices will be called {\em trivial}), and multiples of the tensor flip, i.e. $R=q\cdot F$, where $F(v\ot w)=w\ot v$, $v,w\in V$. Here $q$ lies in $\T$, the unit circle in the complex plane\footnote{A richer class of examples is presented in Def.~\ref{def:diagonalR}}.

As is well known and will be recalled later, any $R\in\CR$ defines representations of the braid groups. However, this is by no means the only interesting algebraic structure attached to an R-matrix, and in this article, we emphasize certain endomorphisms and subfactors defined by $R$. To introduce these, we have to recall some well-known facts about Cuntz algebras.

\medskip

The Cuntz algebra $\CO_d$, $d\in\Nl$, is the unital $C^*$-algebra generated by $d$ isometries $S_1,\ldots,S_d$ such that $S_i^*S_j=\delta_{ij}1$ and $\sum_{i=1}^dS_iS_i^*=1$ \cite{Cuntz:1977}. Using standard notation for multi indices $\mu = (\mu_1,\ldots,\mu_n)$, we set $S_\mu:=S_{\mu_1} \cdots S_{\mu_n}$ and refer to $|\mu|:=n$ as the length of $\mu$.

The subalgebra $\CF_d^n:=\newspan\{S_\mu S_\nu^*\,:\,|\mu|=|\nu|=n\}$ is naturally isomorphic to the $n$-fold tensor power $M_d^{\tp n}$ of the full matrix algebra\footnote{We will suppress this isomorphism in our notation. For instance, the matrix units $E_{ij}\in M_d$ are identified with the Cuntz algebra elements $S_iS_j^*\in\CF_d^1$, and $R\in M_d\ot M_d$ is identified with $R=\sum_{i,j,k,l=1}^dR^{ij}_{kl}S_iS_jS_l^*S_k^*\in\CF_d^2$, where $R^{ij}_{kl}=\langle e_i\ot e_j,R(e_k\ot e_l)\rangle$ and $\{e_i\}_{i=1}^d$ is the standard basis of $\Cl^d$.} $M_d$. In particular, we may view R-matrices $R\in\CR(d)$ as elements of $\CF_d^2\subset\CO_d$. The norm closure of the increasing family $\CF_d^n\subset\CF_d^{n+1}\subset\ldots$ is a UHF algebra of type $d^\infty$ which we denote $\CF_d$.

An important feature of $\CO_d$ that we will rely on throughout is that its unitary elements $u\in\CU(\CO_d)$ are in bijection with its (unital, injective) endomorphisms $\la_u\in\End\CO_d$ \cite{Cuntz:1980}. On generators, the endomorphism $\la_u$ corresponding to $u\in\CU(\CO_d)$ is defined by
\begin{align*}
 \la_u(S_i):=uS_i,
 \end{align*}
and every endomorphism of $\CO_d$ is of this form. 

We can now introduce our central object of interest.

\begin{definition}
    A {\em Yang-Baxter endomorphism of $\CO_d$} is an endomorphism of the form~$\la_R$, $R\in\CR(d)$.
\end{definition}

An important example is the so-called {\em canonical endomorphism} $\varphi:=\la_F$ given by the flip $F$, which takes the explicit form $\varphi(x)=\sum_{i=1}^d S_i x S_i^*$, $x\in\CO_d$. This endomorphism satisfies $S_i x = \varphi(x) S_i$ for all $x \in \CO_d$ and $i=1,\ldots,d$,  and restricts to the one-sided shift $x\mapsto\id_{M_d}\ot x$ on the infinite tensor product UHF algebra $\CF_d \simeq M_d \otimes M_d \otimes \ldots$, which indicates its relevance for R-matrices in view of \eqref{eq:YBE}. In fact, the YBE takes the form 
\begin{align}\label{eq:YBE-Cuntz-Form}
    R\varphi(R)R=\varphi(R)R\varphi(R)
\end{align}
when $R$ is viewed as an element of $\CF_d^2\subset\CO_d$.

\medskip

Without further mentioning, we will often use two basic consequences of the definition of $\la_u$ (for general unitary $u\in\CU(\CO_d)$) and $\varphi$: The composition law 
\begin{align}
 \la_u\la_v=\la_{\la_u(v)u}
 ,\qquad 
 u,v\in\CU(\CO_d),
\end{align}
and an explicit formula for the action of $\la_u$ on $\CF_d^n$: Given arbitrary unitary $u \in \CU(\CO_d)$ and an integer $n\geq 1$, we define two elements of $\CF_d^{n+1}$,
\begin{align}\label{eq:Un-nU}
 u_n &:= u \varphi(u) \cdots \varphi^{n-1}(u),\qquad 
 {}_nu:=\varphi^{n-1}(u)\cdots u=({u^*}_n)^*
\end{align}
and see that 
\begin{align}\label{eq:lau-explicit}
	\la_u(x) &= (\ad u_n)(x)               &&\hspace*{-3cm}\text{for } x\in\CF_d^k,\quad n\geq k,\\
	\la_u(x) &= \lim_{n\to\infty}(\ad u_n)(x) &&\hspace*{-3cm}\text{for } x\in\CF_d.
 \label{eq:lau-limit-formula}
\end{align}
The latter limit exists in the norm topology of $\CO_d$ \cite{Cuntz:1998}, and we note that for $u\in\CF_d$, the endomorphism $\la_u$ leaves $\CF_d$ invariant, $\la_u(\CF_d)\subset\CF_d$.

We now recall some properties and characterizations of Yang-Baxter endomorphisms and add a new one. 

\begin{proposition}\label{prop:ybe-od}
    Let $R \in \CU(\CF_d^2)$. The following conditions are equivalent:
    \begin{enumerate}
        \item\label{item:YBE} $R\in\CR(d)$, namely $R \varphi(R) R = \varphi(R) R \varphi(R)$,
        \item\label{item:Cuntz-YBE} $\lambda_R(R) = \varphi(R)$ \cite{Cuntz:1998},
        \item\label{item:RinL2L2} $R$ commutes with every element $x\in\la_R^2(\CO_d)$ \cite{ContiHongSzymanski:2012},
        \item\label{item:LR2} $\lambda_R^2 = \lambda_{\varphi(R)R}$.
    \end{enumerate}
\end{proposition}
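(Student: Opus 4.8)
The proof hinges on the composition law $\lambda_u\lambda_v=\lambda_{\lambda_u(v)u}$ together with the already-established equivalence of \eqref{item:YBE} and \eqref{item:Cuntz-YBE}. Applying the composition law with $u=v=R$ gives $\lambda_R^2=\lambda_R\lambda_R=\lambda_{\lambda_R(R)R}$. Thus the identity $\lambda_R^2=\lambda_{\varphi(R)R}$ in \eqref{item:LR2} holds if and only if $\lambda_{\lambda_R(R)R}=\lambda_{\varphi(R)R}$. Since $u\mapsto\lambda_u$ is a \emph{bijection} from $\CU(\CO_d)$ onto $\End\CO_d$, this is equivalent to $\lambda_R(R)R=\varphi(R)R$, and cancelling the unitary $R$ on the right, to $\lambda_R(R)=\varphi(R)$, which is precisely condition~\eqref{item:Cuntz-YBE}.

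So the plan is simply: first reduce \eqref{item:LR2} to $\lambda_R(R)R=\varphi(R)R$ via the composition law; then invoke injectivity of $u\mapsto\lambda_u$ to reduce further to $\lambda_R(R)=\varphi(R)$; then cite the equivalence $\eqref{item:YBE}\Leftrightarrow\eqref{item:Cuntz-YBE}$ (Cuntz) already recorded in the proposition. The argument is essentially a one-line chain of equivalences, and no genuine obstacle arises — the only point requiring the slightest care is that we are free to cancel $R$ on the right because it is unitary, hence invertible in $\CO_d$. I would present it as: $\lambda_R^2=\lambda_{\lambda_R(R)R}$ always, and $\lambda_R(R)R=\varphi(R)R\iff\lambda_R(R)=\varphi(R)\iff R\in\CR(d)$, where the last step is \eqref{item:Cuntz-YBE}. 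One could also remark that this shows \eqref{item:LR2} is really just a reformulation of Cuntz's condition \eqref{item:Cuntz-YBE} packaged at the level of endomorphisms rather than unitaries, which is perhaps why it deserves separate mention: it expresses the YBE as the statement that $\lambda_R$ ``intertwines itself with $\varphi$'' in the sense $\lambda_R^2=\lambda_{\varphi(R)R}$, a form convenient for iterating.

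If one prefers to avoid even mentioning \eqref{item:Cuntz-YBE} and argue directly from \eqref{item:YBE}, the same computation works: writing the YBE in Cuntz form \eqref{eq:YBE-Cuntz-Form} as $R\varphi(R)R=\varphi(R)R\varphi(R)$ and using $\lambda_R(x)=R\varphi(R)\cdots$ type formulas one recovers $\lambda_R(R)=R\varphi(R)R\varphi(R)^*\cdot\text{(correction)}$ — but this is more cumbersome, so I would stick with routing through \eqref{item:Cuntz-YBE}. In either case the proof is short and the only ``content'' is the bijectivity of $u\mapsto\lambda_u$, which is part of the standing setup recalled before the proposition.
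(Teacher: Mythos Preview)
Your proof is correct and follows essentially the same approach as the paper: both use the composition law to write $\lambda_R^2=\lambda_{\lambda_R(R)R}$ and then invoke the bijection $u\mapsto\lambda_u$. The only cosmetic difference is that the paper expands $\lambda_R(R)R=R\varphi(R)R\varphi(R)^*$ explicitly (via $\lambda_R(R)=\ad(R\varphi(R))(R)$) and compares directly with the YBE in the form \eqref{item:YBE}, whereas you cancel the rightmost $R$ and route through condition \eqref{item:Cuntz-YBE}; these are equivalent one-line manipulations.
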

\begin{proof}
    The equivalence \ref{item:YBE}$\iff$\ref{item:Cuntz-YBE} was shown in \cite{Cuntz:1998}, and \ref{item:YBE}$\iff$\ref{item:RinL2L2} was shown in \cite{ContiHongSzymanski:2012}. To show \ref{item:YBE}$\iff$\ref{item:LR2}, note that one has for any $R\in\CU(\CF_d^2)$
    \begin{align*}
        \lambda_R^2 = \lambda_{\lambda_R(R) R} = \lambda_{R \varphi(R) R \varphi(R)^* R^* R} = \lambda_{R \varphi(R) R \varphi(R)^*}\,, 
    \end{align*}
    which coincides with $\la_{\varphi(R)R}$ if and only if $R \varphi(R) R \varphi(R)^* = \varphi(R)R$. This condition is clearly equivalent to the YBE as expressed in \ref{item:YBE}.  
\end{proof}

\begin{remark}
	Characterization \ref{item:RinL2L2} could be phrased as $R\in(\la_R^2,\la_R^2)$ in standard notation for intertwiner spaces for endomorphisms, which emphasizes the similarity of our setup to algebraic quantum field theory and subfactors \cite{DoplicherHaagRoberts:1971,Longo:1992,FredenhagenRehrenSchroer:1989}. We reserve this notation for a von Neumann algebraic version introduced later on.
\end{remark}

It is a natural question to ask whether Yang-Baxter endomorphisms can be automorphisms, i.e. surjective. Whereas it is well known and easy to check that for $u\in\CF_d^1$, the associated endomorphism $\la_u$ is an automorphism\footnote{These automorphisms are usually referred to as quasi-free automorphisms~\cite{Evans:1980}.}, with inverse $\la_u^{-1}=\la_{u^*}$, the problem to recognize which endomorphisms $\la_u$ are automorphisms is delicate in general \cite{ContiSzymanski:2011}. For Yang-Baxter endomorphisms the answer is however a straightforward consequence of Prop.~\ref{prop:ybe-od}~\ref{item:RinL2L2} \cite{ContiHongSzymanski:2012}.

\begin{corollary}\label{cor:no-auto}
    A Yang-Baxter endomorphism $\la_R$ is an automorphism if and only if $R$ is trivial. 
\end{corollary}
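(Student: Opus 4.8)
The plan is to deduce the corollary directly from the characterization in Proposition~\ref{prop:ybe-od}~\ref{item:RinL2L2}, which says that $R$ commutes with everything in $\la_R^2(\CO_d)$. The ``if'' direction is immediate: if $R = q\cdot 1$ with $q\in\T$, then $\la_R(S_i) = qS_i$, so $\la_R$ coincides with a gauge (quasi-free) automorphism and is in particular surjective.

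For the ``only if'' direction, suppose $\la_R$ is an automorphism. Then $\la_R^2$ is an automorphism as well, so $\la_R^2(\CO_d) = \CO_d$. Feeding this into characterization \ref{item:RinL2L2}, we learn that $R$ commutes with \emph{every} element of $\CO_d$, i.e. $R$ lies in the center of $\CO_d$. Since $\CO_d$ is simple with trivial center $\Cl 1$, and $R$ is unitary, this forces $R = q\cdot 1$ for some $q\in\T$, which is exactly the statement that $R$ is trivial.

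There is essentially no obstacle here; the work has all been done in establishing Proposition~\ref{prop:ybe-od}. The only point worth stating carefully is that surjectivity of $\la_R$ implies surjectivity of $\la_R^2 = \la_R\circ\la_R$, which is trivial, together with the fact that $\CO_d$ has trivial center — a standard consequence of its simplicity (an element commuting with all of $\CO_d$ generates a closed two-sided ideal together with scalars, or more directly, the center of a simple unital $C^*$-algebra is $\Cl 1$). Thus the proof is a one-line reduction: $\la_R$ an automorphism $\implies \la_R^2(\CO_d)=\CO_d \implies R\in Z(\CO_d)=\Cl 1 \implies R$ trivial, and the converse is clear.
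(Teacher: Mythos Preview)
Your proof is correct and follows essentially the same approach as the paper: use Proposition~\ref{prop:ybe-od}~\ref{item:RinL2L2} together with surjectivity of $\la_R^2$ to conclude $R\in Z(\CO_d)=\Cl 1$. The paper's version is just a terser rendering of exactly this argument.
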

\begin{proof}
    If $\la_R$ is an automorphism, then so is $\la_R^2$, and hence $\la_R^2(\CO_d)=\CO_d$. But $R$ commutes with $\la_R^2(\CO_d)$, and $\CO_d$ has trivial center. Hence $R$ is trivial. The other direction is evident.
\end{proof}

With the help of the canonical endomorphism $\varphi$, we may also conveniently introduce the previously mentioned braid group representations associated with $R\in\CR(d)$. Let $B_n=\langle b_1,\ldots,b_{n-1}\rangle$ denote the braid group on $n$ strands with its standard Coxeter generators~$b_i$, and let $B_\infty$ denote the infinite braid group, namely the inductive limit of the family $B_n\subset B_{n+1}\subset\ldots$. Given $R\in\CR(d)$, the multiplicative extension of 
\begin{align}
 \rho_R(b_k):=\varphi^{k-1}(R)\in\CF_d^{k+1}\subset\CF_d
 , \qquad k\in\Nl,
\end{align}
is a group homomorphism $\rho_R:B_\infty\to\CU(\CF_d)$. We will frequently consider the $C^*$-algebra generated by $\rho_R$, namely
\begin{align}\label{eq:BR}
 \CB_R
 :=
 C^*\{\varphi^n(R)\,:\,n\in\Nl_0\}
 \subset
 \CF_d,
\end{align}
and the closely related $C^*$-algebras
\begin{align}\label{eq:AR}
 \CA_R
 :=
 \{x\in\CO_d\,:\,\la_R(x)=\varphi(x)\}
 ,\qquad
 \CA_R^{(0)}
 :=
 \CA_R\cap\CF_d
 .
\end{align}

\begin{lemma}\label{lemma:YBEndos}
    Let $R \in \CR(d)$ be an R-matrix and $\la_R$ its corresponding Yang-Baxter endomorphism.
    \begin{enumerate}
        \item\label{item:ARinclusions} $\CB_R\subset\CA_R^{(0)}$, i.e. 
        \begin{align}\label{eq:lambdaRrestrictstovarphionB}
         \la_R(x)=\varphi(x),\qquad x\in\CB_R.
        \end{align}
        \item\label{item:lambdaRrestrictstovarphi} $\la_R$ restricts to an endomorphism of $\CF_d$, $\CA_d$, $\CA_d^{(0)}$, and $\CB_R$.
        \item\label{item:PowersofLaR} For any $n\in\Nl$, one has
        \begin{align}\label{eq:PowersOfLaR}
        \lambda_R^n 
        = 
        \lambda_{{}_nR}
        =
        \lambda_{\rho_R(b_n\cdots b_1)},\qquad n\in\Nl.
    \end{align}
    \end{enumerate}
\end{lemma}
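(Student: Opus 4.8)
\textbf{Proof plan for Lemma~\ref{lemma:YBEndos}.}

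The plan is to treat the three parts in the natural logical order, deriving (b) and (c) from (a) together with the composition law and the explicit formulas \eqref{eq:lau-explicit}--\eqref{eq:lau-limit-formula}. For part (a), the heart of the matter is to show $\la_R(\varphi^n(R)) = \varphi^{n+1}(R)$ for all $n\in\Nl_0$; since $\CB_R$ is the $C^*$-algebra generated by the $\varphi^n(R)$ and $\la_R$, $\varphi$ are both $*$-homomorphisms, this suffices. The case $n=0$ is exactly Proposition~\ref{prop:ybe-od}~\ref{item:Cuntz-YBE}. For $n\geq 1$, I would use the intertwining relation $S_i x = \varphi(x) S_i$ to write $\varphi^n(R) = \sum_{|\mu|=n} S_\mu R S_\mu^*$ is not quite what I want; instead, the cleaner route is: $\la_R$ and $\varphi$ commute as endomorphisms of $\CO_d$ in the sense needed here — more precisely, I would show $\la_R \circ \varphi = \varphi \circ \la_R$ on $\CB_R$, or better, prove directly by induction that $\la_R(\varphi^n(R)) = \varphi^n(\la_R(R)) = \varphi^{n+1}(R)$, using $\varphi(R)=\la_R(R)$ and the fact that $\la_R\varphi = \varphi\la_R$ holds whenever the relevant element lies in a suitable subalgebra. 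The genuinely load-bearing identity is that $\la_R$ commutes with $\varphi$ when applied to elements of $\CB_R$; this in turn follows from $R\varphi(R)=\varphi(R)\cdot(\text{something in }\CB_R)$ via the YBE in the form \eqref{eq:YBE-Cuntz-Form}, so the verification reduces to an algebraic manipulation with $R$ and $\varphi(R)$ that is essentially Proposition~\ref{prop:ybe-od} applied repeatedly.

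For part (b): that $\la_R$ restricts to an endomorphism of $\CF_d$ was already noted after \eqref{eq:lau-limit-formula} since $R\in\CF_d$. That it restricts to $\CB_R$ is immediate from (a), as $\la_R(\CB_R)\subseteq\CB_R$. For $\CA_R$ and $\CA_R^{(0)}$ I would argue as follows: if $x\in\CA_R$, i.e. $\la_R(x)=\varphi(x)$, then I must check $\la_R(\la_R(x))=\varphi(\la_R(x))$, i.e. $\la_R(\varphi(x))=\varphi(\la_R(x))$, which again is the statement that $\la_R$ and $\varphi$ commute on the relevant element; and since $\varphi(x)=\la_R(x)\in\la_R(\CO_d)$, this should follow from the intertwiner characterization or directly from $\varphi\la_R = \la_{\varphi(R)}\cdot(\ldots)$ combined with $R\in(\la_R^2,\la_R^2)$ (Proposition~\ref{prop:ybe-od}~\ref{item:RinL2L2}). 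The $\CF_d$-intersected versions then follow since $\la_R$ preserves $\CF_d$.

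For part (c): I would prove $\la_R^n = \la_{{}_nR}$ by induction on $n$ using the composition law $\la_u\la_v = \la_{\la_u(v)u}$. Taking $u={}_{n}R$... more directly: assume $\la_R^n = \la_{{}_nR}$; then $\la_R^{n+1} = \la_R^n\la_R = \la_{\la_{{}_nR}(R)\cdot{}_nR}$, and I need $\la_{{}_nR}(R) = \varphi^n(R)$ so that $\la_{{}_nR}(R)\cdot{}_nR = \varphi^n(R)\varphi^{n-1}(R)\cdots R = {}_{n+1}R$. Now $\la_{{}_nR} = \la_R^n$ by the inductive hypothesis, so $\la_{{}_nR}(R) = \la_R^n(R)$, and $\la_R^n(R) = \varphi^n(R)$ follows from part (a) (iterating $\la_R(\varphi^k(R))=\varphi^{k+1}(R)$, starting from $R=\varphi^0(R)$). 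The identification ${}_nR = \rho_R(b_n\cdots b_1)$ is then just unwinding the definitions: $\rho_R(b_k)=\varphi^{k-1}(R)$, so $\rho_R(b_n\cdots b_1) = \varphi^{n-1}(R)\cdots\varphi(R)R = {}_nR$ by \eqref{eq:Un-nU}.

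The main obstacle I anticipate is part (a) — specifically, pinning down cleanly why $\la_R$ commutes with $\varphi$ on $\CB_R$ (equivalently, why $\la_R(\varphi^n(R))=\varphi^{n+1}(R)$ for all $n$, not just $n=0$). One has to be a little careful because $\la_R$ and $\varphi$ do \emph{not} commute as endomorphisms of all of $\CO_d$ in general; the commutation only holds on the braid subalgebra, and the proof of this is exactly where the full strength of the Yang-Baxter equation (in every "slot", via the far-commutation $\varphi^j(R)\varphi^k(R)=\varphi^k(R)\varphi^j(R)$ for $|j-k|\geq 2$ and the braid relation for $|j-k|=1$) gets used. Once that is in place, (b) and (c) are bookkeeping with the composition law.
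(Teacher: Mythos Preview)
Your overall logical structure matches the paper's: prove $\la_R(\varphi^n(R))=\varphi^{n+1}(R)$ by induction (base case Prop.~\ref{prop:ybe-od}\ref{item:Cuntz-YBE}), deduce (a) and the $\CB_R$-part of (b), and handle (c) by the same composition-law induction you wrote out. Part (c) is identical to the paper.

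The one place where the paper is sharper than your sketch is the mechanism for the inductive step in (a) and for $\la_R(\CA_R)\subset\CA_R$ in (b). You anticipate needing far-commutation plus the braid relation in each slot; that computation does work, but the paper sidesteps it entirely with a single general identity valid for any $u\in\CU(\CO_d)$:
\[
\la_u\circ\varphi = \ad u\circ\varphi\circ\la_u,
\]
which you can check directly on generators. Granting this, if $\la_R(x)=\varphi(x)$ then
\[
\la_R(\la_R(x)) = \la_R(\varphi(x)) = R\,\varphi(\la_R(x))\,R^* = R\,\varphi^2(x)\,R^* = \varphi^2(x) = \varphi(\la_R(x)),
\]
the penultimate step using only that $R\in\CF_d^2$ commutes with $\varphi^2(\CO_d)$. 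This simultaneously gives $\la_R(\CA_R)\subset\CA_R$ and, since $R\in\CA_R$, the induction $\la_R^n(R)=\varphi^n(R)$; then $\la_R(\varphi^n(R))=\la_R(\la_R^n(R))=\la_R^{n+1}(R)=\varphi^{n+1}(R)$. So the ``obstacle'' you flag dissolves once you isolate the identity $\la_u\varphi=\ad u\,\varphi\,\la_u$ and the trivial commutation $[R,\varphi^2(\CO_d)]=0$; no slot-by-slot braid manipulation is needed.
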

\begin{proof}
    We first prove that $\la_R$ restricts to $\CA_R$, and to this end recall that for general $u\in\CU(\CO_d)$, one has $\la_u\circ\varphi=\ad u\circ\varphi\circ\la_u$. This implies that if $x\in\CO_d$ satisfies $\lambda_R(x)=\varphi(x)$, then 
	\begin{equation}\label{eq:lambdaR-AR}
	\begin{aligned}
	 \lambda_R(\lambda_R(x)) 
	 &= 
	 \lambda_R(\varphi(x)) 
	 = 
	 R\varphi(\lambda_R(x))R^* 
	 \\
	 &=
	 R\varphi(\varphi(x))R^* 
	 =
	 \varphi(\varphi(x)) = \varphi(\lambda_R(x))
	 ,
	\end{aligned}
    \end{equation}
	where the next to last step follows from the general fact that $\CF_d^n$ commutes with $\varphi^n(\CO_d)$.
 
	This argument yields $\lambda_R(\CA_R)\subset\CA_R$. As $R\in\CF_d$, we also have $\la_R(\CF_d)\subset\CF_d$ and therefore $\lambda_R(\CA_R^{(0)})\subset\CA_R^{(0)}$ as well.
	
	Regarding $\CB_R$, the argument \eqref{eq:lambdaR-AR} can be used to prove $\la_R^n(R)=\varphi^n(R)$ by induction in $n\in\Nl$, the case $n=1$ being settled by~Prop.~\ref{prop:ybe-od}~\ref{item:Cuntz-YBE}. This implies, $n\in\Nl_0$,
    \begin{align*}
     \la_R(\varphi^n(R))=\la_R^{n+1}(R)=\varphi^{n+1}(R)=\varphi(\varphi^n(R)).
    \end{align*}
    As $\CB_R$ is generated by $\varphi^n(R)$, $n\in\Nl_0$, we have shown both \ref{item:ARinclusions} and \ref{item:lambdaRrestrictstovarphi}.
    
    For \ref{item:PowersofLaR}, we note that ${}_nR=\varphi^{n-1}(R)\cdots R=\rho_R(b_n\cdots b_1)$ by definition of ${}_nR$ and~$\rho_R$, and carry out another induction in $n$ to show $\la_R^n=\la_{\rho_R(b_n\cdots b_1)}$. In fact, $\la_R^{n+1}=\la_R\la_{\rho_R(b_n\cdots b_1)}=\la_{\la_R(\varphi^{n-1}(R)\cdots R)R}=\la_{\varphi^n(R)\cdots R}=\la_{\rho_R(b_{n+1}\cdots b_1)}$.    
\end{proof}

\begin{remark}
	For general $R$, the algebra $\CA_R$ is not contained in $\CF_d$ (take $R=F$ with $\CA_F=\CO_d$ as a counterexample). We also mention that our later results will imply that in general, $\CA_R^{(0)}$ is strictly larger than $\CB_R$. In the special case $R=F$, the $C^*$-algebra $\CB_F$	has been shown in \cite{DoplicherRoberts:1987} to be equal to $\CO_{U(d)}$, namely the fixed point algebra of $\CO_d$ under the canonical action of $U(d)$ by quasi-free automorphisms.
\end{remark}
 
Any R-matrix defines several $C^*$-algebra inclusions, namely $\la_R(\CO_d)\subset\CO_d$, $\la_R(\CF_d)\subset\CF_d$, $\la_R(\CB_R)=\varphi(\CB_R)\subset\CB_R$, etc. We now recall further structure that will allow us to promote these inclusions to subfactors of von Neumann algebras.

Trivial R-matrices $R=d^{-it}1$, $t\in\Rl$, define a $\frac{2\pi}{\log d}$-periodic one-parameter group of automorphisms $\sigma_t:=\la_{d^{-it}1}$ of $\CO_d$, and we define the spectral subspaces 
\begin{align}\label{eq:SpectralSubspaces}
 \CO_d^{(n)}:=\{x\in\CO_d\,:\,\sigma_t(x)=d^{-itn}x\},\qquad n\in\Zl.
\end{align}
Sometimes it will be more convenient to work with a rescaled version of $\sigma$, namely the $(2\pi)$-periodic gauge action $\alpha_t:=\sigma_{-t/\log d}=\la_{e^{it}}$.

One has $\CO_d^{(0)}=\CF_d$, and $E^0:\CO_d\to\CF_d$, $E^0(x):=\frac{1}{2\pi}\int_0^{2\pi}\alpha_t(x)dt$ is a conditional expectation onto the UHF subalgebra.

Viewing $\CF_d$ as an infinite tensor product, we have the canonical normal normalized trace state $\tau:\CF_d\to\Cl$, and define $\om:=\tau\circ E^0$. This is a KMS state on $\CO_d$ with modular group $\sigma_t$, $t\in\Rl$, and we denote the von Neumann algebras generated by its GNS representation $(\pi_\om,\Hil_\om,\Om_\om)$ as
\begin{align}
  \CM:=\pi_\om(\CO_d)''\,,\qquad 
  \CN:=\pi_\om(\CF_d)''\subset\CM. 
\end{align}
It is well known that $\CM$ is a factor of type III$_{1/d}$ and $\CN$ is a factor of type II$_1$. We will use the same symbols $\om$, $\tau$ and $E^0:\CM\to\CN$ \cite{Haagerup:1989} to denote the extensions of these maps to the weak closures $\CM$ and $\CN$. 

For our purposes, it is important to note that for any $u\in\CF_d$ (and in particular, for any R-matrix), the corresponding endomorphism $\la_u$ extends to a normal endomorphism of $\CM$ leaving $\om$ invariant \cite{Longo:1994_2}. Also here, we will use the same symbol for the extension. 

To complete the picture, we also introduce the von Neumann algebra $\CL_R$ generated by the $C^*$-algebra $\CB_R$ corresponding to some R-matrix $R\in\CR$, i.e. 
\begin{align}
 \CL_R:=\pi_\om(\CB_R)''\subset\CN\subset\CM.
\end{align}
As an immediate consequence of \eqref{eq:lambdaRrestrictstovarphionB}, we observe 
\begin{align}
	\la_R|_{\CL_R} = \varphi|_{\CL_R}.
\end{align}

Further structural elements relevant for our analysis are conditional expectations and left inverses. Because $\la_R$ commutes with the modular group, Takesaki's theorem provides us with a unique $\om$-preserving conditional expectation $E_R:\CM\to\la_R(\CM)$, which is faithful and normal and has the form 
\begin{align}
 E_R=\la_R\circ\phi_R
\end{align}
with $\phi_R$ the corresponding $\om$-preserving left inverse of $\la_R$. Recall that $\phi_R:\CM\to\CM$ is a completely positive normal linear map that satisfies
\begin{align}\label{eq:phiRproperty}
 \phi_R(\la_R(x)y\la_R(z))=x\phi_R(y)z,\qquad x,y,z\in\CM.
\end{align}
These properties of $\phi_R$ and the limit formula \eqref{eq:lau-limit-formula} imply 
\begin{align}
 \phi_R(x)
 =
 \wlim_{n\to\infty}{R_n}^*xR_n,\qquad x\in\CN.
\end{align}
As $R_n\in\CL_R\subset\CN$, this yields in particular
\begin{align}\label{eq:phiRpreserves}
 \phi_R(\CN)=\CN,\qquad \phi_R(\CL_R)=\CL_R.
\end{align}
The left inverse $\phi_R$ is usually difficult to evaluate explicitly. However, in the case of the flip $R=F$, one finds $\phi_F(x)=\frac{1}{d}\sum_{k=1}^n S_k^* x S_k$, $x\in\CM$, which restricts to the normalized partial trace on the first tensor factor on $\CN\cong M_d\ot M_d\ot\ldots$, namely 
\begin{align}\label{eq:phiF}
    \phi_F(a_1\ot a_2\ot a_3\ldots\,)=\tau(a_1)\cdot a_2\ot a_3\ot\ldots, \qquad a_i\in M_d.
\end{align}

We summarize these structures in the following proposition in terms of commuting squares of von Neumann algebras \cite{GoodmandelaHarpeJones:1989}.

\begin{proposition}\label{proposition:subfactor-diagram}
    Let $R\in\CR(d)$ and consider the diagram
    \begin{equation}\label{diag:commsquares}
		\begin{array}{ccc}
            \la_R(\CM) &\subset & \CM 
            \\
            \cup && \cup
            \\
            \la_R(\CN) &\subset & \CN 
            \\
            \cup && \cup
            \\
            \varphi(\CL_R) &\subset & \CL_R .
		\end{array}
    \end{equation}
    \begin{enumerate}
     \item\label{item:factors} All von Neumann algebras in the diagram are hyperfinite factors. $\CM$, $\la_R(\CM)$ are of type III$_{1/d}$ and $\CN,\la_R(\CN)$ are of type II$_1$. If $R$ is non-trivial, $\CL_R,\varphi(\CL_R)$ are of type II$_1$ as well.
     \item\label{item:expectations} Both squares in the diagram are commuting squares.
    \end{enumerate}
\end{proposition}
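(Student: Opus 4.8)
The statement has two parts: identifying the types of the various factors, and showing that both squares in \eqref{diag:commsquares} are commuting squares. For part \ref{item:factors}, the facts that $\CM$ is III$_{1/d}$ and $\CN$ is II$_1$ are recalled in the text; since $\la_R$ is a normal $\om$-preserving endomorphism, it is injective (simplicity of $\CO_d$ passes to the weak closure via faithfulness of $\om$), and an isomorphic image of a factor is a factor of the same type, giving $\la_R(\CM)$ of type III$_{1/d}$ and $\la_R(\CN)$ of type II$_1$. Hyperfiniteness is inherited because $\CM,\CN$ are already hyperfinite and subfactors-as-images of hyperfinite factors are hyperfinite. For $\CL_R$: it sits inside the finite factor $\CN$, so it is a finite von Neumann algebra with a trace; I would first argue $\CL_R$ is a factor (if this is not already available, note that $\varphi$ acts as a shift-like endomorphism and a standard ergodicity/asymptotic argument on $\CB_R\subset\CF_d$ shows the center is trivial — alternatively this may be deferred to the later section asserting $\CL_R$ is a factor, which we are entitled to assume). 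Given that $\CL_R$ is a factor contained in a II$_1$ factor, it is either II$_1$ or finite-dimensional (type I$_n$); the latter is excluded precisely when $R$ is non-trivial, because a finite-dimensional $\CL_R$ would force the braid generators $\varphi^k(R)$ to generate a finite-dimensional algebra, and one checks (e.g. via the faithfulness of the trace and the tensor-shift structure, or by invoking Cor.~\ref{cor:no-auto}-type rigidity) that this collapses $R$ to a scalar. Since $\varphi$ is trace-preserving and injective, $\varphi(\CL_R)\cong\CL_R$, so it has the same type.

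**The commuting square conditions.** Recall a diagram $\CB\subset\CA$, $\cup$, $\CB_0\subset\CA_0$ (with $\CB_0=\CA_0\cap\CB$) is a commuting square with respect to a faithful normal trace/state $\om$ if the $\om$-preserving conditional expectation $E^{\CA}_{\CB}:\CA\to\CB$ maps $\CA_0$ onto $\CB_0$; equivalently $E^{\CA}_{\CB}\circ E^{\CA}_{\CA_0}=E^{\CA}_{\CB_0}$. For the \emph{upper} square $\la_R(\CN)\subset\CN$, $\la_R(\CM)\subset\CM$: the $\om$-preserving expectation $\CM\to\la_R(\CM)$ is $E_R=\la_R\circ\phi_R$, and the $\om$-preserving expectation $\CM\to\CN$ is $E^0$. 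The key computation is that $E_R$ restricted to $\CN$ lands in $\la_R(\CN)$: since $\phi_R(\CN)=\CN$ by \eqref{eq:phiRpreserves}, we get $E_R(\CN)=\la_R(\phi_R(\CN))=\la_R(\CN)$, which is exactly the commuting-square property for the upper square (one should also check $E^0$ and $E_R$ commute appropriately, but this follows since both commute with the modular group and $\la_R$ is $\om$-preserving, or more directly from $E_R(\CN)=\la_R(\CN)=\la_R(\CM)\cap\CN$). For the \emph{lower} square $\varphi(\CL_R)\subset\CL_R$, $\la_R(\CN)\subset\CN$: here I would use that $\la_R|_{\CL_R}=\varphi|_{\CL_R}$, so $\varphi(\CL_R)=\la_R(\CL_R)\subset\la_R(\CN)\cap\CL_R$; the content is that the $\om$-preserving expectation $E_R:\CN\to\la_R(\CN)$ maps $\CL_R$ into $\varphi(\CL_R)$. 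Using $E_R=\la_R\circ\phi_R$ and $\phi_R(\CL_R)=\CL_R$ from \eqref{eq:phiRpreserves}, we get $E_R(\CL_R)=\la_R(\phi_R(\CL_R))=\la_R(\CL_R)=\varphi(\CL_R)$, which is precisely the commuting-square condition.

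**Main obstacle.** The genuinely substantive input is \eqref{eq:phiRpreserves}, i.e. that the left inverse $\phi_R$ preserves both $\CN$ and $\CL_R$; but this is already established in the text via the weak-limit formula $\phi_R(x)=\wlim_n R_n^* x R_n$ together with $R_n\in\CL_R\subset\CN$, so for the present proposition it can simply be quoted. Consequently the only real work left is (a) nailing down that $\CL_R$ is a factor and is infinite-dimensional exactly when $R$ is non-trivial — this is the step I expect to require the most care, and it is natural to borrow it from the later dedicated result on factoriality of $\CL_R$; and (b) verifying the bookkeeping that makes "$E_R(\CN)=\la_R(\CN)$" and "$E_R(\CL_R)=\varphi(\CL_R)$" actually equivalent to the commuting-square definition, which amounts to checking that the relevant conditional expectations are the $\om$-preserving ones and are mutually compatible — routine given Takesaki's theorem and $\om$-invariance of $\la_R$ and $\varphi$. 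I would organize the write-up as: (1) types and factoriality; (2) recall the commuting-square criterion; (3) upper square via $E_R=\la_R\phi_R$ and $\phi_R(\CN)=\CN$; (4) lower square via $\la_R|_{\CL_R}=\varphi|_{\CL_R}$ and $\phi_R(\CL_R)=\CL_R$.
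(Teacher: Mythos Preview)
Your argument for part~\ref{item:expectations} is essentially the paper's: both squares are handled via $E_R=\la_R\circ\phi_R$ together with \eqref{eq:phiRpreserves}. (For the upper square the paper phrases it slightly differently---$E_R$ commutes with the modular group by Takesaki, hence $E_R(\CN)\subset\CN\cap\la_R(\CM)=\la_R(\CN)$---but your route through $\phi_R(\CN)=\CN$ is equally valid and arguably more direct.)

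The one place where your plan diverges from the paper is the factoriality of $\CL_R$. You propose to ``defer'' this to a later section or invoke a ``standard ergodicity/asymptotic argument'', but in fact the paper proves it \emph{right here}, in two lines, and there is no later result to borrow. The argument is worth knowing: if $x\in\CL_R\cap\CL_R'$, then $x$ commutes with every $R_n$, so $\la_R(x)=\lim_n R_n x R_n^*=x$; but $\la_R|_{\CL_R}=\varphi|_{\CL_R}$, hence $\varphi(x)=x$, and $\varphi$ has no nontrivial fixed points. This is exactly the kind of ``shift ergodicity'' you gestured at, but the mechanism that reduces it to ergodicity of $\varphi$ is the identity $\la_R|_{\CL_R}=\varphi|_{\CL_R}$, which you did not make explicit. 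Your alternative idea of deferring is circular, since this proposition \emph{is} where factoriality of $\CL_R$ is established.
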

\begin{proof}
    \ref{item:factors} All we need to show is that $\CL_R$ is a factor. So let $x\in\CL_R\cap\CL_R'$. Then $x$ commutes with $R_n\in\CL_R$ for all $n\in\Nl$, and we have $\la_R(x)=\lim_n(\ad R_n)(x)=x$. But since $\la_R$ restricts to $\varphi$ on $\CL_R$, we get $\varphi(x)=\la_R(x)=x$. The canonical endomorphism~$\varphi$ is well known to have only trivial fixed points, hence $x\in\Cl1$.

    \ref{item:expectations} By Takesaki's theorem, the conditional expectation $E_R:\CM\to\la_R(\CM)$ commutes with the modular group. This implies that $E_R(\CN)\subset\CN\cap\la_R(\CM)=\la_R(\CN)$, i.e. the upper square in the diagram is a commuting square.
        
    Recall that for $x\in\CN$, we have $\phi_R(x)=\text{w-lim}_n(\ad {R_n}^*)(x)$. As $R_n\in\CL_R$, this directly gives invariance of $\CL_R$ under $\phi_R$, and therefore $E_R(\CL_R)\subset\la_R(\CL_R)=\varphi(\CL_R)$. This shows that the lower square in \eqref{diag:commsquares} is a commuting square.
\end{proof}

\begin{remark}
	As just demonstrated, any R-matrix provides us with (at least) three subfactors. Let us point out that the $\CM$- and $\CN$-subfactors contain only partial information about $R$ as an R-matrix. For example, let $R=F$ be the flip, $u\in\CU(\CF_d^1)$ non-trivial, and $\alpha:=\la_u\in\Aut\CM$. Then $\la_R\alpha=\la_S$ with $S=\varphi(u)F$. Diagonalizing $u$, it is easy to see that $S$ is a diagonal R-matrix (cf. Def.~\ref{def:diagonalR}~\ref{item:diagonalR}). Moreover, $\la_R$ and $\alpha$ commute, and therefore $\la_R^n(\CM)=\la_S^n(\CM)$, $\la_R^n(\CN)=\la_S^n(\CN)$ for all $n\in\Nl$. But despite $R$ and $S$ defining identical $\CM$- and $\CN$-subfactors, they are quite different from each other as R-matrices, for instance $R^2=1$ and $S^2\neq1$. 

	On the other hand, the subfactors generated by the braid group representations, $\varphi(\CL_R)\subset\CL_R$ and $\varphi(\CL_S)\subset\CL_S$, differ in this example. For instance, we will see later that the first one is irreducible but the second one is not.
\end{remark}

It is a natural question to ask what the indices of the subfactors in \eqref{diag:commsquares} are. Adopting standard notation, we will write $\Ind_{E_R}(\la_R)$ for the index of $\la_R(\CM)\subset\CM$ taken w.r.t. the $\om$-invariant conditional expectation, $\Ind(\la_R)$ for the minimal index of $\la_R(\CM)\subset\CM$ \cite{Kosaki:1986,Hiai:1988,Longo:1989}, and $[\CN:\la_R(\CN)]$, $[\CL_R:\varphi(\CL_R)]$ for the Jones indices \cite{Jones:1983_2} of the type II$_1$ subfactors $\la_R(\CN)\subset\CN$, $\varphi(\CL_R)\subset\CL_R$, respectively.

Independently of the Yang-Baxter equation, it is known that $\Ind_{E_R}(\la_R)=[\CN:\la_R(\CN)]\leq d^2$ \cite{Longo:1989,ContiPinzari:1996}, and the preceding commuting squares result implies $ [\CL_R:\varphi(\CL_R)]\leq[\CN:\la_R(\CN)]$ by a Pimsner-Popa inequality \cite{PimsnerPopa:1986}. We thus have
\begin{align}\label{eq:basic-index-bounds}
	[\CL_R:\varphi(\CL_R)]
	\leq 
	[\CN:\la_R(\CN)] 
	= 
	\Ind_{E_R}(\CM) 
	\leq 
	d^2
	<\infty.
\end{align}
New results on indices will be presented in Section~\ref{section:irreducibility}.

\medskip

We close this section by presenting a large family of R-matrices that can be built with the flip and partitions of unity. 

\begin{definitionlemma}\label{def:diagonalR}\leavevmode
    \begin{enumerate}
     \item\label{item:simpleR} Let $\{p_i\}_{i=1}^N$ be a partition of unity in $\CF_d^1$, i.e. the $p_i$ are orthogonal projections in $\CF_d^1$ such that $p_ip_j=\delta_{ij}p_i$ and $\sum_{i=1}^Np_i=1$. Let $c_{ij}\in\T$, $i,j\in\{1,\ldots,N\}$, be arbitrary parameters. Then
    \begin{align}\label{eq:simpleR}
        R
        :=
        \sum_{i=1}^N c_{ii}\,p_i\varphi(p_i)+\sum_{
        \genfrac{}{}{0pt}{2}{i,j=1}{i\neq j}
        }^N c_{ij}p_i\varphi(p_j)F
    \end{align}
    is an R-matrix.  Such R-matrices will be referred to as {\em simple}.
    
    \item\label{item:diagonalR} If $R\in\CR(d)$ is a simple R-matrix with only one-dimensional projections, i.e. $\tau(p_i)=1/d$ for all $i$, then there exists a unitary $u\in\CU(\CF_d^1)$ such that $p_i=uS_iS_i^*u^*$, and 
    \begin{align}\label{eq:diagonalR}
        R=\la_u(DF),\qquad D=\sum_{i,j=1}^dc_{ij}S_iS_jS_j^*S_i^*.
    \end{align}
    Such R-matrices will be referred to as {\em diagonal}.
    \end{enumerate}
\end{definitionlemma}
\begin{proof}
    \ref{item:simpleR} It is clear that \eqref{eq:simpleR} defines a unitary in $\CF_d^2$. The verification of the Yang-Baxter equation \eqref{eq:YBE-Cuntz-Form} is a tedious but straightforward calculation that we omit here. In Section~\ref{section:sums} we will see a more conceptual argument for $R\in\CR(d)$.
    
    \ref{item:diagonalR} The statement about the existence of $u$ such that $p_i=uS_iS_i^*u^*=\la_u(S_iS_i^*)$ is clear, and we then also have $\varphi(p_i)=\la_u(\varphi(S_iS_i^*))$. We have to verify that \eqref{eq:simpleR} simplifies to \eqref{eq:diagonalR} if all $p_i$ are one-dimensional. To this end, note that $\la_u(F)=F$ and $S_iS_i^*\varphi(S_iS_i^*)F=S_iS_i^*\varphi(S_iS_i^*)$. We get   
    \begin{align*}
        R
        &=
        \sum_i c_{ii}\la_u(S_iS_i^*\varphi(S_iS_i^*)F)
        +
        \sum_{i\neq j}c_{ij}\la_u(S_iS_i^*\varphi(S_jS_j^*)F)
        \\
        &=
        \sum_{i,j}c_{ij}\la_u(S_iS_i^*\varphi(S_jS_j^*)F)
        =
        \la_u(DF)
        ,
    \end{align*}
    as claimed.
\end{proof}

We will frequently use simple R-matrices as examples. Note that trivial R-matrices are simple (choose $N=1$, $p_1=1$) and the flip is diagonal (choose $N=d$, $p_i=S_iS_i^*$, $c_{ij}=1$ for all $i,j$). The term ``simple'' should not be understood in a mathematical sense -- in fact, all non-trivial simple R-matrices define reducible endomorphisms and can be decomposed into smaller R-matrices, as we shall explain later. There exist (more interesting) R-matrices that are not simple.

\section{Towers of algebras associated with R-matrices}\label{section:towers}

Having established the basic subfactors associated with R-matrices, we now turn to their analysis, in particular of their relative commutants. As the basis of our following arguments, we recall some known facts about relative commutants of localized endomorphisms (i.e., endomorphisms of the form $\la_u$, $u\in\CF_d$) of Cuntz algebras. 

For any two endomorphisms $\la,\mu$ of $\CM$, we write
\begin{align*}
 (\la,\mu):=\{T\in\CM\,:\,T\la(x)=\mu(x)T\,\quad\forall x\in\CM\}
\end{align*}
for the space of intertwiners from $\la$ to $\mu$. In particular, $(\la,\la)=\la(\CM)'\cap\CM$ is the relative commutant of $\la(\CM)\subset\CM$.

For an arbitrary unitary $u\in\CU(\CO_d)$, one has \cite[Prop.~2.5]{Longo:1994_2}
\begin{align}\label{eq:SelfIntertwinersGeneral}
 (\la_u,\la_u)=\{x\in\CM\,:\,\varphi(x)=u^*xu\}
 =
 \CM^{\ad u\circ\varphi}.
\end{align}
If, more specifically, $u\in\CU(\CF_d^n)$ for some $n\in\Nl$, one furthermore has \cite[Prop.~4.2]{ContiPinzari:1996}
\begin{align}\label{eq:IntertwinersDecompositionInSpectralSubspaces}
 (\la_u,\la_u) &= 
 \bigoplus_{k=-n+2}^{n-2}(\la_u,\la_u)^{(k)},
 \\
 (\la_u,\la_u)^{(k)}
 &\subset
 \begin{cases}
  (\varphi^{n-1},\varphi^{n-1+k}) & k\geq0
  \\
  (\varphi^{n-1-k},\varphi^{n-1}) & k<0
 \end{cases}
 .
\end{align}
From this we see in particular
\begin{align}\label{eq:IntertwinersInFdn}
 (\la_u,\la_u)^{(0)}
 &=
 \la_u(\CM)'\cap\CN
 \subset(\varphi^{n-1},\varphi^{n-1})=\CF_d^{n-1},
 \\
 (\la_u,\la_u)
 &\subset
 \CF_d^1,\qquad u\in\CU(\CF_d^2),
 \label{eq:IntertwinersInFd2}
\end{align}
and note that \eqref{eq:IntertwinersInFd2} occurs in particular for R-matrices $u=R\in\CU(\CF_d^2)$.

Having recalled these facts, we now turn to study the subfactors given by $\la_R$ and introduce their relative commutants, $n\in\Nl_0$,
\begin{align*}
 \CM_{R,n}
 &:=
 \la_R^n(\CM)'\cap\CM,
 \qquad 
 \CN_{R,n}
 :=
 \la_R^n(\CN)'\cap\CN,
 \qquad
 \CL_{R,n}
 :=
 \varphi^n(\CL_R)'\cap\CL_R.
\end{align*}
Thus $\CM_{R,n}=(\la_R^n,\la_R^n)$, but we prefer the notation $\CM_{R,n}$ in order to distinguish the three different levels of relative commutants $\CM_{R,n}$, $\CN_{R,n}$, $\CL_{R,n}$.

We clearly have three ascending towers of algebras:
\begin{equation} \label{eq:thethreetowers}
    \begin{array}{ccccccccccccccc}
        \Cl =
        & \CM_{R,0}
        &\subset
        & \CM_{R,1}
        &\subset
        & \!\!...\!\!
        &\subset
        & \CM_{R,n}
        &\subset
        & \CM_{R,n+1}
        &\subset
        & \!\!...\!\!
        &\subset
        & \CM
        \\
         \Cl =
        & \CN_{R,0}
        &\subset
        & \CN_{R,1}
        &\subset
        & \!\!...\!\!
        &\subset
        & \CN_{R,n}
        &\subset
        & \CN_{R,n+1}
        &\subset
        & \!\!...\!\!
        &\subset
        & \CN
        \\
        \Cl =
        & \CL_{R,0}
        &\subset
        & \CL_{R,1}
        &\subset
        & \!\!...\!\!
        &\subset
        & \CL_{R,n}
        &\subset
        & \CL_{R,n+1}
        &\subset
        & \!\!...\!\!
        &\subset
        & \CL_R
    \end{array}
\end{equation}

In the following, we will derive various relations/inclusions between these algebras, and realise them as fixed point algebras for certain endomorphisms. In particular, it is not clear from the outset if there are inclusions one way or the other between $\CM_{R,n}$, $\CN_{R,n}$, $\CL_{R,n}$. 

We begin with the relative commutants at the highest level, i.e. the $\CM_{R,n}$.

\begin{proposition}\label{proposition:MRn}
 Let $R\in\CR(d)$ and $n\in\Nl$. Then 
 \begin{align}
  \CM_{R,n}
  &=
  \CM^{\ad {}_nR\circ\varphi}
  =
  \bigoplus_{k=-n+1}^{n-1}
  (\CM^{(k)})^{\ad {}_nR\circ\varphi},
  \\
  \CM_{R,n}^{(0)}
  &=
  \la_R^n(\CM)'\cap\CN
  =
  (\CF_d^n)^{\ad{}_nR\circ\varphi}
  =
  \{x\in\CF_d^n\,:\,\varphi(x)=\la_{R^*}(x)\},
 \end{align}
 and in particular for $n=1$,
 \begin{align}\label{eq:MR1}
 \CM_{R,1}
 =
 \CM_{R,1}^{(0)}
 =
 \{x\in\CF_d^1\,:\,\varphi(x)=R^*xR\}.
\end{align}
\end{proposition}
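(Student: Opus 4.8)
The plan is to compute $\CM_{R,n} = \la_R^n(\CM)' \cap \CM$ by invoking the formula \eqref{eq:PowersOfLaR} from Lemma~\ref{lemma:YBEndos}, which tells us $\la_R^n = \la_{{}_nR}$ with ${}_nR = \varphi^{n-1}(R)\cdots R \in \CU(\CF_d^{n+1})$. Once this is in place, the first displayed equality is immediate from the general self-intertwiner formula \eqref{eq:SelfIntertwinersGeneral}: applying it with $u = {}_nR$ gives
\begin{align*}
 \CM_{R,n} = (\la_{{}_nR},\la_{{}_nR}) = \{x\in\CM : \varphi(x) = ({}_nR)^* x\, ({}_nR)\} = \CM^{\ad {}_nR\circ\varphi}.
\end{align*}
The second equality in the first line (the spectral decomposition over $k$ from $-n+1$ to $n-1$) follows from \eqref{eq:IntertwinersDecompositionInSpectralSubspaces}, again specialized to $u = {}_nR \in \CU(\CF_d^{n+1})$: that decomposition has range $k = -(n+1)+2,\dots,(n+1)-2$, i.e. $-n+1,\dots,n-1$, and since $\ad {}_nR\circ\varphi$ commutes with the gauge/modular action (because ${}_nR \in \CF_d$), the fixed-point algebra decomposes compatibly with the spectral subspaces $\CM^{(k)}$, giving $\CM_{R,n}^{(k)} = (\CM^{(k)})^{\ad {}_nR\circ\varphi}$.

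Next I would handle the degree-zero part. By \eqref{eq:IntertwinersInFdn} with $u = {}_nR \in \CU(\CF_d^{n+1})$ we have $\CM_{R,n}^{(0)} = \la_R^n(\CM)'\cap\CN \subset \CF_d^n$. On this piece the fixed-point condition $\varphi(x) = ({}_nR)^* x ({}_nR)$ needs to be rewritten as $\varphi(x) = \la_{R^*}(x)$. For this, recall $\la_{R^*} = \la_R^{-1}$ is not generally available, but we do have for $x \in \CF_d^n$ that $\la_R(x) = \ad R_n(x)$ (by \eqref{eq:lau-explicit}), and dually $\la_{R^*}(x) = \ad(R^*)_n(x) = \ad({}_nR)^*(x)$ using the identity ${}_nu = ((u^*)_n)^*$ from \eqref{eq:Un-nU} applied to $u = R^*$, so $({}_nR)^* = (R^*)_n$ up to the appropriate reading. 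Thus for $x \in \CF_d^n$ the condition $({}_nR)^*x({}_nR) = \varphi(x)$ is exactly $\la_{R^*}(x) = \varphi(x)$, giving $\CM_{R,n}^{(0)} = (\CF_d^n)^{\ad {}_nR\circ\varphi} = \{x \in \CF_d^n : \varphi(x) = \la_{R^*}(x)\}$. The case $n=1$ then specializes directly: ${}_1R = R$, $\CF_d^1$ has no nonzero spectral components other than degree $0$ so $\CM_{R,1} = \CM_{R,1}^{(0)}$, and the condition reads $\varphi(x) = R^*xR$ for $x \in \CF_d^1$, which is \eqref{eq:MR1}.

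The step I expect to be the main obstacle is the careful bookkeeping in identifying $({}_nR)^* \cdot (-) \cdot ({}_nR)$ with $\la_{R^*}$ on $\CF_d^n$ — i.e. verifying that the conjugating unitary coming out of \eqref{eq:SelfIntertwinersGeneral} is precisely the one that implements $\la_{R^*}$ via \eqref{eq:lau-explicit}, keeping track of the distinction between $u_n = u\varphi(u)\cdots\varphi^{n-1}(u)$ and ${}_nu = \varphi^{n-1}(u)\cdots u$ and which of these appears when $\la_R^n$ is conjugated rather than $\la_R$ directly. The relation ${}_n(R^*) = (({}_nR)^*)$? — no: from \eqref{eq:Un-nU}, ${}_nu = ((u^*)_n)^*$, so $(R^*)_n = ({}_nR)^*$, and since $\la_{R^*}$ acts on $\CF_d^n$ by $\ad (R^*)_n = \ad ({}_nR)^*$, the match is exact. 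Once this indexing is pinned down the rest is formal, but it is the place where a sign/order slip would be easy, so I would write it out in full.
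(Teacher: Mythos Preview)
Your proposal is correct and follows essentially the same route as the paper: both use $\la_R^n=\la_{{}_nR}$ with ${}_nR\in\CF_d^{n+1}$, then invoke \eqref{eq:SelfIntertwinersGeneral}, \eqref{eq:IntertwinersDecompositionInSpectralSubspaces}, and \eqref{eq:IntertwinersInFdn}, and finish by checking via \eqref{eq:Un-nU} that $(R^*)_n=({}_nR)^*$ so that $\ad({}_nR)^*$ coincides with $\la_{R^*}$ on $\CF_d^n$. Your anticipated ``main obstacle'' is exactly the one-line verification the paper carries out, and you have it right.
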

\begin{proof}
 Recall that $\la_R^n=\la_{{}_nR}$ \eqref{eq:PowersOfLaR} and ${}_nR=\varphi^{n-1}(R)\cdots\varphi(R)R\in\CF_d^{n+1}$. Then the two equalities in the first line immediately follow from \eqref{eq:SelfIntertwinersGeneral} and \eqref{eq:IntertwinersDecompositionInSpectralSubspaces}.
 
 In the second line, the first equality is the definition of $\CM_{R,n}^{(0)}$ and the second equality follows by combining \eqref{eq:SelfIntertwinersGeneral} with \eqref{eq:IntertwinersInFdn} and ${}_nR\in\CF_d^{n+1}$. To get the last equality, note that for $x\in\CF_d^n$, 
 \begin{align*}
  \la_{R^*}(x)
  =\ad (R^*)_n (x)
  =\ad ({}_nR)^* (x),
 \end{align*}
and therefore $x\in(\CF_d^n)^{\ad{}_nR\circ\varphi}$ is equivalent to $x\in\CF_d^n$ with $\varphi(x)=\ad({}_nR)^*(x)=\la_{R^*}(x)$. The special case $n=1$ now follows from the previous statements.
\end{proof}

As an example, we determine the structure of $\CM_{R,1}$ for a class of simple R-matrices.

\begin{proposition}\label{prop:structureofMR1}
    Let $R$ be a simple R-matrix (Def.~\ref{def:diagonalR}~\ref{item:simpleR}) with projections $p_1,\ldots,p_N\in\CF_d^1$ and parameters $c_{ij}$, $i,j\in\{1,\ldots,N\}$, such that $c_{ij}=1$ for $i\neq j$. Define 
    \begin{align*}
        m
        &:=
        \left|\{i\in\{1,\ldots,N\}\,:\,\tau(p_i)=1/d,\;c_{ii}=1\}\right|.
    \end{align*}
    Then
    \begin{align}\label{eq:structureofMR1fornormalforms}
     \CM_{R,1}
     \cong 
     \underbrace{\C\oplus\ldots\oplus\C}_{N-m\text{ \rm terms}} \oplus M_m.
    \end{align}
\end{proposition}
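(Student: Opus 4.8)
The starting point is the description \eqref{eq:MR1} of $\CM_{R,1}$ as $\{x\in\CF_d^1:\varphi(x)=R^*xR\}$, which I would make completely explicit in the tensor picture $\CF_d^2\cong M_d\ot M_d$: there an element $x\in\CF_d^1$ becomes $x\ot1$, the canonical endomorphism acts by $\varphi(x)=1\ot x$, and $F$ becomes the tensor flip on $\C^d\ot\C^d$. Hence $\CM_{R,1}$ is $*$-isomorphic to the intertwiner space $\{x\in M_d: R(1\ot x)=(x\ot1)R\}$, and the whole proof consists in solving this equation for the simple R-matrix $R$ of \eqref{eq:simpleR} with $c_{ij}=1$ for $i\neq j$.

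Next I would put $R$ into block form with respect to the orthogonal decomposition $\C^d=\bigoplus_{i=1}^N V_i$, $V_i:=p_i\C^d$ (so $\dim V_i=1$ iff $\tau(p_i)=1/d$). Using $c_{ij}=1$ for $i\neq j$ and the fact that $p_i\ot p_i$ commutes with $F$, one gets $R=\sum_i c_{ii}(p_i\ot p_i)+(1-P)F$ with $P:=\sum_i p_i\ot p_i$; equivalently, $R$ acts as the scalar $c_{ii}$ on $V_i\ot V_i$ and as the flip $V_a\ot V_b\to V_b\ot V_a$ for $a\neq b$. Writing $x=(x_{ij})_{i,j}$ with $x_{ij}:=p_i x p_j\colon V_j\to V_i$ and evaluating both sides of $R(1\ot x)=(x\ot1)R$ on elementary tensors $v\ot w$ with $v\in V_a$, $w\in V_b$, one then splits the resulting identity into its components in the summands $V_i\ot V_a$ and reads off the constraints on the blocks.

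The bookkeeping produces exactly three kinds of conditions, all of which follow from the two elementary tensor-algebra facts that $(1-c)\,\xi\ot\xi=0$ forces $c=1$ or $\xi=0$, and that $v\ot\xi=\xi\ot v$ for all $v$ forces $\xi=0$ once $\dim V_a\geq2$. From the case $a=b$, looking at the $V_a\ot V_a$-component, one gets $1_{V_a}\ot x_{aa}=x_{aa}\ot1_{V_a}$ on $V_a\ot V_a$, hence $x_{aa}\in\C\,1_{V_a}$ for every $a$. From the case $a=b$, $V_i\ot V_a$-component with $i\neq a$, one gets $x_{ia}\,w\ot v=c_{aa}\,x_{ia}\,v\ot w$ for all $v,w\in V_a$, which forces $x_{ia}=0$ unless $c_{aa}=1$ and $\dim V_a=1$. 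From the case $a\neq b$, $V_a\ot V_a$-component, one gets $c_{aa}\,v\ot x_{ab}w=x_{ab}w\ot v$, which forces $x_{ab}=0$ unless $c_{aa}=1$ and $\dim V_a=1$; the remaining ($i\neq a$) components in the case $a\neq b$ are identities. Combining the last two statements (one controls a block by its column index, the other by its row index), an off-diagonal block $x_{ij}$, $i\neq j$, vanishes unless both $i$ and $j$ lie in $S:=\{k:\dim V_k=1,\ c_{kk}=1\}$, and for $i,j\in S$ there is no constraint at all (the $V_k$, $k\in S$, being one-dimensional).

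Finally I would assemble the answer: with $p:=\sum_{i\in S}p_i$ one has $m=|S|=\dim(p\C^d)$, and $\CM_{R,1}$ is exactly the set of operators $\sum_{i\notin S}\mu_i\,p_i+y$ with $\mu_i\in\C$ and $y\in pM_dp$ arbitrary. Since $p_i p=0$ for $i\notin S$, this is visibly a unital $*$-subalgebra of $M_d$, orthogonally decomposed as $\bigl(\bigoplus_{i\notin S}\C\,p_i\bigr)\oplus pM_dp\cong\C^{N-m}\oplus M_m$, which is \eqref{eq:structureofMR1fornormalforms}. The only mildly delicate point is the case analysis in the block computation --- in particular noticing that it takes the two different off-diagonal identities together to force $x_{ij}=0$ unless $i,j\in S$ --- but there is no conceptual obstacle beyond careful bookkeeping.
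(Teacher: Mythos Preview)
Your proposal is correct and follows essentially the same route as the paper: both reduce to the fixed-point/intertwining equation from \eqref{eq:MR1}, expand $R$ in its block form relative to the $p_i$, and read off the conditions $p_ixp_i\in\C p_i$ and $p_ixp_j=0$ for $i\neq j$ unless both $i,j\in S$. The only difference is presentational --- the paper computes $y=(\ad R\circ\varphi)(x)-x$ in Cuntz-algebra notation and cuts it down by multiplying with $p_i\varphi(p_j)$, while you work in the tensor picture and evaluate on elementary tensors $v\ot w\in V_a\ot V_b$ --- but the resulting case analysis and conclusion are identical.
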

\begin{proof}
    Let $x\in\CF_d^1$. We claim that $x\in\CM_{R,1}$ is equivalent to $x$ satisfying the following two conditions:
    \begin{enumerate}
        \item $p_ixp_i\in\Cl p_i$ for all $i$.
        \item Let $i\neq j$. If $\tau(p_i)>d^{-1}$ or $\tau(p_j)>d^{-1}$ or $c_{ii}\neq1$ or $c_{jj}\neq1$, then $p_ixp_j=0$.
    \end{enumerate}
    To verify this, we first calculate from the definition of $R$ \eqref{eq:simpleR}
    \begin{align*}
     y
     &:=(\ad R\circ\varphi)(x)-x
     \\
     &=
     \sum_i p_i\varphi(p_ixp_i)
     +
     \sum_{i\neq j} c_{ii} p_i\varphi(p_ixp_j)F
     +
     \sum_{i\neq j} \overline{c_{ii}} p_jxp_i\varphi(p_i)F
     \\
     &\quad
     -
     \sum_i p_ixp_i\varphi(p_i)
     -
     \sum_{i\neq j} p_jxp_i\varphi(p_i)
     -
     \sum_{i\neq j} p_ixp_j\varphi(p_i)
     .
    \end{align*}
    Vanishing of $y$ is equivalent to $x\in\CM_{R,1}$. We observe that if $y=0$, then for any $i$
    \begin{align*}
        0=
        p_i\varphi(p_i)yp_i\varphi(p_i)
        =
        p_i\varphi(p_ixp_i)-p_ixp_i\varphi(p_i).
    \end{align*}
    Thus $x\in\CM_{R,1}$ implies condition a).
    
    We next consider $i\neq j$. If $y=0$, then
    \begin{align*}
        0
        &=
        p_i\varphi(p_i)yp_j\varphi(p_i)
        =
        c_{ii}\,p_i\varphi(p_ixp_j)F-p_ixp_j\varphi(p_i),
        \\
        0
        &=
        p_j\varphi(p_i)yp_i\varphi(p_i)
        =
        \overline{c_{ii}}\,p_jxp_i\varphi(p_i)F-p_jxp_i\varphi(p_i).
    \end{align*}
    It follows that if either $p_i$ or $p_j$ has dimension greater than $1$, then $p_ixp_j=0$. Furthermore, if $p_i$ and $p_j$ are one-dimensional (i.e. $\tau(p_i)=\tau(p_j)=1/d$) and $c_{ii}\neq1$ or $c_{jj}\neq1$, then $p_ixp_j=0$. That is, $x\in\CM_{R,1}$ implies condition b). 
    
    Conversely, if a) and b) hold, it is easy to check that the above sum vanishes (term 1 cancels term 4, term 2 cancels term 6, and term 3 cancels term 5). Thus $x\in\CM_{R,1}$ is equivalent to $x$ satisfying a) and b).
    
    With $I:=\{i\in\{1,\ldots,N\}\,:\,\tau(p_i)=1/d,\;c_{ii}=1\}$ and $p:=\sum_{i\in I}p_i$, we then have $x\in\CM_{R,1}$ if and only if $x$ is of the form 
    \begin{align*}
        x=\sum_{i\not\in I} \alpha_i\,p_i + pxp,\qquad \alpha_i\in\Cl.
    \end{align*}
    As $|I|=m$ and $|\{1,\ldots,N\}\backslash I|=N-m$ this gives the claimed result.
\end{proof}

We see from this result that the R-matrices considered are all reducible in the sense that $\CM_{R,1}\neq\Cl$; unless $R\in\Cl$ ($N=1$). 

\medskip

In \cite[Prop.~2.3]{ContiRordamSzymanski:2010}, it was shown that for a unitary $u\in\CU(\CO_d)$, one has $\la_u(\CF_d)'\cap\CO_d=\bigcap_{n\in\Nl}(\ad u\circ\varphi)^n(\CO_d)$. We now present a variation of this argument which is also stated in \cite{Akemann:1997} to characterize the relative commutants $\CN_{R,n}$. As \cite{Akemann:1997} is not published, we give most details of the proof.

\begin{proposition}
 Let $R\in\CR(d)$ and $n\in\Nl$. Then 
 \begin{align}\label{eq:NRn}
  \CN_{R,n}
  =
  \bigcap_{k\geq0}(\ad {}_nR\circ\varphi)^k(\CF_d^n)
 \end{align}
 is the largest subalgebra of $\CF_d^n$ that is globally stable under $\ad{}_nR\circ\varphi$. In particular, 
 \begin{align}
  \CM_{R,n}^{(0)}
  &\subset\CN_{R,n},\quad n\in\Nl,
  \qquad
  \CM_{R,1}
  \subset\CN_{R,1}
  \subset\CF_d^1
  .
  \label{eq:MR1inNR1}
 \end{align}
\end{proposition}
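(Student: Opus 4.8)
The plan is to characterize $\CN_{R,n} = \la_R^n(\CN)' \cap \CN$ via the limit formula for $\la_R^n$ and the representation $\la_R^n = \ad({}_nR)_k \circ (\cdots)$ on finite levels of the UHF algebra. First I would recall from \eqref{eq:PowersOfLaR} that $\la_R^n = \la_{{}_nR}$ with ${}_nR \in \CF_d^{n+1}$, and from \eqref{eq:lau-limit-formula} that $\la_{{}_nR}(x) = \lim_{m\to\infty} (\ad({}_nR)_m)(x)$ for $x \in \CF_d$, where $({}_nR)_m = {}_nR \cdot \varphi({}_nR) \cdots \varphi^{m-1}({}_nR)$. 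Since a unitary $w \in \CF_d^n$ commutes with $\la_R^n(\CN)$ iff $w$ commutes with $\la_R^n(y)$ for all $y$ in a weakly dense subset — in particular all $y \in \CF_d$ — and since $\la_R^n$ leaves $\CF_d$ invariant, the key observation is that $\la_R^n$ restricted to $\CF_d^n$ coincides with $\ad({}_nR\circ\varphi)$ up to the shift structure: more precisely, iterating the relation $\la_u(x) = \ad u_k(x)$ for $x \in \CF_d^k$ shows that $\la_R^n(\CF_d^k) \subset \CF_d^{k}$ is built from repeated application of $\ad({}_nR)\circ\varphi$.

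The core of the argument is the following: an element $x \in \CF_d^n$ lies in $\CN_{R,n}$ iff $x$ commutes with $\la_R^n(\CF_d)$, and by the limit/intertwiner formulas this is equivalent to $(\ad {}_nR \circ \varphi)(x) = x$, i.e. $x$ is a fixed point of the endomorphism $\Psi := \ad {}_nR \circ \varphi$ of $\CF_d$ — but crucially, restricted to the requirement that $x \in \CF_d^n$. Now I would invoke the same mechanism as in \cite[Prop.~2.3]{ContiRordamSzymanski:2010}: the fixed-point algebra of $\Psi$ inside $\CF_d^n$ is exactly $\bigcap_{k\geq 0}\Psi^k(\CF_d^n)$. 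One inclusion is trivial (a fixed point lies in every $\Psi^k(\CF_d^n)$). For the reverse, if $x \in \bigcap_k \Psi^k(\CF_d^n)$, one writes $x = \Psi^k(x_k)$ with $x_k \in \CF_d^n$ for each $k$; since $\Psi$ is an injective endomorphism, the $x_k$ are uniquely determined and satisfy $\Psi(x_{k+1}) = x_k$, and because all $x_k$ live in the finite-dimensional algebra $\CF_d^n$ with $\Psi$ isometric, a compactness/stabilization argument forces $x$ to be $\Psi$-invariant. This simultaneously shows $\bigcap_k \Psi^k(\CF_d^n)$ is the largest $\Psi$-stable subalgebra of $\CF_d^n$: it is $\Psi$-stable by construction, and any $\Psi$-stable subalgebra $A \subset \CF_d^n$ satisfies $A = \Psi^k(A) \subset \Psi^k(\CF_d^n)$ for all $k$, hence $A \subset \bigcap_k \Psi^k(\CF_d^n)$.

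For the displayed inclusions \eqref{eq:MR1inNR1}: from Proposition~\ref{proposition:MRn} we have $\CM_{R,n}^{(0)} = (\CF_d^n)^{\ad {}_nR \circ \varphi} = (\CF_d^n)^\Psi$, which by the above equals $\bigcap_k \Psi^k(\CF_d^n) = \CN_{R,n}$ — wait, this would give equality, so I must be careful: the point is that $\CM_{R,n}^{(0)}$ requires $\varphi(x) = \la_{R^*}(x)$ (a condition using $\varphi$, the true shift) whereas $\CN_{R,n}$ only requires commutation with $\la_R^n(\CN)$, so $\CN_{R,n}$ is defined by fixed points of $\Psi$ on $\CF_d^n$ but $\CM_{R,n}^{(0)}$ is a potentially smaller set; the clean statement is $\CM_{R,n}^{(0)} \subset \CN_{R,n}$, obtained by checking that any $x$ with $\varphi(x) = \la_{R^*}(x)$ and $x \in \CF_d^n$ automatically commutes with $\la_R^n(\CN)$. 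Then $\CM_{R,1} = \CM_{R,1}^{(0)} \subset \CN_{R,1} \subset \CF_d^1$ follows by specializing $n=1$ and using \eqref{eq:MR1}. The main obstacle I anticipate is the stabilization step in the finite-dimensional setting — making precise why $\bigcap_k \Psi^k(\CF_d^n)$ consists of genuine fixed points rather than merely of elements with infinite $\Psi$-preimage chains; this is where the finite dimensionality of $\CF_d^n$ together with injectivity (hence, on a finite-dimensional algebra, surjectivity onto its image with the image algebras $\Psi^k(\CF_d^n)$ eventually stabilizing in dimension) does the work.
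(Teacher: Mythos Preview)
Your proposal has two genuine gaps.

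First, the identification of $\CN_{R,n}$ with the fixed-point algebra of $\Psi=\ad{}_nR\circ\varphi$ in $\CF_d^n$ is simply false, and the ``compactness/stabilization argument'' you invoke cannot work. You yourself notice this (``wait, this would give equality''), but you never repair the argument. A concrete counterexample: take $R=uF$ with $u\in\CU(\CF_d^1)$ non-scalar; then for $x\in\CF_d^1$ one computes $\Psi(x)=uxu^*$, so $\Psi$ is an automorphism of $\CF_d^1$ and $\bigcap_k\Psi^k(\CF_d^1)=\CF_d^1$, while the fixed points $\{x:uxu^*=x\}$ form a proper subalgebra. So $\CN_{R,1}=\CF_d^1\neq\CM_{R,1}$. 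The correct statement is that $\CN_{R,n}$ is the largest $\Psi$-\emph{stable} subalgebra of $\CF_d^n$, not the fixed-point algebra; on this stable subalgebra $\Psi$ acts as an automorphism that need not be the identity. What you are missing is the direct computation linking commutation with $\la_R^n(\CN)$ to membership in $\bigcap_k\Psi^k(\CN)$: for $y\in\CF_d^k$ one has $[x,\la_{{}_nR}(y)]=0$ iff $({}_nR)_k^*x({}_nR)_k\in(\CF_d^k)'\cap\CN=\varphi^k(\CN)$, i.e.\ iff $x\in(\ad({}_nR)_k\circ\varphi^k)(\CN)=\Psi^k(\CN)$. This never passes through fixed points.

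Second, you never establish that $\CN_{R,n}\subset\CF_d^n$; you simply restrict attention to $x\in\CF_d^n$ from the outset. This containment is not automatic --- a priori $\CN_{R,n}$ is only known to lie in $\CN$ --- and it is the substantive analytic step in the paper's proof. There it is handled by passing to the isometry $T$ on $L^2(\CN)$ extending $\Psi$, observing that $T$ is unitary on $\CK:=\bigcap_kT^kL^2(\CN)\supset\CN_{R,n}$, and then using that $T^{*k}$ maps $\CF_d^m$ into $\CF_d^n$ for $k\geq m$ together with finite-dimensionality of $\CF_d^n$ to conclude $\CK\subset\CF_d^n$. Only after this is established can one refine $\bigcap_k\Psi^k(\CN)$ to $\bigcap_k\Psi^k(\CF_d^n)$.

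Your argument for the ``largest $\Psi$-stable subalgebra'' part (any $\Psi$-stable $A\subset\CF_d^n$ satisfies $A\subset\Psi^k(\CF_d^n)$ for all $k$) is fine, and once the two issues above are fixed, the inclusion $\CM_{R,n}^{(0)}\subset\CN_{R,n}$ is immediate since fixed points form a $\Psi$-stable subalgebra.
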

\begin{proof}
The ${}^*$-algebra $\bigcup_{k\in\Nl}\CF_d^k$ is weakly dense in $\CN$. This implies that an element $x\in\CN$ commutes with $\la_{{}_nR}(\CN)$ if and only if
 \begin{alignat*}{5}
  &&0&=[x,\la_{{}_nR}(y)]
  =[x,({}_nR)_ky({}_nR)_k^*] 
  &\quad\forall k\in\Nl,y\in\CF_d^k
  \\
  &\Longleftrightarrow&\;
  0&=[({}_nR)_k^*x({}_nR)_k,y]
  &\quad\forall k\in\Nl,y\in\CF_d^k
  \\
  &\Longleftrightarrow&\;
  ({}_nR)_k^*x({}_nR)_k&\in\CN\cap(\CF_d^k)'=\varphi^k(\CF_d)
  &\quad\forall k\in\Nl\\
  &\Longleftrightarrow&\;
  x&\in(\ad ({}_nR)_k\circ\varphi^k)(\CN)=(\ad{}_nR\circ \varphi)^k(\CN)
  &\quad\forall k\in\Nl.
 \end{alignat*}
 This proves $\CN_{R,n}=\bigcap_{k\in\Nl}(\ad {}_nR\circ\varphi)^k(\CN)$. We next show $\CN_{R,n}\subset\CF_d^n$, following \cite[Thm.~3.16]{Akemann:1997}. Namely, we consider the isometry $T$ on $L^2(\CN)$ that is defined by continuous extension of $\CN\ni x\mapsto {}_nR\varphi(x)({}_nR)^*\in\CN$. Then $T$ restricts to a unitary on $\CK:=\bigcap_{k\geq1}T^kL^2(\CN)\supset\CN_{R,n}$ and we have to show $\CK\subset\CF_d^n$. This follows by taking into account that for $x\in\CF_d^m$, $m\in\Nl$, one has $T^{*k}x=(\phi_F\circ\ad({}_nR)^*)^k(x)\in\CF_d^n$ for $k\geq m$, and the finite dimensionality of $\CF_d^n$ \cite[Lemma~3.15]{Akemann:1997}.
 
 Having established $\CN_{R,n}\subset\CF_d^n$, we get together with the previous result
 \begin{align*}
  \CN_{R,n}
  &=\bigcap_{k\in\Nl}(\ad {}_nR\circ\varphi)^k(\CN)\cap\CF_d^n
  =\bigcap_{k\in\Nl}(\ad ({}_nR)_k\circ\varphi^k)(\CN)\cap\CF_d^n.
 \end{align*}
 But inserting the definitions, one sees $({}_nR)_k\in\CF_d^{n+k}$. Thus $({}_nR)_k\varphi^k(x)({}_nR)_k^*=y$ for some $y\in\CF_d^n$ and $x\in\CN$ implies $x\in\CF_d^n$. That is, $(\ad ({}_nR)_k\circ\varphi^k)(\CN)\cap\CF_d^n=(\ad ({}_nR)_k\circ\varphi^k)(\CF_d^n)$, and we arrive at the claimed formula \eqref{eq:NRn}.
 
 To also get the characterization of $\CN_{R,n}$ as the largest subalgebra of $\CF_d^n$ globally invariant under $T=\ad{}_nR\circ\varphi$, it remains to show $T(\CN_{R,n})=\CN_{R,n}$. Note that since $\CF_d^n$ is finite-dimensional, the sequence $\bigcap_{k=0}^mT^k(\CF_d^n)$ stabilizes, i.e. there exists $m_0\in\Nl$ such that $\CN_{R,n}=\bigcap_{k=0}^{m}T^k(\CF_d^n)$ for all $m\geq m_0$. Thus
 \begin{align*}
  T(\CN_{R,n})
  =
  \bigcap_{k=0}^{m_0}T^{k+1}(\CF_d^n)
  \supset
  \bigcap_{k=0}^{m_0+1}T^{k}(\CF_d^n)
  =
  \bigcap_{k=0}^{m_0}T^{k}(\CF_d^n)
  =
  \CN_{R,n},
 \end{align*}
 i.e. the finite-dimensional space $\CN_{R,n}$ is contained in its image under $T$. This implies $T(\CN_{R,n})=\CN_{R,n}$
 
 From this characterization of $\CN_{R,n}$, it is now obvious that it contains the fixed points $(\CF_d^n)^T=\CM_{R,n}^{(0)}$. In the special case $n=1$, we have $\CM_{R,1}=\CM_{R,1}^{(0)}$ by \eqref{eq:MR1}.
\end{proof}

\begin{remark}\label{remark:N1andM1CanDiffer}
    Let us give an example showing that in general, $\CM_{R,1}\neq\CN_{R,1}$. For later use, we actually give two similar examples, both based on the flip $F$ and a unitary $u\in\CF_d^1$, namely
    \begin{align*}
     R:=uF,\qquad S:=uFu^*=u\varphi(u^*)F.
    \end{align*}
    Both $R$ and $S$ are R-matrices, as can be checked by direct verification of the Yang-Baxter equation, or by realizing that they are diagonal (Def.~\ref{def:diagonalR}). For $x\in\CF_d^1$, we have
    \begin{align*}
        (\ad R\circ\varphi)(x)&=RFxFR^*=uxu^*,\\
        (\ad S\circ\varphi)(x)&=SFxFS^*=u\varphi(u^*)x\varphi(u)u^*=uxu^*.
    \end{align*}
    Thus $\CF_d^1$ is globally invariant under $\ad R\circ\varphi$ and $\ad S\circ\varphi$, and therefore $\CN_{R,1}=\CN_{S,1}=\CF_d^1$. But for $u\not\in\Cl$, the above formula shows that not every $x\in\CF_d^1$ is a fixed point of $\ad R\circ\varphi$ or $\ad S\circ\varphi$ i.e. $\CM_{R,1}=\CM_{S,1}$ is a proper subalgebra of~$\CF_d^1$.
\end{remark}

We now move on to the relative commutants $\CL_{R,n}$ on the level of the von Neumann algebra $\CL_R$ generated by the $B_\infty$-representation $\rho_R$. In this representation, $R$ represents the first generator $b_1\in B_\infty$; in particular, $\CL_R=\CL_{R^*}$. 

The following proposition contains in particular the fixed point characterization $\CL_{R,n}=\CL_R^{\la_{\varphi^n(R)}}$ which is similar to the work of Gohm and Köstler \cite{GohmKostler:2009_2}, where analogues of $\la_{\varphi^n(R)}$ are called ``partial shifts''.

\begin{proposition}\label{prop:LRn}
    Let $R\in\CR(d)$ and $n\in\Nl_0$. Then 
    \begin{enumerate}
        \item\label{item:characterization-of-LRn} $\CL_{R,n}=\CF_d^n\cap\CL_R=\CL_R^{\la_{\varphi^n(R)}}=\CM_{R,n}\cap\CL_R$, and all these algebras are invariant under exchanging $R$ and $R^*$.
        \item\label{item:Bn-in-LRn} $C^*(\rho_R(B_n))\subset\CL_{R,n}$, $n\geq1$.
    \end{enumerate}
\end{proposition}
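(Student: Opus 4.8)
The plan is to prove the four algebra identities in \ref{item:characterization-of-LRn} by establishing a cycle of inclusions, and then deduce \ref{item:Bn-in-LRn} from \ref{item:characterization-of-LRn} together with the braid relations.

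\medskip

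First I would handle the inclusion $\CF_d^n\cap\CL_R\subset\CM_{R,n}\cap\CL_R$. Since $\la_R^n=\la_{{}_nR}$ with ${}_nR=\varphi^{n-1}(R)\cdots R\in\CF_d^{n+1}$, and $\CF_d^n$ commutes with $\varphi^n(\CO_d)\supset\la_R^n(\CM)$ modulo the correction by ${}_nR$, one actually sees directly: for $x\in\CF_d^n$, the formula $\la_R^n(y)=\ad({}_nR)_k(\varphi^k(y))$ for $y\in\CF_d^k$ shows that $x$ commutes with all of $\la_R^n(\CN)$ as soon as $x$ commutes with ${}_nR$-conjugates of $\varphi^k(\CF_d^k)$; but the cleanest route is to observe that $\la_R^n$ restricted to $\CL_R$ equals $\varphi^n$ restricted to $\CL_R$ (iterating $\la_R|_{\CL_R}=\varphi|_{\CL_R}$), so for $x\in\CL_R$ we have $x\in\CM_{R,n}\cap\CL_R \iff x\in\varphi^n(\CL_R)'\cap\CL_R=\CL_{R,n}$. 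That gives $\CM_{R,n}\cap\CL_R=\CL_{R,n}$ for free. Next, $\CL_{R,n}=\varphi^n(\CL_R)'\cap\CL_R$: since $\CL_R$ is generated by $\{\varphi^k(R):k\geq0\}$ and $\varphi^n(\CL_R)$ by $\{\varphi^k(R):k\geq n\}$, an element of $\CL_{R,n}$ must commute with $\varphi^k(R)\in\CF_d^{k+1}$ for all $k\geq n$; combined with \eqref{eq:IntertwinersInFdn}-type spectral-subspace arguments (or directly from \eqref{eq:IntertwinersInFd2} applied to the generators) this forces $\CL_{R,n}\subset\CF_d^n$, hence $\CL_{R,n}\subset\CF_d^n\cap\CL_R$, closing the loop $\CF_d^n\cap\CL_R\subset\CL_{R,n}\subset\CF_d^n\cap\CL_R$.

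\medskip

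For the fixed-point characterization $\CL_{R,n}=\CL_R^{\la_{\varphi^n(R)}}$, I would argue as follows. The endomorphism $\la_{\varphi^n(R)}$ acts on $\CF_d$ and, by the analogue of Lemma~\ref{lemma:YBEndos} applied to the R-matrix $\varphi^n(R)$ (which is again an R-matrix, being unitarily equivalent to $R$ via a shift), restricts to $\CL_R$. Using the limit formula \eqref{eq:lau-limit-formula}, $\la_{\varphi^n(R)}(x)=\lim_k\ad\big(\varphi^n(R)\cdots\varphi^{n+k-1}(R)\big)(x)$, so $x$ is a fixed point precisely when $x$ commutes with $\varphi^n(R)\varphi^{n+1}(R)\cdots$, i.e.\ with the sub-$C^*$-algebra $\varphi^n(\CB_R)$, whose weak closure is $\varphi^n(\CL_R)$. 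Hence $\CL_R^{\la_{\varphi^n(R)}}=\varphi^n(\CL_R)'\cap\CL_R=\CL_{R,n}$. Invariance under $R\leftrightarrow R^*$ is immediate since $\CB_R=\CB_{R^*}$ (as $R^*=\varphi$-conjugate... no—rather $R^*\in\CB_R$ trivially, and $\varphi^n(R^*)=\varphi^n(R)^*$), so $\CL_R=\CL_{R^*}$ and all the defining data are symmetric; $\CF_d^n$ and $\CM_{R,n}=\la_R^n(\CM)'\cap\CM$ likewise since $\la_{R^*}^n(\CM)=\la_R^n(\CM)$ (the relative commutant depends only on the image).

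\medskip

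Finally, \ref{item:Bn-in-LRn}: the generators of $\rho_R(B_n)$ are $\varphi^{k-1}(R)$ for $k=1,\ldots,n-1$, each lying in $\CF_d^{k+1}\subset\CF_d^n$, hence $C^*(\rho_R(B_n))\subset\CF_d^n$; and it lies in $\CL_R$ by definition. By \ref{item:characterization-of-LRn}, $\CF_d^n\cap\CL_R=\CL_{R,n}$, so we are done—provided we additionally check these generators commute with $\varphi^n(\CL_R)$, which is automatic from the far-commutativity braid relations $[\varphi^{j}(R),\varphi^{k}(R)]=0$ for $|j-k|\geq2$: every generator of $\varphi^n(\CL_R)$ is a limit of words in $\varphi^k(R)$ with $k\geq n$, and each such $\varphi^k(R)$ commutes with $\varphi^{j}(R)$ for $j\leq n-2$. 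I expect the main obstacle to be the proof that $\CL_{R,n}\subset\CF_d^n$—equivalently that the only elements of $\CL_R$ commuting with $\varphi^n(\CL_R)$ already sit in $\CF_d^n$—which needs the $\CO(n)$-truncation/stabilization argument in the spirit of the $\CN_{R,n}$ proof rather than a purely formal manipulation; everything else is bookkeeping with $\la_R|_{\CL_R}=\varphi|_{\CL_R}$ and the limit formula.
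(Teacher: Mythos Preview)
Your proposal has genuine gaps in the two non-trivial inclusions, and one outright false claim.

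First, the equivalence ``$x\in\CM_{R,n}\cap\CL_R \iff x\in\varphi^n(\CL_R)'\cap\CL_R$'' does not follow from $\la_R^n|_{\CL_R}=\varphi^n|_{\CL_R}$. Membership in $\CM_{R,n}$ means commuting with all of $\la_R^n(\CM)$, not just with $\la_R^n(\CL_R)=\varphi^n(\CL_R)$; you only get the easy inclusion $\CM_{R,n}\cap\CL_R\subset\CL_{R,n}$. Second, your fixed-point argument is one-directional: from $\la_{\varphi^n(R)}(x)=\lim_k\ad\big(\varphi^n(R)\cdots\varphi^{n+k}(R)\big)(x)=x$ you cannot conclude that $x$ commutes with each $\varphi^j(R)$ individually, so ``fixed point $\Rightarrow$ commutes with $\varphi^n(\CB_R)$'' is unproven. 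Third, the claim $\la_{R^*}^n(\CM)=\la_R^n(\CM)$ is false in general (compare \eqref{eq:MR1} for $R$ and $R^*$), so your $R\leftrightarrow R^*$ invariance argument collapses. You correctly flag $\CL_{R,n}\subset\CF_d^n$ as the crux, but the $\CN_{R,n}$-style stabilization argument does not obviously adapt, since $\CL_R$ is not invariant under $\ad{}_nR\circ\varphi$.

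The paper's key idea, which resolves all of this at once, is to rewrite the fixed-point equation using $\la_R|_{\CL_R}=\varphi|_{\CL_R}$. For $x\in\CL_R$ one has
\[
\la_{\varphi^n(R)}(x)
=\varphi^{n-1}(R^*)\cdots R^*\,\la_R(x)\,R\cdots\varphi^{n-1}(R)
={}_n(R^*)\,\varphi(x)\,{}_n(R^*)^*,
\]
so $\la_{\varphi^n(R)}(x)=x$ is exactly the defining equation $x=(\ad{}_n(R^*)\circ\varphi)(x)$ of $\CM_{R^*,n}$ from Proposition~\ref{proposition:MRn} --- with $R^*$, not $R$. This immediately gives $\CL_R^{\la_{\varphi^n(R)}}\subset\CM_{R^*,n}^{(0)}\cap\CL_R\subset\CF_d^n\cap\CL_R\subset\CL_{R,n}$, closing the cycle $\CL_{R,n}\subset\CL_R^{\la_{\varphi^n(R)}}\subset\CM_{R^*,n}\cap\CL_R\subset\CF_d^n\cap\CL_R\subset\CL_{R,n}$. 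Running the same chain with $R$ and $R^*$ interchanged (legitimate because $\CL_R=\CL_{R^*}$) then produces $\CM_{R,n}\cap\CL_R$ in the third slot and simultaneously establishes the $R\leftrightarrow R^*$ invariance. Your part \ref{item:Bn-in-LRn} is fine once \ref{item:characterization-of-LRn} is in place.
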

\begin{proof}
    \ref{item:characterization-of-LRn} We will demonstrate the inclusions
    \begin{align*}
        \CL_{R,n}
        \stackrel{\text{(i)}}{\subset}
        \CL_R^{\la_{\varphi^n(R)}}
        \stackrel{\text{(ii)}}{\subset}
        \CM_{R^*,n}\cap\CL_R
        \stackrel{\text{(iii)}}{\subset}
        \CF_d^n\cap\CL_R
        \stackrel{\text{(iv)}}{\subset}
        \CL_{R,n}
        .
    \end{align*}
    Note the appearance of ${R^*}$ instead of $R$ in the third algebra. Nonetheless, this chain of inclusions implies the claimed equalities because we have $\CL_R=\CL_{R^*}$ and may thus run through the chain of inclusions once more with $R$ and $R^*$ interchanged, realizing that all algebras are invariant under replacing $R$ with~$R^*$.
    
    To begin with, we note that $\la_{\varphi^n(R)}(x)$, $x\in\CN$, can be written as
    \begin{align*}
        \la_{\varphi^n(R)}(x)
        &=
        \lim_{k\to\infty}\varphi^n(R)\cdots\varphi^{n+k}(R)x\varphi^{n+k}(R^*)\cdots\varphi^n(R^*)
        \\
        &=
        \varphi^{n-1}(R^*)\cdots R^*\la_R(x)R\cdots\varphi^{n-1}(R)
        \\
        &=
        {}_n(R^*)\la_R(x){}_n(R^*)^*.        
    \end{align*}
    It is apparent from the first line that any $x\in\varphi^n(\CL_R)'$ is a fixed point of $\la_{\varphi^n(R)}$, i.e. we have inclusion (i). 

    Any $x\in\CL_R$ satisfies $\la_R(x)=\varphi(x)$, and thus the above calculation yields
    \begin{align*}
        \CL_R^{\la_{\varphi^n(R)}}
        &\subset
        \{x\in\CL_R\,:\,x=(\ad {}_n(R^*)\circ\varphi)(x)\}
        =
        \CM_{R^*,n}\cap\CL_R,
    \end{align*}
	where we have used Prop.~\ref{proposition:MRn}. This shows the inclusion (ii).
	
	As $\CL_R\subset\CN$, we also have $\CM_{R^*,n}\cap\CL_R=\CM_{R^*,n}^{(0)}\cap\CL_R\subset\CF_d^n\cap\CL_R$ by Prop.~\ref{proposition:MRn}, showing inclusion~(iii). Inclusion (iv) is evident because $\CF_d^n$ and $\varphi^n(\CL_R)$ commute in $\CN$.

	\ref{item:Bn-in-LRn} By definition of $\rho_R$, we have $C^*(\rho_R(B_n))\subset\CF_d^n\cap\CL_R=\CL_{R,n}$.
\end{proof}

Having clarified some of the relations of the relative commutants, in particular 
\begin{align}\label{eq:inclusions-of-relative-commutants}
    \CL_{R,n}\subset\CM_{R,n}^{(0)}\subset\CN_{R,n}\subset\CF_d^n,
    \qquad n\in\Nl,
\end{align}
we comment on the action of $\la_R$ and $\phi_R$ on these three towers.

\begin{lemma}
	Let $R\in\CR(d)$ and $n\in\Nl_0$. Then 
	\begin{align}\label{eq:laRonTowers}
		\la_R(\CM_{R,n})\subset\CM_{R,n+1}
		,\quad
		\la_R(\CN_{R,n})\subset\CN_{R,n+1}
		,\quad
		\la_R(\CL_{R,n})\subset\CL_{R,n+1}
		,\\
		\phi_R(\CM_{R,n+1})=\CM_{R,n}
		,\quad
		\phi_R(\CN_{R,n+1})=\CN_{R,n}
		,\quad
		\phi_R(\CL_{R,n+1})=\CL_{R,n}
		,
		\label{eq:phiRonTowers}
	\end{align}
	and
	\begin{align}\label{eq:Rinsecondcommutant}
		R&\in\CL_{R,2}\subset\CM_{R,2}^{(0)}
		\subset\CM_{R,2}\cap\CN_{R,2},\\
		\phi_R(R)&\in\CL_{R,1}\subset\CM_{R,1}\subset\CN_{R,1}.
		\label{eq:phiRRinfirstcommutant}
	\end{align}
\end{lemma}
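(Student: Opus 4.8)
The plan is to prove the three statements \eqref{eq:laRonTowers}, \eqref{eq:phiRonTowers}, \eqref{eq:Rinsecondcommutant}--\eqref{eq:phiRRinfirstcommutant} in turn, relying on the intertwining/covariance relations already available for $\la_R$ and $\phi_R$ together with the explicit formula $\la_R^n=\la_{{}_nR}$ from Lemma~\ref{lemma:YBEndos}\ref{item:PowersofLaR}.

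\textbf{The inclusions \eqref{eq:laRonTowers}.} First I would treat the $\CM_{R,n}$ row. If $x\in\CM_{R,n}=(\la_R^n,\la_R^n)$, then for all $y\in\CM$ we have $x\la_R^n(y)=\la_R^n(y)x$; applying the endomorphism $\la_R$ gives $\la_R(x)\la_R^{n+1}(y)=\la_R^{n+1}(y)\la_R(x)$, i.e. $\la_R(x)\in(\la_R^{n+1},\la_R^{n+1})=\CM_{R,n+1}$. The argument for $\CN_{R,n}$ is identical once one recalls (from Lemma~\ref{lemma:YBEndos}\ref{item:lambdaRrestrictstovarphi}) that $\la_R(\CN)\subset\CN$, so that $\la_R(x)$ lands in $\CN$; and for $\CL_{R,n}$ one uses additionally that $\la_R$ restricts to $\CB_R$, hence extends to an endomorphism of $\CL_R$ with $\la_R|_{\CL_R}=\varphi|_{\CL_R}$, so $\la_R(\CL_{R,n})=\varphi(\CL_{R,n})\subset\varphi(\varphi^n(\CL_R)'\cap\CL_R)\subset\varphi^{n+1}(\CL_R)'\cap\varphi(\CL_R)\subset\CL_{R,n+1}$.

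\textbf{The inclusions \eqref{eq:phiRonTowers}.} Here I would use the defining left-inverse property \eqref{eq:phiRproperty}: $\phi_R(\la_R(x)\,y\,\la_R(z))=x\,\phi_R(y)\,z$, together with $\phi_R\circ\la_R=\id$. For the ``$\subset$'' direction: if $y\in\CM_{R,n+1}$, so $y$ commutes with $\la_R^{n+1}(\CM)=\la_R(\la_R^n(\CM))$, then for $w\in\la_R^n(\CM)$ we compute $\phi_R(y)\,w=\phi_R(y\,\la_R(w))=\phi_R(\la_R(w)\,y)=w\,\phi_R(y)$ using \eqref{eq:phiRproperty} with $x=z=1$ in the middle and the fact that $\la_R(w)\in\la_R^{n+1}(\CM)$ commutes with $y$; hence $\phi_R(y)\in\CM_{R,n}$. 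For the reverse inclusion (to get equality) one composes with $\la_R$: if $x\in\CM_{R,n}$ then $\la_R(x)\in\CM_{R,n+1}$ by \eqref{eq:laRonTowers} and $\phi_R(\la_R(x))=x$, so $x\in\phi_R(\CM_{R,n+1})$. The $\CN$-row follows from the same computation together with $\phi_R(\CN)=\CN$ (already recorded in \eqref{eq:phiRpreserves}), and the $\CL_R$-row from $\phi_R(\CL_R)=\CL_R$ of \eqref{eq:phiRpreserves}; on $\CL_R$, $\phi_R$ coincides with the left inverse of $\varphi$ by the commuting-square statement referenced earlier, which makes the $\varphi^n(\CL_R)'$ bookkeeping go through cleanly.

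\textbf{The memberships \eqref{eq:Rinsecondcommutant}--\eqref{eq:phiRRinfirstcommutant}.} For $R\in\CL_{R,2}$: by definition $R=\rho_R(b_1)\in C^*(\rho_R(B_2))$ since $b_1\in B_2$, and Prop.~\ref{prop:LRn}\ref{item:Bn-in-LRn} gives $C^*(\rho_R(B_2))\subset\CL_{R,2}$; alternatively $R\in\CF_d^2\cap\CL_R=\CL_{R,2}$ by Prop.~\ref{prop:LRn}\ref{item:characterization-of-LRn}. The chain $\CL_{R,2}\subset\CM_{R,2}^{(0)}\subset\CM_{R,2}\cap\CN_{R,2}$ is then just the already-established \eqref{eq:inclusions-of-relative-commutants} together with the trivial $\CM_{R,2}^{(0)}\subset\CM_{R,2}$; and one should sanity-check directly that $R$ commutes with $\la_R^2(\CO_d)$ — but this is exactly Prop.~\ref{prop:ybe-od}\ref{item:RinL2L2}, which re-confirms $R\in\CM_{R,2}$. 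Finally $\phi_R(R)\in\CL_{R,1}$ follows by applying \eqref{eq:phiRonTowers} (the $\CL_R$-row, $n=1$) to $R\in\CL_{R,2}$, and then $\CL_{R,1}\subset\CM_{R,1}\subset\CN_{R,1}$ is again \eqref{eq:inclusions-of-relative-commutants} for $n=1$.

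I expect the only genuine subtlety to be in \eqref{eq:phiRonTowers}: establishing \emph{equality} rather than mere inclusion requires knowing that $\phi_R$ maps the smaller algebra $\CM_{R,n+1}$ \emph{onto} $\CM_{R,n}$, which I would get from $\phi_R\circ\la_R=\id$ combined with \eqref{eq:laRonTowers}, but one must be a little careful that $\phi_R$ indeed maps $\CM_{R,n+1}$ into $\CM$ (it does, being the global left inverse on $\CM$) and that the three towers are genuinely compatible with the single map $\phi_R$ — for the $\CL_R$-tower this rests on the commuting-square result \eqref{eq:commutingsquare2-pre}/Thm.~\ref{thm:commuting-squares} which identifies $\phi_R|_{\CL_R}$ with the left inverse of $\varphi$, so no independent argument is needed there. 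Everything else is a direct unwinding of definitions.
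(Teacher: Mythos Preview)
Your argument is correct and follows essentially the same route as the paper: the intertwiner characterisation for \eqref{eq:laRonTowers}, then $\phi_R\circ\la_R=\id$ together with the bimodule property \eqref{eq:phiRproperty} and $\phi_R(\CN)=\CN$, $\phi_R(\CL_R)=\CL_R$ from \eqref{eq:phiRpreserves} for \eqref{eq:phiRonTowers}, and finally $R\in\CF_d^2\cap\CL_R=\CL_{R,2}$ plus \eqref{eq:inclusions-of-relative-commutants} for the last two lines. One small remark: your appeal to Thm.~\ref{thm:commuting-squares} for the $\CL_R$-row of \eqref{eq:phiRonTowers} is unnecessary (and in the paper's ordering that theorem comes after this lemma) --- the bimodule property alone already handles it, since for $w\in\varphi^n(\CL_R)=\la_R^n(\CL_R)$ one has $\la_R(w)\in\varphi^{n+1}(\CL_R)$, so $y\in\CL_{R,n+1}$ commutes with $\la_R(w)$ and $\phi_R(y)w=\phi_R(y\la_R(w))=\phi_R(\la_R(w)y)=w\phi_R(y)$, exactly as in your $\CM$-row computation.
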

\begin{proof}
 	Since $\CM_{R,n}=(\la_R^n,\la_R^n)$, we clearly have $\la_R(\CM_{R,n})\subset\CM_{R,n+1}$, and since $\la_R$ preserves the subalgebras $\CN$ and $\CL_R$ of $\CM$, we also have the other inclusions in \eqref{eq:laRonTowers}. Applying the left inverse $\phi_R$ then gives $\CM_{R,n}\subset\phi_R(\CM_{R,n+1})$, etc. Taking into account that $\phi_R$ preserves $\CN$ and $\CL_R$ \eqref{eq:phiRpreserves}, and its bimodule property \eqref{eq:phiRproperty}, we even get the equalities \eqref{eq:phiRonTowers}.

	By Prop.~\ref{prop:ybe-od}~\ref{item:RinL2L2}, we have $R\in\CM_{R,2}$. Since $R\in\CL_R$, \eqref{eq:Rinsecondcommutant} follows. The second equality \eqref{eq:phiRRinfirstcommutant} is then a consequence of $\phi_R(\CL_{R,2})\subset\CL_{R,1}$.
\end{proof}

We can now conclude that the inclusion $C^*(\rho_R(B_n))\subset\CL_{R,n}$ in Prop.~\ref{prop:LRn}~\ref{item:Bn-in-LRn} is proper in general. For example, for $n=1$ the group $B_n$ is trivial, i.e. $C^*(\rho_R(B_1))=\Cl$, but $\CL_{R,1}$ contains $\phi_R(R)$ which is non-trivial in general.

\medskip

As an aside, we mention that it frequently happens that the braid group representations ${\rho_R}|_{B_n}$ factor through a finite quotient of $B_n$ \cite{GalindoRowell:2013}. The most prominent example of such a quotient is the symmetric group $S_n$; note that $\rho_R$ factors through the symmetric groups if and only if $R$ is {\em involutive}, i.e. $R^2=1$. We record the following characterization of R-matrices with trivial square.

\begin{lemma}\leavevmode
	\begin{enumerate}
        \item\label{item:trivialsquareendo} Let $u\in\CU(\CO_d)$. Then $u^2\in\C$ if and only if $u \in (\lambda_u,\lambda_{\varphi(u)})$.
        \item\label{item:trivialsquareR} 
        Let $R \in \CR(d)$. Then the following conditions are equivalent:
        \begin{itemize}
            \item $R^2 \in\Cl$;
            \item $R \in (\lambda_R,\lambda_{\varphi(R)})$;
            \item $\lambda_R$ and $\lambda_{R^*}$ commute.
        \end{itemize}
	 \end{enumerate}
\end{lemma}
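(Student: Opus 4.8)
The plan is to prove part \ref{item:trivialsquareendo} first as a general fact about Cuntz algebra endomorphisms, and then derive part \ref{item:trivialsquareR} by combining it with the characterizations of R-matrices already established in Prop.~\ref{prop:ybe-od}. For \ref{item:trivialsquareendo}, the starting point is the composition law $\la_u\la_u = \la_{\la_u(u)u}$. Thus $\la_u^2 = \la_{u^2}$ holds if and only if $\la_u(u)u = u^2$ (there is no ambiguity here since $u\mapsto\la_u$ is injective), i.e. if and only if $\la_u(u) = u u u^{-1} = u$. Meanwhile $\varphi(u)$ and $\la_u$ interact via $\varphi(x)S_i = S_i x$ rewritten for $\la_u$; more directly, $u\in(\la_u,\la_{\varphi(u)})$ means $u\,\la_u(x) = \la_{\varphi(u)}(x)\,u$ for all $x\in\CO_d$. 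Using $\la_{\varphi(u)}(S_i) = \varphi(u)S_i = S_i u$ and $\la_u(S_i) = uS_i$, the intertwining relation on generators reads $u\,uS_i = S_i u\, u$, i.e. $u^2 S_i = S_i u^2$ for all $i$, which says precisely that $u^2$ lies in the relative commutant of $\{S_1,\dots,S_d\}$ in $\CO_d$, namely the center $\C 1$. Conversely $u^2\in\C$ gives $u^2 S_i = S_i u^2$ and unwinding the same computation yields the intertwiner relation on generators, hence on all of $\CO_d$ since both sides are endomorphisms (or: $*$-homomorphisms) agreeing on generators.

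Actually, I would present \ref{item:trivialsquareendo} via the cleaner route: $u\in(\la_u,\la_{\varphi(u)})$ iff $\la_{\varphi(u)} = \ad u\circ\la_u = \la_{u\la_u(u)u^*\cdot u} = \la_{u\la_u(u)}$... let me instead just verify directly that $u\in(\la_u,\la_{\varphi(u)})$ is equivalent to $\la_u(u) = u$, which in turn is equivalent to $\la_u^2 = \la_{u^2}$ and hence (by injectivity of $u\mapsto\la_u$ and the composition law) to $u^2\in\C 1$... but wait, $\la_{u^2}$ for scalar $u^2$ is $\la_u^2$ automatically only if we know $\la_u(u)u = u^2$; and $\la_u(u)u = u^2 \Leftrightarrow \la_u(u) = u$. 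And $\la_u(u)=u$ does not immediately say $u^2\in\C$. The correct bridge is: $u^2\in\C 1 \Leftrightarrow u^2$ central $\Leftrightarrow$ $u^2 S_i = S_i u^2\ \forall i \Leftrightarrow u(uS_i) = (S_i u)u \Leftrightarrow u\,\la_u(S_i) = \la_{\varphi(u)}(S_i)\, u\ \forall i \Leftrightarrow u\in(\la_u,\la_{\varphi(u)})$, where the last step uses that an element intertwining two endomorphisms on the generating set $S_1,\dots,S_d$ intertwines them on all of $\CO_d$. That is the whole argument for \ref{item:trivialsquareendo}, using only $\CO_d$ simple (trivial center) and the defining relations.

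For \ref{item:trivialsquareR}, apply \ref{item:trivialsquareendo} with $u = R$: $R^2\in\C$ iff $R\in(\la_R,\la_{\varphi(R)})$, giving the first two equivalent conditions. For the third, $\la_R$ and $\la_{R^*}$ commute: compute $\la_R\la_{R^*} = \la_{\la_R(R^*)R^*}$ and $\la_{R^*}\la_R = \la_{\la_{R^*}(R)R}$, so commutativity is equivalent (by injectivity) to $\la_R(R^*)R^* = \la_{R^*}(R)R$. Now I would use Prop.~\ref{prop:ybe-od}\ref{item:Cuntz-YBE}, which gives $\la_R(R) = \varphi(R)$ for an R-matrix; applying this to $R^*$ (note $R^*$ is also an R-matrix, since the YBE is preserved under taking adjoints — or cite that $\CL_R = \CL_{R^*}$ remark / simply observe $R\in\CR(d)\Rightarrow R^*\in\CR(d)$) gives $\la_{R^*}(R^*) = \varphi(R^*) = \varphi(R)^*$. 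Taking adjoints, $\la_{R^*}(R)$... hmm, $\la_{R^*}(R^*)^* = \la_{R^*}(R) = \varphi(R)$. So $\la_{R^*}(R) = \varphi(R)$. Also $\la_R(R^*) = \la_R(R)^* = \varphi(R)^*$. Then $\la_R(R^*)R^* = \varphi(R)^* R^*$ and $\la_{R^*}(R)R = \varphi(R) R$; these are equal iff $\varphi(R)^* R^* = \varphi(R) R$, i.e. $(R\varphi(R))^* = \varphi(R)R$, i.e. $R\varphi(R)\varphi(R)R = 1$... that's not obviously $R^2\in\C$. Let me reconsider: perhaps it is cleaner to relate commutativity of $\la_R,\la_{R^*}$ directly to $R\in(\la_R,\la_{\varphi(R)})$ using the YBE in the form \eqref{eq:YBE-Cuntz-Form} $R\varphi(R)R = \varphi(R)R\varphi(R)$. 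The relation $R\in(\la_R,\la_{\varphi(R)})$ unpacks, via $\la_R(S_i)=RS_i$ and $\la_{\varphi(R)}(S_i)=\varphi(R)S_i$, to $R\cdot RS_i = \varphi(R)S_i\cdot R$ for all $i$; using $S_i R = \varphi(R)S_i$ this is $R^2 S_i = \varphi(R)^2 S_i$, hmm, giving $R^2 = \varphi(R)^2$ in the appropriate sense. The honest plan is: expand all three conditions on generators using the relation $S_i x = \varphi(x)S_i$, reduce each to an identity in $\CF_d^{\text{finite}}$, and show the YBE \eqref{eq:YBE-Cuntz-Form} is exactly what makes them pairwise equivalent.

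The main obstacle I expect is the third equivalence ($\la_R,\la_{R^*}$ commute): unlike the first two, which follow formally from part \ref{item:trivialsquareendo}, this one genuinely uses that $R$ satisfies the YBE — one must feed in Prop.~\ref{prop:ybe-od}\ref{item:Cuntz-YBE} (or \eqref{eq:YBE-Cuntz-Form} directly) to convert the raw condition $\la_R(R^*)R^* = \la_{R^*}(R)R$ into a statement about $R^2$. The bookkeeping of adjoints and of which R-matrix ($R$ or $R^*$) one is applying each identity to is the delicate part; I would handle it by first recording the auxiliary facts $R^*\in\CR(d)$, $\la_R(R^*) = \varphi(R^*)$, $\la_{R^*}(R) = \varphi(R)$, $\la_{R^*}(R^*)=\varphi(R^*)$ as immediate consequences of Prop.~\ref{prop:ybe-od}\ref{item:Cuntz-YBE}, and only then substitute. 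Everything else is the composition law $\la_u\la_v = \la_{\la_u(v)u}$ together with injectivity of $u\mapsto\la_u$ and the simplicity (trivial center) of $\CO_d$.
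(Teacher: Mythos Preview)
Your argument for \ref{item:trivialsquareendo} is correct and takes a slightly different route from the paper. The paper computes $\ad(u)\circ\lambda_u = \lambda_{u^2\varphi(u)^*}$ in one stroke (using $S_i u^* = \varphi(u)^* S_i$), equates it with $\lambda_{\varphi(u)}$, and obtains $u^2 = \varphi(u^2)$; the ergodicity of $\varphi$ then forces $u^2\in\C$. You instead unpack the intertwiner relation on generators to $u^2 S_i = S_i u^2$ and invoke the triviality of the center of $\CO_d$. Both are valid; the paper's version has the minor advantage that the same fixed-point condition $u^2=\varphi(u^2)$ reappears verbatim in part~\ref{item:trivialsquareR}.

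For the third equivalence in \ref{item:trivialsquareR} there is a slip that derails you. The composition law is $\lambda_u\lambda_v = \lambda_{\lambda_u(v)\,u}$, so
\[
\lambda_R\lambda_{R^*} = \lambda_{\lambda_R(R^*)\,R},\qquad
\lambda_{R^*}\lambda_R = \lambda_{\lambda_{R^*}(R)\,R^*};
\]
you swapped the trailing unitary in both, which is why your substitution produced $(R\varphi(R))^* = \varphi(R)R$ and stalled. With the correct version, and using the identities $\lambda_R(R^*)=\varphi(R^*)$ and $\lambda_{R^*}(R)=\varphi(R)$ that you already recorded, commutativity becomes $\varphi(R^*)R = \varphi(R)R^*$, i.e.\ $R^2 = \varphi(R)^2 = \varphi(R^2)$. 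At this point you need the fact that $\varphi$ has no nontrivial fixed points (not merely that $\CO_d$ has trivial center) to conclude $R^2\in\C$; this is precisely the paper's argument.
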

\begin{proof}
	\ref{item:trivialsquareendo} One has ${\rm ad}(u) \lambda_u = \lambda_{u^2 \varphi(u)^*}$, so that it coincides with $\lambda_{\varphi(u)}$ if and only if $u^2 \varphi(u)^* = \varphi(u)$,
	i.e. $u^2 = \varphi(u^2)$. However, it is well known that $\varphi$ admits no nontrivial fixed points.
	
	\ref{item:trivialsquareR} We have $\la_R\circ\la_{R^*}=\la_{\varphi(R^*)R}$ and $\la_{R^*}\circ\la_{R}=\la_{\varphi(R)R^*}$, hence the two endomorphisms commute if and only if $R^2=\varphi(R^2)$. Since $\varphi$ has only trivial fixed points, the conclusion follows.
\end{proof}

Our main results concerning the relative positions of the subalgebras $\varphi^n(\CL_R)$ and $\CL_{R,n}$ in $\CN$ are contained in the following theorem. The $\tau$-preserving conditional expectation $\CN\to\CF_d^n$ will be denoted $E_n$.

\begin{theorem}\label{thm:commuting-squares}
    Let $R\in\CR$ and $n\in\Nl$. Then the squares 
    \begin{align}\label{eq:commutingsquare2}
        \begin{array}{ccc}
            \CF_d^n &\subset &\CN
            \\
            \cup & & \cup
            \\
            \CL_{R,n} &\subset &\CL_R
        \end{array}
        \qquad\qquad\qquad
        \begin{array}{ccc}
            \varphi^n(\CN) &\subset &\CN
            \\
            \cup & & \cup
            \\
            \varphi^n(\CL_R) &\subset &\CL_R
        \end{array}
    \end{align}
    commute, i.e. $E_n(\CL_R)=\CL_{R,n}$ and $\phi_R(x)=\phi_F(x)$, $x\in\CL_R$.
\end{theorem}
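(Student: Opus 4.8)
\emph{Strategy.} I would first isolate a single invariance property and derive both commuting squares from it. Since $\CN=\CF_d^n\otimes\varphi^n(\CN)$, the $\tau$-preserving conditional expectation onto $\varphi^n(\CN)$ is $\varphi^n\circ\phi_F^n$, so the right-hand square commutes (and then automatically $\varphi^n(\CN)\cap\CL_R=\varphi^n(\CL_R)$) if and only if $\phi_F^n(\CL_R)\subseteq\CL_R$, and by iteration it suffices to know $\phi_F(\CL_R)\subseteq\CL_R$. For the left-hand square, $\CL_{R,n}=\CF_d^n\cap\CL_R$ by Prop.~\ref{prop:LRn}, so ``commuting square'' just means $E_n(\CL_R)\subseteq\CL_R$. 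Thus everything follows once one establishes
\[
\phi_F(\CL_R)\subseteq\CL_R\qquad\text{and}\qquad E_n(\CL_R)\subseteq\CL_R\quad(n\in\Nl).
\]

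\emph{From invariance to $\phi_R=\phi_F$ on $\CL_R$.} Put $\theta:=\varphi|_{\CL_R}=\la_R|_{\CL_R}\in\End\CL_R$. A normal, $\tau$-preserving left inverse $\psi\colon\CL_R\to\CL_R$ of $\theta$ is unique, because $\theta\circ\psi$ is then necessarily the unique $\tau$-preserving conditional expectation $\CL_R\to\varphi(\CL_R)$, forcing $\psi=\theta^{-1}\circ E_{\varphi(\CL_R)}$. Now $\phi_R|_{\CL_R}$ is such a left inverse: it maps $\CL_R$ into $\CL_R$ by \eqref{eq:phiRpreserves}, it is normal and $\tau$-preserving (restriction of the $\om$-preserving $\phi_R$), and $\phi_R(\la_R(a))=a$. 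Granting $\phi_F(\CL_R)\subseteq\CL_R$, the restriction $\phi_F|_{\CL_R}$ is also such a left inverse (normal, $\tau$-preserving, $\phi_F(\varphi(a))=a$). Hence $\phi_R|_{\CL_R}=\phi_F|_{\CL_R}$, which is exactly the stated form of the right-hand square; the left-hand square is then $E_n(\CL_R)\subseteq\CF_d^n\cap\CL_R=\CL_{R,n}$, together with $E_n|_{\CL_{R,n}}=\id$. (Alternatively one can identify $\varphi\circ\phi_R|_{\CL_R}$ directly with the conditional expectation $E_R|_{\CL_R}$ already produced by Prop.~\ref{proposition:subfactor-diagram}.)

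\emph{The crux.} It remains to prove the invariances $\phi_F(\CL_R)\subseteq\CL_R$ and $E_n(\CL_R)\subseteq\CL_R$, and this is where I expect the real difficulty to lie. By normality it suffices to evaluate these maps on words $w$ in the braid generators $\sigma_i:=\varphi^{i-1}(R)=\rho_R(b_i)$ and their inverses. For $\phi_F$: using the bimodule identity $\phi_F(\varphi(a)\,x\,\varphi(b))=a\,\phi_F(x)\,b$ and $\sigma_{i+1}=\varphi(\sigma_i)$, peel every $\sigma_i^{\pm 1}$ with $i\geq 2$ off the two ends of $w$ — each step multiplies by $\sigma_{i-1}^{\pm 1}\in\CL_R$ — until one is left with $\phi_F$ of a core $R^{e_1}w_1R^{e_2}w_2\cdots w_{k-1}R^{e_k}$ with $w_j\in W^*(\sigma_2,\sigma_3,\dots)=\varphi(\CL_R)$. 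Writing $R^{e}=\sum_\alpha a^{e}_\alpha\otimes b^{e}_\alpha\in\CF_d^1\otimes\varphi(\CF_d^1)$, moving the first-leg factors $a^{e}_\alpha$ (which commute with everything supported beyond the first tensor slot) to the left, and using $\phi_F=\varphi^{-1}\circ(\tau\otimes\id)$ together with $\phi_F|_{\CF_d^1}=\tau$, one obtains an explicit finite sum
\[
\phi_F(\text{core})=\sum_{\alpha_1,\dots,\alpha_k}\tau\!\big(a^{e_1}_{\alpha_1}\cdots a^{e_k}_{\alpha_k}\big)\;\tilde b_{\alpha_1}\,\varphi^{-1}(w_1)\,\tilde b_{\alpha_2}\cdots\tilde b_{\alpha_k}
\]
with $\tilde b_{\alpha_j}=\varphi^{-1}(b^{e_j}_{\alpha_j})\in\CF_d^1$ and $\varphi^{-1}(w_j)\in\CL_R$. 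The remaining point is to see that such cyclic contractions of R-matrix blocks interleaved with braid words again lie in $\CL_R$: they are precisely normalized partial traces of elements $\rho_R(\beta)$, $\beta\in B_\infty$, and one must re-express these inside $\CL_R$ using the relations the $\sigma_i$ satisfy — this is the step in which the Yang-Baxter equation is genuinely used. The identical computation with $\tau\otimes\id$ replaced by $\id\otimes\tau$ (and peeling by $\CF_d^{n}$-bimodularity of $E_n$) handles $E_n(\CL_R)\subseteq\CL_R$. The formal parts — the reductions and the uniqueness of left inverses — are routine; the one real obstacle is this closure property of partial traces of braid words in $\CL_R$.
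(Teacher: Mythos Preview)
Your reduction is correct: both squares follow from the invariances $E_n(\CL_R)\subseteq\CL_R$ and $\phi_F(\CL_R)\subseteq\CL_R$, and the uniqueness-of-left-inverse argument for deducing $\phi_R|_{\CL_R}=\phi_F|_{\CL_R}$ from the latter is fine. The gap is exactly where you locate it: the ``crux'' paragraph does not prove either invariance. After peeling, your expression for $\phi_F(\text{core})$ is a sum of products $\tilde b_{\alpha_1}\varphi^{-1}(w_1)\tilde b_{\alpha_2}\cdots$ with $\tilde b_{\alpha_j}\in\CF_d^1$; these individual factors are \emph{not} in $\CL_R$, and there is no visible mechanism by which the Yang--Baxter relation forces the full sum to land in $\CL_R$. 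Saying ``one must re-express these inside $\CL_R$ using the relations the $\sigma_i$ satisfy'' is precisely the theorem being proved, restated at the level of tensor components. No direct combinatorial argument along these lines is apparent.

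The paper supplies exactly the missing idea, by two rather different arguments. For the left square there is a short proof that bypasses your computation: identify $\CL_{R,n}$ with the fixed-point algebra $\CL_R^{\la_{\varphi^n(R)}}$ (Prop.~\ref{prop:LRn}), note that $\CL_R$ is globally $\la_{\varphi^n(R)}$-invariant, so the $\tau$-preserving conditional expectation $H_{R,n}:\CN\to\CN^{\la_{\varphi^n(R)}}$ (an ergodic average) maps $\CL_R$ into $\CL_{R,n}\subset\CF_d^n$; a one-line trace computation using $\CF_d^n\subset\CN^{\la_{\varphi^n(R)}}$ then gives $H_{R,n}=E_n$ on $\CL_R$. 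For the right square the key insight is to relate $\phi_F$ back to $E_n$ via the ``type~3'' equivalence $R\sim FRF$: conjugation by the total-reversal permutation $\rho_F(\Delta_n)$ carries $\CL_{R,n}$ onto $\CL_{FRF,n}$, and the identity $\phi_F(x)=E_n(\rho_F(b_1\cdots b_n)^{-1}x\,\rho_F(b_1\cdots b_n))$ for $x\in\CF_d^{n+1}$, combined with the recursion $\Delta_{n+1}=b_1\cdots b_n\Delta_n$, reduces $\phi_F(\CL_{R,n+1})\subset\CL_{R,n}$ to the already-established $E_n(\CL_{FRF,n+1})\subset\CL_{FRF,n}$. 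This bootstrap --- a soft fixed-point argument for $E_n$, then a transfer to $\phi_F$ through the flip equivalence and fundamental braids --- is the substantive content your proposal is missing.
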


The proof splits naturally into two parts, one for each diagram. The proof of the first part (left diagram) is given below. The proof of the second part (right diagram) requires more work and is best done after more structure has been introduced. It is therefore postponed to Section~\ref{section:equivalence} (p.~\pageref{page:proofpart2}).

\medskip

\noindent{\em Proof (first half).} 
	Let $H_{R,n}$ denote the $\tau$-preserving conditional expectation of $\CN^{\la_{\varphi^n(R)}}\subset\CN$. As $\CL_R\subset\CN$ is invariant under $\la_{\varphi^n(R)}$ by Prop.~\ref{prop:LRn}, the map $H_{R,n}$ restricts to the $\tau$-preserving conditional expectation from $\CL_R$ onto $\CL^{\la_{\varphi^n(R)}}=\CL_{R,n}=\CF^n_d\cap\CL_R$.
    
    Given $x\in\CL_R$, we want to show that $H_{R,n}(x)$ coincides with $E_n(x)$. Indeed, both $H_{R,n}(x)$ and $E_n(x)$ lie in $\CF_d^n$, so we only have to show $\tau(yH_{R,n}(x))=\tau(yE_n(x))$ for all~$y\in\CF_d^n$. But $\CF_d^n$ is clearly contained in the fixed point algebra $\CN^{\la_{\varphi^n(R)}}$. Thus, for $x\in\CL_R$, $y\in\CF_d^n$, 
    \begin{align*}
        \tau(yH_{R,n}(x))
        &=
        \tau(H_{R,n}(yx))
        =
        \tau(yx)
        =
        \tau(E_n(yx))
        =
        \tau(yE_n(x)).
    \end{align*}
    This shows $E_n(x)=H_{R,n}(x)\in\CL_{R,n}$, which is equivalent to the left diagram being a commuting square.$\hfill\square$
    
    \medskip

So far, we have concentrated on the ``horizontal inclusions'' in \eqref{eq:thethreetowers} and not mentioned $\CL_R\subset\CN$, $\CL_R\subset\CM$. These ``vertical inclusions'' are closely connected to fixed points of $\la_R$ and will be discussed in Section~\ref{section:ergodicity}.

\section{Algebraic operations on $\CR$}\label{section:operations}

Although the structure of the set $\CR(d)$ of all R-matrices of dimension $d$ is not known, a number of symmetries of $\CR(d)$ are known. For example, $R\mapsto R^*$, $R\mapsto c\cdot R$, $c\in\T$, $R\mapsto(u\ot u)R(u\ot u)^*$, $u\in\CU(\CF_d^1)$, and $R\mapsto FRF$ with the flip $F\in\CR(d)$, are all bijections\footnote{The maps $R\mapsto(u\ot u)R(u\ot u)^*$ and $R\mapsto FRF$ will be discussed in more detail in Section~\ref{section:equivalence}.} $\CR(d)\to\CR(d)$.

However, it is often more interesting to consider algebraic operations that exist only on $\CR=\bigcup_{d}\CR(d)$ and do not preserve the spaces $\CR(d)$ of R-matrices of fixed dimension~$d$. In this section, we will discuss three such structures: A tensor product $R\boxtimes S$ (with $\dim(R\boxtimes S)=\dim R\cdot\dim S$), Wenzl's cabling powers $R^{(n)}$ (with $\dim(R^{(n)})=(\dim R)^n$), and a sum operation $R\boxplus S$ (with $\dim(R\boxplus S)=\dim R+\dim S$). 

On the level of R-matrices, all these operations are known. What is new in our approach is that we relate them to natural operations on the corresponding Yang-Baxter endomorphisms.

In the following, the dimension $d$ will be explicitly indicated in our notation, i.e.
we write $\CN_d$ for the infinite tensor product of matrix algebras $M_d$, and $\tau_d$, $\varphi_d$ for its canonical trace and shift, $F_d\in\CU(\CF_d^2)$ for the flip in dimension $d$, etc.

\subsection{Tensor products of R-matrices}\label{section:tensorproducts}

Let $R \in \CR(d)\subset\End(\Cl^d\ot\Cl^d)$, $\tilde R \in \CR(\tilde d)\subset\End(\Cl^{\tilde d}\ot\Cl^{\tilde d})$ be $R$-matrices. The tensor product of $R$, $\tilde R$ is defined as
\begin{align}
	R \boxtimes \tilde R 
	:= F_{23} (R \otimes \tilde R) F_{23} \in \End((\Cl^d\ot\Cl^{\tilde d})\ot(\Cl^d\ot\Cl^{\tilde d})),
\end{align}
where $F_{23}: \Cl^d\ot\Cl^{\tilde d}\ot\Cl^d\ot\Cl^{\tilde d}\to\Cl^d\ot\Cl^{d}\ot\Cl^{\tilde d}\ot\Cl^{\tilde d}$ is the flip unitary exchanging the two middle factors. Evidently $R\boxtimes\tilde R$ is a unitary R-matrix of dimension $d\tilde d$, i.e. $R\boxtimes\tilde R\in\CR(d\tilde d)$. We will refer to $R \boxtimes \tilde R$ as the {\it tensor product} of $R$ and $\tilde R$ (although it slightly differs from the actual tensor product $R\ot\tilde R$). It is also clear that $(R \boxtimes \tilde R)^* = R^* \boxtimes \tilde R^*$, and that if both $R$ and $\tilde R$ are involutive, then so is $R \boxtimes \tilde R$. 

From the point of view of the Cuntz algebras, we may consider $R \in \CF_d^2$, $S \in \CF_{\tilde d}^2$ and $R \boxtimes \tilde R \in \CF_{d\tilde d}^2$. The following discussion will allow us to get a precise relation between the associated subfactors.

\medskip

Let $\CO_d$ and $\CO_{\tilde d}$ be Cuntz algebras with canonical generators $S_i$, $1 \leq i \leq d$ and $\tilde S_j$, $1 \leq j \leq \tilde d$, respectively.
Namely, all the $S_i$'s and $\tilde S_j$'s are isometries such that $\sum_{i=1}^d S_i S_i^* = 1$, $\sum_{j=1}^{\tilde d} \tilde S_j \tilde S_j^* = 1$, and $\CO_d=C^*(S_1,\ldots,S_d)$, $\CO_{\tilde d} = C^*(\tilde S_1,\ldots, \tilde S_{\tilde d})$.
The tensor product $C^*$-algebra $\CO_d \otimes \CO_{\tilde d}$ is generated by the elements $S_i \otimes 1$ and $1 \otimes \tilde S_j$, $1 \leq i \leq d$, $1 \leq j \leq \tilde d$.\footnote{Since $\CO_d$ is nuclear there is no ambiguity on the choice of the cross-norm on the algebraic tensor product.}
In general, $\CO_d \otimes \CO_{\tilde d}$ is not a Cuntz algebra.\footnote{However, it is well known that $\CO_2 \otimes \CO_d \simeq \CO_2$, for all $d \geq 2$, 
although none of these isomorphisms has been concretely exhibited.}

Consider also the Cuntz algebra $\CO_{d\tilde d}$, with canonical generating isometries $U_{ij}$, $1 \leq i \leq d, 1 \leq j \leq \tilde d$ such that $\sum_{i,j} U_{ij} U_{ij}^* = 1$.
Since, for every $1 \leq i \leq d$ and $1 \leq j \leq \tilde d$, $S_i \otimes \tilde S_j$ is an isometry in $\CO_d \otimes \CO_{\tilde d}$ and, moreover, 
$\sum_{i,j} S_i \otimes \tilde S_j (S_i \otimes \tilde S_j)^* = \big(\sum_i S_i S_i^*\big) \otimes \big(\sum_j \tilde S_j \tilde S_j^* \big) = 1 \otimes 1$,
there is an injective $*$-homomorphism 
\begin{align}
	\iota_{d,\tilde d} : \CO_{d\tilde d} \to \CO_d \otimes \CO_{\tilde d}
\end{align}
such that $\iota_{d,\tilde d}(U_{ij})=S_i \otimes \tilde S_j$.

In order to simplify the notation, in the sequel we will often drop the symbol $\iota_{d,\tilde d}$ and identify accordingly $U_{ij}$ with $S_i \otimes \tilde S_j$. All in all, we have thus identified a copy of $\CO_{d\tilde d}$ inside $\CO_d \otimes \CO_{\tilde d}$, as the $C^*$-subalgebra of the tensor product generated by the isometries $S_i \otimes \tilde S_j$. Moreover, it is not difficult to see that $\CO_{d\tilde d} = (\CO_d \otimes \CO_{\tilde d})^\beta$, where $\beta$ denotes the $2\pi$-periodic ``twisted'' $\Rl$-action $\beta^t := \alpha_d^t\otimes\alpha_{\tilde d}^{-t}=\lambda_{e^{it} 1_d}\ot\lambda_{e^{-it} 1_{\tilde d}}$\cite{Chambers:2014,Morgan:2017}, and there exists a faithful conditional expectation $\CO_d \otimes \CO_{\tilde d} \to \CO_{d\tilde d}$ obtained by averaging~$\beta$. 

Under the identification of $\CO_{d\tilde d}$ with $(\CO_d \otimes \CO_{\tilde d})^\beta$, there are coherent identifications of $\CF_{d\tilde d}^n$ with $\CF_d^n \otimes \CF_{\tilde d}^n$, $n\in\Nl$, such that
\begin{align*}
	& U_{i_1 j_1}U_{i_2 j_2} \cdots U_{i_n j_n} U^*_{i'_n j'_n} \cdots U^*_{i'_2 j'_2} U^*_{i'_1 j'_1} 
	\\
	&=
	(S_{i_1} \otimes \tilde S_{j_1})(S_{i_2} \otimes \tilde S_{j_2}) \cdots (S_{i_n} \otimes \tilde S_{j_n})(S_{i'_n} \otimes \tilde S_{j'_n})^* \cdots (S_{i'_2} \otimes \tilde S_{j'_2})^*(S_{i'_1} \otimes \tilde S_{j'_1})^* 
	\\
	&= 
	(S_{i_1}S_{i_2} \cdots S_{i_n}S^*_{i'_n} \cdots S^*_{i'_2}S^*_{i'_1}) \otimes 
	(\tilde S_{j_1}\tilde S_{j_2} \cdots \tilde S_{j_n}\tilde S^*_{j'_n} \cdots \tilde S^*_{j'_2}\tilde S^*_{j'_1}), 
\end{align*}
and thus of $\CF_{d\tilde d}=\CO_{d\tilde d}^{\alpha_{d\tilde d}}$ with $\CF_d \otimes \CF_{\tilde d} = \CO_d^{\alpha_d} \otimes \CO_{\tilde d}^{\alpha_{\tilde d}}$.

For the following lemma, the Yang-Baxter equation is not needed.
\begin{lemma}\leavevmode
	\begin{enumerate}
	 \item\label{item:otimes-and-restriction} Let $R\in\CU(\CO_d)$ and $\tilde R\in\CU(\CO_{\tilde d})$. Then $\la_R\ot\la_{\tilde R}\in\End(\CO_d\ot\CO_{\tilde d})$ restricts to an endomorphism of $\CO_{d\tilde d}$ if and only if $R\in\CF_d$ and $\tilde R\in\CF_{\tilde d}$.
	 \item\label{item:boxtimes-and-restriction} Let $R\in\CU(\CF_d^2)$, $\tilde R\in\CU(\CF_{\tilde d}^2)$. Then $\iota_{d,\tilde d}(R\boxtimes\tilde R)=R\otimes\tilde R$, and 
	\begin{align}
		(\la_R\otimes\la_{\tilde R})|_{\CO_{d\tilde d}}
		=
		\la_{R\boxtimes\tilde R}
		.
	\end{align}
	\end{enumerate}
\end{lemma}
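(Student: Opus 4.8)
I would prove part \ref{item:otimes-and-restriction} first, since it isolates the only delicate point, and then part \ref{item:boxtimes-and-restriction} follows by a short direct computation. For \ref{item:otimes-and-restriction}, recall that under the identification $\iota_{d,\tilde d}$, the subalgebra $\CO_{d\tilde d}\subset\CO_d\ot\CO_{\tilde d}$ is generated by the isometries $U_{ij}=S_i\ot\tilde S_j$. An endomorphism $\Lambda:=\la_R\ot\la_{\tilde R}$ restricts to an endomorphism of $\CO_{d\tilde d}$ if and only if $\Lambda(U_{ij})\in\CO_{d\tilde d}$ for all $i,j$, i.e. if and only if $(R S_i)\ot(\tilde R\tilde S_j)\in\CO_{d\tilde d}$. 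Since $\CO_{d\tilde d}=(\CO_d\ot\CO_{\tilde d})^\beta$ with $\beta^t=\alpha_d^t\ot\alpha_{\tilde d}^{-t}$, membership in $\CO_{d\tilde d}$ is equivalent to $\beta$-invariance. Writing $R=\sum_{k\in\Zl}R_k$ and $\tilde R=\sum_{l\in\Zl}\tilde R_l$ for the decompositions into $\alpha_d$- resp.\ $\alpha_{\tilde d}$-spectral components (so $\alpha_d^t(R_k)=e^{ikt}R_k$, and $S_i$ has gauge degree $+1$), one computes $\beta^t\big((RS_i)\ot(\tilde R\tilde S_j)\big)=\sum_{k,l}e^{i(k+1)t}e^{-i(l+1)t}(R_k S_i)\ot(\tilde R_l\tilde S_j)$, and $\beta$-invariance for all $i,j$ forces $R_k=0$ and $\tilde R_l=0$ whenever $k\ne 0$, $l\ne 0$ (the various $R_kS_i$, being linearly independent pieces sitting in different spectral subspaces of $\CO_d$, cannot cancel against each other). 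Hence $R=R_0\in\CO_d^{\alpha_d}=\CF_d$ and $\tilde R=\tilde R_0\in\CF_{\tilde d}$. Conversely, if $R\in\CF_d$ and $\tilde R\in\CF_{\tilde d}$, then $\la_R(\CF_d)\subset\CF_d$, $\la_{\tilde R}(\CF_{\tilde d})\subset\CF_{\tilde d}$, so $\Lambda(\CF_d\ot\CF_{\tilde d})\subset\CF_d\ot\CF_{\tilde d}$, and since $\Lambda$ intertwines the gauge actions in the evident way it commutes with $\beta$ and therefore preserves the fixed-point algebra $\CO_{d\tilde d}$; alternatively, one checks directly that $\Lambda(U_{ij})=(RS_i)\ot(\tilde R\tilde S_j)=(R\ot\tilde R)(S_i\ot\tilde S_j)=(R\ot\tilde R)\,U_{ij}\in\CO_{d\tilde d}$ because $R\ot\tilde R\in\CF_d^2\ot\CF_{\tilde d}^2\subset\CO_{d\tilde d}$, which is precisely the content needed below.

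For part \ref{item:boxtimes-and-restriction}, I would first identify $\iota_{d,\tilde d}(R\boxtimes\tilde R)$ with $R\ot\tilde R$. Unwinding the definition $R\boxtimes\tilde R=F_{23}(R\ot\tilde R)F_{23}$ acting on $(\Cl^d\ot\Cl^{\tilde d})^{\ot 2}$, and writing $R\boxtimes\tilde R=\sum (R\boxtimes\tilde R)^{(ij)(i'j')}_{(kl)(k'l')}\,U_{ij}U_{i'j'}U_{k'l'}^*U_{kl}^*$ in the Cuntz picture associated with the generators $U_{ij}$, one sees that conjugation by $F_{23}$ exactly undoes the reshuffling of tensor legs caused by interpreting $U_{ij}U_{i'j'}$ as $S_iS_{i'}\ot\tilde S_j\tilde S_{j'}$ rather than as the naive tensor-leg order; the upshot is the clean identity $\iota_{d,\tilde d}(R\boxtimes\tilde R)=R\ot\tilde R\in\CF_d^2\ot\CF_{\tilde d}^2$. (Concretely: with $R=\sum R^{ii'}_{kk'}S_iS_{i'}S_{k'}^*S_k^*$ and similarly for $\tilde R$, the element $R\ot\tilde R$ expands as $\sum R^{ii'}_{kk'}\tilde R^{jj'}_{ll'}\,U_{ij}U_{i'j'}U_{k'l'}^*U_{kl}^*$, and matching coefficients against the matrix entries of $F_{23}(R\ot\tilde R)F_{23}$ in the basis $\{e_i\ot e_j\}$ of $\Cl^d\ot\Cl^{\tilde d}$ gives the claim — this is the ``tedious but routine'' index bookkeeping.)

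Granted this identification, the equality of endomorphisms is immediate: both $(\la_R\ot\la_{\tilde R})|_{\CO_{d\tilde d}}$ and $\la_{R\boxtimes\tilde R}$ are endomorphisms of $\CO_{d\tilde d}$ (for the left-hand side, use part \ref{item:otimes-and-restriction} together with $R\in\CF_d^2\subset\CF_d$, $\tilde R\in\CF_{\tilde d}^2\subset\CF_{\tilde d}$; for the right-hand side, $R\boxtimes\tilde R\in\CF_{d\tilde d}^2$ defines a Yang-Baxter-type endomorphism, though the YBE is not needed here), and an endomorphism of $\CO_{d\tilde d}$ is determined by its values on the generators $U_{ij}$. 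On generators one computes
\begin{align*}
	(\la_R\ot\la_{\tilde R})(U_{ij})
	=(\la_R\ot\la_{\tilde R})(S_i\ot\tilde S_j)
	=(RS_i)\ot(\tilde R\tilde S_j)
	=(R\ot\tilde R)(S_i\ot\tilde S_j)
	=\iota_{d,\tilde d}(R\boxtimes\tilde R)\,U_{ij}
	=\la_{R\boxtimes\tilde R}(U_{ij}),
\end{align*}
where the last step is the definition of $\la_{R\boxtimes\tilde R}$ on $\CO_{d\tilde d}$. Since the two endomorphisms agree on a generating set and both are $*$-homomorphisms, they agree on all of $\CO_{d\tilde d}$.

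\textbf{Main obstacle.} The conceptually substantive point is part \ref{item:otimes-and-restriction}, specifically the ``only if'' direction: one must rule out the possibility that nontrivial gauge-homogeneous components of $R$ and $\tilde R$ conspire so that $\Lambda(U_{ij})$ lands in $\CO_{d\tilde d}$ despite $R\notin\CF_d$. The clean way to dispatch this is the spectral/gauge-grading argument above, using that $\beta$-invariance of $(RS_i)\ot(\tilde R\tilde S_j)$ separately forces each bi-homogeneous piece $(R_kS_i)\ot(\tilde R_l\tilde S_j)$ to vanish for $(k,l)\ne(0,0)$ — and then noting that $R_kS_i\ne 0$ for some $i$ whenever $R_k\ne 0$ (because $\sum_i S_iS_i^*=1$ forces $\sum_i (R_kS_i)(R_kS_i)^*=R_kR_k^*$, so $R_k=0$ iff all $R_kS_i=0$). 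The identification $\iota_{d,\tilde d}(R\boxtimes\tilde R)=R\ot\tilde R$ in part \ref{item:boxtimes-and-restriction} is purely bookkeeping (tracking how $F_{23}$ compensates the tensor-leg permutation inherent in the isomorphism $\CF_{d\tilde d}^2\cong\CF_d^2\ot\CF_{\tilde d}^2$), and everything else is formal.
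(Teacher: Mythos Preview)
Your treatment of part~\ref{item:boxtimes-and-restriction} is correct and essentially identical to the paper's: both compute $\iota_{d,\tilde d}(R\boxtimes\tilde R)=R\ot\tilde R$ by matching matrix coefficients, and then check agreement on the generators $U_{ij}$.

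However, there is a genuine gap in your argument for the ``only if'' direction of part~\ref{item:otimes-and-restriction}. You claim that $\beta$-invariance of $(RS_i)\ot(\tilde R\tilde S_j)$ forces $R_k=0$ for every $k\neq 0$ (and likewise for $\tilde R$), because the bi-homogeneous pieces $(R_kS_i)\ot(\tilde R_l\tilde S_j)$ cannot cancel. But $\beta^t$ scales such a piece by $e^{i(k-l)t}$, so $\beta$-invariance only kills the terms with $k\neq l$; the diagonal terms with $k=l$ are $\beta$-invariant regardless of whether $k=0$. Concretely, if $R\in\CO_d^{(n)}$ and $\tilde R\in\CO_{\tilde d}^{(n)}$ are both homogeneous of the same degree $n\neq 0$, then $R\ot\tilde R$ is $\beta$-invariant and $(\la_R\ot\la_{\tilde R})(U_{ij})\in\CO_{d\tilde d}$, yet $R\notin\CF_d$. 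Your spectral-subspace argument, carried out correctly, only yields that $R$ and $\tilde R$ must each be homogeneous of some common degree $n$.

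What is missing is a separate argument that a \emph{unitary} in $\CO_d$ cannot lie in $\CO_d^{(n)}$ for $n\neq 0$. The paper supplies this via the KMS property of $\om$ with respect to the modular group $\sigma_t=\la_{d^{-it}1}$: for $R\in\CO_d^{(n)}$ unitary one gets $1=\om(RR^*)=\om(R^*\sigma_i(R))=d^{\,n}\om(R^*R)=d^{\,n}$, forcing $n=0$. You should insert this step (or an equivalent one, e.g.\ observing that $\CO_d^{(n)}$ for $n>0$ consists of elements $aS_1^n$ with $a\in\CF_d$, none of which can be unitary since $S_1^n{S_1^*}^n$ is a proper projection) after concluding homogeneity.
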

\begin{proof}
	\ref{item:otimes-and-restriction} On generators, the endomorphism $\la_R\ot\la_{\tilde R}\in\End(\CO_d\ot\CO_{\tilde d})$ acts according to $(\lambda_R \otimes \lambda_{\tilde R})(S_i \otimes \tilde S_j)= (R \otimes \tilde R) (S_i \otimes \tilde S_j)$ for all $i$, $j$. Thus $\lambda_R \otimes \lambda_{\tilde R}$ restricts to~$\CO_{d\tilde d}$, that is $(\lambda_R \otimes \lambda_{\tilde R})(\CO_{d\tilde d}) \subset \CO_{d\tilde d}$, precisely when $R \otimes \tilde R \in \CO_{d\tilde d}$, i.e. precisely when $\alpha_d^t(R) \otimes \alpha_{\tilde d}^{-t}(\tilde R) = R \otimes \tilde R$ for all $t \in \Rl$. This latter condition is satisfied if and only if both $R$ and $\tilde R$ are eigenvectors for $\alpha_d$ and $\alpha_{\tilde d}$, respectively, i.e. $R\in\CO_d^{(n)}$, ${\tilde R}\in\CO_{\tilde d}^{(n)}$ for some $n\in\Zl$. But this is easily seen to be in conflict with the KMS condition for $\om$ if $n\neq0$. Thus $R\in\CO_d^{(0)}=\CF_d$, $\tilde R\in\CO_{\tilde d}^{(0)}=\CF_{\tilde d}$.
	
	\ref{item:boxtimes-and-restriction} Note that the matrix elements of $R\boxtimes\tilde R$ are $(R\boxtimes\tilde R)^{(\alpha i)(\beta j)}_{(\gamma k)(\delta l)}=R^{\alpha\beta}_{\gamma\delta}\tilde R^{ij}_{kl}$, where $\alpha,\beta,\gamma,\delta\in\{1,\ldots,d\}$ and $i,j,k,l\in\{1,\ldots,\tilde d\}$. Thus
	\begin{align*}
		R\boxtimes\tilde R
		&=
		\sum R^{\alpha\beta}_{\gamma\delta}\tilde R^{ij}_{kl} U_{\alpha i}U_{\beta j}U_{\delta l}^*U_{\gamma k}^*
		\\
		&\stackrel{\iota_{d\tilde d}}{\longmapsto}
		\sum R^{\alpha\beta}_{\gamma\delta}\tilde R^{ij}_{kl} S_\alpha S_\beta S_\delta^* S_\gamma^* \otimes S_iS_jS_l^*S_k^*
		=
		R\otimes\tilde R,
	\end{align*}
	and the calculation in \ref{item:otimes-and-restriction} shows $(\la_R\otimes\la_{\tilde R})|_{\CO_{d\tilde d}}=\la_{R\boxtimes\tilde R}$.
\end{proof}

Let us look at two special cases, the identity $1_d\in\CO_d$ and the flip $F_d\in\CO_d$. Then $1_d\boxtimes1_{\tilde d}=1_{d\tilde d}$ and $F_d\boxtimes F_{\tilde d}=F_{d\tilde d}$. For the canonical $2\pi$-periodic actions of $\mathbb R$, this implies that $\lambda_{e^{it} 1_d} \otimes \lambda_{e^{it} 1_{\tilde d}} \in \Aut(\CO_d \otimes \CO_{\tilde d})$ restricts to $\lambda_{e^{2it} 1_{d\tilde d}}$ on $\CO_{d\tilde d}$, and for the canonical shifts, this implies that $\varphi_d\ot\varphi_{\tilde d}$ restricts to $\varphi_{d\tilde d}$. Indeed, for all $i$ and $j$,
\begin{align*}
	\varphi_{d\tilde d} (S_i \otimes \tilde S_j) 
	&= 
	\sum_{i',j'} (S_{i'} \otimes \tilde S_{j'}) (S_i \otimes \tilde S_j) (S_{i'} \otimes \tilde S_{j'})^* 
	\\
	&= 
	\big(\sum_{i'} S_{i'}S_i S_{i'}^* \big) \otimes \big(\sum_{j'} \tilde S_{j'}\tilde S_j \tilde S_{j'}^* \big) 
	= 
	\varphi_d(S_i) \otimes \varphi_{\tilde d}(\tilde S_j).
\end{align*}
Notice that the index of $\varphi_d(\CN_d)\subset\CN_d$ is $d^2$, so that in this example we see immediately that the index of the endomorphism associated to the tensor product $F_d\boxtimes F_{\tilde d}$ is the product of the indices of the endomorphisms given by $F_d$ and $F_{\tilde d}$.

This is an instance of a general fact. Since $\CF_{d\tilde d}$ is identified with $\CF_d \otimes \CF_{\tilde d}$, the same holds on the level of the weak closures, and
\begin{align}
	\lambda_{R \boxtimes \tilde R}(\CN_{d\tilde d})
	=
	(\lambda_{R} \otimes \lambda_{\tilde R})(\CN_d \otimes \CN_{\tilde d})
	= 
	\lambda_R(\CN_d) \otimes \lambda_{\tilde R}(\CN_{\tilde d})
	.
\end{align}
From here we readily get the multiplicativity of the Jones index under the tensor product.
\begin{theorem}
	Let $R \in\CU(\CF_d^2)$, $\tilde R\in\CU(\CF_{\tilde d}^2)$. Then the Jones indices of the type II$_1$ subfactors associated to $R$, $\tilde R$ and $R \boxtimes \tilde R$ are related by
	\begin{align}
		[\CN_{d\tilde d}:\lambda_{R \boxtimes \tilde R}(\CN_{d\tilde d})]
		= 
		[\CN_d:\lambda_R(\CN_d)] \cdot [\CN_{\tilde d}: \lambda_{\tilde R}(\CN_{\tilde d})] \ . 
	\end{align}
\end{theorem}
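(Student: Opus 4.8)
The plan is to combine the identification $\CF_{d\tilde d}\cong\CF_d\otimes\CF_{\tilde d}$ (already established) with the tensor-product behaviour of $\omega$-preserving conditional expectations, and then appeal to Kosaki's multiplicativity of the index under tensor products. Concretely: from Lemma~\ref{item:boxtimes-and-restriction} we have $\la_{R\boxtimes\tilde R}=(\la_R\otimes\la_{\tilde R})|_{\CO_{d\tilde d}}$, and passing to weak closures gives the displayed identity $\la_{R\boxtimes\tilde R}(\CN_{d\tilde d})=\la_R(\CN_d)\otimes\la_{\tilde R}(\CN_{\tilde d})$ inside $\CN_{d\tilde d}=\CN_d\,\overline{\otimes}\,\CN_{\tilde d}$. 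So the subfactor associated to $R\boxtimes\tilde R$ is literally the tensor product of the two subfactors associated to $R$ and $\tilde R$.

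First I would make precise that the $\tau_{d\tilde d}$-preserving conditional expectation $\CN_{d\tilde d}\to\la_{R\boxtimes\tilde R}(\CN_{d\tilde d})$ is the tensor product $E_R\otimes E_{\tilde R}$ of the two $\tau$-preserving conditional expectations onto $\la_R(\CN_d)$ and $\la_{\tilde R}(\CN_{\tilde d})$: this follows because $\tau_{d\tilde d}=\tau_d\otimes\tau_{\tilde d}$ under the identification, $E_R\otimes E_{\tilde R}$ is a conditional expectation onto the tensor-product subfactor, and it preserves $\tau_d\otimes\tau_{\tilde d}$, so uniqueness of the trace-preserving conditional expectation for a finite-index II$_1$ subfactor forces the identification. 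Then I would invoke the fact that the Jones index (equivalently, the index of the trace-preserving expectation for II$_1$ factors) is multiplicative under tensor products, i.e. $\mathrm{Ind}(E_R\otimes E_{\tilde R})=\mathrm{Ind}(E_R)\cdot\mathrm{Ind}(E_{\tilde R})$; this is a standard result of Kosaki (and, for the Jones index of hyperfinite II$_1$ subfactors, is classical). Combining these two facts yields
\begin{align*}
	[\CN_{d\tilde d}:\la_{R\boxtimes\tilde R}(\CN_{d\tilde d})]
	=\mathrm{Ind}(E_R\otimes E_{\tilde R})
	=\mathrm{Ind}(E_R)\cdot\mathrm{Ind}(E_{\tilde R})
	=[\CN_d:\la_R(\CN_d)]\cdot[\CN_{\tilde d}:\la_{\tilde R}(\CN_{\tilde d})].
\end{align*}

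The main obstacle, such as it is, is bookkeeping rather than conceptual: one must check carefully that the identification of $\CF_{d\tilde d}$ with $\CF_d\otimes\CF_{\tilde d}$ intertwines all the relevant structure (traces, shifts, and crucially the $\tau$-preserving conditional expectations onto the images of the endomorphisms), so that the weak-closure statement $\la_{R\boxtimes\tilde R}(\CN_{d\tilde d})=\la_R(\CN_d)\,\overline{\otimes}\,\la_{\tilde R}(\CN_{\tilde d})$ genuinely holds as von Neumann subalgebras with matching normal faithful trace. Once that is in place, the multiplicativity of the index is a cited result and the theorem is immediate. Note that for $R,\tilde R\in\CR(d),\CR(\tilde d)$ actual R-matrices one additionally has finiteness of all indices by \eqref{eq:basic-index-bounds}, so no issues of infinite index arise; but the statement as phrased only needs $R\in\CU(\CF_d^2)$, $\tilde R\in\CU(\CF_{\tilde d}^2)$, and the argument above uses nothing more.
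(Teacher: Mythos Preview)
Your proposal is correct and follows essentially the same route as the paper: both arguments reduce to the identity $\la_{R\boxtimes\tilde R}(\CN_{d\tilde d})=\la_R(\CN_d)\,\overline{\otimes}\,\la_{\tilde R}(\CN_{\tilde d})$ inside $\CN_{d\tilde d}=\CN_d\,\overline{\otimes}\,\CN_{\tilde d}$ (established just before the theorem), and then invoke the standard multiplicativity of the Jones index under tensor products of II$_1$ subfactors. The paper is terser --- it simply says ``from here we readily get the multiplicativity'' --- whereas you spell out the identification of the trace-preserving conditional expectation with $E_R\otimes E_{\tilde R}$ and cite Kosaki; but there is no substantive difference in strategy.
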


Since this result applies in particular to R-matrices, we see that the subset of the positive real line ${\mathbb R}_+$ of all Jones indices arising from unitary solutions of the YBE (in any dimension) is closed under taking ordinary products.

\medskip

Concerning the relative commutants associated to the tensor product, we record the following result.
\begin{proposition} 
	Let $R\in\CR(d)$, $\tilde R\in\CR(\tilde d)$. Then 
	\begin{align}
        \CM_{R,1} \otimes \CM_{\tilde R,1} 
        \subseteq 
        \CM_{R \boxtimes \tilde R,1} 
        \subseteq 
        \CN_{R \boxtimes \tilde R,1} 
        = 
        \CN_{R,1} \otimes \CN_{\tilde R,1}  \ . 
	\end{align}
\end{proposition}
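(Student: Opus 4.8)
\emph{Proof proposal.} The asserted chain consists of three links of increasing difficulty, and I would establish them in that order. The middle inclusion $\CM_{R\boxtimes\tilde R,1}\subseteq\CN_{R\boxtimes\tilde R,1}$ has nothing to do with tensor products: it is simply the general inclusion \eqref{eq:MR1inNR1} (equivalently \eqref{eq:inclusions-of-relative-commutants}) applied to the single R-matrix $R\boxtimes\tilde R\in\CR(d\tilde d)$, so there is nothing to do here.

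For the first inclusion I would work entirely inside the finite-dimensional algebra $\CF_{d\tilde d}^1$, which this section has identified with $\CF_d^1\otimes\CF_{\tilde d}^1$. Under this identification (and $\iota_{d,\tilde d}$) one has $R\boxtimes\tilde R=R\otimes\tilde R$, and, as was checked above on the generators $S_i\otimes\tilde S_j$, the restriction of $\varphi_{d\tilde d}$ to $\CF_{d\tilde d}^1$ is the restriction of $\varphi_d\otimes\varphi_{\tilde d}$ to $\CF_d^1\otimes\CF_{\tilde d}^1$. Using the explicit description $\CM_{R,1}=\{x\in\CF_d^1:\varphi_d(x)=R^*xR\}$ from \eqref{eq:MR1}, and the analogous ones for $\tilde R$ and, in dimension $d\tilde d$, for $R\boxtimes\tilde R$, one then checks for $x\in\CM_{R,1}$, $y\in\CM_{\tilde R,1}$ that
\begin{align*}
 (R\boxtimes\tilde R)^*(x\otimes y)(R\boxtimes\tilde R)
 &=(R^*xR)\otimes(\tilde R^*y\tilde R)\\
 &=\varphi_d(x)\otimes\varphi_{\tilde d}(y)
 =\varphi_{d\tilde d}(x\otimes y),
\end{align*}
so $x\otimes y\in\CM_{R\boxtimes\tilde R,1}$. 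Since this defining condition is linear in its argument, the whole algebraic tensor product $\CM_{R,1}\otimes\CM_{\tilde R,1}$ lies in $\CM_{R\boxtimes\tilde R,1}$.

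The last equality $\CN_{R\boxtimes\tilde R,1}=\CN_{R,1}\otimes\CN_{\tilde R,1}$ I would prove at the von Neumann algebra level. By definition $\CN_{R\boxtimes\tilde R,1}=\la_{R\boxtimes\tilde R}(\CN_{d\tilde d})'\cap\CN_{d\tilde d}$; since this section has identified $\CN_{d\tilde d}$ with $\CN_d\otimes\CN_{\tilde d}$ (acting standardly on the corresponding tensor product of GNS spaces) and established $\la_{R\boxtimes\tilde R}(\CN_{d\tilde d})=\la_R(\CN_d)\otimes\la_{\tilde R}(\CN_{\tilde d})$, this becomes
\[
 \CN_{R\boxtimes\tilde R,1}
 =\big(\la_R(\CN_d)\otimes\la_{\tilde R}(\CN_{\tilde d})\big)'\cap\big(\CN_d\otimes\CN_{\tilde d}\big).
\]
The claim then follows from the identity $(A_1\otimes A_2)'\cap(B_1\otimes B_2)=(A_1'\cap B_1)\otimes(A_2'\cap B_2)$ valid for inclusions $A_i\subseteq B_i$ of von Neumann algebras. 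This is the only point requiring genuine care, and it is where I expect the main (though entirely standard) work to lie: it is obtained from Tomita's commutation theorem $(A_1\otimes A_2)'=A_1'\otimes A_2'$ together with the elementary distributivity $(M_1\otimes M_2)\vee(N_1\otimes N_2)=(M_1\vee N_1)\otimes(M_2\vee N_2)$ of spatial tensor products over joins, by writing the intersection as $(C'\vee D)'$ with $C=\la_R(\CN_d)'\otimes\la_{\tilde R}(\CN_{\tilde d})'$ and $D=\CN_d'\otimes\CN_{\tilde d}'$, so that $C'\vee D=(\la_R(\CN_d)\vee\CN_d')\otimes(\la_{\tilde R}(\CN_{\tilde d})\vee\CN_{\tilde d}')$ and one reads off the stated relative commutant after taking commutants. (The reverse inclusion $\CN_{R,1}\otimes\CN_{\tilde R,1}\subseteq\CN_{R\boxtimes\tilde R,1}$ on its own is elementary, since each tensor leg commutes with the corresponding leg of $\la_{R\boxtimes\tilde R}(\CN_{d\tilde d})=\la_R(\CN_d)\otimes\la_{\tilde R}(\CN_{\tilde d})$; only the opposite inclusion really uses the commutation theorem.)
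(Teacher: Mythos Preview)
Your proof is correct and follows essentially the same route as the paper's. The only minor difference is in the last equality: where you take the relative commutant inside the full von Neumann tensor product $\CN_d\otimes\CN_{\tilde d}$ and invoke the general identity $(A_1\otimes A_2)'\cap(B_1\otimes B_2)=(A_1'\cap B_1)\otimes(A_2'\cap B_2)$ directly, the paper first uses the already-established containment $\CN_{R\boxtimes\tilde R,1}\subset\CF_{d\tilde d}^1=\CF_d^1\otimes\CF_{\tilde d}^1$ to restrict to a finite-dimensional ambient algebra before applying the tensor commutation theorem. Both arguments rest on Tomita's commutation theorem, so this is a cosmetic rather than a substantive difference.
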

\begin{proof}
	On the one hand,
	\begin{align*}
		\CM_{R,1} \otimes \CM_{\tilde R,1} 
		&= 
		\{ x \in \CF_d^1 \ : \ \lambda_{R^*}(x) = \varphi_d(x) \} 
		\otimes 
		\{ y \in \CF_{\tilde d}^1 \ : \ \lambda_{\tilde R^*}(y) = \varphi_{\tilde d}(y) \}
		\\
		& \subseteq  
		\{ T \in \CF_d^1 \otimes \CF_{\tilde d}^1 \ : \ (\lambda_{R^*} \otimes \lambda_{\tilde R^*})(T) = (\varphi_d \otimes \varphi_{\tilde d})(T)\}
		\\
		&= 
		\{T \in \CF_{d\tilde d}^1 \ : \ \lambda_{(R \boxtimes \tilde R)^*}(T) = \varphi_{d\tilde d}(T)\}
		\\
		&= 
		\CM_{R \boxtimes \tilde R,1}. 
	\end{align*}
	On the other hand,
	\begin{align*}
		\CM_{R \boxtimes \tilde R,1}
		&\subseteq 
		\CN_{R \boxtimes \tilde R,1}
		= 
		\{T \in \CF_{d\tilde d}^1 \ : \ [T, \lambda_{R \boxtimes \tilde R}(x)] = 0, \ x \in \CF_{d\tilde d}\ \} 
		\\
		&= 
		\{T \in \CF_d^1 \otimes \CF_{\tilde d}^1 \ : \ [T,(\lambda_R\otimes \lambda_{\tilde R})(x)]=0, \ x \in \CF_d \otimes \CF_{\tilde d}\} 
		\\
		&= 
		\big(\lambda_R(\CF_d) \otimes \lambda_{\tilde R}(\CF_{\tilde d})\big)' \cap \big(\CF_d^1 \otimes \CF_{\tilde d}^1\big) 
		\\
		& 
		= \big(\lambda_R(\CN_d)' \otimes \lambda_{\tilde R}(\CN_{\tilde d})' \big) \cap (\CF_d^1 \otimes \CF_{\tilde d}^1) 
		\\
		& 
		= 
		\CN_{R,1} \otimes \CN_{\tilde R,1}.
\end{align*}
\end{proof}

\subsection{Cabling powers of R-matrices}

The second algebraic operation on $\CR$ that we want to discuss are cabling powers \cite{RehrenSchroer:1989,Wenzl:1990}. Given $d,n\in\Nl$, we define ``cabling maps'' between type II$_1$-factors, $c_n:\CN_d\to\CN_{d^n}$, such that $c_n(\bigotimes_{i=1}^{nm} M_d)=\bigotimes_{i=1}^m M_{d^n}$ for all $m \geq 1$, by linear and weakly continuous extension from algebraic tensor products,
\begin{align}
 c_n(\bigotimes_{i=1}^{nm}x_i)
 :=
 (\bigotimes_{i=1}^n x_i)\otimes (\bigotimes_{i=n+1}^{2n} x_i)
 \otimes\ldots\otimes 
  (\bigotimes_{i=(m-1)n+1}^{nm} x_i)\,,\qquad x_i\in M_d.
\end{align}
It follows that $c_n$ is an isomorphism with the properties 
\begin{align*}
 c_n(1)=1
 \,\qquad 
 \tau_{d^n}\circ c_n=\tau_d\,\qquad
 \varphi_{d^n}\circ c_n=c_n\circ\varphi_d^n,\qquad c_n(\CF_d^{kn})=\CF_{d^n}^k,\;\,k\in\Nl.
\end{align*}

To define the $n$-th cabling power of $R\in\CR(d)$, we also introduce
\begin{align*}
 {}_nR_n
 &:=({}_nR)_n
 \\
 &=
 {}_nR\cdots\varphi^{n-1}({}_nR)
 \\
 &=
 \varphi^{n-1}(R)\cdots R
 \cdot
 \varphi^{n}(R)\cdots \varphi(R)
 \cdots
 \varphi^{2n-2}(R)\cdots \varphi^{n-1}(R)
 \\
 &=
 {}_n(R_n).
\end{align*}
Note that ${}_nR_n$ is a unitary in $\CF_d^{2n}$ which satisfies $({}_nR_n)^*={}_n(R^*)_n$. For low $n$, we have ${}_1R_1=R$ and ${}_2R_2=\varphi(R)R\varphi^2(R)\varphi(R)$. A graphical illustration of ${}_3R_3$ is given in Fig.~\ref{figure:cabling}.

\begin{figure}[h!]
	\includegraphics[width=80mm]{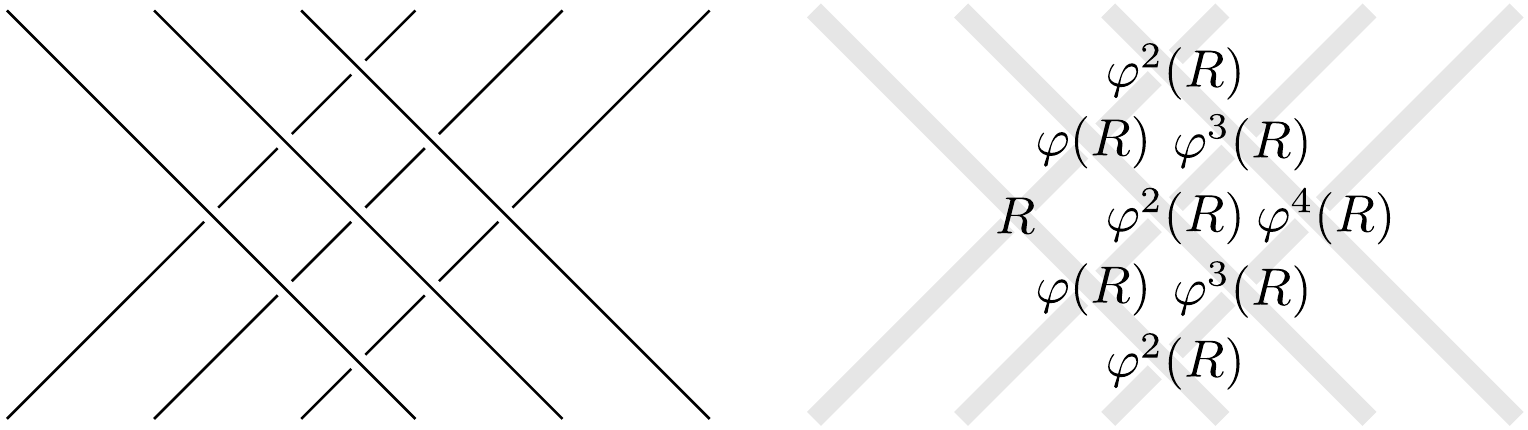}
    \caption{\small Illustration of ${}_3R_3=\varphi^2(R)\varphi(R)R\cdot\varphi^3(R)\varphi^2(R)\varphi(R)\cdot\varphi^4(R)\varphi^3(R)\varphi^2(R)$}
	\label{figure:cabling}
\end{figure}

Wenzl's cabling powers of an R-matrix take in our setting the following form.

\begin{definition}
	Let $R\in\CR(d)$ and $n\in\Nl$. The {\em $n$-th cabling power of $R$ is}
	\begin{align}
		R^{(n)}
		:=
		c_n({}_nR_n)
		\in\CU(\CF_{d^n}^2)
	\end{align}
	$R^{(n)}$ is an R-matrix in $\CR(d^n)$, and $(R^{(n)})^*=(R^*)^{(n)}$.
\end{definition}
The proof that $R^{(n)}\in\CR(d^n)$ can be found in \cite{Wenzl:1990}. 

We now show that at least on the level of the type II factor $\CN$, cabling powers of R-matrices correspond to ordinary powers of their corresponding Yang-Baxter endomorphisms.

\begin{proposition}\label{prop:cabling-and-powers}
	Let $R\in\CR(d)$ and $n\in\Nl$. Then
	\begin{align}\label{eq:cabling-and-powers}
		(c_n^{-1}\la_{R^{(n)}}c_n)(x)=\la_R^n(x),\qquad x\in\CN_d.
	\end{align}
	In particular,
	\begin{align}\label{eq:cabling-and-index}
	[\CN_{d^n}:\la_{R^{(n)}}(\CN_{d^{n}})]
	=
	[\CN_d:\la_R(\CN_d)]^n.
	\end{align}
\end{proposition}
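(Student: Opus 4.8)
The plan is to prove the identity \eqref{eq:cabling-and-powers} on generators and then transport indices. First I would compute $\la_R^n$ explicitly using Lemma~\ref{lemma:YBEndos}\ref{item:PowersofLaR}, which gives $\la_R^n=\la_{{}_nR}$ with ${}_nR=\varphi^{n-1}(R)\cdots R\in\CF_d^{n+1}$. Next I would iterate this: applying the composition law $\la_u\la_v=\la_{\la_u(v)u}$ (or simply re-using the formula) $n$ further times, one finds $\la_R^{n^2}=\la_{{}_nR_n}$, where ${}_nR_n=({}_nR)_n={}_nR\cdot\varphi^{n-1}({}_nR)\cdots$ is exactly the element introduced before the definition of $R^{(n)}$. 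Actually the cleaner route is: since $\la_R^n=\la_{{}_nR}$, the $n$-fold power of $\la_R^n$ is $(\la_R^n)^n=\la_{{}_nR}^n=\la_{{}_n({}_nR)}=\la_{{}_nR_n}$ by the same lemma applied to the unitary ${}_nR$ in place of $R$. So on $\CF_d$ (and hence on $\CN_d$), $\la_R^{n^2}=\ad({}_nR_n)_\bullet$ in the limit sense of \eqref{eq:lau-limit-formula}. Wait --- we want $\la_R^n$, not $\la_R^{n^2}$; the point is that cabling rescales the tensor structure by a factor $n$, which is precisely what $c_n$ encodes.

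The key observation is the behaviour of $c_n$ under the shift: $\varphi_{d^n}\circ c_n=c_n\circ\varphi_d^n$, together with $c_n(\CF_d^{kn})=\CF_{d^n}^k$. So I would argue as follows. By definition $R^{(n)}=c_n({}_nR_n)$, hence $\la_{R^{(n)}}$ acts on $\CF_{d^n}^k$ by $\ad(R^{(n)})_k=\ad\big(R^{(n)}\varphi_{d^n}(R^{(n)})\cdots\varphi_{d^n}^{k-1}(R^{(n)})\big)$. Using $\varphi_{d^n}^j\circ c_n=c_n\circ\varphi_d^{nj}$, this equals $\ad\, c_n\big({}_nR_n\cdot\varphi_d^n({}_nR_n)\cdots\varphi_d^{n(k-1)}({}_nR_n)\big)=\ad\, c_n\big(({}_nR_n)_k^{[n]}\big)$, where the superscript means the $n$-spaced product. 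Now I claim ${}_nR_n\cdot\varphi_d^n({}_nR_n)\cdots\varphi_d^{n(k-1)}({}_nR_n)={}_n(R_{nk})$ or, better organised, equals $({}_{nk}R)$ up to reindexing --- i.e. it is the unitary implementing $\la_R^{nk}$ on $\CF_d^{nk+\text{(bounded)}}$. Conjugating back, $c_n^{-1}\la_{R^{(n)}}c_n$ acts on $c_n^{-1}(\CF_{d^n}^k)=\CF_d^{nk}$ by $\ad$ of exactly the element implementing $\la_R^{nk}$; since this holds for all $k$ and the union $\bigcup_k\CF_d^{nk}$ is weakly dense in $\CN_d$, and both sides are normal, we get $c_n^{-1}\la_{R^{(n)}}c_n=\la_R^n$ on $\CN_d$, which is \eqref{eq:cabling-and-powers}.

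For the index statement \eqref{eq:cabling-and-index}, once \eqref{eq:cabling-and-powers} is established it is essentially formal: $c_n:\CN_d\to\CN_{d^n}$ is a trace-preserving isomorphism, so $[\CN_{d^n}:\la_{R^{(n)}}(\CN_{d^n})]=[\CN_d:(c_n^{-1}\la_{R^{(n)}}c_n)(\CN_d)]=[\CN_d:\la_R^n(\CN_d)]$. Then one invokes multiplicativity of the Jones index under composition of the finite-index subfactors $\la_R^n(\CN_d)\subset\la_R^{n-1}(\CN_d)\subset\cdots\subset\la_R(\CN_d)\subset\CN_d$ --- each step is isomorphic (apply $\la_R^j$) to $\la_R(\CN_d)\subset\CN_d$, which has index $[\CN_d:\la_R(\CN_d)]$ --- giving $[\CN_d:\la_R^n(\CN_d)]=[\CN_d:\la_R(\CN_d)]^n$, finite by \eqref{eq:basic-index-bounds}. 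Combining yields \eqref{eq:cabling-and-index}.

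The main obstacle is the bookkeeping in the middle step: verifying that the $n$-spaced product $\prod_{j=0}^{k-1}\varphi_d^{nj}({}_nR_n)$ is indeed the same unitary (up to elements acting trivially on the relevant $\CF_d^m$) that implements $\la_R^{nk}$ via \eqref{eq:lau-explicit}--\eqref{eq:lau-limit-formula}. This is a purely combinatorial identity about products of $\varphi$-shifted copies of $R$ --- the kind of thing Figure~\ref{figure:cabling} is meant to make visually obvious --- but it needs to be stated carefully, e.g. by induction on $k$ using ${}_{n(k+1)}R={}_{nk}(\varphi^{?}(R)\cdots)\cdot{}_nR_n$-type recursions, or more slickly by noting $\la_{R^{(n)}}^k$ and $(c_n\la_R^n c_n^{-1})^k$ must agree if they agree for $k=1$, reducing everything to checking \eqref{eq:cabling-and-powers} itself, which in turn reduces to the single identity $c_n({}_nR_n)=R^{(n)}$ (the definition) plus $\varphi_{d^n}c_n=c_n\varphi_d^n$. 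Framed that way the proof is short; I would present it via the composition-law computation $\la_R^{n}\!\!\mid_{\CF_d}$ and the covariance of $c_n$, checking agreement on each $\CF_d^{kn}$ and passing to the weak closure.
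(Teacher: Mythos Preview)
Your overall strategy matches the paper's proof exactly: compute $c_n^{-1}((R^{(n)})_k)$ using $R^{(n)}=c_n({}_nR_n)$ and the intertwining relation $\varphi_{d^n}\circ c_n=c_n\circ\varphi_d^n$, identify the result as an implementing unitary for $\la_R^n$, and pass to the limit via \eqref{eq:lau-limit-formula}. The index argument is also the same.

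There is, however, a concrete bookkeeping error in your middle paragraph. The $n$-spaced product
\[
{}_nR_n\cdot\varphi_d^{\,n}({}_nR_n)\cdots\varphi_d^{\,n(k-1)}({}_nR_n)
\]
is \emph{not} equal to ${}_{nk}R$ (nor to ${}_n(R_{nk})$), and it does \emph{not} implement $\la_R^{nk}$. Expanding ${}_nR_n=({}_nR)\varphi({}_nR)\cdots\varphi^{n-1}({}_nR)$, the product telescopes to
\[
{}_nR\cdot\varphi({}_nR)\cdots\varphi^{nk-1}({}_nR)=({}_nR)_{nk},
\]
which is the implementing unitary for $\la_{{}_nR}=\la_R^n$ on $\CF_d^{nk}$ (via \eqref{eq:lau-explicit}), not for $\la_R^{nk}$. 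Note that $({}_nR)_{nk}$ and ${}_{nk}R$ are genuinely different elements for $n\geq 2$ (already for $k=1$: $({}_nR)_n={}_nR_n\neq{}_nR$), so this is not a harmless reindexing. If the product really implemented $\la_R^{nk}$, the limit over $k$ would not define a single endomorphism and your conclusion $c_n^{-1}\la_{R^{(n)}}c_n=\la_R^n$ would not follow. Once you replace ${}_{nk}R$ by $({}_nR)_{nk}$ and $\la_R^{nk}$ by $\la_R^n$ throughout that paragraph, the argument is exactly the paper's; your final ``cleaner route'' paragraph already describes it correctly.
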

\begin{proof}
	We calculate, $k\in\Nl$,
	\begin{align*}
		c_n^{-1}(&(R^{(n)})_k)
		=
		c_n^{-1}(R^{(n)}\cdots\varphi_{d^n}^{k-1}(R^{(n)}))
		\\
		&=
		{}_nR_n\cdot\varphi_d^n({}_nR_n)\cdots\varphi_d^{n(k-1)}({}_nR_n)\\
		&=
		{}_nR\cdots\varphi^{n-1}({}_nR)\cdot\varphi^n({}_nR)\cdots\varphi^{2n-1}({}_nR)\cdot\ldots\cdot
		\varphi^{n(k-1)}({}_nR)\cdots\varphi^{nk-1}({}_nR)
		\\
		&=
		({}_nR)_{kn}.
	\end{align*}
	Hence, for any $x\in\CN_d$,
	\begin{align*}
	 (c_n^{-1}\la_{R^{(n)}}c_n)(x)
	 &=
	 \lim_{k\to\infty}\ad c_n^{-1}((R^{(n)})_k)(x)
	 \\
	 &=
	\lim_{k\to\infty}\ad (({}_nR)_{nk})(x)=\la_{{}_nR}(x)=\la_R^n(x).
	\end{align*}
	As all the subfactors $\la_R^{k+1}(\CN)\subset\la_R^k(\CN)$, $k\in\Nl_0$, are isomorphic, this implies the index formula \eqref{eq:cabling-and-index}.
\end{proof}

\begin{remark}
	Let $R\not\in\Cl$ be non-trivial, and recall that $\la_R^n$ is reducible for $n\geq2$ in the sense that $\CM_{R,n}\neq\Cl$; namely $R\in\CM_{R,2}\subset\CN_{R,2}$. Thus Prop.~\ref{prop:cabling-and-powers} immediately implies that $\la_{R^{(n)}}$ is reducible as an endomorphism of $\CN_{d^n}$. This remains true on the level of the III$_{1/d^n}$-factor because $c_n(R)\in\CM_{R^{(n)},1}$.
\end{remark}

Our two elementary standard examples, the identity and the flip, reproduce themselves under taking cabling powers, i.e. $1_d^{(n)}=1_{d^n}$ and $F_d^{(n)}=F_{d^n}$. For later reference, we note that this implies in particular
\begin{align}\label{eq:shiftcablingpowers}
    \varphi_{d^n} = \la_{F^{(n)}}\in\End\CN_{d^n},\qquad
    \phi_{F^{(n)}} = c_n\circ\phi_F^n\circ c_n^{-1}. 
\end{align}

\subsection{Sums of R-matrices}\label{section:sums}

The third operation on $\CR$ that we want to discuss is additive on dimension. Given $R\in\CR(d)$, $\tilde R\in\CR(\tilde d)$, we define $R\boxplus\tilde R\in\End((\Cl^d\oplus\Cl^{\tilde d})\ot(\Cl^d\oplus\Cl^{\tilde d}))$ by \cite{LechnerPennigWood:2019}
\begin{align}\label{eq:DefBoxplus}
	R\boxplus \tilde R 
	&:= R\oplus \tilde R\oplus F\quad\text{on}\\
	(\Cl^d\oplus\Cl^{\tilde d})\ot(\Cl^d\oplus\Cl^{\tilde d}) 
	&= (\Cl^d\ot \Cl^{d})\oplus(\Cl^{\tilde d}\ot \Cl^{\tilde d})\oplus((\Cl^{d}\ot \Cl^{\tilde d})\oplus(\Cl^{\tilde d}\ot \Cl^{d})).\nonumber
\end{align}
In other words, $R\boxplus \tilde R$ acts as $R$ on $\Cl^{d}\ot\Cl^{d}$, as $\tilde R$ on $\Cl^{\tilde d}\ot\Cl^{\tilde d}$, and as the flip on the mixed tensors involving factors from both, $\Cl^{d}$ and $\Cl^{\tilde d}$.

If $R,\tilde R$ are R-matrices, then so is $R\boxplus \tilde R$ \cite{LechnerPennigWood:2019}. We also mention that we clearly have $(R\boxplus \tilde R)^*=R^*\boxplus \tilde R^*$, and $F_d\boxplus F_{\tilde d}=F_{d+\tilde d}$. The identity is however not preserved under this sum. For example, we have $1_1\boxplus1_1=F_2$.

Given $R\in\CR(d)$, $\tilde R\in\CR(\tilde d)$, we get an endomorphism $\la_{R\boxplus\tilde R}\in\End(\CO_{d+\tilde d})$. We currently have no detailed picture of $\la_{R\boxplus\tilde R}$. However, it is clear that $\la_{R\boxplus\tilde R}$ is always reducible, as follows from the following result.

\begin{proposition}
	Let $R\in\CR(d),\tilde R\in\CR(\tilde d)$. Then 
	\begin{align}\label{eq:MR+S1}
		\CM_{R,1}\oplus\CM_{\tilde R,1}\subset\CM_{R\boxplus \tilde R,1};
	\end{align}
	in particular $\la_{R\boxplus \tilde R}$ is always reducible. The inclusion \eqref{eq:MR+S1} is proper in general. We also have
	\begin{align}
		\phi_{R\boxplus \tilde R}(R\boxplus \tilde R)
		=
		\frac{d}{d+\tilde d}\,\phi_R(R) \oplus \frac{\tilde d}{d+\tilde d}\,\phi_{\tilde R}(\tilde R).
	\end{align}
\end{proposition}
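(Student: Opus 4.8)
The plan is to work directly on the level of the Cuntz algebra $\CO_{d+\tilde d}$, using the explicit block structure of $R\boxplus\tilde R$ from \eqref{eq:DefBoxplus}. First I would label the generating isometries of $\CO_{d+\tilde d}$ as $T_1,\ldots,T_{d+\tilde d}$, splitting the index set into a ``left'' block $L=\{1,\ldots,d\}$ and a ``right'' block $\tilde L=\{d+1,\ldots,d+\tilde d\}$, so that $\{S_i:=T_i\}_{i\in L}$ generate a copy of $\CO_d$ with projection $e:=\sum_{i\in L}T_iT_i^*$ and $\{\tilde S_j:=T_{d+j}\}_{j\in\tilde L}$ generate a copy of $\CO_{\tilde d}$ with projection $\tilde e:=1-e$. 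With this notation, $R\boxplus\tilde R$ viewed as an element of $\CF_{d+\tilde d}^2$ decomposes as $R\oplus\tilde R\oplus F_{\mathrm{mix}}$ according to whether both, neither, or exactly one of the two tensor legs lies in the left block. Concretely $(R\boxplus\tilde R)(e\otimes e)=R$, $(R\boxplus\tilde R)(\tilde e\otimes\tilde e)=\tilde R$ (with $R\in\CF_d^2\subset\CF_{d+\tilde d}^2$ the obvious ampliation), and on the mixed part it acts as the flip, i.e. it intertwines $e\otimes\tilde e$ and $\tilde e\otimes e$.

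Next I would compute $\phi_{R\boxplus\tilde R}$ on $R\boxplus\tilde R$ using the formula $\phi_{u}(x)=\frac1{d+\tilde d}\sum_{k=1}^{d+\tilde d}T_k^*\,x\,T_k$ for the left inverse of $\varphi_{d+\tilde d}$ composed appropriately — more precisely, since $\phi_F(x)=\frac{1}{d'}\sum_k T_k^* x T_k$ for $d'=d+\tilde d$ and $R\boxplus\tilde R\in\CF_{d+\tilde d}^2$, we want the weak limit $\phi_{R\boxplus\tilde R}(x)=\wlim_n ({}_n(R\boxplus\tilde R))^* x\, {}_n(R\boxplus\tilde R)$, but on a single element of $\CF_{d+\tilde d}^2$ it is cleaner to use the bimodule/left-inverse relation together with the flip identity $\phi_F(S_\mu S_\nu^*)=\frac1{d'}\sum_k T_k^* S_\mu S_\nu^* T_k$. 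The key observation is that when one sandwiches $R\boxplus\tilde R=R\oplus\tilde R\oplus F_{\mathrm{mix}}$ by $T_k^*(\,\cdot\,)T_k$ and sums over $k$, the cross terms between different blocks vanish because $T_k^*(e\otimes e\text{-part})$ lands in disjoint corners, and one is left with: the $R$-block contributes $\sum_{k\in L}T_k^* R\, T_k = d\cdot\phi_F^{\CO_d}(R)$ up to the normalization, the $\tilde R$-block contributes $\tilde d\cdot\phi_F^{\CO_{\tilde d}}(\tilde R)$, and the mixed/flip block contributes a term that is killed because the flip on mixed tensors exchanges $e\otimes\tilde e$ with $\tilde e\otimes e$, so $\sum_k T_k^*(F_{\mathrm{mix}})T_k$ either lands outside $\CF_{d+\tilde d}^1$ in a way that is orthogonal to the surviving part, or more precisely $\phi_F(F_{\mathrm{mix}})$ vanishes because $\phi_F(S_iS_j^*)=\frac1{d'}\delta_{ij}1$ contributions only come from the ``diagonal'' $e\otimes e$ and $\tilde e\otimes\tilde e$ pieces. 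Dividing by $d+\tilde d$ then yields exactly $\frac{d}{d+\tilde d}\phi_R(R)\oplus\frac{\tilde d}{d+\tilde d}\phi_{\tilde R}(\tilde R)$, once one checks (via Thm.~\ref{thm:commuting-squares}, or directly) that $\phi_R(R)=\phi_F^{\CO_d}(R)$ holds because $R\in\CL_R$ and the left inverses of $\la_R$ and $\varphi$ coincide there.

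For the inclusion \eqref{eq:MR+S1}, I would invoke Prop.~\ref{proposition:MRn}~\eqref{eq:MR1}, which says $\CM_{S,1}=\{x\in\CF^1:\varphi(x)=S^*xS\}$. Given $x\in\CM_{R,1}$ (so $x\in\CF_d^1$, $\varphi_d(x)=R^*xR$) and $y\in\CM_{\tilde R,1}$ (so $y\in\CF_{\tilde d}^1$, $\varphi_{\tilde d}(y)=\tilde R^*y\tilde R$), I would check that $x\oplus y\in\CF_{d+\tilde d}^1$ satisfies $\varphi_{d+\tilde d}(x\oplus y)=(R\boxplus\tilde R)^*(x\oplus y)(R\boxplus\tilde R)$. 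On the $e\otimes e$ corner this reduces to $\varphi_d(x)=R^*xR$; on the $\tilde e\otimes\tilde e$ corner to $\varphi_{\tilde d}(y)=\tilde R^*y\tilde R$; and on the mixed corners one needs $(R\boxplus\tilde R)$ to conjugate $\varphi_{d+\tilde d}(x\oplus y)$ correctly — but on mixed tensors $R\boxplus\tilde R$ acts as the flip, and $\varphi_{d+\tilde d}(x\oplus y)$ restricted to, say, $e\otimes\tilde e$ equals (a block-diagonal ampliation that the flip moves to $\tilde e\otimes e$), and one verifies the two mixed-corner conditions match because $x\oplus y$ is block-diagonal. That $R\boxplus\tilde R$-reducibility follows is then immediate since $\CM_{R,1}\oplus\CM_{\tilde R,1}\supseteq\C e\oplus\C\tilde e\supsetneq\C1$ (as $e$ is a nontrivial projection whenever both $d,\tilde d\geq1$). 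For properness of the inclusion in general, I would exhibit a concrete small example, e.g. $R=\tilde R=1_1$ so that $R\boxplus\tilde R=F_2$ and $\CM_{R,1}\oplus\CM_{\tilde R,1}=\C\oplus\C$ is two-dimensional while $\CM_{F_2,1}=\CF_2^1=M_2$ is four-dimensional, as recorded right after \eqref{eq:DefBoxplus}.

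I expect the main obstacle to be the bookkeeping in the $\phi_{R\boxplus\tilde R}$ computation: one has to be careful that the weak-limit definition of $\phi_{R\boxplus\tilde R}$ — involving ${}_n(R\boxplus\tilde R)$, not a single-step sandwich — actually agrees with the naive one-step computation on the element $R\boxplus\tilde R$ itself. The clean way around this is to use that $R\boxplus\tilde R\in\CL_{R\boxplus\tilde R}$ and apply the second half of Thm.~\ref{thm:commuting-squares} (which gives $\phi_{R\boxplus\tilde R}=\phi_F$ on $\CL_{R\boxplus\tilde R}$, hence $\phi_{R\boxplus\tilde R}(R\boxplus\tilde R)=\phi_F(R\boxplus\tilde R)=\frac1{d+\tilde d}\sum_k T_k^*(R\boxplus\tilde R)T_k$), reducing everything to a finite matrix computation in $\CF_{d+\tilde d}^2$; the block-orthogonality of the three summands $R$, $\tilde R$, $F_{\mathrm{mix}}$ then makes the cross terms vanish and produces the stated weighted direct sum, again using $\phi_R(R)=\phi_F(R)$ and $\phi_{\tilde R}(\tilde R)=\phi_F(\tilde R)$ from the same theorem applied in dimensions $d$ and $\tilde d$.
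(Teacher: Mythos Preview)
Your proposal is correct and follows essentially the same route as the paper: both verify $\varphi(x\oplus\tilde x)=(R\boxplus\tilde R)^*(x\oplus\tilde x)(R\boxplus\tilde R)$ block by block for the inclusion, and both reduce the $\phi$-formula to the partial trace $\phi_F$ via Thm.~\ref{thm:commuting-squares}. The only differences are cosmetic: the paper cites \cite[Lemma~4.2~iv)]{LechnerPennigWood:2019} for the partial-trace-of-$\boxplus$ computation whereas you carry it out by hand, and you supply the explicit example $1_1\boxplus 1_1=F_2$ for the properness of \eqref{eq:MR+S1}, which the paper asserts but does not exhibit in its proof.
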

\begin{proof}
	Let $x\in\CM_{R,1}\subset\CF_d^1$ and $\tilde x\in\CM_{\tilde R,1}\subset\CF_{\tilde d}^1$, i.e. $R^*xR=\varphi_d(x)$ and $\tilde R^*\tilde x\tilde R=\varphi_{\tilde d}(\tilde x)$. We may view $\CF_{d+\tilde d}^1$ as $\End(\Cl^d\oplus\Cl^{\tilde d})$, and define $p:=1\oplus0$, $p^\perp:=1-p=0\oplus1$ to be the orthogonal projections onto the two summands. Then
	\begin{align*}
		(R^*\boxplus\tilde R^*)(x\oplus\tilde x)&(R\boxplus\tilde R)
		=
		(R^*\boxplus\tilde R^*)(pxp+p^\perp \tilde xp^\perp)\varphi_{d+\tilde d}(p+p^\perp)(R\boxplus\tilde R)
		\\
		&=
		p\varphi_d(p)R^*xRp\varphi_d(p)+p^\perp\varphi_{\tilde d}(p^\perp)\tilde R^*\tilde x\tilde Rp^\perp\varphi_{\tilde d}(p^\perp)
		\\
		&\qquad+
		\varphi_d(pxp)p^\perp + \varphi_{\tilde d}(p^\perp\tilde xp^\perp)p
		\\
		&=
		p\varphi_d(pxp)+p^\perp\varphi_{\tilde d}(p^\perp\tilde x p^\perp)
		+
		\varphi_d(pxp)p^\perp + \varphi_{\tilde d}(p^\perp\tilde xp^\perp)p
		\\
		&=
		\varphi_d(pxp)+\varphi_{\tilde d}(p^\perp\tilde x p^\perp)
		\\
		&=
		\varphi_{d+\tilde d}(x\oplus\tilde x).
	\end{align*}
	This proves $x\oplus\tilde x\in\CM_{R\boxplus \tilde R,1}$.
	
	The second statement follows from Thm.~\ref{thm:commuting-squares}: For each R-matrix $R\in\CR(d)$, we have $\phi_R(R)=\phi_F(R)$ with $F\in\CR(d)$ the flip, i.e. $\phi_R(R)$ coincides with the normalized left partial trace of $R$. The claim then follows from the fact that the non-normalized partial trace maps $\boxplus$ sums to direct sums \cite[Lemma 4.2~iv)]{LechnerPennigWood:2019}.
\end{proof}

\begin{remark}
    The sum operation $\boxplus$ allows us to write down many examples of R-matrices and is the concept behind the definition of simple R-matrices (Def.~\ref{def:diagonalR}). Namely, we can start from trivial R-matrices $R=c\cdot1_d\in\CR(d)$, $c\in\T$, and build non-trivial ones by summation, i.e.
    \begin{align*}
        R=c_11_{d_1}\boxplus c_21_{d_2}\boxplus\ldots\boxplus c_N1_{d_N}\in\CR(d_1+\ldots+d_N),\qquad c_1,\ldots,c_N\in\T.
    \end{align*}
    Note that we may describe such R-matrices equivalently as follows: There is a partition of unity in $\CF_d^1$, i.e. pairwise orthogonal projections $p_1,\ldots,p_N\in\CF_d^1$ such that $p_1+\ldots+p_N=1$. To each projection~$p_i$, we have associated a phase factor $c_i\in\T$. Then 
    \begin{align}\label{eq:easyR}
        R=\sum_{i=1}^N c_i\,(p_i\ot p_i)+ F\sum_{\genfrac{}{}{0pt}{2}{i,j=1}{i\neq j}}^N (p_i\ot p_j),
    \end{align}
    which we realize to be a special form of simple R-matrix (Def.~\ref{def:diagonalR}). The more general form \eqref{eq:simpleR} is obtained by a slightly more general form of sum $\boxplus$, involving the parameters $c_{ij}$, $i\neq j$.
\end{remark}

\section{Equivalences of R-matrices}\label{section:equivalence}

In the last section, we related natural operations on R-matrices to operations on their endomorphisms. Conversely, one can start from a natural operation/relation on endomorphisms and relate it to structure on the level of the underlying R-matrices. The most obvious operation, namely composition of endomorphisms, does however not preserve the YBE, i.e. the product of two Yang-Baxter endomorphisms is usually not Yang-Baxter. Instead, we will consider equivalence relations given by conjugation with automorphisms, and define corresponding equivalence relations on $\CR(d)$.

\begin{definition}\label{def:equivalence}
	Let $R,S\in\CR(d)$. 
	\begin{enumerate}
		\item $R,S$ are {\em $\CM$-equivalent} iff there exists an automorphism $\alpha\in\Aut\CM$ such that $\la_R=\alpha\circ\la_S\circ\alpha^{-1}$, and we write $R\Msim S$ in this case.
		\item $R,S$ are {\em $\CN$-equivalent} iff there exists an automorphism $\beta\in\Aut\CN$ such that ${\la_R}|_{\CN}=\beta\circ{\la_S}|_{\CN}\circ\beta^{-1}$, and we write $R\Nsim S$ in this case.
		\item $R,S$ are {\em equivalent} iff there exists an isomorphism $\gamma_{R,S}:\CL_R\to\CL_S$ such that $\gamma_{R,S}(R)=S$ and $\varphi(\gamma_{R,S}(x))=\gamma_{R,S}(\varphi(x))$ for all $x\in\CL_R$, and we write $R\sim S$ in this case.
		\item $R,S$ have {\em equivalent representations} iff for each $n\in\Nl$, the representations $\rho_R^{(n)}$ and $\rho_S^{(n)}$ of the braid group $B_n$ on $n$ strands are unitarily equivalent. 
	\end{enumerate}
\end{definition}

It is clear that the subfactors $\la_R(\CM)\subset\CM$, $\la_R(\CN)\subset\CN$, and $\varphi(\CL_R)\subset\CL_R$ are equivalent to $\la_S(\CM)\subset\CM$, $\la_S(\CN)\subset\CN$, and $\varphi(\CL_S)\subset\CL_S$) if $R\Msim S$, $R\Nsim S$ and $R\sim S$, respectively. It is also clear that the relations $\Msim$, $\Nsim$, $\sim$ are different from each other.

The last equivalence relation (equivalence of representations) was originally introduced in \cite{AlazzawiLechner:2016} and played a prominent role in the classification of involutive R-matrices \cite{LechnerPennigWood:2019}. It essentially captures the {\em character} of an R-matrix, defined as the positive definite normalized class function
\begin{align}
 \tau_R:B_\infty\to\Cl,\qquad \tau_R:=\tau\circ\rho_R.
\end{align}
Equivalence of representations turns out to be the same as equivalence ($\sim$):

\begin{proposition}\label{prop:character}
    Let $R,S\in\CR(d)$. The following are equivalent:
    \begin{enumerate}
		\item\label{item:equivrep} $R$ and $S$ have equivalent representations.
		\item\label{item:sim} $R\sim S$.
		\item\label{item:character} $R$ and $S$ have the same character $\tau_R=\tau_S$.
    \end{enumerate}
\end{proposition}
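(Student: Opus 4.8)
The plan is to prove the cyclic chain of implications \ref{item:equivrep}$\Rightarrow$\ref{item:character}$\Rightarrow$\ref{item:sim}$\Rightarrow$\ref{item:equivrep}, exploiting that $\CL_R$ is (by Prop.~\ref{proposition:subfactor-diagram}) a II$_1$ factor with canonical trace and that the braid group representation $\rho_R$ generates it.

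First I would establish \ref{item:equivrep}$\Rightarrow$\ref{item:character}. If for each $n$ the representations ${\rho_R}|_{B_n}$ and ${\rho_S}|_{B_n}$ are unitarily equivalent, say via a unitary $W_n$, then since $\tau$ is a trace on $\CN$ we get $\tau_R(g)=\tau(\rho_R(g))=\tau(W_n\rho_S(g)W_n^*)=\tau(\rho_S(g))=\tau_S(g)$ for every $g\in B_n$; letting $n\to\infty$ and using that $B_\infty=\bigcup_n B_n$ gives $\tau_R=\tau_S$. (Here I use that the trace on $\CN$ restricts to the canonical trace on the relevant $\CF_d^{k}$, so conjugation-invariance is immediate.)

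Next, \ref{item:character}$\Rightarrow$\ref{item:sim}. This is the heart of the argument and is essentially the GNS/von~Neumann-algebraic uniqueness statement for characters, already used in \cite{LechnerPennigWood:2019}. Since $\tau$ is faithful on $\CL_R$ and $\CL_S$, one has $\CL_R\cong\pi_{\tau_R}(\Cl[B_\infty])''$ and similarly for $S$, where $\pi_{\tau_R}$ is the GNS representation of the group algebra with respect to the positive definite class function $\tau_R$. From $\tau_R=\tau_S$ one obtains a unitary $U:L^2(\CL_R,\tau)\to L^2(\CL_S,\tau)$ intertwining the two GNS data (mapping the cyclic trace vector to the cyclic trace vector and $\rho_R(g)$ to $\rho_S(g)$), hence a trace-preserving isomorphism $\gamma_{R,S}:=\mathrm{Ad}\,U$ from $\CL_R$ onto $\CL_S$ with $\gamma_{R,S}(\rho_R(g))=\rho_S(g)$ for all $g\in B_\infty$. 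In particular $\gamma_{R,S}(R)=\gamma_{R,S}(\rho_R(b_1))=\rho_S(b_1)=S$. It remains to check the covariance $\varphi\circ\gamma_{R,S}=\gamma_{R,S}\circ\varphi$ on $\CL_R$: because $\varphi$ acts on the generators by the shift $\rho_R(b_k)\mapsto\rho_R(b_{k+1})$ (that is, $\varphi(\varphi^{k-1}(R))=\varphi^k(R)$), and this shift is an endomorphism of $B_\infty$ intertwined by $\gamma_{R,S}$ on generators, the identity extends by normality and $*$-algebra density to all of $\CL_R$; alternatively one invokes $\la_R|_{\CL_R}=\varphi|_{\CL_R}$ and that $\gamma_{R,S}$ intertwines $\la_R$ and $\la_S$ on the generating unitaries. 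Thus $R\sim S$.

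Finally, \ref{item:sim}$\Rightarrow$\ref{item:equivrep}. Given $\gamma_{R,S}:\CL_R\to\CL_S$ with $\gamma_{R,S}(R)=S$ and $\varphi\circ\gamma_{R,S}=\gamma_{R,S}\circ\varphi$, one deduces $\gamma_{R,S}(\varphi^{k-1}(R))=\varphi^{k-1}(S)$ for all $k$, i.e. $\gamma_{R,S}(\rho_R(g))=\rho_S(g)$ for every $g\in B_\infty$. Since $\gamma_{R,S}$ is a trace-preserving isomorphism of II$_1$ factors it is spatially implemented by a unitary $W:L^2(\CL_R,\tau)\to L^2(\CL_S,\tau)$; its restriction to the (finite-dimensional, $\tau$-nondegenerate) cyclic subspace generated by $\rho_R(B_n)$ furnishes, after a standard finite-dimensional argument, a genuine unitary equivalence of the representations ${\rho_R}|_{B_n}$ and ${\rho_S}|_{B_n}$ for each $n$ — or, more simply, because $\CF_d^{n}$ is finite-dimensional one notes $C^*(\rho_R(B_n))\subset\CF_d^{n+1}$ and applies the isomorphism $\gamma_{R,S}$ directly, which is automatically implemented by a unitary in this type I setting.

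I expect the main obstacle to be the covariance/shift bookkeeping in step \ref{item:character}$\Rightarrow$\ref{item:sim}: one must verify carefully that the GNS unitary obtained from equality of characters is compatible with the canonical endomorphism $\varphi$, which requires knowing that $\varphi$ restricted to $\CL_R$ is determined by the shift on the braid generators and that normality propagates this from the generators to the weak closure. The passage between "$\rho_R$ and $\rho_S$ equivalent for all $n$" and "a single isomorphism $\gamma_{R,S}$" in the last implication also needs a small amount of care because $\CL_R$ is type II while each $C^*(\rho_R(B_n))$ is finite-dimensional; the clean way is to go through the GNS space rather than trying to patch the $W_n$ together.
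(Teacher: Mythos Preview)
Your proof is correct; it simply runs the cycle in the opposite direction from the paper. The paper proves \ref{item:equivrep}$\Rightarrow$\ref{item:sim}$\Rightarrow$\ref{item:character}$\Rightarrow$\ref{item:equivrep}: it builds $\gamma_{R,S}$ concretely as the limit $\lim_n\ad Y_n$ from the given intertwiners (your step \ref{item:equivrep}$\Rightarrow$\ref{item:sim} goes the other way), verifies $\varphi$-covariance via the weak-limit formula $\la_R(x)=\wlim_n(\ad R_n)(x)$ and $\la_R|_{\CL_R}=\varphi|_{\CL_R}$, and then for \ref{item:character}$\Rightarrow$\ref{item:equivrep} invokes exactly the finite-dimensional fact you sketch at the end --- a trace-preserving $*$-isomorphism between finite-dimensional $C^*$-subalgebras of $M_{d^n}$ is implemented by a unitary in $\CF_d^n$. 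Your route instead obtains $\gamma_{R,S}$ abstractly from GNS uniqueness for $\tau_R=\tau_S$ and checks $\varphi$-covariance on generators by normality; this is a perfectly good alternative and arguably cleaner conceptually, while the paper's explicit $Y_n$-construction has the advantage that it later feeds into the ``type 1--3'' analysis of concrete intertwiners.

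One caution on your \ref{item:sim}$\Rightarrow$\ref{item:equivrep}: the first option you offer --- restricting the spatial unitary $W:L^2(\CL_R,\tau)\to L^2(\CL_S,\tau)$ to the cyclic subspace of $\rho_R(B_n)$ --- does not directly yield the required equivalence, since that cyclic subspace is not the representation space $(\Cl^d)^{\ot n}$ of $\rho_R|_{B_n}$. Your second option is the right one (and is what the paper does): restrict $\gamma_{R,S}$ to a trace-preserving isomorphism $C^*(\rho_R(B_n))\to C^*(\rho_S(B_n))$ inside $\CF_d^n$, and then use that such an isomorphism is unitarily implemented in $\CF_d^n$. The trace-preservation you need here follows from uniqueness of the trace on the II$_1$ factor $\CL_R$, as you note.
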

\begin{proof}
	\ref{item:equivrep}$\implies$\ref{item:sim} If $R$ and $S$ have equivalent representations, there exist unitaries $Y_n\in\CU(\CF_d^n)$ such that $Y_n\varphi^k(R)Y_n^*=\varphi^k(S)$, $k\in\{0,1,\ldots,n-2\}$. This implies that for any $x\in\rho_R(\Cl B_\infty)$,
	\begin{align}\label{eq:gammalimit}
	 \gamma_{R,S}(x):=\lim_{n\to\infty}Y_nxY_n^*
	\end{align}
	exists, and the so defined map $\gamma_{R,S}$ is an isomorphism $\rho_R(\Cl B_\infty)\to\rho_S(\Cl B_\infty)$ with $\gamma_{R,S}(\varphi^k(R))=\varphi^k(S)$, $k\in\Nl_0$. Obviously $\gamma_{R,S}$ preserves $\tau$ and extends to an isomorphism $\CL_R\to\CL_S$ (denoted by the same symbol). 
	
	It remains to show $\varphi(\gamma_{R,S}(x))=\gamma_{R,S}(\varphi(x))$ for all $x\in\CL_R$. Indeed,
	\begin{align*}
		\gamma_{R,S}(\varphi(x))
		&=
		\gamma_{R,S}(\la_R(x))
		=
		\wlim_{n\to\infty}\gamma_{R,S}((\ad R_n)(x))
		\\
		&=
		\wlim_{n\to\infty}(\ad S_n)(\gamma_{R,S}(x))
		=
		\la_S(\gamma_{R,S}(x))
		=
		\varphi(\gamma_{R,S}(x))
		.
	\end{align*}
	Hence $R\sim S$.
	
	\ref{item:sim}$\implies$\ref{item:character} Let $R\sim S$. From the definition of this equivalence relation, we have an isomorphism $\gamma_{R,S}:\CL_R\to\CL_S$ such that $\gamma_{R,S}\circ\rho_R=\rho_S$, and the uniqueness of the trace implies that $\gamma_{R,S}$ preserves $\tau$. Hence, for any $b\in B_\infty$,
	\begin{align*}
	 \tau_S(b)
	 &=
	 \tau(\rho_S(b))
	 =
	 \tau(\gamma_{R,S}(\rho_R(b)))
	 =
	 \tau(\rho_R(b))
	 =
	 \tau_R(b).
	\end{align*}

	\ref{item:character}$\implies$\ref{item:equivrep} 
	Let $R,S$ have coinciding characters $\tau_R=\tau_S$, and pick $n\in\Nl$, $x\in\Cl B_n$. Then
    \begin{align*}
     \tau(\rho_R^{(n)}(x)^*\rho_R^{(n)}(x))=\tau_R(x^*x)=\tau_S(x^*x)=\tau(\rho_S^{(n)}(x)^*\rho_S^{(n)}(x)),
    \end{align*}
    and the faithfulness of $\tau$ yields $\ker\rho_R^{(n)}=\ker\rho_S^{(n)}$. So $\alpha:\rho_R^{(n)}(\Cl B_n)\to\rho_S^{(n)}(\Cl B_n)$, $\rho_R^{(n)}(x)\mapsto \rho_S^{(n)}(x)$, is an isomorphism of finite-dimensional $C^*$-algebras. Furthermore, equality of characters $\tau_R=\tau_S$ implies $\tau\circ\alpha=\tau$ on $\rho_R^{(n)}(\Cl B_n)$.
    
    But a trace-preserving isomorphism of finite-dimensional $C^*$-algebras represented on Hilbert spaces of the same dimension is always implemented by a unitary between these Hilbert spaces, i.e. there exists a unitary $Y_n\in\CF_d^n$ such that $Y_n\rho_R(x)Y_n^{-1}=\rho_S(x)$, $x\in\Cl B_n$. This shows that $R$ and $S$ have equivalent representations.   
\end{proof}

We mention as an aside that we may view $\tau_R$ as a state on $\Cl B_\infty$, and that the von Neumann algebra generated by the GNS construction of $(\Cl B_\infty,\tau_R)$ is naturally isomorphic to the factor $\CL_R$. Thus we see that $\tau_R$ is an {\em extremal} (or indecomposable) character, i.e. an extreme point in the convex set of positive normalized class functions, generalizing a result from \cite{LechnerPennigWood:2019} to non-involutive R-matrices. 

\bigskip

In general, the character equivalence relation $\sim$ does not imply the ``higher'' equivalences $\Nsim$, $\Msim$, but sometimes $\gamma_{R,S}:\CL_R\to\CL_S$ extends to appropriate automorphisms of $\CN$ or $\CM$. In the following, we discuss three example scenarios that we will subsequently refer to as ``type 1--3''.

\begin{description}\label{page:types}
	\item[Type 1] Let $R\in\CR(d)$ and $u\in\CU(\CF_d^1)$. Then $S:=u\varphi(u)R\varphi(u)^*u^*=\la_u(R)\in\CR(d)$ and $R\sim S$. One can choose the intertwiners as $Y_n:=u_n$, and easily verifies that $\la_u$ is an automorphism satisfying $\la_S=\la_u\circ\la_R\circ\la_u^{-1}$. Since $\la_u$ leaves $\CN$ invariant, we have $R\Msim S$ and $R\Nsim S$ in this case, with the isomorphisms $\alpha,\beta,\gamma_{R,S}$ from the various equivalence relations all being given by (restrictions of) $\la_u$.
	
	\item[Type 2] Let $R\in\CR(d)$ and $u\in\CU(\CF_d^1)$ such that $\la_u(R)=R$ (i.e., $R$ commutes with $u\varphi(u)$). Then $S:=\varphi(u)R\varphi(u)^*\in\CR(d)$ and $R\sim S$. One can choose the intertwiners as $Y_n:=u\varphi(u^2)\cdots\varphi^{n-1}(u^n)$. Hence in this case, $\gamma_{R,S}$ is given by 
	\begin{align}
		\Lambda_u:=\lim_{n\to\infty}{\ad}(u\varphi(u^2)\cdots\varphi^{n-1}(u^n)),
	\end{align}
	which trivially exists as an automorphism of $\bigcup_n\CF_d^n\subset\CN$ and extends to $\CN$. Clearly $\Lambda_u$ restricts to an isomorphism $\CL_R\to\CL_S$ matching the representations $\rho_R$ and $\rho_S=\Lambda_u\circ\rho_R$. For $x\in\CF_d^n$, we therefore have
	\begin{align*}
	 \Lambda_u(\la_R(x))
	 =
	 \Lambda_u(R_nx{R_n}^*)
	 =
	 S_n\Lambda_u(x){S_n}^*
	 =
	 \la_S(\Lambda_u(x)).
	\end{align*}
	Hence in this case, we also have $R\Nsim S$.
	
	Note that in this case, we have $\varphi(u)R\varphi(u)^*=u^*Ru$, so exchanging $u$ with $u^*$ we also have the $\CN$-equivalence $R\sim uRu^*$, with isomorphism $\Lambda_{u^*}$.
	
	We give an example to show that $\Lambda_u$ does in general not extend to $\CM$, i.e. to an $\CM$-equivalence $R\Msim S$.
	
	\begin{example}
		Let $u\in\CF_d^1$ and $R:=uFu^*$. Since the flip $F$ commutes with $u\varphi(u)$, we have $R\Nsim F$, and now show $R\not\Msim F$. In fact, if we had $R\Msim F$, then the type III subfactors given by $R$ and $F$ would be equivalent, and in particular their relative commutants $\CM_{R,1}$ and $\CM_{F,1}$ would have the same dimension. Recalling $\CM_{R,1}=\{x\in\CF_d^1:\varphi(x)=R^*xR\}$ \eqref{eq:MR1}, we have $\CM_{F,1}=\CF_d^1$. But as shown in Remark~\ref{remark:N1andM1CanDiffer}, $\CM_{R,1}=\CM_{S,1}\neq\CF_d^1$ if $u\not\in\Cl$. Hence $R\not\Msim F$.
	\end{example}

	\item[Type 3] The third type of equivalence is given by an R-matrix $R$ and its ``flipped'' version $FRF$, where $F$ is the flip \cite{LechnerPennigWood:2019}. The corresponding intertwiners are best described in terms of the so-called {\em fundamental braids} $\Delta_n\in B_n$ \cite{Garside:1969}, defined recursively by
	\begin{align}\label{eq:fundamental-braids}
        \Delta_1:=e,\qquad \Delta_2:=b_1,\qquad \Delta_{n+1}:=b_1\cdots b_n\cdot\Delta_n.
	\end{align}
    The fundamental braids satisfy \cite{KasselTuraev:2008}
    \begin{align}\label{eq:fundamental-braid-property}
        \Delta_n b_k = b_{n-k}\Delta_n,\qquad k\in\{1,\ldots,n-1\}.
    \end{align}
    Moreover, $\Delta_n^2$ generates the center of $B_n$. In particular, $\Delta_n b \Delta_n^{-1}=\Delta_n^{-1}b\Delta_n$ for all $b\in B_n$.
	
	\begin{lemma}\label{lemma:intertwinerlemma}
        Let $R\in\CR(d)$. Then $FRF\in\CR(d)$ and $R\sim FRF$, and the intertwiners can be chosen as 
        \begin{align}\label{eq:R-FRF-intertwiners}
            Y_n 
            :=
            \rho_{FRF}(\Delta_n)\rho_F(\Delta_n)
            ,\qquad n\in\Nl.
        \end{align}
	\end{lemma}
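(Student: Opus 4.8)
The plan is to verify the three assertions in turn: that $FRF$ is again an R-matrix, that the stated $Y_n$ are genuine intertwiners in the sense of Prop.~\ref{prop:character}, and then to deduce $R\sim FRF$ from the construction in the proof of that proposition.

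That $FRF\in\CR(d)$ is the classical ``mirror'' symmetry of the Yang--Baxter equation, and I would record it by conjugating the Cuntz-form YBE $R\varphi(R)R=\varphi(R)R\varphi(R)$ by $\rho_F(\Delta_3)=F\varphi(F)F$. Since $\rho_F$ factors through $S_3$, the unitary $\rho_F(\Delta_3)$ is the permutation operator of the order-reversing permutation $(1\,3)$ on the first three tensor factors. Using the general rule that conjugating a two-slot operator $R_{k,k+1}$ by a permutation operator moves it to the image pair of slots, together with the elementary fact that $R$ with its two legs in the reversed order on a pair $i<j$ equals $(FRF)_{i,j}$, one obtains $\ad(\rho_F(\Delta_3))(R)=\varphi(FRF)$ and $\ad(\rho_F(\Delta_3))(\varphi(R))=FRF$; conjugating the YBE for $R$ therefore yields the YBE for $FRF$. (Alternatively this may simply be quoted as standard.)

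For the equivalence, fix $n\in\Nl$ and set $Y_n:=\rho_{FRF}(\Delta_n)\rho_F(\Delta_n)$. This is a unitary in $\CF_d^n$, since $\Delta_n$ is a word in $b_1,\dots,b_{n-1}$ and both $\rho_{FRF}(b_j)=\varphi^{j-1}(FRF)$ and $\rho_F(b_j)=\varphi^{j-1}(F)$ lie in $\CF_d^{j+1}\subset\CF_d^n$ for $j\le n-1$. I would then establish, inside $\CF_d^n\cong\End(V\tp n)$, the two identities
\begin{align*}
 \rho_F(\Delta_n)\,\rho_R(b_k)\,\rho_F(\Delta_n)^*&=\rho_{FRF}(b_{n-k}),\qquad 1\le k\le n-1,\\
 \rho_{FRF}(\Delta_n)\,\rho_{FRF}(b_{n-k})\,\rho_{FRF}(\Delta_n)^*&=\rho_{FRF}(b_k),\qquad 1\le k\le n-1.
\end{align*}
The first comes from the fact (a standard property of the Garside element, see~\cite{KasselTuraev:2008}) that $\rho_F(\Delta_n)$ implements the order-reversing permutation $w_0\colon i\mapsto n+1-i$ of the $n$ tensor factors: conjugating $\rho_R(b_k)=R_{k,k+1}$ by it produces $R$ on the slots $\{w_0(k),w_0(k+1)\}=\{n-k,n+1-k\}$ but with its legs reversed, i.e. $(FRF)_{n-k,n+1-k}=\rho_{FRF}(b_{n-k})$. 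The second is purely group-theoretic: by \eqref{eq:fundamental-braid-property} one has $\Delta_n b_{n-k}=b_k\Delta_n$ in $B_n$, and applying the homomorphism $\rho_{FRF}$ gives exactly this. Composing the two identities yields $Y_n\rho_R(b_k)Y_n^*=\rho_{FRF}(b_k)$ for $1\le k\le n-1$, equivalently $Y_n\varphi^j(R)Y_n^*=\varphi^j(FRF)$ for $0\le j\le n-2$. These are precisely the intertwining relations assumed in the proof of Prop.~\ref{prop:character}~(\ref{item:equivrep}$\Rightarrow$\ref{item:sim}); hence $\gamma_{R,FRF}(x):=\lim_{n\to\infty}Y_nxY_n^*$ --- which in fact stabilises as soon as $n$ exceeds the number of strands supporting $x\in\rho_R(\Cl B_\infty)$ --- defines an isomorphism $\CL_R\to\CL_{FRF}$ with $\gamma_{R,FRF}(R)=FRF$ and $\gamma_{R,FRF}\circ\varphi=\varphi\circ\gamma_{R,FRF}$, which is the assertion $R\sim FRF$ of Def.~\ref{def:equivalence}.

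The delicate point, and the step I would check most carefully, is the leg-reversal bookkeeping in the first displayed identity: one has to be sure that conjugation by the order-reversal unitary turns $R_{k,k+1}$ into $(FRF)_{n-k,n+1-k}$ rather than into $R$ on the mirrored pair, and that the resulting generator index is $n-k$ and not $k$ or $k-1$. I would pin this down once and for all by combining the general conjugation rule $P_\sigma A_{i_1\cdots i_r}P_\sigma^{-1}=A_{\sigma(i_1)\cdots\sigma(i_r)}$ with the identity $R_{j,i}=(FRF)_{i,j}$ for $i<j$, after first verifying the cases $n=2$ (where the identity reads $FR\cdot R\cdot R^*F=FRF$) and $n=3$ by hand to fix all conventions.
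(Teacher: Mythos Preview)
Your proof is correct and follows essentially the same route as the paper: both establish the conjugation identity $\rho_F(\Delta_n)\rho_R(b_k)\rho_F(\Delta_n)^{-1}=\rho_{FRF}(b_{n-k})$ via the order-reversing action of $\rho_F(\Delta_n)$, then use the Garside relation $\Delta_n b_{n-k}=b_k\Delta_n$ applied through $\rho_{FRF}$ to conclude $Y_n\rho_R(b_k)Y_n^{-1}=\rho_{FRF}(b_k)$. You add a bit more than the paper does --- an explicit argument for $FRF\in\CR(d)$ (which the paper simply skips) and an explicit appeal to Prop.~\ref{prop:character} for the passage from intertwiners to $R\sim FRF$ --- but the core computation is identical.
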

	\begin{proof}
        We skip the straightforward proof of $FRF\in\CR(d)$.
        
        The representative $\rho_F(\Delta_n)\in\End((\Cl^d)\tp{n})$ of the  fundamental braid given by the involutive R-matrix $F$ acts by total inversion permutation of the $n$ tensor factors. In view of the tensor product structure of the representation $\rho_R$,
        \begin{align}
            \rho_F(\Delta_n)\varphi^{k-1}(R)\rho_F(\Delta_n)^{-1}
            =
            \varphi^{n-k-1}(FRF),\qquad k\in\{1,\ldots,n-1\}.
        \end{align}
        Using \eqref{eq:fundamental-braid-property}, this implies
        \begin{align*}
            Y_n\rho_R(b_k)Y_n^{-1}
            &=
            \rho_{FRF}(\Delta_n)\rho_F(\Delta_n)
            \varphi^{k-1}(R)
            \rho_F(\Delta_n)^{-1}\rho_{FRF}(\Delta_n)^{-1}
            \\
            &=
            \rho_{FRF}(\Delta_n)\rho_{FRF}(b_{n-k})\rho_{FRF}(\Delta_n)^{-1}
            \\
            &=
            \rho_{FRF}(b_k).
        \end{align*}
        As $b_1,\ldots,b_{n-1}$ generate $B_n$, this establishes the intertwiner property of $Y_n$.
	\end{proof}

	We add two more remarks that are special to the type 3 equivalence $R\sim FRF$. On the one hand, we note that given $R\in\CR$ and $x\in\CF_d^1$, the equation $\varphi(x)=RxR^*$ is equivalent to $\varphi(x)=FR^*FxFRF$. In view of \eqref{eq:MR1}, this gives an identification of relative commutants,
	\begin{align}
		\CM_{FRF,1}=\{x\in\CF_d^1\,:\,\varphi(x)=FR^*F\}=\CM_{R^*,1}.
	\end{align}
	Our second remark concerns the isomorphism $\gamma_{R,FRF}:\CL_R\to\CL_{FRF}$, which extends to an algebra closely related to the $C^*$-algebra $\CA_R^{(0)}$ introduced in \eqref{eq:AR}.
	
	\begin{lemma}\label{lemma:R-FRF}
		Let $R\in\CR(d)$, $n\in\Nl$, and $x\in\CF_d^n$ such that $\varphi(x)=\la_R(x)$ (this is satisfied in particular by any $x\in\CL_{R,n}$). Then
		\begin{align}
			Y_mxY_m^*=Y_nxY_n^*,\qquad m\geq n,
		\end{align}
		where $Y_m$ is the intertwiner \eqref{eq:R-FRF-intertwiners}. In particular, $\gamma_{R,FRF}=\lim_m \ad Y_m$ extends to such elements $x$, and $\gamma_{R,FRF}(x)=Y_nxY_n^*$ for all $x\in\CL_{R,n}$.
	\end{lemma}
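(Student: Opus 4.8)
The plan is to reduce everything to the single statement that $Y_m x Y_m^* = Y_n x Y_n^*$ for all $m\ge n$; granting this, $\lim_m Y_m x Y_m^*$ exists trivially and equals $Y_n x Y_n^*$, and when $x\in\CL_{R,n}\subset\CL_R$ this limit is by definition $\gamma_{R,FRF}(x)$ (note $\la_R(x)=\varphi(x)$ for $x\in\CL_R$, so the hypothesis of the lemma is met), which settles both remaining assertions. So fix $x\in\CF_d^n$ with $\varphi(x)=\la_R(x)$ and $m\ge n$; I will show $Y_{m+1}xY_{m+1}^*=Y_mxY_m^*$.

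First I would rewrite $Y_mxY_m^*$ using a purely combinatorial fact about the total--inversion permutation realised by $\rho_F(\Delta_k)$. Writing $x^\flat:=\rho_F(\Delta_n)x\rho_F(\Delta_n)^*\in\CF_d^n$ for the element obtained from $x$ by reversing the order of its $n$ tensor legs, conjugation by $\rho_F(\Delta_m)$ carries the first $n$ legs to the last $n$ slots in reversed order, so that $\rho_F(\Delta_m)x\rho_F(\Delta_m)^*=\varphi^{m-n}(x^\flat)$ and hence
\begin{align*}
 Y_mxY_m^* = \rho_{FRF}(\Delta_m)\,\varphi^{m-n}(x^\flat)\,\rho_{FRF}(\Delta_m)^*,\qquad m\ge n.
\end{align*}
Now I would invoke the Garside factorisation $\Delta_{m+1}=\Delta_m\cdot(b_mb_{m-1}\cdots b_1)$, together with $\rho_{FRF}(b_mb_{m-1}\cdots b_1)={}_m(FRF)$ and the splitting ${}_m(FRF)=\varphi^{m-n}\big({}_n(FRF)\big)\cdot{}_{m-n}(FRF)$, whose right-hand factor lies in $\CF_d^{m-n+1}$ and therefore commutes with $\varphi^{m+1-n}(x^\flat)$. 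After cancelling the common conjugation by $\rho_{FRF}(\Delta_m)$ and using injectivity of $\varphi$, the equality $Y_{m+1}xY_{m+1}^*=Y_mxY_m^*$ collapses to the single identity
\begin{align}\label{eq:proofcrux}
 {}_n(FRF)\cdot\varphi(x^\flat)\cdot{}_n(FRF)^* = x^\flat \qquad\text{in }\CF_d^{n+1}.
\end{align}

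For \eqref{eq:proofcrux} the hypothesis must enter. On $\CF_d^n$ one has the unconditional permutation identities $\varphi(y)=F_n\,y\,F_n^*$ and $\varphi(y^\flat)=\rho_F(\Delta_{n+1})\,y\,\rho_F(\Delta_{n+1})^*$ (with $F_n:=\rho_F(b_1\cdots b_n)$ and $\rho_F(\Delta_{n+1})=\rho_F(\Delta_n)\,{}_nF$). Combining the first with $\la_R(x)=R_n x R_n^*$, where $R_n:=\rho_R(b_1\cdots b_n)$, the hypothesis $\varphi(x)=\la_R(x)$ is equivalent to the statement that $Q:=F_n^*R_n={}_nF\cdot R_n$ commutes with $x$. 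Feeding the second identity into \eqref{eq:proofcrux} and conjugating back by $\rho_F(\Delta_n)$, \eqref{eq:proofcrux} becomes the assertion that a certain element $P\in\CF_d^{n+1}$, built from ${}_n(FRF)$, ${}_nF$ and $\rho_F(\Delta_n)$, commutes with $x$. For $n=1$ this is the one-line computation
\begin{align*}
 FRF\,\varphi(x)\,(FRF)^* = FRF\cdot (RxR^*)\cdot FR^*F = F(RxR^*)F = F\varphi(x)F = x,
\end{align*}
using $\varphi(x)=FxF$ on $\CF_d^1$, and there indeed $P=FR=Q$. For general $n$ one checks $P=Q$ by unwinding the braid relations $\Delta_nb_k=b_{n-k}\Delta_n$ and applying the Yang--Baxter equation \eqref{eq:YBE-Cuntz-Form}, after which $P=Q$ commutes with $x$ by hypothesis and \eqref{eq:proofcrux} follows.

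The main obstacle is precisely this last step: the generator $b_n$ occurring in ${}_n(FRF)=\rho_{FRF}(b_nb_{n-1}\cdots b_1)$ reaches across to tensor slot $n+1$ and hence genuinely interacts with $\varphi(x^\flat)$, so after conjugating by $\rho_F(\Delta_n)$ one is left with a ``long--range'' contribution that must be shown to recombine with the remaining short--range factors into the single element $Q={}_nF\cdot R_n$. This recombination is exactly where the Yang--Baxter equation is used, and is the only non-formal part of the argument; everything else is bookkeeping with the Garside element $\Delta_n$ and the identities \eqref{eq:fundamental-braids}--\eqref{eq:fundamental-braid-property}.
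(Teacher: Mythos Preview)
Your reduction to the $m$--independent identity \eqref{eq:proofcrux} is correct, and the strategy is sound. The gap is that you do not actually verify the crucial claim $P=Q$; you only assert it, and your hint that the Yang--Baxter equation is what makes it work is misleading. In fact no YBE is needed here. The same permutation identity you already invoked (conjugation by the total reversal $\rho_F(\Delta_{n+1})$ sends $\varphi^{k-1}(R)$ to $\varphi^{n-k}(FRF)$) gives at once
\[
{}_n(FRF)=\rho_F(\Delta_{n+1})\,R_n\,\rho_F(\Delta_{n+1})^{-1}.
\]
Combined with $\rho_F(\Delta_n)\rho_F(\Delta_{n+1})=\rho_F(\Delta_n)^{-1}\rho_F(\Delta_{n+1})={}_nF$ (from $\Delta_{n+1}=\Delta_n\,b_n\cdots b_1$ and $\rho_F(\Delta_n)^2=1$), one gets
\[
P=\rho_F(\Delta_n)\cdot{}_n(FRF)\cdot\rho_F(\Delta_{n+1})={}_nF\cdot R_n=Q
\]
directly, and then \eqref{eq:proofcrux} is exactly the hypothesis $R_n x R_n^*=F_n x F_n^*$. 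So your ``main obstacle'' dissolves once you reuse the permutation identity; the YBE plays no role in this step.

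The paper organises the argument differently and more compactly. It first observes (from the same permutation identity) that $Y_m$ can be rewritten as $\rho_F(\Delta_m)\rho_R(\Delta_m)$, and then establishes the recursion $Y_{m+1}=Y_m\cdot\rho_F(b_1\cdots b_m)^{-1}\rho_R(b_1\cdots b_m)$ via the other Garside factorisation $\Delta_{m+1}=b_1\cdots b_m\Delta_m$ together with $\la_R(\rho_R(\Delta_m))=\varphi(\rho_R(\Delta_m))$ (which is where the YBE enters, through $\la_R|_{\CL_R}=\varphi$). The hypothesis $\la_R(x)=\varphi(x)$ then says precisely that the extra factor $\rho_F(b_1\cdots b_m)^{-1}\rho_R(b_1\cdots b_m)$ commutes with $x$ for $m\ge n$, and the inductive step is immediate. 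Compared to your route, this avoids introducing $x^\flat$, the splitting of ${}_m(FRF)$, and the separate identity \eqref{eq:proofcrux}; on the other hand, your route (once the gap is filled) is slightly more elementary in that it uses only the permutation action of $\rho_F$ and the bare hypothesis on $x$, never appealing to $\la_R|_{\CL_R}=\varphi$.
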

	\begin{proof}
		To prove this lemma, we first establish a recursion relation for the intertwiners $Y_m$. We claim
		\begin{align}\label{eq:Ym-recursion}
			Y_{m+1}
			=
			Y_m\cdot\rho_F(b_1\cdots b_m)^{-1}\rho_R(b_1\cdots b_m),\qquad m\in\Nl.
		\end{align}
		To show this, recall that we already know the identity 
		\begin{align*}
            \rho_F(\Delta_m)\rho_R(b)\rho_F(\Delta_m)^{-1}
            =
            \rho_{FRF}(\Delta_m b\Delta_m^{-1}), \qquad b\in B_m;
        \end{align*}
		this was shown in the proof of Lemma~\ref{lemma:intertwinerlemma}. Thus we may rewrite the intertwiners as $Y_m=\rho_{FRF}(\Delta_m)\rho_F(\Delta_m)=\rho_F(\Delta_m)\rho_R(\Delta_m)$. 
		
		We furthermore note that $\rho_R(\Delta_m)\in\CL_{R,m}$ and therefore
		\begin{align*}
			\ad\rho_R(b_1\cdots b_m)[\rho_R(\Delta_m)]
			&=
			\la_R(\rho_R(\Delta_m))
			=
			\varphi(\rho_R(\Delta_m))
			\\
			&=\ad(\rho_F(b_1\cdots b_m))[(\rho_R(\Delta_m))].
		\end{align*}
		Moreover, since $F^2=1$, we have $\rho_F(\Delta_m)=\rho_F(\Delta_m^{-1})$. Together with the recursion relation $\Delta_{m+1}=b_1\cdots b_m\Delta_m$, this gives
		\begin{align*}
			Y_{m+1}
			&=
			\rho_F(\Delta_{m+1}^{-1})\rho_R(\Delta_{m+1})
			\\
			&=
			\rho_F(\Delta_m^{-1})\rho_F(b_1\cdots b_m)^{-1}\rho_R(b_1\cdots b_m)\rho_R(\Delta_m)
			\\
			&=
			\rho_F(\Delta_m^{-1})\rho_F(b_1\cdots b_m)^{-1}\varphi(\rho_R(\Delta_m))\rho_R(b_1\cdots b_m)
			\\
			&=
			\rho_F(\Delta_m^{-1})\rho_R(\Delta_m)\rho_F(b_1\cdots b_m)^{-1}\rho_R(b_1\cdots b_m)
			\\
			&=
			Y_m\cdot \rho_F(b_1\cdots b_m)^{-1}\rho_R(b_1\cdots b_m),
		\end{align*}
		proving \eqref{eq:Ym-recursion}.
		
		Now let $x\in\CF_d^n$ such that $\varphi(x)=\la_R(x)$. Then $\ad(\rho_R(b_1\cdots b_m))[x]=\la_R(x)=\varphi(x)=\ad(\rho_F(b_1\cdots b_m))[x]$ for any $m\geq n$, and therefore
		\begin{align*}
			\ad(Y_{n+1})(x)
			&=
			\ad(Y_{n})(x).
		\end{align*}
		Clearly, this implies $\ad Y_m(x)=(\ad Y_n)(x)$ for all $m\geq n$.
		
		The isomorphism $\gamma_{R,FRF}$ is defined by the limit formula $\lim_m\ad Y_m$ on $\rho_R(\Cl B_\infty)$ and showed that it uniquely extends to an isomorphism $\CL_R\to\CL_S$. Thus, as $\lim_m(\ad Y_m)(x)$ exists and equals $Y_nxY_n^*$ for $x\in\CF_d^n$ as in the statement of the lemma, we find $\gamma_{R,FRF}(x)=Y_nxY_n^*$ as claimed.	
	\end{proof}
\end{description}

Let us emphasize that in general, it is not known whether the $\sim$ equivalence class of an R-matrix is exhausted by the three cases listed above. Furthermore, in general the equivalences $R\Msim S$ or $R\Nsim S$ do not imply $R\sim S$ (For example, $R\Nsim-R$ for all $R\in\CR$, but usually $R\not\sim-R$.)

\medskip

Making use of the type 3 intertwiners, we can now also give the postponed second part of the proof of Theorem~\ref{thm:commuting-squares}.

\medskip

\noindent{\em Proof of Theorem~\ref{thm:commuting-squares} (second half).} \label{page:proofpart2}
	Let $R\in\CR$ and $S:=FRF$. We want to show that $\CL_R$ is invariant under $\phi_F$. As a preparation, we first show, $n\in\Nl$,
	\begin{align}\label{eq:step1}
		\ad\rho_F(\Delta_{n})(\CL_{R,n})
		=
		\CL_{S,n}.
	\end{align}
	In fact, we know from Lemma~\ref{lemma:R-FRF} that the intertwiner isomorphism $\gamma_{R,S}$ coincides with $\ad Y_n$ on $\CL_{R,n}$, with the intertwiners $Y_n=\rho_{S}(\Delta_n)\rho_F(\Delta_n)$ \eqref{eq:R-FRF-intertwiners}. Thus
	\begin{align*}
		\ad\rho_F(\Delta_n)(\CL_{R,n})
		=
		\ad\rho_S(\Delta_n)^{-1}(\ad Y_n(\CL_{R,n}))
		=
		\ad\rho_S(\Delta_n)^{-1}(\CL_{S,n})
		=
		\CL_{S,n},
	\end{align*}
	where the last step follows from $\ad\rho_S(\Delta_n)^{-1}$ being an inner automorphism of~$\CL_{S,n}$.

	Now let $x\in\CL_{R,n+1}$, $n\in\Nl_0$. As $\phi_F(x)$ acts by tracing out the first tensor factor of $x$ (see~\eqref{eq:phiF}), and $E_n(x)$ acts by tracing out the $(n+1)$st tensor factor of $x$, we have
	\begin{align}
		\phi_F(x)
		=
		E_n({F_n}^*xF_n)
		=
		E_n(\rho_F(b_1\cdots b_n)^{-1}x\rho_F(b_1\cdots b_n)).
	\end{align}
	Using the recursion relation $\Delta_{n+1}=b_1\cdots b_n\cdot \Delta_n$ for the fundamental braids and $\rho_F(\Delta_n)\in\CF_d^n$, we have
	\begin{align*}
		\phi_F(x)
		&=
		E_n(\ad\rho_F(\Delta_n\Delta_{n+1}^{-1})(x))
		=
		\ad\rho_F(\Delta_n)\left[E_n(\ad\rho_F(\Delta_{n+1}^{-1})(x))\right].
	\end{align*}
	In this formula, $\ad\rho_F(\Delta_{n+1}^{-1})(x)\in\CL_{S,n+1}$ by \eqref{eq:step1} (note $\rho_F(\Delta_{n+1}^{-1})=\rho_F(\Delta_{n+1})$), and thus $E_n(\ad\rho_F(\Delta_{n+1}^{-1})(x))\in\CL_{S,n}$ by the first part of Thm.~\ref{thm:commuting-squares}. If we now apply \eqref{eq:step1} once more, with the roles of $R$ and $S$ exchanged, we arrive at $\phi_F(x)\in\CL_{R,n}$.

	Proceeding to general $x\in\CL_R$, we have $E_n(x)\in\CL_{R,n}$ and $E_n(x)\to x$ weakly as $n\to\infty$. As we have just shown $\phi_F(E_n(x))\in\CL_R$ for all $n\in\Nl$ and $\phi_F$ is normal, it follows that $\phi_F(x)\in\CL_R$.

	The uniqueness of the $\tau$-preserving conditional expectation $E_R=\la_R\circ\phi_R$ of $\varphi(\CL_R)\subset\CL_R$ now implies that for any $x\in\CL_R$,
	\begin{align*}
		\varphi(\phi_R(x))=E_R(x)=E_F(x)=\varphi(\phi_F(x)),
	\end{align*}
	and thus $\phi_R(x)=\phi_F(x)$. This shows that the right diagram in \eqref{eq:commutingsquare2} is a commuting square for $n=1$, and the case $n>1$ follows by composing several isomorphic commuting squares.$\hfill\square$

\medskip

Applications of Thm.~\ref{thm:commuting-squares} will appear in the next section.

We now describe a situation in which $R\Msim S$ does imply $R\sim S$. 

\begin{proposition}\label{proposition:conjugation-vs-fixed-points}
    \leavevmode
    \begin{enumerate}
     \item\label{item:conjugationequivalencerelation}  Let $R,w\in\CU(\CO_d)$ such that $\alpha^{-1}:=\lambda_w\in\Aut\CM$. Then  
    \begin{align}\label{eq:conjug-eqn}
    \alpha\circ\lambda_R\circ\alpha^{-1}=\lambda_{\alpha(R)}
    \;\Longleftrightarrow\;
    w\in\CO_d^{\lambda_{\varphi(R)}}.
    \end{align}
    \item\label{item:equivalences} In the same situation as in~\ref{item:conjugationequivalencerelation}, assume in addition that $R\in\CR(d)$ and $S:=\alpha(R)\in\CF_d^2$. Then $S\in\CR(d)$ and $S\sim R$.
    \end{enumerate}
\end{proposition}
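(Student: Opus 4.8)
For part~\ref{item:conjugationequivalencerelation} the plan is a short computation with the composition law $\la_u\la_v=\la_{\la_u(v)u}$. Writing $\alpha=\la_w^{-1}$ one checks $\alpha(S_i)=\alpha(w)^*S_i$, so on the generators
\[
  (\alpha\circ\la_R\circ\alpha^{-1})(S_i)
  = \alpha(\la_R(w)R)\,\alpha(w)^*\,S_i
  = \alpha(\la_R(w)Rw^*)\,S_i ,
\]
whence $\alpha\circ\la_R\circ\alpha^{-1}=\la_{\alpha(\la_R(w)Rw^*)}$. As $u\mapsto\la_u$ is injective, this equals $\la_{\alpha(R)}$ exactly when $\la_R(w)Rw^*=R$, i.e.\ $\la_R(w)=RwR^*$. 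I would then rewrite this by means of the elementary identity $\la_u=\ad(u)\circ\la_{\varphi(u)}$, valid for every $u\in\CU(\CO_d)$: both sides are endomorphisms and agree on generators, since $u\,\la_{\varphi(u)}(S_i)\,u^*=u\,\varphi(u)S_i\,u^*=u\,S_iu\,u^*=uS_i=\la_u(S_i)$, using $\varphi(u)S_i=S_iu$. Taking $u=R$ gives $\la_R(w)=R\,\la_{\varphi(R)}(w)\,R^*$, so $\la_R(w)=RwR^*\iff\la_{\varphi(R)}(w)=w\iff w\in\CO_d^{\la_{\varphi(R)}}$, which is \eqref{eq:conjug-eqn}.

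For part~\ref{item:equivalences} the first step is to obtain the conjugation equation $\alpha\circ\la_R\circ\alpha^{-1}=\la_S$ with $S:=\alpha(R)$. By part~\ref{item:conjugationequivalencerelation} this amounts to $w\in\CO_d^{\la_{\varphi(R)}}$, equivalently $\la_R(w)=RwR^*$, equivalently $w$ commutes with $\varphi(R)$; securing this from the standing hypotheses $R\in\CR(d)$ and $S=\alpha(R)\in\CF_d^2$ is, I expect, the substantive point of the proof. A natural route: from $\la_w(S)=w\varphi(w)\,S\,\varphi(w)^*w^*=R$ and $\la_w(\varphi(S))=w\varphi(R)w^*$ (the latter from $\la_w\circ\varphi=\ad w\circ\varphi\circ\la_w$), apply the injective homomorphism $\la_w$ to the Yang--Baxter relation for $S\in\CF_d^2$ and compare it with the Yang--Baxter relation for $R$; this should force $w\varphi(R)w^*=\varphi(R)$.

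Granting $w\in\CO_d^{\la_{\varphi(R)}}$, the remainder is bookkeeping. Iterating the conjugation equation gives $\la_S^n=\alpha\circ\la_R^n\circ\alpha^{-1}$ for all $n\in\Nl_0$. By the inductive argument in the proof of Lemma~\ref{lemma:YBEndos} one has $\la_R^n(R)=\varphi^n(R)$ for an R-matrix $R$, hence
\[
  \la_S^n(S)=\alpha(\la_R^n(R))=\alpha(\varphi^n(R)),\qquad n\in\Nl_0 .
\]
Since $w$ commutes with $\varphi(R)$ one has $\alpha(\varphi(R))=\varphi(\alpha(R))=\varphi(S)$, so the case $n=1$ reads $\la_S(S)=\varphi(S)$, and $S\in\CR(d)$ by Prop.~\ref{prop:ybe-od}~\ref{item:Cuntz-YBE}. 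Consequently $\la_S^n(S)=\varphi^n(S)$, i.e.\ $\alpha(\varphi^n(R))=\varphi^n(S)$ for every $n$, so that $\alpha$ maps the generating set $\{\varphi^n(R)\}$ of $\CB_R$ onto the generating set $\{\varphi^n(S)\}$ of $\CB_S$ and intertwines $\varphi$ on $\CB_R$.

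Finally, $\alpha$ is $\om$-preserving, being the inverse of the $\om$-preserving automorphism $\la_w$, so it extends to a normal automorphism of $\CM$ carrying $\CL_R=\pi_\om(\CB_R)''$ onto $\pi_\om(\CB_S)''=\CL_S$. Its restriction $\gamma_{R,S}:=\alpha|_{\CL_R}$ is then an isomorphism $\CL_R\to\CL_S$ with $\gamma_{R,S}(R)=S$, and the relation $\gamma_{R,S}\circ\varphi=\varphi\circ\gamma_{R,S}$ holds on the weakly dense $*$-subalgebra generated by $\{\varphi^n(R)\}$ and hence, by normality of $\varphi$ and $\gamma_{R,S}$, on all of $\CL_R$. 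By Definition~\ref{def:equivalence} this is precisely $S\sim R$, completing the plan.
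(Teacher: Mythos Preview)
Your argument for part~\ref{item:conjugationequivalencerelation} is correct and is essentially the paper's computation, just organised on generators rather than via the composition law $\la_u\la_v=\la_{\la_u(v)u}$.

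For part~\ref{item:equivalences} there is a genuine gap stemming from a misreading. The phrase ``in the same situation as in~\ref{item:conjugationequivalencerelation}'' is meant to include the (equivalent) conditions of~\ref{item:conjugationequivalencerelation}: the paper's proof immediately writes $\alpha\la_R^n\alpha^{-1}=\la_S^n$, i.e.\ it \emph{assumes} the conjugation equation (equivalently $w\in\CO_d^{\la_{\varphi(R)}}$) as a standing hypothesis. You instead try to \emph{derive} this from $R\in\CR(d)$ and $S=\alpha(R)\in\CF_d^2$, and this attempt contains two errors. First, your chain ``$w\in\CO_d^{\la_{\varphi(R)}}$, equivalently $\la_R(w)=RwR^*$, equivalently $w$ commutes with $\varphi(R)$'' breaks at the last step: being a fixed point of $\la_{\varphi(R)}$ is \emph{not} the same as commuting with $\varphi(R)$ for general $w\in\CU(\CO_d)$ (only the first equivalence, via $\la_R=\ad R\circ\la_{\varphi(R)}$, is valid). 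Second, your ``natural route'' is circular: you propose to apply $\la_w$ to the YBE for $S$, but $S\in\CR(d)$ is precisely one of the conclusions to be proved; and even granting it, comparing $R(w\varphi(R)w^*)R=(w\varphi(R)w^*)R(w\varphi(R)w^*)$ with $R\varphi(R)R=\varphi(R)R\varphi(R)$ does not force $w\varphi(R)w^*=\varphi(R)$.

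Once the conjugation equation is taken as given (as the paper does), your remaining argument---deducing $\la_S^n(S)=\alpha(\varphi^n(R))$, hence $\la_S(S)=\varphi(S)$ so $S\in\CR(d)$ by Prop.~\ref{prop:ybe-od}, and then passing to $\CL_R\to\CL_S$---coincides with the paper's. One minor point: you assert that $\la_w$ is $\om$-preserving, but this is only recorded in the paper for $w\in\CF_d$; fortunately it is not needed, since normality of $\alpha\in\Aut\CM$ already carries $\CB_R''=\CL_R$ onto $\CB_S''=\CL_S$.
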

\begin{proof}
    \ref{item:conjugationequivalencerelation} We write $\alpha=\lambda_v$ and compute
 \begin{align*}
  \alpha\lambda_R\alpha^{-1}
  &=
  \lambda_v\lambda_R\lambda_w
  =
  \lambda_v\lambda_{\lambda_R(w)R}
  =
  \lambda_{\lambda_v(\lambda_R(w)R)v},
 \end{align*}
 which coincides with $\lambda_{\alpha(R)}=\lambda_{\lambda_v(R)}$ if and only if $\lambda_v(\lambda_R(w)R)v=\lambda_v(R)$. Applying $\lambda_w$ to both sides of this equation and observing that $\lambda_w\lambda_v=\id$ implies $\lambda_w(v)=w^*$, we see that $\alpha\circ\lambda_R\circ\alpha^{-1}=\lambda_{\alpha(R)}$ is equivalent to 
 \begin{align}
  w=(\ad R^*\circ\lambda_R)(w)=\lambda_{\varphi(R)}(w),
 \end{align}
 i.e. $w\in\CO_d^{\lambda_{\varphi(R)}}$.
 
 \ref{item:equivalences} We now assume that $R\in\CR(d)$ is an R-matrix, and set $S:=\alpha(R)$. Then, $n\in\Nl_0$,
 \begin{align*}
  \alpha(\varphi^n(R))
  =
  (\alpha\la_R^n\alpha^{-1})(S)
  =
  \la_S^n(S).
 \end{align*}
	In particular, $\varphi(\alpha(R))=\alpha(\varphi(R))$, which immediately implies $\varphi(S)S\varphi(S)=S\varphi(S)S$. Since $S\in\CF_d^2$ as well, $S$ is also an R-matrix. Thus $\la_S^n(S)=\varphi^n(S)$, i.e. we have $\alpha(\varphi^n(R))=\varphi^n(S)$, which shows that $\alpha$ restricts to an isomorphism $\CL_R\to\CL_S$ such that $\varphi(\alpha(x))=\alpha(\varphi(x))$ for all $x\in\CL_R$. This verifies the definition of $R\sim S$.
\end{proof}

We thus see that the enhanced form of $\Msim$ equivalence spelled out in \eqref{eq:conjug-eqn} is parameterized by the fixed points of $\la_{\varphi(R)}$. The structure of this fixed point algebra is elucidated in the following general lemma.

\begin{lemma}\label{lemma:fixedpointsandphi}
    Let $R\in\CU(\CO_d)$. Then
    \begin{align}
        \CF_d^1 &\subset \CO_d^{\la_{\varphi(R)}},\\
     \varphi(\CO_d^{\lambda_R}) & \subset \CO_d^{\lambda_{\varphi(R)}},
     \label{eq:fixed-point-algebra-inclusion}
     \\
     \phi_F(\CO_d^{\lambda_{\varphi(R)}}) & =\CO_d^{\lambda_R}, 
     \label{eq:fixed-point-algebra-equality}
    \end{align}
    and 
    \begin{align}\label{eq:trivial-fixed-points}
     \CO_d^{\lambda_R}=\C\;\Longleftrightarrow\;\CO_d^{\lambda_{\varphi(R)}}=\CF_d^1.
    \end{align}
\end{lemma}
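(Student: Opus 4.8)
The plan is to reduce everything to a single identity relating the two endomorphisms. First I would record that, by the Cuntz relation $S_ix=\varphi(x)S_i$ (hence $\varphi(R)S_i=S_iR$), the generators satisfy $\la_{\varphi(R)}(S_i)=\varphi(R)S_i=S_iR=R^*(RS_i)R$, so that
\[
\la_{\varphi(R)}=\ad R^*\circ\la_R
\qquad\text{and equivalently}\qquad
\la_R=\ad R\circ\la_{\varphi(R)};
\]
this identity already appears implicitly in the proof of Prop.~\ref{proposition:conjugation-vs-fixed-points}. Combined with the general relation $\la_R\circ\varphi=\ad R\circ\varphi\circ\la_R$, it also gives $\la_{\varphi(R)}\circ\varphi=\varphi\circ\la_R$. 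Besides this I will only use that $\phi_F\circ\varphi=\id_{\CO_d}$ and that $\phi_F$ restricts to $a\mapsto\tau(a)1$ on $\CF_d^1$ (see \eqref{eq:phiF}).

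With the identity at hand the first two inclusions are immediate. For matrix units, $\la_{\varphi(R)}(S_iS_j^*)=(S_iR)(S_jR)^*=S_iRR^*S_j^*=S_iS_j^*$, so $\CF_d^1\subset\CO_d^{\la_{\varphi(R)}}$; and if $\la_R(x)=x$ then $\la_{\varphi(R)}(\varphi(x))=\varphi(\la_R(x))=\varphi(x)$, which establishes \eqref{eq:fixed-point-algebra-inclusion}. For \eqref{eq:fixed-point-algebra-equality}, the inclusion ``$\supset$'' follows by applying $\phi_F$ to the last line, since $\phi_F(\varphi(x))=x$. For ``$\subset$'', I would take $y\in\CO_d^{\la_{\varphi(R)}}$; the identity then forces $\la_R(y)=RyR^*$, and hence
\[
\la_R(\phi_F(y))=\frac1d\sum_{k=1}^d\la_R(S_k)^*\la_R(y)\la_R(S_k)=\frac1d\sum_{k=1}^dS_k^*R^*(RyR^*)RS_k=\phi_F(y),
\]
so $\phi_F(y)\in\CO_d^{\la_R}$.

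Finally, for \eqref{eq:trivial-fixed-points}: ``$\Leftarrow$'' is read off \eqref{eq:fixed-point-algebra-equality}, since $\CO_d^{\la_R}=\phi_F(\CO_d^{\la_{\varphi(R)}})=\phi_F(\CF_d^1)=\C1$. For ``$\Rightarrow$'' the one idea needed is the expansion $y=\sum_{i,j=1}^dS_i(S_i^*yS_j)S_j^*$, valid for every $y\in\CO_d$: applying $\la_{\varphi(R)}$, using $\la_{\varphi(R)}(S_i)=S_iR$, and compressing the resulting equality $\la_{\varphi(R)}(y)=y$ with $S_k^*(\,\cdot\,)S_l$ yields $\la_{\varphi(R)}(S_k^*yS_l)=R^*(S_k^*yS_l)R$ for all $k,l$; by the identity this says $\la_R(S_k^*yS_l)=S_k^*yS_l$, so each $S_k^*yS_l$ lies in $\CO_d^{\la_R}=\C1$, whence $y\in\newspan\{S_kS_l^*\}=\CF_d^1$. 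Together with the first inclusion this gives $\CO_d^{\la_{\varphi(R)}}=\CF_d^1$. Everything here is bookkeeping with $\la_{\varphi(R)}=\ad R^*\circ\la_R$; the only slightly non-mechanical step is this last compression argument, and I do not anticipate a real obstacle once the identity is in place.
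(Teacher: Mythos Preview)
Your proof is correct and follows essentially the same route as the paper: both arguments rest on the identity $\la_{\varphi(R)}=\ad R^*\circ\la_R$ (equivalently $\la_R(S_i^*wS_j)=S_i^*\la_{\varphi(R)}(w)S_j$) and the compression $y=\sum_{i,j}S_i(S_i^*yS_j)S_j^*$ for the final implication. The only cosmetic difference is that the paper packages the ``$\subset$'' part of \eqref{eq:fixed-point-algebra-equality} as the intertwining relation $\phi_F\circ\la_{\varphi(R)}=\la_R\circ\phi_F$, whereas you compute $\la_R(\phi_F(y))$ directly; the content is identical.
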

\begin{proof}
 The first inclusion is trivial. Let $x\in\CO_d^{\lambda_R}$. Then 
 \begin{align*}
    \varphi(x)=\varphi(\lambda_R(x))=({\rm ad} R^*\circ\lambda_R)(\varphi(x))=\lambda_{\varphi(R)}(\varphi(x)),
 \end{align*}
 proving $\varphi(\CO_d^{\lambda_R})\subset \CO_d^{\lambda_{\varphi(R)}}$. Applying $\phi_F$, this also gives $\CO_d^{\lambda_R}\subset\phi_F(\CO_d^{\lambda_{\varphi(R)}})$. 
 
 Now let $i,j\in\{1,\ldots,d\}$ and $w\in\CO_d$. Then
 \begin{align}\label{fixedpointmap}
  \lambda_R(S_i^*wS_j)
  &=
  S_i^*R^*\lambda_R(w)RS_j
  =
  S_i^*\lambda_{\varphi(R)}(w)S_j,
 \end{align}
 and setting $i=j$ and summing over $i$, we find in particular $\phi_F\circ\lambda_{\varphi(R)}=\lambda_R\circ\phi$. For $w\in\CO_d^{\lambda_{\varphi(R)}}$, this implies $\phi_F(w)=\phi_F(\lambda_{\varphi(R)}(w))=\lambda_R(\phi_F(w))$, i.e. $\phi_F(\CO_d^{\lambda_{\varphi(R)}}) \subset \CO_d^{\lambda_R}$. 
 
 In particular, if $\CO_d^{\lambda_{\varphi(R)}}=\CF_d^1$, then $\CO_d^{\lambda_R}=\phi_F(\CF_d^1)=\C$. It remains to show that $\CO_d^{\lambda_R}=\C$ implies $\CO_d^{\lambda_{\varphi(R)}}=\CF_d^1$. Let $w\in\CO_d^{\lambda_{\varphi(R)}}$. In view of \eqref{fixedpointmap}, we then have $S_i^*wS_j\in\CO_d^{\lambda_R}$ for any $i,j$. In case $\CO_d^{\lambda_R}=\C$, this implies $S_i^*wS_j\in\C$ for any $i,j$, and thus
 \begin{align}
  w=\sum_{i,j=1}^dS_i(S_i^*wS_j)S_j^*\in{\rm span}\{S_iS_j^*\,:\,i,j\in\{1,\ldots,d\}\}=\CF_d^1,
 \end{align}
 as claimed.
\end{proof}

The last statement of this lemma implies that for $\CO_d^{\la_R}=\Cl$, the only possibility to satisfy \eqref{eq:conjug-eqn} is by $w\in\CF_d^1$. Another source of fixed points of $\la_{\varphi(R)}$ is $\varphi(\CO_d^{\la_R})$. In both cases, \eqref{eq:conjug-eqn} amounts to the ``type 1'' equivalence: 

\begin{lemma}
    Let $R\in\CR(d)$.
    \begin{enumerate}
     \item\label{item:type1} If $\lambda_R$ is ergodic (i.e. $\CO_d^{\la_R}=\Cl$), then $\alpha\lambda_R\alpha^{-1}=\lambda_{\alpha(R)}$ with  $\alpha\in\Aut\CO_d$ if and only if $\alpha=\la_u$ with $u\in\CU(\CF_d^1)$. In this case, $R\sim\alpha(R)$ is an example of the ``Type~1'' situation (p.~\pageref{page:types}).
     \item\label{item:type2} Let $u\in\CF_d^1$ be a unitary fixed point of $\la_R$ and $w:=\varphi(u)\in\CO_d^{\la_{\varphi(R)}}$. Then $\alpha\lambda_R\alpha^{-1}=\lambda_{\alpha(R)}$ with $\alpha:=\la_w^{-1}$, and $R\sim\alpha(R)$ are again type 1 equivalent.
    \end{enumerate}
\end{lemma}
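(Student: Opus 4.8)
The plan is to derive both parts from Proposition~\ref{proposition:conjugation-vs-fixed-points} together with the description of the fixed point algebra $\CO_d^{\la_{\varphi(R)}}$ in Lemma~\ref{lemma:fixedpointsandphi}.

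For part~\ref{item:type1}, recall that every element of $\Aut\CO_d$ is of the form $\alpha=\la_v$, so $\alpha^{-1}=\la_w$ for some $w\in\CU(\CO_d)$. Proposition~\ref{proposition:conjugation-vs-fixed-points}~\ref{item:conjugationequivalencerelation} then identifies the conjugation identity $\alpha\la_R\alpha^{-1}=\la_{\alpha(R)}$ with the single condition $w\in\CO_d^{\la_{\varphi(R)}}$. Under ergodicity $\CO_d^{\la_R}=\Cl$, equivalence~\eqref{eq:trivial-fixed-points} gives $\CO_d^{\la_{\varphi(R)}}=\CF_d^1$, so the condition reads $w\in\CU(\CF_d^1)$; setting $u:=w^*$ yields $\alpha=\la_w^{-1}=\la_u$ with $u\in\CU(\CF_d^1)$. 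Conversely, for any $\alpha=\la_u$ with $u\in\CU(\CF_d^1)$ one has $\alpha^{-1}=\la_{u^*}$ and $u^*\in\CF_d^1\subset\CO_d^{\la_{\varphi(R)}}$ by the first inclusion of Lemma~\ref{lemma:fixedpointsandphi}, so Proposition~\ref{proposition:conjugation-vs-fixed-points}~\ref{item:conjugationequivalencerelation} returns the identity (this direction needs no ergodicity). Since then $\alpha(R)=\la_u(R)$, this is exactly the Type~1 transform of $R$, and $R\sim\alpha(R)$ follows from the Type~1 discussion.

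For part~\ref{item:type2}, I would first verify the hypotheses of Proposition~\ref{proposition:conjugation-vs-fixed-points}. Because $u\in\CO_d^{\la_R}$, inclusion~\eqref{eq:fixed-point-algebra-inclusion} shows $w=\varphi(u)\in\CO_d^{\la_{\varphi(R)}}$, confirming the assertion in the statement. Next I must check that $\la_w=\la_{\varphi(u)}$ is invertible, so that $\alpha:=\la_w^{-1}$ is meaningful: using the intertwining relation $S_ix=\varphi(x)S_i$ one gets $\la_{\varphi(u)}(S_i)=\varphi(u)S_i=S_iu$, hence $\la_{\varphi(u)}$ fixes $\CF_d^1$ pointwise and $\la_{\varphi(u^*)}$ is a two-sided inverse on generators; thus $\alpha=\la_{\varphi(u^*)}$ is an automorphism (extending to $\CM$ since $\varphi(u^{\pm1})\in\CF_d$). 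With $w=\varphi(u)\in\CO_d^{\la_{\varphi(R)}}$, Proposition~\ref{proposition:conjugation-vs-fixed-points}~\ref{item:conjugationequivalencerelation} delivers $\alpha\la_R\alpha^{-1}=\la_{\alpha(R)}$.

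It remains to identify $\alpha(R)$ and recognise the Type~1 pattern. Computing $\alpha(R)=\la_{\varphi(u^*)}(R)$ from the explicit formula~\eqref{eq:lau-explicit} at $n=2$ and using that $\CF_d^2$ commutes with $\varphi^2(\CO_d)$, I expect $\alpha(R)=\varphi(u^*)R\varphi(u)\in\CF_d^2$. The fixed point condition $\la_R(u)=u$ reads $RuR^*=u$, i.e. $u$ (viewed as $u\ot1$) commutes with $R$; since $u\ot1$ also commutes with $\varphi(u^{\pm1})=1\ot u^{\pm1}$, expanding $\la_{u^*}(R)=u^*\varphi(u^*)R\varphi(u)u$ and moving $u^*,u$ inward collapses it, via $u^*Ru=R$, to $\varphi(u^*)R\varphi(u)=\alpha(R)$. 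Hence $\alpha(R)=\la_{u^*}(R)$ is the Type~1 transform of $R$ with parameter $u^*$, so $R\sim\alpha(R)$ is a Type~1 equivalence (and $\alpha(R)\in\CR(d)$, as also follows from Proposition~\ref{proposition:conjugation-vs-fixed-points}~\ref{item:equivalences}). The main care is needed in this last bookkeeping step—tracking which tensor factor each of $u\ot1$ and $1\ot u$ occupies—but the two commutations make it routine.
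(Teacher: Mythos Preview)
Your proof is correct and follows essentially the same route as the paper: both parts are deduced from Proposition~\ref{proposition:conjugation-vs-fixed-points} together with the fixed-point description in Lemma~\ref{lemma:fixedpointsandphi}, and in part~\ref{item:type2} the identification $\alpha(R)=\varphi(u^*)R\varphi(u)=\la_{u^*}(R)$ via $[u,R]=0$ matches the paper's computation exactly. Your additional remarks (explicitly checking that $\la_{\varphi(u)}$ is invertible with inverse $\la_{\varphi(u^*)}$, and observing that the converse in part~\ref{item:type1} requires no ergodicity) are correct refinements that the paper leaves implicit.
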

\begin{proof}
    \ref{item:type1} For ergodic $\lambda_R$, we have $\CO_d^{\lambda_{\varphi(R)}}=\CF_d^1$ by Lemma~\ref{lemma:fixedpointsandphi}. But $\alpha\lambda_R\alpha^{-1}=\lambda_{\alpha(R)}$ is equivalent to $\alpha^{-1}=\lambda_w$ with $w\in\CO_d^{\lambda_{\varphi(R)}}$ (Prop.~\ref{proposition:conjugation-vs-fixed-points}), so that the conjugation equation is satisfied if and only if $\alpha$ is quasi-free. For quasi-free $\alpha$, it is clear that $\alpha(R)\in\CF_d^2$, which implies $\alpha(R)\in\CR_d$ and $R\sim\alpha(R)$ are type 1 equivalent.
    
    \ref{item:type2} Defining $\alpha:=\la_{\varphi(u)}^{-1}$, we have $\alpha=\la_{\varphi(u^*)}$ and $S:=\alpha(R)\in\CF_d^2$. Thus $S\in\CR(d)$ is an R-matrix equivalent to $R$. Since $u\in\CF_d^1$ is a fixed point of $\la_R$, it follows that $u$ and $R$ commute. Hence 
    \begin{align}
     S=\alpha(R)
     =
     \la_{\varphi(u^*)}(R)
     =
     \varphi(u^*)u^*Ru\varphi(u)
     =
     \la_{u^*}(R),
    \end{align}
    i.e. $S\sim R$ are type 1 equivalent.
\end{proof}

These observations show that the equivalence relation \eqref{eq:conjug-eqn} is closely related to type~1 equivalence. It is possible that both notions coincide.

The appearance of fixed points warrants a more systematic look at fixed points and ergodicity of Yang-Baxter endomorphisms. This is done in Section~\ref{section:ergodicity}.

\subsection{Equivalent R-matrices and braid group characters}\label{subsection:EquivalencesAndPartialTraces}

Whereas a classification of all R-matrices seems out of reach, a more accessible (though still challenging) question is to classify all Yang-Baxter {\em characters}, i.e. all traces $\tau_R$, $R\in\CR$, on $B_\infty$. This amounts to classifying R-matrices up to the equivalence relation $\sim$.

In order to explain how our results can contribute to this problem, it is instructive to compare this situation with the special case of {\em involutive} R-matrices (i.e. $R\in\CR(d)$ such that $R^2=1$, equivalently $R=R^*$) which has been studied before. Note that for involutive R-matrices, $\tau_R$ can be viewed as a character of the infinite symmetric group $S_\infty$ rather than the infinite braid group.

In preparation for the following, we define {\em R-matrices of normal form} to be special simple R-matrices (Def.~\ref{def:diagonalR}) with parameters $c_{ij}=1$ for $i\neq j$ and $\eps_i:=c_{ii}\in\{+1,-1\}$ for all $i$. That is, normal form R-matrices are given by a partition of unity $p_1,\ldots,p_N$ in $\CF_d^1$ and signs $\eps_1,\ldots,\eps_N$ such that
\begin{align}\label{eq:def-normal-form}
    R
    =
    \sum_{i=1}^N \eps_{i}\,p_i\varphi(p_i)+\sum_{\genfrac{}{}{0pt}{2}{i,j=1}{i\neq j}}^N p_i\varphi(p_j)F
    =
    \bigboxplus_{i=1}^N\eps_i 1_{d_i},
\end{align}
where $d_i=d\tau(p_i)$ are the dimensions of the projections $p_i$. These normal forms can be described by a pair of Young diagrams with $d$ boxes in total.

\begin{theorem}\label{theorem:involutivecase}\cite{LechnerPennigWood:2019}
    \leavevmode
    \begin{enumerate}
     \item Let $R,S\in\CR(d)$ be involutive. Then $R\sim S$ if and only if $\phi_R(R)\cong\phi_S(S)$ are similar, i.e. $\phi_R(R)=u\phi_S(S)u^*$ for some $u\in\CU(\CF_d^1)$.
     \item\label{item:normalform-invol} Each involutive $R$ is equivalent to a unique R-matrix of normal form. 
     \item Let $R$ be an R-matrix of normal form, with projections $p_1,\ldots,p_N$ and signs $\eps_1,\ldots,\eps_N$. Define the rational numbers 
        \begin{align}\label{eq:Thomaparameters}
            \alpha_i&:=\tau(p_i),\qquad \eps_{i}=+1,\\
            \beta_j&:=\tau(p_j),\qquad \eps_{j}=-1.
        \end{align}
    Then the character $\tau_R(\sigma)$, $\sigma\in S_\infty$, takes the following form: If the disjoint cycle decomposition of $\sigma$ is given by $m_n$ cycles of length $n$, $n\in\Nl$, then 
    \begin{align}\label{eq:ThomaCharacterFormula}
        \tau_R(\sigma)=\prod_n\left(\sum_i\alpha_i^n+(-1)^{n+1}\sum_j\beta_j^n\right)^{m_n}.
    \end{align}
    Furthermore, the signed parameters $\alpha_i$, $-\beta_j$ are exactly the eigenvalues of $\phi_R(R)$.
    \end{enumerate}
\end{theorem}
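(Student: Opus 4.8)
\textbf{Proof proposal for Theorem~\ref{theorem:involutivecase}.}

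The plan is to reduce all three parts to a combination of the commuting square result Thm.~\ref{thm:commuting-squares}, the structure of simple/normal-form R-matrices from Def.~\ref{def:diagonalR}, and Thoma's classification of characters of $S_\infty$. For part (a), the implication ``$\phi_R(R)\cong\phi_S(S)$ $\Rightarrow$ $R\sim S$'' is a type~1 equivalence: if $u\in\CU(\CF_d^1)$ conjugates the partial traces, then since $\phi_R=\phi_F$ on $\CL_R$ by Thm.~\ref{thm:commuting-squares} (and likewise for $S$), I would first show that $R$ and $S$ can be brought to a common normal form by a quasi-free automorphism $\la_u$, using that an involutive $R$ is determined up to $\la_u$ by the spectral projections of its partial trace. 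The converse ``$R\sim S$ $\Rightarrow$ $\phi_R(R)\cong\phi_S(S)$'' I would prove later as a consequence of part (c): the character $\tau_R$ determines, via the power-sum formula \eqref{eq:ThomaCharacterFormula}, the multiset of eigenvalues of $\phi_R(R)$ (Newton's identities recover the $\alpha_i,-\beta_j$ from the power sums $\sum_i\alpha_i^n+(-1)^{n+1}\sum_j\beta_j^n$, which are read off from $\tau_R$ on $n$-cycles), and since $\phi_R(R)\in\CF_d^1$ is selfadjoint when $R$ is involutive, its similarity class is determined by its eigenvalue multiset.

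For part (b), existence: given involutive $R$, diagonalize the selfadjoint element $\phi_R(R)\in\CF_d^1$; group its eigenspaces into projections $p_i$ with eigenvalues of a common sign $\eps_i$, and I would check that the normal-form R-matrix $\bigboxplus_i\eps_i 1_{d_i}$ built from this data has the same partial trace, hence is $\sim$-equivalent to $R$ by part (a). Uniqueness: two normal forms with the same character must have the same eigenvalue multiset for their partial traces (again via \eqref{eq:ThomaCharacterFormula}), and since a normal form is completely recorded by the unordered list of pairs $(d_i,\eps_i)$, equivalently by the eigenvalue multiset $\{\alpha_i\}\cup\{-\beta_j\}$, the normal form is unique. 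Here I would invoke that the normal forms correspond bijectively to pairs of Young diagrams with $d$ boxes (the $\alpha$-diagram from the $\eps_i=+1$ blocks, the $\beta$-diagram from the $\eps_j=-1$ blocks), so distinct normal forms give distinct Thoma parameters.

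For part (c), the character formula: I would compute $\tau_R(\sigma)$ directly for a normal-form R-matrix. Since $\tau_R=\tau\circ\rho_R$ and $\rho_R$ is a representation of $S_\infty$ here (as $R^2=1$), it suffices to compute on a single $n$-cycle, say $\sigma=(1\,2\,\cdots\,n)$ represented by $\rho_R(b_1\cdots b_{n-1})=R\varphi(R)\cdots\varphi^{n-2}(R)$, and then use multiplicativity of $\tau$ over disjoint cycles acting on disjoint tensor factors. For the single cycle, the normal form \eqref{eq:def-normal-form} means $R=\bigboxplus_i\eps_i 1_{d_i}$ acts blockwise, and a standard computation — expanding the product of flips and phases over the partition of unity and tracing — yields $\tau(\rho_R(\text{$n$-cycle}))=\sum_i\alpha_i^n+(-1)^{n+1}\sum_j\beta_j^n$; this is exactly the Thoma/Frobenius character value, so the formula follows. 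The identification of $\alpha_i,-\beta_j$ as the eigenvalues of $\phi_R(R)=\phi_F(R)$ is immediate from \eqref{eq:phiF}: tracing out the first leg of $R=\bigboxplus_i\eps_i 1_{d_i}$ gives $\phi_F(R)=\sum_i\eps_i\tau(p_i)p_i'$-type expression whose eigenvalues are the $\eps_i\tau(p_i)=\pm\alpha_i$ or $\pm\beta_j$ with the stated signs.

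The main obstacle I anticipate is part (a)'s forward direction and the bookkeeping in part (c)'s single-cycle computation: one must carefully track how the flip components of a $\boxplus$-sum interact across the tensor factors $1,\dots,n$ when multiplying $R\varphi(R)\cdots\varphi^{n-2}(R)$ and applying $\tau$, and confirm that the cross terms organize precisely into the alternating power sums. Everything else is either a citation (Thoma's theorem, already subsumed in the reference \cite{LechnerPennigWood:2019}) or a routine consequence of the commuting-square identity $\phi_R=\phi_F$ on $\CL_R$ together with the spectral theorem for the selfadjoint element $\phi_R(R)$.
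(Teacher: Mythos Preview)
Your plan has a genuine circularity that needs to be broken. In part (a), the forward implication ``$\phi_R(R)\cong\phi_S(S)\Rightarrow R\sim S$'' is argued by bringing $R$ and $S$ to a common normal form; but in part (b), existence of a normal form is argued by building a normal form with the same partial trace and then invoking part (a) forward. Neither step has an independent anchor. Moreover, the forward direction of (a) is \emph{not} a type~1 equivalence: two $\sim$-equivalent involutive R-matrices are in general not related by a quasi-free automorphism $\la_u$ (e.g.\ $R\sim FRF$, type~3), so the sentence ``an involutive $R$ is determined up to $\la_u$ by the spectral projections of its partial trace'' is false as stated and cannot serve as the bridge.

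A second, related gap: your part (c) computation is only for normal-form $R$, but you then use it in (a) converse (and implicitly in (b)) for \emph{general} involutive $R$. What is actually needed is the identity $\tau_R(\text{$n$-cycle})=\tau(\phi_R(R)^n)$ for arbitrary $R\in\CR(d)$, proved directly from $\phi_R|_{\CL_R}=\phi_F|_{\CL_R}$ (Thm.~\ref{thm:commuting-squares}) and the left-inverse property \eqref{eq:phiRproperty}; this is exactly Thm.~\ref{theorem:phiR}\ref{item:R-cycles} in the paper. Once you have that formula for all $R$, the logic untangles: extremality of $\tau_R$ (Prop.~\ref{proposition:subfactor-diagram}) together with Thoma's theorem shows that $\tau_R$ is determined by its cycle values, hence by the moments of $\phi_R(R)$, hence by the similarity class of the selfadjoint element $\phi_R(R)$. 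This gives (a) outright; then (b) existence follows by exhibiting a normal form with the prescribed Thoma parameters, and (c) is the check that normal forms realise the Thoma formula. Alternatively, the paper's Section~\ref{section:reduction-of-involutives} supplies an independent route to (b) via iterated restriction, which would also break your circularity.
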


The proofs of these facts rely crucially on the fact that $\rho_R$ factors through the infinite symmetric group. In particular, i) a parameterization of all extremal characters of $S_\infty$ is known from the work of Thoma \cite{Thoma:1964} (in terms of the {\em Thoma parameters} $\alpha_i$, $\beta_j$ \eqref{eq:Thomaparameters}), ii) $S_\infty$ allows for a disjoint cycle decomposition, iii) for involutive R-matrices, $\phi_R(R)$ is selfadjoint, and iv) for involutive R-matrices, $\la_R$ is completely reducible in a sense to be described in Section~\ref{section:reduction-of-involutives}.

The results of Thm.~\ref{theorem:involutivecase} do not carry over to the case of general (not necessarily involutive) R-matrices. However, certain aspects can be generalized, which is the content of the following theorem.

\begin{theorem}\label{theorem:phiR}
    Let $R,S\in\CR(d)$. 
    \begin{enumerate}
     \item\label{item:left-and-right} $\phi_R(R)=\phi_F(R)=\phi_F(FRF)$ is a normal element of $\CF_d^1$ with norm $\|\phi_R(R)\|\leq1$. In particular, $R$ has identical left and right partial traces\footnote{In matrix notation, $\phi_F(R)=d^{-1}(\Tr\ot\id)(R)$ and $\phi_F(FRF)=d^{-1}(\id\ot\Tr)(R)$ are the normalized left and right partial traces of $R$.}.
     \item\label{item:R-cycles} $\tau(R\varphi(R)\cdots\varphi^{n-1}(R))=\tau(\phi_R(R)^n)$, $n\in\Nl_0$.
     \item\label{item:phiRR-is-an-invariant} If $R\sim S$, then $\phi_R(R)\cong\phi_S(S)$ (unitary similarity).
    \end{enumerate}
\end{theorem}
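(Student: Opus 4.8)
The plan is to prove the three parts in the order they are stated, since each relies on the previous one and on the commuting square Theorem~\ref{thm:commuting-squares}.

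For part~\ref{item:left-and-right}: The starting point is the second half of Theorem~\ref{thm:commuting-squares}, which gives $\phi_R(x)=\phi_F(x)$ for all $x\in\CL_R$. Since $R\in\CL_R$, this immediately yields $\phi_R(R)=\phi_F(R)$. For the identity $\phi_F(R)=\phi_F(FRF)$: apply the type~3 equivalence $R\sim FRF$. By part~\ref{item:phiRR-is-an-invariant} (which we prove independently below, so this is not circular) we would get $\phi_R(R)\cong\phi_{FRF}(FRF)$; but we can do better and get genuine equality rather than just similarity. The cleanest route is to compute directly: $\phi_F(FRF)=\frac1d\sum_k S_k^*FRFS_k$, and a short calculation with the flip relations (using $FS_iS_j^* = S_jS_i^*F$-type identities, or more simply the matrix-unit description $\phi_F(a_1\ot a_2\ot\cdots)=\tau(a_1)a_2\ot\cdots$) shows that $\phi_F(FRF)$ is the right partial trace while $\phi_F(R)$ is the left partial trace. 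To see these coincide and give a normal element, one uses the Yang-Baxter equation: this is precisely where the YBE enters. In fact, the identity $\phi_R(R)=\phi_F(FRF)$ combined with $\phi_R(R)=\phi_F(R)$ forces left partial trace $=$ right partial trace, and since $\phi_R=\la_R\circ E_R^{-1}$-part is a completely positive normal map, $\phi_R(R)$ lies in $\CM_{R,1}\subset\CF_d^1$ by \eqref{eq:phiRRinfirstcommutant}; normality of $\phi_R(R)$ then follows from the fact that $\CM_{R,1}$ is a finite-dimensional $C^*$-algebra and $\phi_R(R)$ is a ``conditional expectation value'' of a unitary satisfying additional relations — concretely, $\phi_R(R)$ commutes with its adjoint because $R$ satisfies the YBE, which one checks by writing $\phi_R(R)^*\phi_R(R)$ and $\phi_R(R)\phi_R(R)^*$ using \eqref{eq:phiRproperty} and \eqref{eq:YBE-Cuntz-Form}. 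The norm bound $\|\phi_R(R)\|\le1$ is immediate from $\phi_R$ being completely positive and unital (hence contractive) and $\|R\|=1$.

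For part~\ref{item:R-cycles}: Write $W_n:={}_nR=\varphi^{n-1}(R)\cdots\varphi(R)R=\rho_R(b_n\cdots b_1)$, so that $\la_R^n=\la_{W_n}$ by \eqref{eq:PowersOfLaR}. We have $\tau(R\varphi(R)\cdots\varphi^{n-1}(R))=\tau(R_n)$ where $R_n=R\varphi(R)\cdots\varphi^{n-1}(R)$. The claim is $\tau(R_n)=\tau(\phi_R(R)^n)$. Proceed by induction on $n$, the case $n=0$ being $\tau(1)=1$ and $n=1$ being trivial. For the inductive step, use the bimodule property of $\phi_R$ \eqref{eq:phiRproperty} together with $\la_R(x)=\varphi(x)$ for $x\in\CL_R$ (all the $\varphi^k(R)$ lie in $\CL_R$): one computes $\phi_R(R_{n+1})=\phi_R(R\cdot\varphi(R_n))=\phi_R(R\,\la_R(R_n))=\phi_R(R)\,R_n$... wait, more carefully, $\phi_R(R\la_R(y))=\phi_R(R)y$ only if we can pull $R_n$ out, which requires $R_n=\la_R(z)$; instead use $\varphi(R_n)=\la_R(R_n)$ since $R_n\in\CL_R$, giving $\phi_R(R_{n+1})=\phi_R(R\la_R(R_n))=\phi_R(R)\,R_n$. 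Then $\tau(R_{n+1})=\tau(\phi_R(R_{n+1}))=\tau(\phi_R(R)R_n)$, and iterating, or rather applying this relation $n$ times inside the trace together with the trace property and $\phi_R(R)\in\CL_R$, yields $\tau(R_{n+1})=\tau(\phi_R(R)^{n+1})$. The key facts are the $\tau$-invariance $\tau\circ\phi_R=\tau$ (since $\phi_R$ is a $\tau$-preserving left inverse) and that $\phi_R(R)$ commutes with... actually one must be slightly careful that $\phi_R(R)$ and $R_n$ don't commute in general, so the cleanest version uses the recursion $\phi_R(R_{k+1})=\phi_R(R)R_k$ to reduce the trace step by step: $\tau(R_{n})=\tau(\phi_R(R)R_{n-1})=\tau(\phi_R(R)\phi_R(R_{n-1})\cdot(\text{correction}))$ — the correction vanishes because $R_{n-1}\in\CL_R$ and on $\CL_R$ one has $\phi_R=\phi_F$ which is a conditional expectation, so $\tau(\phi_R(R)R_{n-1})=\tau(\phi_R(R)E_1(R_{n-1}))$ when $\phi_R(R)\in\CF_d^1$... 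This is where I expect the main obstacle: organizing the induction so that the partial-trace structure (commuting square from Theorem~\ref{thm:commuting-squares}) is used correctly to ``collapse'' $R_{n-1}$ onto its expectation. I would handle it by working entirely inside $\CL_R$, using $\phi_R|_{\CL_R}=\phi_F|_{\CL_R}$ and the explicit partial-trace formula \eqref{eq:phiF}.

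For part~\ref{item:phiRR-is-an-invariant}: This is the most structural part and the reason the commuting square matters. Suppose $R\sim S$ via the isomorphism $\gamma_{R,S}:\CL_R\to\CL_S$ with $\gamma_{R,S}(R)=S$ and $\gamma_{R,S}\circ\varphi=\varphi\circ\gamma_{R,S}$ on $\CL_R$. By Theorem~\ref{thm:commuting-squares}, $\phi_R|_{\CL_R}=\phi_F|_{\CL_R}$ and $\phi_S|_{\CL_S}=\phi_F|_{\CL_S}$, but more usefully the left inverse of $\varphi$ on $\CL_R$ is intrinsic to the pair $(\CL_R,\varphi|_{\CL_R})$: it is the unique $\tau$-preserving left inverse. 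Since $\gamma_{R,S}$ intertwines $\varphi|_{\CL_R}$ with $\varphi|_{\CL_S}$ and preserves $\tau$ (uniqueness of the trace on these $\mathrm{II}_1$ factors, as noted in the proof of Prop.~\ref{prop:character}), it must intertwine the corresponding left inverses: $\gamma_{R,S}\circ\phi_R|_{\CL_R}=\phi_S|_{\CL_S}\circ\gamma_{R,S}$. Applying this to $R$: $\gamma_{R,S}(\phi_R(R))=\phi_S(\gamma_{R,S}(R))=\phi_S(S)$. Now $\phi_R(R)\in\CL_{R,1}\subset\CF_d^1$ and $\phi_S(S)\in\CF_d^1$, and $\gamma_{R,S}$ restricts to a trace-preserving isomorphism $\CL_{R,1}\to\CL_{S,1}$, hence (being a trace-preserving $*$-isomorphism between finite-dimensional $C^*$-algebras sitting in $\CF_d^1=M_d$ represented identically) it is implemented by a unitary $u\in\CU(\CF_d^1)$ by the same argument used at the end of the proof of Prop.~\ref{prop:character}. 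Therefore $\phi_S(S)=u\,\phi_R(R)\,u^*$, i.e. $\phi_R(R)\cong\phi_S(S)$, completing the proof.
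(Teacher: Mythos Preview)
Your plan for part~\ref{item:left-and-right} is on the right track but leaves the two hardest steps unargued. For $\phi_F(R)=\phi_F(FRF)$, the paper does not go via the type~3 equivalence; it simply tests against arbitrary $y\in\CF_d^1$ and computes
\[
\tau(y\,\phi_F(FRF))=\tau(\varphi(y)FRF)=\tau(yR)=\tau(\la_R(y)R)=\tau(y\,\phi_R(R)),
\]
using $\tau\circ\la_R=\tau$ in the third step. For normality, your remark that $\phi_R(R)\in\CM_{R,1}$ does not help (finite-dimensional $C^*$-algebras contain non-normal elements). The paper instead shows $\tau(x\,\phi_R(R)\phi_R(R)^*)=\tau(xR\varphi(R^*))=\tau(x\,\phi_R(R)^*\phi_R(R))$ for all $x\in\CF_d^1$; the second equality is the delicate one and uses, beyond \eqref{eq:phiRproperty}, both the already-proven identity $\phi_R(R)=\phi_F(FRF)$ and the observation $\phi_F(R)=\phi_{R^*}(R)$ (valid because $R\in\CL_R=\CL_{R^*}$).

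For part~\ref{item:R-cycles}, your recursion $\phi_R(R_{n+1})=\phi_R(R)\,R_n$ is correct, and the obstacle you flag is real: the route via $E_1$ fails because $E_1(R_{n-1})$ has no closed form. The missing step is this: since $\phi_R(R)^m\in\CF_d^1$ commutes with $\varphi(\CF_d)$, cyclicity gives $\tau(\phi_R(R)^mR_k)=\tau(R\,\phi_R(R)^m\,\la_R(R_{k-1}))$; now write $R\,\phi_R(R)^m=\la_R(\phi_R(R)^m)\,R$ (because $\la_R=\ad R$ on $\CF_d^1$) and apply \eqref{eq:phiRproperty} together with $\tau\circ\phi_R=\tau$ to obtain $\tau(\phi_R(R)^{m+1}R_{k-1})$. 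The paper packages exactly this manipulation as a two-index recursion $t_{k,m}=t_{k+1,m-1}$.

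Your argument for part~\ref{item:phiRR-is-an-invariant} is correct and genuinely different from the paper's. The paper deduces it from parts~\ref{item:left-and-right} and~\ref{item:R-cycles}: equal characters force $\tau(\phi_R(R)^n)=\tau(\phi_S(S)^n)$ for all $n$, and two normal matrices with equal moments are unitarily similar. Your structural route via $\gamma_{R,S}$ bypasses both previous parts; the only points to check carefully are that $\gamma_{R,S}(\CL_{R,1})=\CL_{S,1}$ (immediate from $\CL_{R,1}=\varphi(\CL_R)'\cap\CL_R$ and $\gamma_{R,S}\circ\varphi=\varphi\circ\gamma_{R,S}$) and that the preserved trace on $\CL_{R,1}$ is the restriction of the $M_d$-trace (it is). Your approach is more conceptual; the paper's has the virtue of yielding the cycle formula~\ref{item:R-cycles} along the way, which is of independent interest.
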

\begin{proof}
    \ref{item:left-and-right} By Thm.~\ref{thm:commuting-squares}, we know $\phi_F(x)=\phi_R(x)$ for all $x\in\CL_R$, so in particular $\phi_F(R)=\phi_R(R)$. We also know that $E_1(R)=\phi_F(FRF)\in\CL_{R,1}$. Given arbitrary $y\in\CF_d^1$, we compute
    \begin{align*}
        \tau(y\phi_F(FRF))
        =
        \tau(\varphi(y)FRF)
        =
        \tau(yR)
        =
        \tau(\la_R(y)R)
        =
        \tau(y\phi_R(R)),
    \end{align*}
    which shows $\phi_F(FRF)=\phi_R(R)$. 
    
    In general, left inverses/partial traces do not preserve normality, but in our situation, we can show that $\phi_R(R)$ is always normal, i.e. $\phi_R(R)\phi_R(R)^*=\phi_R(R)^*\phi_R(R)$. Since $\phi_R(R)\in\CF_d^1$, it is enough to compare traces against arbitrary elements $x\in\CF_d^1$.
    
    In the following computation, we use the property \eqref{eq:phiRproperty} of $\phi_R$ and $\tau\circ\phi_R=\tau$, the fact that $\la_R=\ad R$ on $\CF_d^1$, and $\la_R(R^*)=\varphi(R^*)$. This yields    
    \begin{align*}
        \tau(x\phi_R(R)\phi_R(R)^*)
        &=
        \tau(\la_R(x\phi_R(R))R^*)
        \\
        &=
        \tau(x\phi_R(R)R^*)
        \\
        &=
        \tau(\la_R(x)R\varphi(R^*))
        \\
        &=
        \tau(Rx\varphi(R^*))
        \\
        &=
        \tau(xR\varphi(R^*)).
    \end{align*}
    On the other hand, using $\phi_R(R)=\phi_F(FRF)$ and $\phi_R(R)=\phi_F(R)=\phi_{R^*}(R)$ (this follows because $R\in\CL_R=\CL_{R^*}$), we find
    \begin{align*}
        \tau(x\phi_R(R)^*\phi_R(R))
        &=
        \tau(x\phi_F(FR^*F)\phi_R(R))
        \\
        &=
        \tau(\varphi(x)FR^*F\varphi(\phi_R(R)))
        \\
        &=
        \tau(xR^*\phi_R(R))
        \\
        &=
        \tau(xR^*\phi_{R^*}(R))
        \\
        &=
        \tau(\la_{R^*}(x)\varphi(R^*)R)
        \\
        &=
        \tau(xR\varphi(R^*)),
    \end{align*}
    which coincides with the previous result. This proves that $\phi_R(R)$ is normal. The norm estimate is a standard property of the conditional expectation $E_R=\la_R\phi_R$.

     \ref{item:R-cycles} For $k,m\in\Nl_0$, define 
 \begin{align}
  t_{k,m}
  :=
  \tau(\varphi^k(R)\varphi^{k-1}(R)\cdots R\cdot\phi_R(R)^m).
 \end{align}
 We will prove $t_{k,m}=t_{k+1,m-1}$, which implies the claim as $t_{n,0}=t_{0,n}$.
 
As before, we use the four facts i) $x\phi_R(y)=\phi_R(\la_R(x)y)$, ii) $\la_R(a)=\varphi(a)$ for $a\in\CL_R$, iii) $\tau\circ\phi_R=\tau$, iv) $\la_R(\phi_R(R))=R\phi_R(R)R^*$, and compute
 \begin{align*}
  t_{k,m}
  &=
  \tau(\varphi^k(R)\cdots R\phi_R(R)^{m-1}\cdot\phi_R(R)).
  \\
  &=
  \tau\left(\phi_R\left(\la_R\left(\varphi^k(R)\cdots R\cdot\phi_R(R)^{m-1}\right)R\right)\right)    
  \\
  &=
  \tau\left(\varphi^{k+1}(R)\cdots \varphi(R)\cdot R\phi_R(R)^{m-1}R^*R\right)    
  \\
  &=
  t_{k+1,m-1}.
 \end{align*}

 \ref{item:phiRR-is-an-invariant} Let $R\sim S$, i.e. $\tau_R=\tau_S$. Then part~\ref{item:R-cycles} implies that $\phi_R(R)^n$ and $\phi_S(S)^n$ have the same trace for any $n\in\Nl_0$. Thus $\phi_R(R)$ and $\phi_S(S)$ have the same characteristic polynomial, and as they are normal by part~\ref{item:left-and-right}, it follows that $\phi_R(R)$ and $\phi_S(S)$ are unitarily equivalent. 
\end{proof}

\begin{remark}\leavevmode
	\begin{enumerate}
	 \item This theorem states in particular that the spectrum of the (left or right) partial trace of an R-matrix is an invariant for $\sim$. Since any normal matrix can be diagonalized by conjugation with a unitary, we also see that given $R\in\CR(d)$, there exists $u\in\CU(\CF_d^1)$ such that $\la_u(R)\sim R$ (``type 1'', see p.~\pageref{page:types}) and $\la_u(R)$ has diagonal left and right partial traces.
	 \item 	Whereas it is known in the setting of involutive R-matrices that $R\sim S$ is {\em equivalent} to $\phi_R(R)\cong\phi_S(S)$, the implication $\impliedby$ does {\em not} hold in general. In fact, it is not difficult to construct unitary R-matrices $R,S$ such that $\phi_R(R)=\phi_S(S)$ (and $R\cong S$), but for example $\tau(R^2\varphi(R))\neq\tau(S^2\varphi(S))$, i.e. $R\not\sim S$.
	 \item In the involutive case, it is furthermore known that $\phi_R(R)$ is always invertible and that all of its eigenvalues lie in $\Zl[\frac{1}{d}]$. We currently do not know whether the first statement (invertibility) holds in general, but it is easy to give examples of non-involutive R-matrices $R$ such that $\sigma(\phi_R(R))\not\subset\Zl[\frac{1}{d}]$.
	 \item Specializing to involutive R-matrices, part~\ref{item:R-cycles} recovers Thoma's character formula \eqref{eq:ThomaCharacterFormula} for cycles: On an $n$-cycle in $c_n\in S_\infty$, the character $\tau_R$ gives
	 \begin{align}\label{eq:ThomaOnCycle}
		\tau_R(c_n)
		=
		\sum_i \alpha_i^n+(-1)^{n+1}\sum_j\beta_j^n,
	 \end{align}
	 where $\alpha_i$, $\beta_j$ are the Thoma parameters of $R$ \eqref{eq:Thomaparameters}.
	\end{enumerate}
\end{remark}

\section{Irreducibility, Reduction, and Index}\label{section:irreducibility}

In the following we will call an R-matrix $R$ {\em irreducible} iff $\la_R$ is irreducible as an endomorphism of $\CM$, i.e. iff $\CM_{R,1}=\la_R(\CM)'\cap\CM=\Cl$. This does not necessarily mean that $\la_R$ is irreducible as an endomorphism of $\CN$: In view of \eqref{eq:inclusions-of-relative-commutants},
\begin{align}\label{eq:relativecommutantsinF1}
    \CL_{R,1}
    \subset
    \CM_{R,1}
    \subset
    \CN_{R,1}
    \subset
    \CF_d^1,\qquad R\in\CR(d),
\end{align}
and in general, the relative commutants $\CL_{R,1}$, $\CM_{R,1}$ and $\CN_{R,1}$ are all different from each other. It is therefore conceivable that there exist R-matrices such that, for instance, $\la_R$ is irreducible but ${\la_R}|_{\CN}$ is not, or that ${\la_R}|_{\CL_R}$ is irreducible but $\la_R$ is not\footnote{An example for the latter situation is given by $R=F$.}. Our notion of irreducibility always refers to $\la_R\in\End\CM$, and we will explicitly indicate whenever we consider $\la_R$ as an endomorphism of $\CN$ or $\CL_R$ by restriction.

\medskip

A Yang-Baxter endomorphism $\la_R$ is a unital normal endomorphism of the type III factor $\CM$ with finite-dimensional relative commutant $\CM_{R,1}\subset\CF_d^1$ \eqref{eq:relativecommutantsinF1}. We may therefore decompose it into finitely many irreducible endomorphisms of $\CM$, unique up to inner automorphisms (i.e. as sectors). In the following, we will heavily rely on results of R.~Longo, see \cite{Longo:1989,Longo:1991} for the original articles and \cite{Izumi:1991} for a summary, to obtain information about $\la_R$ and the minimal index $\Ind(\la_R)$. 

By a {\em partition of unity} in $\CM_{R,n}$ (for some $n\in\Nl$) we will mean a family $\{p_i\}_{i=1}^{d_1}\subset\CM_{R,1}$ of orthogonal projections such that $p_{i} p_{j}=\delta_{ij}p_{i}$ and $\sum_{i=1}^{d_1}p_{i}=1$. Note that since $\CM_{R,n}$ is finite-dimensional, there always exist finite partitions of unity by minimal projections.

Square brackets $\left[\la\right]$ denote the sector of $\la$, i.e. $[\la]=\{\ad u\circ\la\,:\,u\in\CU(\CM)\}$.

\begin{proposition}
    Let $R\in\CR$, $n\in\Nl$, and $\{p_{n,i}\}_{i=1}^{d_n}$ a partition of unity in $\CM_{R,n}$. Then there exist isometries $v_{n,i}\in\CM$ such that as sectors
    \begin{align}
        [\la_R^n]
        &=
        \bigoplus_{i=1}^{d_n}
        \left[\mu_{n,i}
        \right]
        ,
        \qquad
        \mu_{n,i}(\cdot)=v_{n,i}^*\la_R^n(\cdot)v_{n,i}.
    \end{align}
    The minimal index of $\la_R$ is bounded below by
    \begin{align}
     d_n^{2/n}\leq\Ind\la_R.
    \end{align}
    In case $v_{n,i}\in\CO_d$, we have $\mu_{n,i}=\la_{u_{n,i}}$ with $u_{n,i}=v_{n,i}^*\cdot{}_nR\,\varphi(v_{n,i})$.
\end{proposition}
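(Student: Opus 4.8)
The plan is to exploit that $\la_R^n = \la_{{}_nR}$ (Lemma~\ref{lemma:YBEndos}~\ref{item:PowersofLaR}), so $\la_R^n$ is a localized endomorphism of $\CM$ with finite-dimensional relative commutant $\CM_{R,n} = ({}_nR, {}_nR)' \cap \CM \subset \CF_d^{2n-1}$ (or a direct sum of spectral subspaces thereof, per Prop.~\ref{proposition:MRn}). Since $\CM$ is a type III factor and $\CM_{R,n}$ is finite-dimensional, a partition of unity $\{p_{n,i}\}_{i=1}^{d_n}$ by (not necessarily minimal, but pairwise orthogonal, summing to $1$) projections in $\CM_{R,n}$ yields, via the factoriality of $\CM$ (so each $p_{n,i}$ is equivalent to $1$ in $\CM$), isometries $v_{n,i} \in \CM$ with $v_{n,i} v_{n,i}^* = p_{n,i}$ and $\sum_i v_{n,i} v_{n,i}^* = 1$. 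First I would record the standard computation that $\mu_{n,i}(\cdot) := v_{n,i}^* \la_R^n(\cdot) v_{n,i}$ is a unital endomorphism of $\CM$: unitality is $v_{n,i}^* \la_R^n(1) v_{n,i} = v_{n,i}^* v_{n,i} = 1$, and multiplicativity follows because $v_{n,i} v_{n,i}^* = p_{n,i} \in \CM_{R,n}$ commutes with $\la_R^n(\CM)$. This immediately gives the sector decomposition $[\la_R^n] = \bigoplus_i [\mu_{n,i}]$ in the usual way (the $v_{n,i}$ are the intertwiners $\mu_{n,i} \to \la_R^n$, and completeness is $\sum_i v_{n,i} v_{n,i}^* = 1$).

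Next I would bring in the index estimate. By multiplicativity of the minimal index under composition, $\Ind(\la_R^n) = \Ind(\la_R)^n$ (Longo \cite{Longo:1989,Longo:1991}; this also appears implicitly in the index discussion around \eqref{eq:basic-index-bounds} and in Prop.~\ref{prop:cabling-and-powers}). On the other hand, if $\la_R^n = \bigoplus_{i=1}^{d_n} \mu_{n,i}$ as sectors with the $\mu_{n,i}$ genuine endomorphisms (each with minimal index $\geq 1$), then subadditivity/superadditivity of the square-root of the minimal index under direct sums gives $\sqrt{\Ind(\la_R^n)} \geq \sum_{i=1}^{d_n} \sqrt{\Ind(\mu_{n,i})} \geq d_n$, hence $\Ind(\la_R)^n = \Ind(\la_R^n) \geq d_n^2$, i.e. $d_n^{2/n} \leq \Ind(\la_R)$. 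I would cite \cite{Longo:1989,Longo:1991,Izumi:1991} for the additivity of $\sqrt{\Ind(\cdot)}$ over direct sums of sectors.

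For the last assertion, suppose $v_{n,i} \in \CO_d$ (rather than merely in $\CM$). Since $\la_R^n = \la_{{}_nR}$ and $\la_u(x) = u x$ on the generators with $\la_u(S_j) = u S_j$, one computes on a generator $S_j$:
\begin{align*}
 \mu_{n,i}(S_j) = v_{n,i}^* \la_R^n(S_j) v_{n,i} = v_{n,i}^* \, {}_nR \, S_j \, v_{n,i} = v_{n,i}^* \, {}_nR \, \varphi(v_{n,i}) \, S_j,
\end{align*}
using the defining relation $S_j x = \varphi(x) S_j$ of the canonical endomorphism. Setting $u_{n,i} := v_{n,i}^* \, {}_nR \, \varphi(v_{n,i})$, this reads $\mu_{n,i}(S_j) = u_{n,i} S_j$, so $\mu_{n,i} = \la_{u_{n,i}}$ provided $u_{n,i}$ is unitary; unitarity is immediate since $v_{n,i}^* v_{n,i} = 1$, ${}_nR$ is unitary, and $\varphi$ is a unital endomorphism so $\varphi(v_{n,i})^* \varphi(v_{n,i}) = 1$, giving $u_{n,i}^* u_{n,i} = 1$, while $u_{n,i} u_{n,i}^* = v_{n,i}^* \, {}_nR \, \varphi(p_{n,i}) \, {}_nR^* \, v_{n,i}$, which equals $1$ because $p_{n,i} \in \CM_{R,n} = ({}_nR,{}_nR)'\cap\CM$ forces ${}_nR\,\varphi(p_{n,i})\,{}_nR^* = \la_R^n(p_{n,i})$ and $v_{n,i}^* \la_R^n(p_{n,i}) v_{n,i} = \mu_{n,i}(p_{n,i}) = 1$ by unitality applied to the projection $p_{n,i}$ — more directly, $p_{n,i}$ commutes with ${}_nR$ up to passing through $\varphi$, and a short check gives $u_{n,i}u_{n,i}^*=1$.

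The main obstacle I anticipate is not the sector decomposition (routine) but correctly invoking the direct-sum (super)additivity of $\sqrt{\Ind}$ for sectors in the type III setting to get $\sqrt{\Ind(\la_R^n)} \geq \sum_i \sqrt{\Ind(\mu_{n,i})} \geq d_n$ — one must be careful that the $\mu_{n,i}$ need not be irreducible (so $\sqrt{\Ind(\mu_{n,i})} \geq 1$ is the only bound available per summand, which is exactly enough) and that the multiplicativity $\Ind(\la_R^n) = \Ind(\la_R)^n$ holds for the minimal index of possibly-reducible endomorphisms. Both are standard consequences of Longo's index theory, but the bookkeeping of which index (minimal vs. conditional-expectation) is used where deserves a careful sentence.
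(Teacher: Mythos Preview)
Your proposal is correct and follows essentially the same route as the paper: type III factoriality gives the isometries $v_{n,i}$, multiplicativity and additivity of the statistical dimension $d(\cdot)=\sqrt{\Ind(\cdot)}$ yield the bound, and evaluation on generators gives $\mu_{n,i}=\la_{u_{n,i}}$. The one place you hesitate, namely $u_{n,i}u_{n,i}^*=1$, is immediate from the fixed-point characterization \eqref{eq:SelfIntertwinersGeneral}: $p_{n,i}\in\CM_{R,n}$ means ${}_nR\,\varphi(p_{n,i})\,({}_nR)^*=p_{n,i}$, so $u_{n,i}u_{n,i}^*=v_{n,i}^*p_{n,i}v_{n,i}=1$.
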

\begin{proof}
    As $\CM$ is of type III, all projections are Murray-von Neumann equivalent to the identity, i.e. there exist isometries $v_{n,i}\in\CM$ such that $p_{n,i}=v_{n,i}v_{n,i}^*$ and $v_{n,i}^*v_{n,j}=\delta_{ij}1$. This implies that $\mu_{n,i}(x):=v_{n,i}^*\la_R^n(x)v_{n,i}$ are unital normal endomorphisms of $\CM$ -- To show that $\mu_{n,i}$ is an algebra homomorphism, note that, $x,y\in\CM$,
    \begin{align*}
        \mu_{n,i}(x)\mu_{n,i}(y)
        &=
        v_{n,i}^*\la_R^n(x)v_{n,i}v_{n,i}^*\la_R^n(y)v_{n,i}
        =
        v_{n,i}^*\la_R^n(x)p_{n,i}\la_R^n(y)v_{n,i}
        \\
        &=
        v_{n,i}^*\la_R^n(xy)p_{n,i}v_{n,i}
        =
        v_{n,i}^*\la_R^n(xy)v_{n,i}
        =
        \mu_{n,i}(xy),
    \end{align*}
    where we have used that $p_{n,i}$ commutes with $\la_R^n(\CM)$. Analogously one shows $\la_R^n(x)=\sum_iv_{n,i}\mu_{n,i}(x)v_{n,i}^*$. $x\in\CM$. This establishes $[\la_R^n]=\bigoplus_{i=1}^{d_n}\left[\mu_{n,i}\right]$.
    
    The statistical dimension $d(\la_R):=\sqrt{\Ind\la_R}$ is additive w.r.t. direct sums, multiplicative w.r.t. composition of endomorphisms, and bounded below by $1$. This implies
    \begin{align*}
        d(\la_R)=d(\la_R^n)^{1/n}=\left(\sum_{i=1}^{d_n}d(\la_{u_{n,i}})\right)^{1/n}\geq d_n^{1/n}
    \end{align*}
    and $\Ind\la_R=d(\la_R)^2\geq d_n^{2/n}$ as claimed.
    
    If $v_{n,i}\in\CO_d$, we can easily check the equality $\mu_{n,i}=\la_{u_{n,i}}$ by evaluating on generators $S_k$, $k=1,\ldots,d$.
\end{proof}

These estimates give concrete index bounds when applied to spectral decompositions.

\begin{corollary}\label{corollary:spectralindexbounds}
    Let $R\in\CR(d)$ and consider the spectra $\sigma(R)$ of $R$ and $\sigma(\phi_R(R))$ of $\phi_R(R)$. Denoting cardinality by $|\cdot|$, we have
    \begin{align}\label{eq:lowerindexbounds}
        |\sigma(R)|\leq\Ind\la_R,\qquad |\sigma(\phi_R(R))|^2\leq \Ind\la_R.
    \end{align}
\end{corollary}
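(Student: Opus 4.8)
The plan is to derive both inequalities in \eqref{eq:lowerindexbounds} from the preceding proposition by choosing suitable partitions of unity in $\CM_{R,1}$ and $\CM_{R,2}$, respectively, and then reading off lower bounds on $\Ind\la_R$.

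\textbf{First inequality.} The key observation is that $R\in\CM_{R,2}\subset\CN_{R,2}$, but more to the point, $R$ itself — being a unitary lying in $\CM_{R,1}$'s ``successor'' — does not directly give projections in $\CM_{R,1}$. Instead I would use that $R\in(\la_R^2,\la_R^2)=\CM_{R,2}$ by Prop.~\ref{prop:ybe-od}~\ref{item:RinL2L2}, so the spectral projections of the unitary $R$ form a partition of unity $\{p_i\}$ in $\CM_{R,2}$ with exactly $|\sigma(R)|$ terms (one minimal-support spectral projection per distinct eigenvalue, though they need not be minimal in $\CM_{R,2}$; all that matters is that there are $|\sigma(R)|$ of them and they are orthogonal and sum to $1$). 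Applying the proposition with $n=2$ and this partition gives $d_2\geq|\sigma(R)|$ and hence $\Ind\la_R\geq d_2^{2/2}=d_2\geq|\sigma(R)|$, which is the first bound. One should double-check that the proposition's lower bound $d_n^{2/n}\leq\Ind\la_R$ is valid for \emph{any} partition of unity in $\CM_{R,n}$, not just one into minimal projections — inspecting its proof, the estimate $d(\la_R^n)=\sum_i d(\mu_{n,i})\geq d_n$ only uses that there are $d_n$ summands each of dimension $\geq1$, so coarser partitions are fine; they simply give weaker bounds.

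\textbf{Second inequality.} Here the relevant element is $\phi_R(R)\in\CL_{R,1}\subset\CM_{R,1}$ by \eqref{eq:phiRRinfirstcommutant}. By Thm.~\ref{theorem:phiR}~\ref{item:left-and-right}, $\phi_R(R)$ is normal, so its distinct spectral values yield a partition of unity $\{p_i\}_{i=1}^{|\sigma(\phi_R(R))|}$ inside $\CM_{R,1}$. Applying the proposition with $n=1$ to this partition gives $d_1\geq|\sigma(\phi_R(R))|$, hence $\Ind\la_R\geq d_1^{2/1}=d_1^2\geq|\sigma(\phi_R(R))|^2$, which is the second bound.

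\textbf{Anticipated obstacle.} The only subtle point is whether the spectral projections of $R$ genuinely lie in $\CM_{R,2}$ rather than merely $\CN_{R,2}$; this is exactly the content of characterization \ref{item:RinL2L2} of Prop.~\ref{prop:ybe-od} ($R$ commutes with $\la_R^2(\CO_d)$, hence with $\la_R^2(\CM)$ by normality and weak density, so $R\in\CM_{R,2}$), and since $\CM_{R,2}$ is a von Neumann algebra it contains all spectral projections of its elements. A second point to verify is that the bound quoted from the proposition applies with the eigenprojections of a unitary/normal operator, which need not be minimal in the relative commutant — as noted above this only weakens, never invalidates, the inequality. Modulo these checks the corollary is immediate.
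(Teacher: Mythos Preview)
Your proof is correct and follows essentially the same approach as the paper's own proof: use the spectral projections of the unitary $R\in\CM_{R,2}$ (via Prop.~\ref{prop:ybe-od}~\ref{item:RinL2L2}) for the first bound with $n=2$, and the spectral projections of the normal element $\phi_R(R)\in\CM_{R,1}$ (via Thm.~\ref{theorem:phiR}~\ref{item:left-and-right}) for the second bound with $n=1$. Your additional remarks verifying that non-minimal partitions suffice and that $\CM_{R,2}$ is closed under spectral calculus are valid and make the argument slightly more explicit than the paper's version.
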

\begin{proof}
    The R-matrix $R$ is a unitary in $\CM_{R,2}$ (Prop.~\ref{prop:ybe-od}~\ref{item:RinL2L2}), hence its spectral projections define a partition of unity of $d_2=|\sigma(R)|$ many projections in $\CM_{R,2}$. For the second bound, we recall that $\phi_R(R)$ is a normal element in $\CM_{R,1}$ (Thm.~\ref{theorem:phiR}~\ref{item:left-and-right}), hence its spectral projections define a partition of unity of $d_1=|\sigma(\phi_R(R))|$ many projections in~$\CM_{R,1}$.
\end{proof}

We describe the decomposition of $\la_R$ for two classes of simple R-matrices.

\begin{proposition}\label{corollary:relativecommutant}
Let $R\in\CR(d)$ be a simple R-matrix (Def.~\ref{def:diagonalR}) with projections $\{p_i\}_{i=1}^N\subset\CF_d^1$ and parameters $\{c_{ij}\}_{i,j=1}^N\subset\T$.
\begin{enumerate}
    \item\label{item:decomposelambdaR-fornormalforms} If $c_{ij}=1$ for all $i\neq j$, let $m:=|\{i\in\{1,\ldots,N\}\,:\,\tau(p_i)=1/d,\;c_{ii}=1\}|$ and $n:=|\{i\in\{1,\ldots,N\}\,:\,\tau(p_i)=1/d,\;c_{ii}\neq1\}|$. Then there exist $n$ automorphisms $\alpha_1,\ldots,\alpha_n$ and $N-n-m$ irreducible endomorphisms $\beta_1,\ldots,\beta_{N-n-m}$ such that
    \begin{align}
        \la_R\cong \alpha_1\oplus\ldots\oplus\alpha_n\oplus\beta_1\oplus\ldots\oplus\beta_{N-n-m}\oplus\underbrace{\id\oplus\ldots\oplus\id}_{m \;\text{terms}}
        .
    \end{align}
    The $\alpha_i,\beta_j$ are all mutually inequivalent and non-trivial as sectors.
    \item\label{item:decomposelambdaR-fordiagonalR} If all $p_i$ are one-dimensional (that is, if $R$ is diagonal), define the unitaries  $u_i:=\sum_{j=1}^d c_{ij}S_jS_j^*\in\CU(\CF_d^1)$, $i=1,\ldots,d$. Then there exists a unitary $u\in\CF_d^1$ such that
    \begin{align}\label{eq:diagonal-decomposition}
        \lambda_R=\la_u\circ\sum_{i=1}^dS_i\lambda_{u_i}(\cdot)S_i^*\circ\la_u^{-1}
    \end{align}
    decomposes into a sum of $d$ automorphisms. In particular, 
    \begin{align}\label{eq:indexofdiagonalR}
        [\CN:\la_R(\CN)] 
        = 
        \Ind_{E_R}(\CM) 
        =
        d^2
        .
    \end{align}
\end{enumerate}
\end{proposition}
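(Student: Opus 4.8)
\emph{Part (b).} The plan is to reduce this to a computation on the Cuntz generators. By Definition and Lemma~\ref{def:diagonalR}\,\ref{item:diagonalR}, a diagonal $R$ has the form $R=\la_u(DF)$ with $u\in\CU(\CF_d^1)$ and $D=\sum_{i,j}c_{ij}S_iS_jS_j^*S_i^*$, so the composition law together with the fact that $\la_u$ is a quasi-free automorphism gives $\la_R=\la_u\circ\la_{DF}\circ\la_u^{-1}$; it therefore suffices to treat $\la_{DF}$. A short calculation yields $DF=\sum_{i,j}c_{ij}S_iS_jS_i^*S_j^*$, hence $\la_{DF}(S_k)=\sum_i c_{ik}S_iS_kS_i^*=\sum_i S_i\la_{u_i}(S_k)S_i^*$, and since both sides are unital $*$-endomorphisms this proves $\la_{DF}=\sum_{i=1}^d S_i\la_{u_i}(\cdot)S_i^*$, i.e.\ \eqref{eq:diagonal-decomposition}. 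The projections $S_iS_i^*$ are mutually orthogonal, sum to $1$, and lie in $\la_{DF}(\CM)'\cap\CM$, so $[\la_{DF}]=\bigoplus_{i=1}^d[\la_{u_i}]$ as sectors; as each $u_i\in\CU(\CF_d^1)$, each $\la_{u_i}$ is a quasi-free automorphism, and conjugation by $\la_u$ shows the same for $[\la_R]$. Thus $\la_R$ is a direct sum of $d$ automorphisms, each of statistical dimension $1$, so $d(\la_R)=d$ and $\Ind(\la_R)=d^2$; combining this with $\Ind(\la_R)\le\Ind_{E_R}(\la_R)=[\CN:\la_R(\CN)]\le d^2$ from \eqref{eq:basic-index-bounds} forces the equalities \eqref{eq:indexofdiagonalR}.

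\emph{Part (a).} Here $R$ is the simple R-matrix with $c_{ij}=1$ for $i\ne j$, which is exactly the box-sum $R=\bigboxplus_{i=1}^N c_{ii}1_{d_i}$ with $d_i=d\,\tau(p_i)$ (cf.~\eqref{eq:easyR}). By Proposition~\ref{prop:structureofMR1}, $\CM_{R,1}\cong\C^{\,N-m}\oplus M_m$, so as sectors $\la_R$ decomposes into $N-m$ pairwise inequivalent multiplicity-one irreducibles together with $m$ copies of a single further irreducible $\rho_0$. The first step is to show $\rho_0=[\id]$: for $i\in I:=\{i:\tau(p_i)=1/d,\ c_{ii}=1\}$ the corresponding summand is one-dimensional, and writing $S_{(i)}$ for the associated generator a direct check from the explicit action of $\la_R$ on generators gives $\la_R(x)S_{(i)}=S_{(i)}x$ for all $x$, i.e.\ $S_{(i)}\in(\id,\la_R)$; conversely one computes that $(\id,\la_R)$ is spanned by these $m$ orthogonal isometries, so $\id$ occurs in $\la_R$ with multiplicity exactly $m$ and the remaining $N-m$ components are pairwise inequivalent and non-trivial as sectors. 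The second step identifies those $N-m$ components. For $j\notin I$ with $d_j=1$ — equivalently $\tau(p_j)=1/d$ and $c_{jj}\ne1$, of which there are exactly $n$ — one has $S_{(j)}S_{(j)}^*=p_j$, and evaluating $S_{(j)}^*\la_R(\cdot)S_{(j)}$ on generators yields the quasi-free automorphism $\la_{v_j}$ with $v_j=c_{jj}p_j+(1-p_j)\in\CU(\CF_d^1)$; these are the $\alpha_1,\dots,\alpha_n$. The components attached to the summands with $d_j\ge2$ — of which there are $N-n-m$ — are the $\beta_1,\dots,\beta_{N-n-m}$, and all the $\alpha_i,\beta_j$ are mutually inequivalent because they come from distinct minimal central projections of $\CM_{R,1}$.

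\emph{Main obstacle.} The step I expect to be hardest is proving that the $\beta_j$ are genuine endomorphisms, i.e.\ not automorphisms: the relative-commutant data of Proposition~\ref{prop:structureofMR1} do not detect this. The plan is a statistical-dimension count. Realizing $\beta_j$, via a corner isomorphism $p_j\CM p_j\cong\CM$ implemented by an isometry $v_j\in\CM$ with $v_jv_j^*=p_j$, as the compression of $\la_R$ by $p_j$, and comparing it with the endomorphism attached to the trivial R-matrix $c_{jj}1_{d_j}$ acting on the proper corner $p_j\CM p_j$, one should obtain $d(\beta_j)=d_j\ge2$. This fits the additivity relation $m+n+\sum_{d_j\ge2}d(\beta_j)=d(\la_R)\le d$, which in fact then pins down $d(\la_R)=d$ (reproving $\Ind_{E_R}(\la_R)=d^2$ for all these R-matrices), and $d(\beta_j)>1$ rules out $\beta_j$ being an automorphism; in particular it confirms that $n$ is the exact number of automorphism components.
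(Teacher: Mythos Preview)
Your proofs of (a) and (b) are correct and follow essentially the same route as the paper. For (b) the paper shows $S_i\in(\la_{u_i},\la_R)$ by checking $\la_R(S_j)S_i=S_i\la_{u_i}(S_j)$ and $\la_R(S_j^*)S_i=S_i\la_{u_i}(S_j^*)$, which is equivalent to your direct verification of $\la_{DF}=\sum_i S_i\la_{u_i}(\cdot)S_i^*$; the index argument is identical. For (a) the paper likewise normalizes so that every one-dimensional $p_i$ equals $S_iS_i^*$, sets $u_i=1+(c_{ii}-1)S_iS_i^*$, checks $S_i\in(\la_{u_i},\la_R)$ for all one-dimensional $p_i$ simultaneously (your $I$-case is $u_i=1$, your $\alpha$-case is $u_i\neq 1$), and then reads off the remaining $N-m-n$ irreducibles $\beta_j$ from the structure $\CM_{R,1}\cong\C^{N-m}\oplus M_m$ of Proposition~\ref{prop:structureofMR1}.

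Your ``Main obstacle'' paragraph is unnecessary, and this is worth knowing. The statement asserts only that the $\beta_j$ are \emph{irreducible endomorphisms} which are mutually inequivalent and non-trivial as sectors; it does not claim they fail to be automorphisms, and the paper's proof does not establish this either. Irreducibility, mutual inequivalence, and non-triviality all follow directly from the relative commutant: each $\beta_j$ is cut out by a minimal central projection in one of the $\C$-summands, distinct summands give inequivalent sectors, and ``non-trivial'' just means $[\beta_j]\neq[\id]$, which holds because $[\id]$ is already accounted for by the $M_m$-block. So once you have identified the $m$ copies of $\id$ and the $n$ quasi-free automorphisms, you are done.

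As a side remark, your sketched dimension argument for the stronger claim $d(\beta_j)=d_j$ is not convincing as written: the ``trivial R-matrix $c_{jj}1_{d_j}$'' gives an automorphism of statistical dimension $1$, not $d_j$, so the comparison you propose does not obviously produce $d(\beta_j)=d_j$; and the inequality $m+n+\sum d(\beta_j)=d(\la_R)\le d$ only gives $d(\beta_j)\le d_j$, the wrong direction. If you want to pursue this, a cleaner route is to observe that the $\om$-preserving expectation $E_R$ assigns the minimal projection $p_j\in\CM_{R,1}$ the value $\tau(p_j)=d_j/d$, and then invoke the relation between $\Ind_{E_R}$ and the minimal index via the index vector; but again, none of this is needed for the proposition as stated.
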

\begin{proof}
    In both cases \ref{item:decomposelambdaR-fornormalforms} and \ref{item:decomposelambdaR-fordiagonalR}, there exists a unitary $u\in\CF_d^1$ such that $p_i=uS_iS_i^*u^*$ for all $i\in\{1,\ldots,N\}$ such that $\tau(p_i)=1/d$. Since $\la_u\circ\la_R\circ\la_u^{-1}=\la_{\la_u(R)}$, we may assume $p_i=S_iS_i^*$ for all one-dimensional projections $p_i$ without loss of generality.

    \ref{item:decomposelambdaR-fornormalforms}  For each one-dimensional projection $p_i$, we define the unitary $u_i:=c_{ii}S_iS_i^*+\sum_{k\neq i}S_kS_k^*=1+(c_{ii}-1)S_iS_i^*\in\CF_d^1$ and claim $S_i\in(\la_{u_i},\la_R)$. To prove this, we note that for arbitrary $j\in\{1,\ldots,d\}$, we have $p_\alpha S_i=\delta_{i\alpha}S_i$ and $p_iS_j=\delta_{ij}S_j$. Then we calculate from the definition of $R$ that, 
    \begin{align*}
        \la_R(S_j)S_i
        =
        RS_jS_i
        &=
        \sum_{\alpha=1}^Nc_{\alpha\alpha}p_\alpha S_j p_\alpha S_i+\sum_{\alpha\neq\beta} p_\alpha S_i\, p_\beta S_j
        =
         \begin{cases}
            c_{ii}\,S_i^2 & i=j\\
            S_iS_j & i\neq j
        \end{cases},
    \end{align*}
    which is easily seen to coincide with $S_iu_iS_j=S_i\la_{u_i}(S_j)$.
    
    Analogously, one shows $\la_R(S_j^*)S_i=S_i\la_{u_i}(S_j^*)$, which then shows that $\la_R$ contains the automorphisms $\la_{u_i}$. 
    
    Since $u_i=1$ if $c_{ii}=1$, this shows that $\la_R$ contains the identity with multiplicity $m$, and $n$ further automorphisms (the $\la_{u_i}$ with $c_{ii}\neq1$), as claimed.
    
    The statement about the remaining irreducible endomorphisms $\beta_k$ now follows from the known structure of $\CM_{R,1}$, namely $\CM_{R,1}\cong\Cl\oplus\ldots\Cl\oplus M_m$, where~$\Cl$ occurs with multiplicity $N-m$ (Prop.~\ref{prop:structureofMR1}).
    
    \ref{item:decomposelambdaR-fordiagonalR} Let $i,j\in\{1,\ldots,d\}$. It is clear that $\lambda_{u_i}$ is an automorphism, with $\lambda_{u_i}(S_j)=c_{ij}S_j$ and $\lambda_{u_i}(S_j^*)=\overline{c_{ij}}S_j^*$. Analogously to part~\ref{item:decomposelambdaR-fornormalforms}, one computes $\lambda_R(S_j)S_i=c_{ij}S_iS_j$ and $\lambda_R(S_j^*)S_i=\overline{c_{ij}}S_iS_j^*$. Hence $S_i\lambda_{u_i}(x)=\lambda_R(x)S_i$ whenever $x=S_j$ or $x=S_j^*$. This implies \eqref{eq:diagonal-decomposition}.
    
    Since each automorphism has dimension $1$, it follows that the minimal index is $\Ind(\la_R)=d^2$. Since $\Ind(\la_R)\leq\Ind_{E_R}(\la_R)=[\CN:\la_R(\CN)]\leq d^2$, \eqref{eq:indexofdiagonalR} follows.
\end{proof}

We see in particular that all simple nontrivial R-matrices are reducible. Irreducible R-matrices do exist (and are in fact likely to be the most interesting ones), but a general overview over irreducible R-matrices is currently not known. In Section~\ref{section:2dRmatrices} we will see an example.

\begin{example}
    The spectral index bounds from Cor.~\ref{corollary:spectralindexbounds} can be fairly weak, as the following example shows. If we take $R=F$, then $\sigma(R)=\{1,-1\}$ and $\phi_R(R)=d^{-1}1$. Thus in this case, the lower bounds \eqref{eq:lowerindexbounds} gives $2\leq\Ind\la_R$ and $1\leq\Ind\la_R$, respectively, to be compared with the exact result $\Ind\la_R=d^2$.
\end{example}

Regarding {\em upper} bounds on the index, we have the completely general bound $[\CN:\la_R(\CN)]\leq d^2$ on the Jones index \cite{ContiPinzari:1996} (and hence on the minimal index). In the special case that $\phi_R(R)=\tau(R)1\neq0$, then it was also shown in \cite{ContiPinzari:1996} that
\begin{align}
	[\CN:\la_R(\CN)]\leq |\tau(R)|^{-2}.	
\end{align}
More generally, if $\phi_R(R)$ is invertible\footnote{For involutive R-matrices, $\phi_R(R)$ is known to be invertible \cite{LechnerPennigWood:2019}. We currently have no proof (but also no counterexample) that this property remains true for general $R\in\CR$.} but not necessarily scalar, then
\begin{align}\label{eq:upperbound}
    [\CN:\la_R(\CN)]\leq \|\phi_R(R)^{-1}\|^4.
\end{align}
This bound is not necessarily sharper than the general bound $d^2$, but has an interesting consequence for R-matrices that we record here, following \cite[Cor.~5.5]{ContiPinzari:1996}. It states that the spectrum of a non-trivial R-matrix can not be concentrated in a disc of radius less than the universal bound $1-2^{-1/4}\approx0.159$ (this value is probably not optimal). 

\begin{corollary}\label{corollary:no-spectral-concentration}
	Let $R\in\CR$ and $\mu\in\T$ such that $\|R-\mu\|<1-2^{-1/4}$. Then $R$ is trivial.
\end{corollary}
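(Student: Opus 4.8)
The plan is to deduce this from the index estimate \eqref{eq:upperbound} together with Jones' restrictions on the possible indices of $\mathrm{II}_1$ subfactors below $4$. The point is that the hypothesis $\|R-\mu\|<1-2^{-1/4}$ forces $\phi_R(R)$ --- which is a normal element of $\CF_d^1$ by Thm.~\ref{theorem:phiR}~\ref{item:left-and-right} --- to be so close to the unitary $\mu 1$ that it is invertible with a good bound on $\|\phi_R(R)^{-1}\|$, after which \eqref{eq:upperbound} pins down the Jones index to be $1$.

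Concretely, I would proceed as follows. Since $\phi_R$ is the left inverse in the conditional expectation $E_R=\la_R\circ\phi_R$, it satisfies $\la_R(\phi_R(1))=E_R(1)=1=\la_R(1)$, hence $\phi_R(1)=1$ by injectivity of $\la_R$; being unital and completely positive, $\phi_R$ is contractive, so
\begin{align*}
	\|\phi_R(R)-\mu 1\|=\|\phi_R(R-\mu 1)\|\le\|R-\mu 1\|<1-2^{-1/4}<1.
\end{align*}
Since $\mu\in\T$, writing $\phi_R(R)=\mu\bigl(1+\mu^{-1}(\phi_R(R)-\mu 1)\bigr)$ and using a Neumann series shows that $\phi_R(R)$ is invertible with
\begin{align*}
	\|\phi_R(R)^{-1}\|\le\frac{1}{1-\|R-\mu 1\|}<\frac{1}{2^{-1/4}}=2^{1/4}.
\end{align*}
Feeding this into \eqref{eq:upperbound} gives $[\CN:\la_R(\CN)]\le\|\phi_R(R)^{-1}\|^4<(2^{1/4})^4=2$. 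Now the index of a $\mathrm{II}_1$ subfactor is either $\ge 4$ or of the form $4\cos^2(\pi/n)$, $n\ge 3$; these values increase with $n$ and their two smallest are $1$ (at $n=3$) and $2$ (at $n=4$), so a value strictly below $2$ must equal $1$. Hence $\la_R(\CN)=\CN$. Using $\Ind_{E_R}(\la_R)=[\CN:\la_R(\CN)]$ (cf.\ \eqref{eq:basic-index-bounds}) this gives $\Ind_{E_R}(\la_R)=1$, so the conditional expectation $E_R\colon\CM\to\la_R(\CM)$ has index $1$, whence $\la_R(\CM)=\CM$ and $\la_R$ is an automorphism of $\CM$; Cor.~\ref{cor:no-auto} then forces $R$ to be trivial.

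The argument has essentially no obstacle; the only thing to be careful about is that \eqref{eq:upperbound} is stated only under the assumption that $\phi_R(R)$ be invertible, and this is exactly what the first norm estimate provides, so no use is made of the (still open) general invertibility of $\phi_R(R)$. One should also notice that the constant $1-2^{-1/4}$ is precisely the threshold making $\|\phi_R(R)^{-1}\|^4<2$, which is why the bound is not claimed to be optimal: any improvement would have to come from sharpening \eqref{eq:upperbound}. (If one prefers not to pass to $\CM$, one can instead argue: $\la_R(\CN)=\CN$ gives $\la_R^2(\CN)=\CN$, and since $R\in\CM_{R,2}=(\la_R^2,\la_R^2)$ commutes with $\la_R^2(\CM)\supseteq\CN$ while $R\in\CF_d^2\subset\CN$ and $\CN$ is a factor, $R\in\Cl 1$.)
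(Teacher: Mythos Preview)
Your proof is correct and follows essentially the same route as the paper's: bound $\|\phi_R(R)-\mu\|$ by $\|R-\mu\|$ to get invertibility of $\phi_R(R)$ with $\|\phi_R(R)^{-1}\|<2^{1/4}$, then apply \eqref{eq:upperbound} to force $[\CN:\la_R(\CN)]<2$, hence $=1$, and conclude via Cor.~\ref{cor:no-auto}. The paper's version is terser (it normalizes to $\mu=1$ and does not spell out Jones' index restrictions or the unitality of $\phi_R$), but the argument is the same.
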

\begin{proof}
	Passing from $R$ to $\mu^{-1} R\in\CR$ we may assume $\mu=1$ without loss of generality. 
	
	By assumption, $\|\phi_R(R)-1\|\leq\|R-1\|<1-2^{-1/4}<1$. Hence $\phi_R(R)$ is invertible, and the inverse satisfies $\|\phi_R(R)^{-1}\|\leq(1-\|R-1\|)^{-1}<2^{1/4}$. Thus \eqref{eq:upperbound} implies $[\CN:\la_R(\CN)]<2$, i.e. $[\CN:\la_R(\CN)]=1$ and $\la_R$ is an automorphism. This is only possible for trivial $R$ (Cor.~\ref{cor:no-auto}).
\end{proof}

The estimates \eqref{eq:lowerindexbounds} and \eqref{eq:upperbound} rely only on the spectrum of $R$ or $\phi_R(R)$ and fail to be sharp when multiplicities have to be taken into account. We hope to revisit this question in a future work.

\begin{remark}
     Akemann showed in \cite{Akemann:1997} that if the inclusion diagram
    \begin{equation} \label{diag:akemannssquare}
        \begin{array}{ccc}
            \CF_d^1 & \subset & \CF_d^2 \\
            \cup &\,&\cup\\
            \la_R(\CN)\cap\CF_d^1 & \subset & \la_R(\CN)\cap\CF_d^2
        \end{array}
    \end{equation}
    is a commuting square, then the index $[\CN:\la_R(\CN)]$ is an {\em integer}. 
    
    We remark here that one can show that for arbitrary $R\in\CR$, 
    \begin{align*}
        \CF_d^1\cap\la_R(\CN) = (\CF_d^1)^{\la_R}.
    \end{align*}
    With the results of the next section, it is then easy to check that if $\la_R$ is {\em ergodic} (that is, $\CN^{\la_R}=\Cl$), then \eqref{diag:akemannssquare} commutes and hence $[\CN:\la_R(\CN)]\in\Nl$. However, the square does {\em not} commute for general R-matrices. Any simple R-matrix containing a projection of dimension greater than 1 is a counterexample.
    
    Presently, it is unknown whether $[\CN:\la_R(\CN)]$ is integer\footnote{It is known, however, that $[\CL_R:\varphi(\CL_R)]$ is typically {\em not} integer \cite{Yamashita:2012,Tanner:2019}.} for any $R\in\CR$, and whether $\{[\CN:\la_R(\CN)]:R\in\CR\}=\Nl$.
\end{remark}

\subsection{Reduction of involutive R-matrices}\label{section:reduction-of-involutives}

Our considerations so far show that the decomposition of a Yang-Baxter endomorphism into irreducible endomorphisms does not preserve the Yang-Baxter equation. This can for example be seen from the decomposition of the endomorphism of a diagonal R-matrix \eqref{eq:diagonal-decomposition} which yields non-trivial automorphisms $\la_{U_i}$ -- these are not R-matrices because the only R-matrices giving automorphisms are trivial.

In the context of Yang-Baxter endomorphisms, one would therefore rather like to consider a different reduction scheme that does preserve the YBE. In this section, we present such a scheme for the subclass of involutive R-matrices (i.e. $R^2=1$).

\medskip 

To begin with, we consider R-matrices with the special property that they can be restricted to certain tensor product subspaces, as defined below.

\begin{definition}
    An R-matrix $R\in\CR(V)$ is called {\em restrictable}\footnote{In \cite{Hietarinta:1993_3_2}, such R-matrices are called ``simple solutions''. Note that R-matrices that are simple according to our definition Def.~\ref{def:diagonalR} are restrictable, but not all restrictable R-matrices are simple.} 
    if there exists a non-trivial subspace $W\subset V$ such that $R$ leaves the two subspaces $W\ot W$ and $W^\perp\ot W^\perp$  of $V\ot V$ (with $W^\perp$ the orthogonal complement of $W\subset V$) invariant. 
\end{definition}

Clearly $R$ is restrictable if and only if there exists a non-trivial orthogonal projection $p\in\CF_d^1$ such that 
\begin{align}
 [R,p\ot p]=0,\qquad [R,p^\perp\ot p^\perp]=0,\qquad [R,p\ot p^\perp+p^\perp\ot p]=0.
\end{align}
(Actually the third equation is a consequence of the first two.)

It is clear that in this situation, the restrictions of $R$ to $W\ot W$ and $W^\perp\ot W^\perp$ are again R-matrices, with base spaces $W$ and $W^\perp$, respectively. 

We now look at the special case of involutive R-matrices.

\begin{lemma}\label{lemma:restrictability}
    Let $R\in\CR_0(V)$ be involutive and reducible. Then $R$ is restrictable. More precisely, there exists a nontrivial subspace $W\subset V$ (with orthogonal complement $W^\perp$) such that according to the orthogonal decomposition
 \begin{align}
  V\ot V = (W\ot W)\oplus(W\ot W^\perp)\oplus(W^\perp\ot W)\oplus(W^\perp\ot W^\perp),
 \end{align}
 $R$ takes the form
 \begin{align}\label{eq:Rsplit}
    R = 
    \left(
	\begin{array}{cccc}
		S\\
		&&U^{-1}\\
		&U\\
		&&&T\\
    \end{array}
    \right)
 \end{align}
 with a unitary $U:W\ot W^\perp\to W^\perp\ot W$ and involutive R-matrices $S\in\CR_0(W)$, $T\in\CR_0(W^\perp)$.  
\end{lemma}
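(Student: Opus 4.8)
The plan is to reduce everything to the structure of $\CM_{R,1}$, which by \eqref{eq:MR1} is $\{x\in\CF_d^1\,:\,\varphi(x)=R^*xR\}$, and to exploit that $R$ is involutive ($R=R^*$). First I would use reducibility: $\CM_{R,1}\neq\Cl$, so it contains a non-trivial projection. The key claim is that $\CM_{R,1}$ actually contains a non-trivial projection $p$ that is \emph{symmetric} in the sense that the associated subspace $W=pV$ satisfies $[R,p\ot p]=0$ and $[R,p^\perp\ot p^\perp]=0$; this is exactly restrictability. To get this, I would analyze the defining relation $\varphi(p)=RpR$ (using $R=R^*$): writing $\varphi(p)=1\ot p$ (in $\CF_d^1\ot\CF_d^1$-language) and $p\ot 1$ for $p$ itself, the relation $RpR=1\ot p$ together with $R^2=1$ forces $R$ to intertwine the two commuting projections $p\ot 1$ and $1\ot p$ in a controlled way. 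Multiplying $RpR=1\ot p$ on appropriate sides by $p\ot 1$, $p^\perp\ot 1$, $1\ot p$, $1\ot p^\perp$ and using $R^2=1$ should yield $[R,p\ot p]=0$ and $[R,p^\perp\ot p^\perp]=0$ directly, hence $W=pV$ witnesses restrictability.

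Next I would set up the block decomposition. With $W=pV$, $W^\perp=p^\perp V$, the four subspaces $W\ot W$, $W\ot W^\perp$, $W^\perp\ot W$, $W^\perp\ot W^\perp$ decompose $V\ot V$. The relations just derived show $R$ preserves $W\ot W$ and $W^\perp\ot W^\perp$, giving the corner blocks $S$ and $T$; these are unitary (as restrictions of a unitary to an invariant subspace of a normal operator, or directly since $R$ commutes with the corner projections) and involutive since $R^2=1$ restricts. That $S,T$ satisfy the YBE follows because the YBE for $R$ in the form \eqref{eq:YBE} restricts to any subspace of the form $Z\ot Z\ot Z$ with $Z\in\{W,W^\perp\}$ that is invariant under all three factors $R\ot\id$, $\id\ot R$: here invariance of $W\ot W$ (resp.\ $W^\perp\ot W^\perp$) under $R$ is precisely what guarantees $W^{\otimes 3}$ is invariant under $R\ot\id_V$ and $\id_V\ot R$. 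For the off-diagonal part, $R$ maps $W\ot W^\perp$ into $(W\ot W)\oplus(W\ot W^\perp)\oplus(W^\perp\ot W)\oplus(W^\perp\ot W^\perp)$, but the corner-commutation relations plus $[R,p\ot p^\perp+p^\perp\ot p]=0$ (a consequence of the first two) force $R$ to exchange $W\ot W^\perp$ and $W^\perp\ot W$; call the resulting unitary $U:W\ot W^\perp\to W^\perp\ot W$. Since $R^2=1$, the block of $R$ from $W^\perp\ot W$ to $W\ot W^\perp$ must be $U^{-1}$, giving exactly the matrix form \eqref{eq:Rsplit}.

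The main obstacle I anticipate is the very first step: showing that $\CM_{R,1}$ contains a projection whose associated subspace is genuinely restrictable, rather than merely a projection $p$ with $\varphi(p)=RpR$. A priori the defining condition only couples $p\ot 1$ to $1\ot p$ through $R$, and one needs $R=R^*$ (involutivity) in an essential way to upgrade this to commutation with $p\ot p$. I would spell this out carefully: from $RpR=1\ot p$ and $R^2=1$ we get $R(1\ot p)R=p\ot 1$ as well; then $(p\ot 1)R(p\ot 1)=(p\ot 1)(R(1\ot p)R)\cdot(\text{stuff})$-type manipulations, combined with positivity of projections, should pin down the block structure. If a direct computation is awkward, an alternative is to invoke that $\CM_{R,1}$ is a finite-dimensional $C^*$-algebra and pass to a minimal central projection, or to use that for involutive $R$ the sector decomposition of $\la_R$ is into (shifted) endomorphisms coming from sub-R-matrices — but the self-contained route through the explicit relation $\varphi(p)=RpR$ is cleanest. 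The rest (unitarity of $S,T,U$, involutivity, YBE for $S,T$) is routine once the subspace decomposition is in place.
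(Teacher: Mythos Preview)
Your approach matches the paper's exactly. The ``main obstacle'' you flag is not one: for \emph{any} non-trivial projection $p\in\CM_{R,1}$, the one-line computation $R(p\ot p)R = R(p\ot 1)R\cdot R(1\ot p)R = (1\ot p)(p\ot 1) = p\ot p$ (inserting $RR=1$ between the two factors) gives restrictability directly, with $R(p\ot p^\perp)R = 1\ot p - p\ot p = p^\perp\ot p$ handling the off-diagonal block --- no special choice of projection and no positivity argument needed.
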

\begin{proof}
    Since $\la_R$ is reducible, there exists a non-trivial projection $p\in\CM_{R,1}\subset\CF_d^1$, and we define $W:=pV$. As an element of $(\la_R,\la_R)$, the projection $p$ satisfies $R^*(p\ot1)R=1\ot p$. Furthermore, $R$ is involutive and hence selfadjoint. This implies that we also have $R(1\ot p)R=p\ot 1$ and therefore 
 \begin{align}
  R(p\ot p)R=R(p\ot1)RR(1\ot p)=(1\ot p)(p\ot 1)=p\ot p.
 \end{align}
 We conclude that $R$ leaves the subspaces $W\ot W$ and $W^\perp\ot W^\perp$ invariant and defines the two R-matrices $S$ and $T$ by restriction to these subspaces.
 
 Moreover, we have
 \begin{align}
  R(p\ot p^\perp)R=R(p\ot 1-p\ot p)R=1\ot p-p\ot p=p^\perp\ot p,
 \end{align}
 and analogously $R(p^\perp \ot p)R=p\ot p^\perp$. This shows that $R$ also restricts to unitary maps $U:W\ot W^\perp\to W^\perp\ot W$ and $U':W^\perp\ot W\to W\ot W^\perp$. Since $R^2=1$, we find $U'=U^{-1}$.
\end{proof}

This observation sheds new light onto the decomposition of involutive R-matrices: Whenever an involutive $R$ is reducible, we can split it into two smaller R-matrices $S,T$ and an ``off-diagonal component'' $U$. Since the restrictions $S$ and $T$ are still involutive, this process can be iterated until, after finitely many steps, the restricted R-matrices are irreducible. In this sense involutive R-matrices are completely reducible. 

\begin{remark}\leavevmode
    \begin{enumerate}
        \item We conjecture that in the involutive case, $\la_R$ is irreducible if and only if $R$ is a multiple of the identity. This is certainly true in dimension $d=2$, but we currently have no proof in general dimension.
        \item Equation \eqref{eq:Rsplit} can also be read as a way of constructing R-matrices of larger dimension out of two smaller ones. If the operator $U$ in \eqref{eq:Rsplit} coincides with the flip $F$, then the right hand side of \eqref{eq:Rsplit} equals $S\boxplus T$, which satisfies the YBE if and only if $S$ and $T$ do. For more general $U$, certain commutation relations between $U$ and $S,T$ have to be satisfied in order to ensure the YBE for $R$ \cite{MajidMarkl:1996}.
    \end{enumerate}
\end{remark}

It is instructive to point out how this reduction scheme leads to a normal form for involutive R-matrices up to the equivalence relation $\sim$. Recall that R-matrices of normal form were defined in \eqref{eq:def-normal-form} as simple R-matrices with parameters $c_{ii}\in\{\pm1\}$ for all $i$ and $c_{ij}=1$ for all $i\neq j$.

\begin{proposition}
    Let $R\in\CR(d)$ be involutive.
    \begin{enumerate}
        \item\label{item:boxreduction} In the situation of Lemma~\ref{lemma:restrictability}, we have $R\sim S\boxplus T$.
        \item\label{item:irred-1F} If $R$ is irreducible, then $R=\pm1$ or $R\sim\pm F$.
        \item\label{item:normalform} There exists an R-matrix $N$ of normal form such that $R\sim N$.
    \end{enumerate}
\end{proposition}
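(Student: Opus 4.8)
The plan is to prove the three parts in the stated order, each one building on the previous. For part~\ref{item:boxreduction}, I would start from the block form \eqref{eq:Rsplit} given by Lemma~\ref{lemma:restrictability}: $R$ acts as $S$ on $W\ot W$, as $T$ on $W^\perp\ot W^\perp$, and as $U^{\mp1}$ on the mixed blocks. The R-matrix $S\boxplus T$ has exactly the same diagonal blocks $S,T$ but uses the flip $F$ on the mixed blocks instead of $U$. The idea is to conjugate $R$ by a suitable quasi-free automorphism / unitary that fixes $W\ot W$ and $W^\perp\ot W^\perp$ but intertwines $U$ and $F$ on the off-diagonal part, and to verify that this conjugation is of one of the equivalence types (Type 1--3) established on p.~\pageref{page:types}, so that it preserves the $\sim$-class. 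Concretely, since both $R$ and $S\boxplus T$ are involutive, $U$ satisfies $U' = U^{-1}$ and one can hope to write $U = (w^\perp \ot w) F (w \ot w^\perp)^*$ for appropriate partial isometries built from a unitary $w\in\CU(\CF_d^1)$ that restricts to the identity on $W$; conjugating by $\la_w$ (Type~1) then sends $R$ to $S\boxplus T$ while fixing $S$ and $T$. The main obstacle here is checking that such a $w$ exists — i.e. that the off-diagonal unitary $U$, subject only to the involutivity and YBE constraints, is always ``trivializable'' in this way; this is where the involutive hypothesis and the commutation relations between $U$ and $S,T$ (cf.\ the remark after Lemma~\ref{lemma:restrictability} and \cite{MajidMarkl:1996}) must be used carefully.

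For part~\ref{item:irred-1F}, I would argue by dimension on $V$. If $\dim V=1$ then $R$ is a phase $q\cdot 1$, and involutivity forces $q\in\{\pm1\}$, so $R=\pm1$. For $\dim V\geq2$, suppose $R$ is irreducible, i.e.\ $\CM_{R,1}=\Cl$. By the conjectural statement mentioned in the preceding remark, irreducibility \emph{should} already force $R=\pm1$, but since that conjecture is open in general I would instead invoke what is actually available: in the involutive case $\la_R$ is completely reducible and the decomposition is governed by Thoma's theorem, so the character $\tau_R$ corresponds to a pair of Young diagrams. An irreducible involutive $R$ must correspond to an extremal character that cannot be refined, and for involutive R-matrices the extremal building blocks are exactly $\pm1$ (one-dimensional, giving a single Thoma parameter $\alpha=1$ or $\beta=1$) or $\pm F$ (which is irreducible on $\CL_R$ though reducible on $\CM$ — note the footnote earlier that $R=F$ gives $\la_R|_{\CL_R}$ irreducible). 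So up to $\sim$ an irreducible involutive $R$ with $\dim V\geq 2$ must be $\sim\pm F$. The precise bookkeeping — reconciling ``irreducible as endomorphism of $\CM$'' with ``irreducible building block for $\sim$'' — is the delicate point and may require restating part~\ref{item:irred-1F} as ``$R$ equals $\pm1$ or is $\sim$-equivalent to $\pm F$ possibly after noting $\CR_0(2)$ directly,'' using Thm.~\ref{theorem:involutivecase} together with the $d=2$ analysis.

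Part~\ref{item:normalform} is then an induction using parts~\ref{item:boxreduction} and~\ref{item:irred-1F}. If $R$ is irreducible, part~\ref{item:irred-1F} gives $R=\pm1$ or $R\sim\pm F$; in either case $R$ is $\sim$-equivalent to a normal form R-matrix: $\pm1_d = \pm1_1\boxplus\cdots\boxplus\pm1_1$ with a single sign is of the shape \eqref{eq:def-normal-form} (one projection, $\eps=\pm1$), and $F_d = 1_1\boxplus\cdots\boxplus1_1$ ($d$ one-dimensional projections, all signs $+1$) while $-F$ has all signs $-1$. If $R$ is reducible, Lemma~\ref{lemma:restrictability} and part~\ref{item:boxreduction} give $R\sim S\boxplus T$ with $S,T$ involutive of strictly smaller dimension; by the induction hypothesis $S\sim N_S$ and $T\sim N_T$ for normal forms $N_S,N_T$, and since $\sim$ is compatible with $\boxplus$ — this follows because an isomorphism $\gamma_{S,N_S}:\CL_S\to\CL_{N_S}$ intertwining the respective $\varphi$'s, together with $\gamma_{T,N_T}$, induces one $\CL_{S\boxplus T}\to\CL_{N_S\boxplus N_T}$, using the explicit action of $\la_{R\boxplus\tilde R}$ on the relative commutant described in Section~\ref{section:sums} — we get $R\sim N_S\boxplus N_T$. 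Finally $N_S\boxplus N_T$ is again of normal form: it is the simple R-matrix attached to the concatenated partition of unity $\{p^S_i\}\cup\{p^T_j\}$ (reshuffled inside a single $\CF_{d}^1$) with the concatenated signs, and $c_{ij}=1$ for $i\neq j$ because the mixed blocks act by the flip. The only real subtlety at this stage is the compatibility of $\sim$ with $\boxplus$, which is where the partial-trace additivity \cite[Lemma 4.2~iv)]{LechnerPennigWood:2019} and Thm.~\ref{theorem:phiR}~\ref{item:phiRR-is-an-invariant} give extra reassurance, since $\phi_R(R)$ being a complete $\sim$-invariant in the involutive case (Thm.~\ref{theorem:involutivecase}) reduces everything to matching spectra of partial traces, which is manifestly additive under $\boxplus$.
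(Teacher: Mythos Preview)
Your approach to part~\ref{item:boxreduction} has a genuine gap. You try to exhibit a Type~1 equivalence $R=\la_w(S\boxplus T)$ for some $w\in\CU(\CF_d^1)$, but this cannot work: for \emph{any} such $w$, the unitary $w\ot w$ commutes with the flip $F$, so conjugation by $w\ot w$ leaves the off-diagonal block of $S\boxplus T$ equal to $F$ and can never produce an arbitrary $U:W\ot W^\perp\to W^\perp\ot W$. (Your candidate $(w^\perp\ot w)F(w\ot w^\perp)^*$ with $w|_W=\id$ collapses to $F$ for the same reason.) In general $R$ and $S\boxplus T$ are \emph{not} related by a quasi-free automorphism; they are only $\sim$-equivalent. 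The paper's route is to compare characters directly: since both R-matrices are involutive and extremal $S_\infty$-characters are determined by their values on cycles, it suffices to show
\[
(\dim R)^{n+1}\,\tau(R\cdots\varphi^n(R))
=
d_S^{n+1}\,\tau(S\cdots\varphi^n(S))+d_T^{n+1}\,\tau(T\cdots\varphi^n(T)),
\]
and the computation from \cite[Prop.~4.4]{LechnerPennigWood:2019} (done there for $U=F$) goes through unchanged for arbitrary $U$, because the off-diagonal block never contributes to these cycle traces. Alternatively --- and closer to what you already observe at the end --- you could use Thm.~\ref{theorem:involutivecase}: in the involutive case $\sim$ is completely determined by the similarity class of $\phi_R(R)$, and the left partial trace of \eqref{eq:Rsplit} is independent of $U$, so $\phi_R(R)=\phi_{S\boxplus T}(S\boxplus T)$ on the nose.

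For part~\ref{item:irred-1F} your argument is too vague and misses the concrete mechanism. The clean step is: irreducibility means $\CM_{R,1}=\Cl$, hence $\CL_{R,1}\subset\CM_{R,1}=\Cl$, hence $\phi_R(R)\in\CL_{R,1}$ is a scalar. In the involutive case, $\phi_R(R)\in\Cl$ is known (from \cite{LechnerPennigWood:2019}, via the Thoma parameters) to force $R=\pm1$ or $R\sim\pm F$. Your appeal to ``extremal characters that cannot be refined'' does not make this link, and your discussion of irreducibility on $\CL_R$ versus $\CM$ is beside the point --- the relevant input is just $\phi_R(R)\in\Cl$.

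Your part~\ref{item:normalform} is essentially the paper's argument. The compatibility of $\sim$ with $\boxplus$ that you flag is indeed needed (and is used implicitly in the paper when replacing each irreducible $R^i$ by its normal form); your justification via $\phi_R(R)$ being a complete invariant in the involutive case is correct and is the cleanest way to close that gap.
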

\begin{proof}
    \ref{item:boxreduction} We have to show that $R$ and $\tilde R:=S\boxplus T$ have the same character. It is sufficient to show that for any $n\in\N$, we have $\tau(R\varphi(R)\cdots\varphi^n(R))=\tau(\tilde R\varphi(\tilde R)\cdots\varphi^n(\tilde R))$ because both R-matrices are involutive and extremal characters of the infinite symmetric group are fixed by their values on cycles. For the case $U=F$, it was shown in \cite[Prop.~4.4]{LechnerPennigWood:2019} that
    \begin{align*}
    (\dim\tilde R)^{n+1}\tau(\tilde R\cdots\varphi^n(\tilde R))
    =
    d_S^{n+1}\cdot
    \tau(S\cdots\varphi^n(S))
    +
    d_T^{n+1}\cdot
    \tau(T\cdots\varphi^n(T)),
    \end{align*}
    where $d_S=\dim S$, $d_T=\dim T$. This proof carries over without changes to the case of a general unitary $U:W\ot W^\perp\to W^\perp\ot W$, leading to the conclusion that $\tau(R\varphi(R)\cdots\varphi^n(R))=\tau(\tilde R\varphi(\tilde R)\cdots\varphi^n(\tilde R))$ for any $n\in\N$.
    
    \ref{item:irred-1F} If $R$ is irreducible, we have in particular $\CL_{R,1}=\Cl$ and therefore $\phi_R(R)\in\Cl$. This implies the claim, as shown in \cite{LechnerPennigWood:2019}.
    
    \ref{item:normalform} Applying the reduction scheme to $R$ repeatedly yields
    \begin{align*}
        R \sim R^1 \boxplus \ldots \boxplus R^n,
    \end{align*}
    where the $R^i\in\CR(d_i)$ are involutive irreducible R-matrices -- the superscript is just a label, not a power -- and the off-diagonal terms $U$ from Lemma~\ref{lemma:restrictability} have been removed up to equivalence $\sim$ with the help of part~\ref{item:boxreduction}. 
    
    Now, in view of part~\ref{item:irred-1F}, each $R^i$ is either $\pm1_{d_i}$ (the subscript indicates the dimension, i.e. $\pm1_{d_i}\in\CR(d_i)$) or equivalent to $\pm F_{d_i}$ (where again, the subscript indicates the dimension). Without loss of generality, assume the first $m$ R-matrices are trivial (for some $0\leq m\leq n$) and the remaining $n-m$ R-matrices are flips, i.e. there are signs $\eps_1,\ldots,\eps_n\in\{\pm1\}$ such that $R^1=\eps_11_{d_1},\ldots,R^m=\eps_m1_{d_m}$ and $R^{m+1}=\eps_{m+1}F_{d_{m+1}},\ldots,R^n=\eps_n F_{d_n}$.
    
    A look at \eqref{eq:simpleR} shows that the flip in dimension $d_i$ is simple, in fact it can be written as $F_{d_i}=1_1\boxplus\ldots\boxplus1_1$ ($d_i$ terms). 
    Hence we arrive at 
    \begin{align*}
        R \sim \eps_11_{d_1}\boxplus\ldots\boxplus\eps_m1_{d_m}\boxplus
        \eps_{m+1}
        (\underbrace{1_1\boxplus\ldots\boxplus1_1}_{d_{m+1} \text{ terms}})\boxplus\ldots\boxplus
        \eps_n(\underbrace{1_1\boxplus\ldots\boxplus1_1}_{d_n \text{ terms}})=:N,
    \end{align*}
    which shows that $R\sim N$ with $N$ of the claimed simple form.
\end{proof}

The normal form result was already known from \cite{LechnerPennigWood:2019}, but we have now a new perspective on it from the point of view of Yang-Baxter endomorphisms. This analysis identifies two greatly simplifying features of the involutive case: On the one hand, every involutive $R$ is completely reducible in the sense explained above, and on the other hand, there exist only very few irreducible involutive R-matrices.

For general R-matrices, neither a reduction scheme nor a classification of irreducible elements, are currently known\footnote{See Section~\ref{section:2dRmatrices} for an example of a non-trivial irreducible R-matrix.}. We hope to come back to this question in a future work.

\section{Ergodicity and Fixed Points}\label{section:ergodicity}

Fixed point subalgebras of automorphisms and endomorphisms of $\CO_d$ have not been investigated systematically but in few cases. For instance $\CO_d^\varphi = {\mathbb C}$, but  there exists an order two quasi-free automorphism $\lambda_f$ of $\CO_2$, $f=S_1 S_2^* + S_2 S_1^*$, such that ${\CO_2}^{\lambda_f} \simeq \CO_2$ \cite{ChoiLatremoliere:2012}. More interestingly, $\CO_2^{\lambda_{-1}} \simeq \CO_4$, as it is the $C^*$-subalgebra of $\CO_2$ generated by $S_i S_j$, $1 \leq i,j \leq 2$. This example is the fixed point algebra of the R-matrix $R=-1 \in \CR(2)$. 

In this section, we discuss fixed point algebras of Yang-Baxter endomorphisms~$\lambda_R$ at the level of the $C^*$-algebras $\CO_d$, $\CF_d$ and the von Neumann algebras $\CM$, $\CN$. What is special in the Yang-Baxter context is that fixed point algebras of $\la_R$ are closely related to the relative commutants $\CL_R\subset\CN$, $\CL_R\subset\CM$, as we demonstrate now.

\begin{proposition}\label{proposition:generalfixedpointresults}
    Let $R\in\CR(d)$. 
    \begin{enumerate}
        \item\label{item:FP-M} $\CM^{\lambda_R}\subset\bigcap\limits_{n\geq1}\lambda_R^n(\CM)\subset\CL_R'\cap\CM$.
        \item\label{item:FP-N} $\CN^{\lambda_R}=\bigcap\limits_{n\geq1}\lambda_R^n(\CN)=\CL_R'\cap\CN$.
        \item\label{item:FP-stability} Let $i,j\in\{1,\ldots,d\}$. Then $S_i^*\CM^{\lambda_R}S_j\subset \CM^{\lambda_R}$ and $S_i^*\CN^{\lambda_R}S_j\subset \CN^{\lambda_R}$.
    \end{enumerate}    
\end{proposition}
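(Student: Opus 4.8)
The plan is to prove \ref{item:FP-M} and \ref{item:FP-N} by the same commutation argument for the ``$\subseteq$''-directions, to handle the one remaining equality $\CL_R'\cap\CN=\CN^{\la_R}$ separately, and then to read off \ref{item:FP-stability}.

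For \ref{item:FP-M}, the inclusion $\CM^{\la_R}\subseteq\bigcap_{n\geq1}\la_R^n(\CM)$ is immediate, since $\la_R(x)=x$ forces $x=\la_R^n(x)\in\la_R^n(\CM)$ for every $n$. For the second inclusion I would exploit the tower structure: by Lemma~\ref{lemma:YBEndos} one has $\varphi^k(R)=\la_R^k(R)$, and combining $R\in\CM_{R,2}$ (see \eqref{eq:Rinsecondcommutant}) with $\la_R(\CM_{R,n})\subseteq\CM_{R,n+1}$ from \eqref{eq:laRonTowers} gives $\varphi^k(R)=\la_R^k(R)\in\CM_{R,k+2}=\la_R^{k+2}(\CM)'\cap\CM$ for every $k\geq0$. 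Hence any $x\in\bigcap_{n\geq1}\la_R^n(\CM)$ lies in $\la_R^{k+2}(\CM)$, so it commutes with $\varphi^k(R)$ for all $k\geq0$; as $\CL_R$ is the von Neumann algebra generated by $\{\varphi^k(R):k\geq0\}$, this yields $x\in\CL_R'\cap\CM$.

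For \ref{item:FP-N}, the chain $\CN^{\la_R}\subseteq\bigcap_{n\geq1}\la_R^n(\CN)\subseteq\CL_R'\cap\CN$ follows at once (the first inclusion as above, the second by intersecting the statement of \ref{item:FP-M} with $\CN$). The crux --- and the step I expect to be the main obstacle --- is the reverse inclusion $\CL_R'\cap\CN\subseteq\CN^{\la_R}$. Here I would argue: if $x\in\CL_R'\cap\CN$, then $x$ commutes with every $R_n\in\CL_R\subseteq\CN$, so the weak-limit formula $\phi_R(x)=\wlim_{n\to\infty}R_n^*xR_n$ for the left inverse gives $\phi_R(x)=x$, and hence $\la_R(x)=\la_R(\phi_R(x))=E_R(x)$. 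Since $E_R$ is the faithful normal $\om$-preserving conditional expectation onto $\la_R(\CM)$ and $\om\circ\la_R=\om$, the Pythagorean identity for $E_R$ gives
\begin{align*}
	\om\big((x-\la_R(x))^*(x-\la_R(x))\big)
	&=\om(x^*x)-\om\big(\la_R(x)^*\la_R(x)\big)\\
	&=\om(x^*x)-\om\big(\la_R(x^*x)\big)=0.
\end{align*}
Faithfulness of $\om$ then forces $x=\la_R(x)$, i.e. $x\in\CN^{\la_R}$, which closes the chain to equalities.

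For \ref{item:FP-stability}, let $x\in\CM^{\la_R}$. By part \ref{item:FP-M} we have $x\in\CL_R'\cap\CM$, so in particular $x$ commutes with $R\in\CL_R$ and therefore $R^*xR=x$. Using $\la_R(S_k)=RS_k$ one computes
\begin{align*}
	\la_R(S_i^*xS_j)=\la_R(S_i)^*\,\la_R(x)\,\la_R(S_j)=S_i^*R^*xRS_j=S_i^*xS_j,
\end{align*}
so $S_i^*xS_j\in\CM^{\la_R}$. If moreover $x\in\CN^{\la_R}$, then since $S_i^*\CF_d^kS_j\subseteq\CF_d^{k-1}$ for $k\geq1$ (and $S_i^*S_j\in\Cl1$ for $k=0$) and the map $a\mapsto S_i^*aS_j$ is normal, one has $S_i^*\CN S_j\subseteq\CN$, and therefore $S_i^*xS_j\in\CN\cap\CM^{\la_R}=\CN^{\la_R}$.
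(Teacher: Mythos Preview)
Your proof is correct. Parts \ref{item:FP-M} and \ref{item:FP-stability} are essentially identical to the paper's arguments (the paper phrases \ref{item:FP-M} by writing $x=\la_R^{m+2}(y)$ and invoking $R\in(\la_R^2,\la_R^2)$ directly, but this is the same content as your tower formulation $\varphi^k(R)\in\CM_{R,k+2}$).

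The only genuine difference is in the reverse inclusion $\CL_R'\cap\CN\subseteq\CN^{\la_R}$ in part \ref{item:FP-N}. The paper argues more directly: since $x$ commutes with every $\varphi^n(R)$, it commutes with every $R_n$, and the limit formula $\la_R(x)=\lim_n(\ad R_n)(x)$ immediately gives $\la_R(x)=x$. You instead route through the left inverse, obtaining $\phi_R(x)=x$ from the weak-limit formula for $\phi_R$, and then close the loop with the Pythagorean identity for the conditional expectation $E_R$. Both arguments are valid; the paper's is shorter, while yours has the minor advantage of citing the weak-limit formula for $\phi_R$ on $\CN$ exactly as stated in the text (the analogous formula for $\la_R$ is only written down for $\CF_d$ in the norm topology, though its extension to $\CN$ by normality is of course immediate).
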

\begin{proof}
    \ref{item:FP-M} The first inclusion is trivial. For the second one, let $x\in\bigcap_{n\geq1}\lambda_R^n(\CM)$ and $m\in\Nl_0$. Then $x=\lambda_R^{m+2}(y)$ for some $y\in\CM$, and taking into account that $R\in\CM_{R,2}=(\lambda_R^2,\lambda_R^2)$, we find
    \begin{align*}
        \varphi^m(R)x
        =
        \lambda_R^m(R)\lambda_R^{m+2}(y)
        =
        \lambda_R^m(R\lambda_R^2(y))
        =
        \lambda_R^m(\lambda_R^2(y)R)
        =
        x\varphi^m(R).
    \end{align*}
    Since $m$ was arbitrary, this implies $x\in\CL_R'\cap\CM$.
    
    \ref{item:FP-N} Exactly as in part \ref{item:FP-M} we have the two ``$\subset$'' inclusions, and it remains to show $\CL_R'\cap\CN\subset\CN^{\la_R}$. Let $x\in\CL_R'\cap\CN$, i.e. $[x,\varphi^n(R)]=0$ for all $n\in\N_0$. Then 
    \begin{align*}
        \lambda_R(x)=\lim_{n\to\infty}R\cdots\varphi^n(R)x\varphi^n(R)^*\cdots R^*=x,
    \end{align*}
    i.e. $x\in\CN^{\lambda_R}$.
    
    \ref{item:FP-stability} Let $x\in\CM^{\lambda_R}$. Taking into account that $x$ commutes with $R$ by part~\ref{item:FP-M}, we have 
    \begin{align*}
        \lambda_R(S_i^*xS_j)=
        S_i^*R^*\lambda_R(x)RS_j
        =
        S_i^*R^*xRS_j
        =S_i^*xS_j.
    \end{align*}
\end{proof}

\begin{remark}\leavevmode
    \begin{enumerate}
		\item In standard terminology, an endomorphism $\la$ of a von Neumann algebra $\CN$ is called {\em ergodic} if $\CN^{\la}=\Cl$ and a {\em shift} if $\bigcap_{n\geq1}\la^n(\CN)=\Cl$. We have thus shown that that ${\la_R}|_{\CN}$ is ergodic if and only if~${\la_R}|_{\CN}$ is a shift. Furthermore, the fixed point algebra coincides with the relative commutant of $\CL_R\subset\CN$. Hence ${\la_R}|_\CN$ is ergodic if and only if $\CL_R\subset\CN$ is irreducible. 
		\item We will later discuss an example where $\CN^{\la_R}$ is infinite-dimensional, i.e. in particular $\CL_R\subset\CN$ has infinite index.
        \item All statements of this proposition hold without changes on the level of the $C^*$-algebras, i.e. $\CO_d^{\lambda_R}\subset\bigcap\limits_{n\geq1}\lambda_R^n(\CO_d)\subset\CB_R'\cap\CO_d$ and $\CF_d^{\lambda_R}=\bigcap\limits_{n\geq1}\lambda_R^n(\CF_d)=\CB_R'\cap\CF_d$.
    \end{enumerate}
\end{remark}

It is currently not clear if one has equalities in Prop.~\ref{proposition:generalfixedpointresults}~\ref{item:FP-M}, or if $\CM^{\la_R}\subset\CN^{\la_R}$ for all non-trivial 
~$R$. We next show that at least ergodicity of $\la_R$ can be decided on the level of the type II factor $\CN$.

For this and following results, we will make use of a (von Neumann version of) family of linear maps $E^n:\CM\to\CN$, $n\in\Z$, introduced in \cite{Cuntz:1977}, namely ($n\geq0$)
\begin{align}\label{eq:Fn}
 E^n(x)=\int_\T\alpha_z(x{S_1^*}^n)
 ,\qquad
 E^{-n}(x)=\int_\T\alpha_z(S_1^nx)
 ,
\end{align}
where $\alpha_z=\lambda_{z\cdot1}$ are the gauge automorphisms, integration is over the circle $z\in\T$ w.r.t. $\frac{dz}{2\pi iz}$, and the choice of $S_1$ as a reference generator is by convention. We also introduce the closely related spectral components $x^{(n)}\in\CM^{(n)}$ of $x$ as
\begin{align}\label{eq:spectral-components}
 x^{(n)}
 :=
 \int\alpha_z(x)z^{-n}
 =
  \begin{cases}
    E^n(x)S_1^n & n\geq0\\
    {S_1^*}^{-n}E^n(x) & n<0 
  \end{cases}
  .
\end{align}
Recall that $x=0$ is equivalent to $x^{(n)}=0$ for all $n\in\Zl$ \cite{Takesaki:1973,Haagerup:1989}. Moreover, we clearly have $(x^*)^{(n)} = (x^{(-n)})^*$ for all $x\in\CM$ and all $n\in\Z$.

For any unitary $U\in\CU(\CF_d)$, the endomorphism $\la_U$ commutes with the gauge action, so that the fixed point algebra $\CM^{\lambda_U}$ is globally ${\mathbb T}$-invariant and for any $x\in\CM^{\lambda_U}$, also all its spectral components $x^{(n)}$ are fixed points of $\la_U$. This applies in particular to R-matrices $R\in\CU(\CF_d^2)$.

\medskip

\begin{proposition}\label{prop:ergodicityFvsO}
	Let $U\in\CU(\CF_d)$. If $\CF_d^{\lambda_U}=\Cl$ then $\CO_d^{\lambda_U} = \Cl$, and if $\CN^{\lambda_U}= \Cl$ then $\CM^{\lambda_U} = \Cl$.
\end{proposition}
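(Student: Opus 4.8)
The plan is to reduce both implications to the elementary observation that, for $n\neq 0$, the gauge‑spectral subspace $\CM^{(n)}$ (respectively $\CO_d^{(n)}$) cannot contain a unitary. Since $U\in\CF_d=\CO_d^{(0)}$, the endomorphism $\la_U$ commutes with the gauge action, so for a fixed point $x\in\CM^{\la_U}$ each spectral component $x^{(n)}$ is again a fixed point of $\la_U$, and $x=0$ if and only if $x^{(n)}=0$ for all $n\in\Z$. Thus it suffices to prove that $x^{(0)}\in\Cl$ and that $x^{(n)}=0$ for every $n\neq 0$. The first is immediate, since $x^{(0)}\in\CN^{\la_U}=\Cl$ (respectively $\CF_d^{\la_U}=\Cl$) by hypothesis.

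For the second, I would first note that $y:=x^{(n)}(x^{(n)})^*$ lies in $\CM^{(n)}\CM^{(-n)}\subset\CM^{(0)}=\CN$, is positive, and is a fixed point of $\la_U$ as a product of two fixed points; hence $y=c\,1$ with $c\ge 0$ by the assumption $\CN^{\la_U}=\Cl$. If $c=0$ then $\|x^{(n)}\|^2=\|y\|=0$ and we are done, so assume $c>0$ and set $v:=c^{-1/2}x^{(n)}$, which satisfies $vv^*=1$. Then $v^*v$ is a self‑adjoint idempotent, while at the same time $v^*v=c^{-1}(x^{(n)})^*x^{(n)}$ lies in $\CN^{(0)}\cap\CM^{\la_U}=\CN^{\la_U}=\Cl$; it is a scalar, and nonzero because $vv^*=1\neq 0$, hence $v^*v=1$. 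So $v$ is a unitary lying in $\CM^{(n)}$.

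It remains to rule out a unitary $v\in\CM^{(n)}$ with $n\neq 0$. Replacing $x$ by $x^*$, whose spectral component in degree $-n$ is $(x^{(n)})^*$, I may assume $n>0$. Using \eqref{eq:spectral-components} I write $v=aS_1^n$ with $a:=E^n(v)\in\CN$, and set $q:=S_1^nS_1^{*n}\in\CN$. From $vv^*=1$ one reads off $aqa^*=1$, and from $v^*v=1$, multiplying by $S_1^n$ on the left and $S_1^{*n}$ on the right and using $S_1^{*n}S_1^n=1$, one obtains $qa^*aq=q$. Hence $w:=aq\in\CN$ is a partial isometry with $ww^*=1$ and $w^*w=q$, and applying the normal tracial state $\tau$ of the type II$_1$ factor $\CN$ yields $1=\tau(ww^*)=\tau(w^*w)=\tau(q)=d^{-n}<1$, a contradiction. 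Therefore $x^{(n)}=0$ for all $n\neq 0$, so $x=x^{(0)}\in\Cl$ and $\CM^{\la_U}=\Cl$. The statement for the $C^*$‑algebras is obtained by running exactly the same argument inside $\CO_d$, with $\CF_d$ in place of $\CN$ and the tracial state of the UHF algebra $\CF_d$ in place of the trace of $\CN$; here $q\in\CF_d^n$ and $w=aq\in\CF_d$, so the identity $\tau(ww^*)=\tau(w^*w)$ is still available from the trace property of $\tau$ on $\CF_d$.

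The only genuinely non‑formal point — and the step I expect to be the crux — is the last one: recognizing that a nonzero spectral component of a fixed point would produce a unitary in a nonzero spectral subspace, and hence force $\tau(S_1^nS_1^{*n})=1$. Once this is phrased through the partial isometry $w=aq$ it collapses to a one‑line trace computation, but it is precisely what eliminates the higher spectral components; all the remaining steps are bookkeeping with the $\Z$‑grading of $\CO_d$ and $\CM$ and the already‑recorded behaviour of $\la_U$ under the gauge action.
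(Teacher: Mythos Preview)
Your proof is correct and follows essentially the same strategy as the paper: decompose a hypothetical fixed point into gauge-spectral components, observe that the zeroth component is scalar by hypothesis and that any nonzero component in degree $n\neq 0$ would be a scalar multiple of a unitary (since both $x^{(n)}(x^{(n)})^*$ and $(x^{(n)})^*x^{(n)}$ are fixed points in $\CN$, hence scalars), and then derive a contradiction from the existence of such a unitary. The only difference is in the last step: the paper dismisses the unitary in $\CM^{(n)}$ by a one-line appeal to the KMS condition for $\omega$ with modular group $\sigma_t=\la_{d^{-it}1}$, whereas you unpack this into the explicit trace identity $1=\tau(ww^*)=\tau(w^*w)=\tau(S_1^nS_1^{*n})=d^{-n}$. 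These are the same obstruction in two dialects---your computation is exactly what the KMS relation $\omega(vv^*)=\omega(v^*\sigma_{-i}(v))=d^{-n}\omega(v^*v)$ would yield---so the approaches are equivalent rather than genuinely different; your version has the mild advantage of being self-contained and not requiring the reader to recall the KMS machinery.
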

\begin{proof}
	Let $x \in\CO_d^{\lambda_U}$. If it was nontrivial, it would not lie in $\CF_d$ and then it would have a nonzero spectral component. Without loss of generality, we may then assume that $x^{(n)} \neq 0$ for some $n >0$, and as remarked above, $x^{(n)}\in\CO_d^{\la_U}$. Now, both $x^{(n)} (x^{(n)})^*$ and $(x^{(n)})^* x^{(n)}$ are fixed points in $\CF_d$ and thus positive scalars, say $\mu$ and $\nu$. It follows immediately that $\nu$ must be equal to $\mu$ and thus $x^{(n)}$ is a multiple of a unitary. However, it is easy to see that this is in conflict with the KMS condition
	(recall that $\lambda_{d^{-it}1}$ is the modular group w.r.t. the state $\omega = \tau \circ E^0$).
	
	The proof for the von Neumann algebras $\CM$, $\CN$ is identical.
\end{proof}

Prop.~\ref{prop:ergodicityFvsO} implies that $\la_R$ is ergodic if and only if ${\la_R}|_{\CN}$ is ergodic. In this case, we will simply say that $R\in\CR$ is ergodic. 

\begin{remark}\label{remark:stability-of-ergodicity}
    \leavevmode
   \begin{enumerate}
        \item It is clear that the equivalence relations $R\Msim S$ and $R\Nsim S$ (Def.~\ref{def:equivalence}) provide automorphisms of $\CM$ and $\CN$ that identify the fixed point algebras of $\la_R$ and~$\la_S$. In particular, the ``type 1'' and ``type 2'' cases of $\sim$ equivalences (see p.~\ref{page:types}) preserve ergodicity. 
        
        \item $R$ is ergodic if and only if $R^*$ is ergodic because
        \begin{align}
            \CN^{\la_{R^*}}=\CL_{R^*}'\cap\CN=\CL_{R}'\cap\CN=\CN^{\la_R}.
        \end{align}
        
        \item Clearly $\CO_d^{\la_R}$ is stable under any endomorphism $\la_u$ that commutes with $\la_R$. For example, if the unitary $u$ is a fixed point, then $\lambda_R \lambda_u = \lambda_{uR}$, and this coincides with $\lambda_u \lambda_R$ if and only if $\varphi(u)$ commutes with $R$. However, in general $\CO_d^{\lambda_R}$ is not $\varphi$-invariant.
    \end{enumerate}
\end{remark}

We now turn to an explicit characterization of ergodicity. Let $H_R:\CN\to\CN^{\la_R}$ denote the unique $\tau$-preserving conditional expectation onto the fixed point algebra. As $\la_R$ preserves $\tau$, the ergodic theorem allows us to write $H_R$ as 
\begin{align}\label{eq:ergodic-mean}
	H_R(x)
	&=
	\slim_{n\to\infty}\frac{1}{n}\sum_{k=0}^{n-1}\la_R^k(x)
	,\qquad
	x\in\CN.
\end{align}
Also recall that $E_n$ denotes the $\tau$-preserving conditional expectation $\CN\to\CF_d^n$, which acts by tracing out all tensor factors except the first $n$ (in particular, $E_0=\tau$).
    
\begin{theorem}\label{thm:ergodicity}
    Let $R\in\CR(d)$. The following are equivalent:
    \begin{enumerate}
		\item\label{item:ergodicity-condition} $E_1(RxR^*)=\tau(x)$ for all $x\in\CF_d^1$.
		\item\label{item:ergodicity-condition+} $E_n(\varphi^{n-1}(R)x\varphi^{n-1}(R^*))=E_{n-1}(x)$ for all $n\in\Nl,x\in\CF_d^n$.
		\item\label{item:1ergodicity} $H_R(x)=\tau(x)$ for all $x\in\CF_d^1$.
		\item\label{item:ergodicity} $R$ is ergodic.
    \end{enumerate}
    If $R$ is ergodic, then so are all its cabling powers $R^{(n)}$, $n\in\Nl$.
\end{theorem}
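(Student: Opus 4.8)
The plan is to prove the cyclic chain of implications \ref{item:ergodicity-condition}$\Rightarrow$\ref{item:ergodicity-condition+}$\Rightarrow$\ref{item:1ergodicity}$\Rightarrow$\ref{item:ergodicity}$\Rightarrow$\ref{item:ergodicity-condition}, and then deduce the statement about cabling powers. For \ref{item:ergodicity-condition}$\Rightarrow$\ref{item:ergodicity-condition+}, I would exploit the fact that $E_n = E_{n-1}\otimes\mathrm{(partial\ trace\ on\ later\ factors)}$ together with the shift covariance $E_n\circ\varphi = \varphi\circ E_{n-1}$ and $\varphi^{n-1}(R)\in\CF_d^{n+1}$; writing $x\in\CF_d^n$ and conditioning onto the first $n-1$ factors, the operator $\varphi^{n-1}(R)$ acts only on the tensor legs $n$ and $n+1$, so tracing out leg $n+1$ reduces the identity to the $n=1$ statement \ref{item:ergodicity-condition} applied in the last slot, by translation invariance of the trace. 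For \ref{item:ergodicity-condition+}$\Rightarrow$\ref{item:1ergodicity}, I would first observe that \ref{item:ergodicity-condition+} says precisely $E_{n-1}\circ\la_R = E_{n-1}$ on $\CF_d^n$ (using $\la_R(x) = \varphi^{n-1}(R)\cdots R\, x\, R^*\cdots\varphi^{n-1}(R^*) = \ad{}_nR(x)$ and iterating the one-step reduction), hence by induction $E_0\circ\la_R^k = E_0 = \tau$ on all of $\CF_d^n$; averaging over $k$ and passing to the strong limit in \eqref{eq:ergodic-mean} gives $\tau(H_R(x)) = \tau(x)$ — but I actually need $H_R(x)=\tau(x)$ itself, so the cleaner route is to show $E_0(\la_R^k(x))=\tau(x)$ for all $k$, whence $E_0(H_R(x))=\tau(x)$, and then use that $H_R(x)$ lies in $\CF_d^1$ (its spectral components are $\la_R$-fixed and, for $x\in\CF_d^1$, $H_R(x)$ stays in $\CF_d^1$ since $E_1\circ\la_R=E_1$ there) so that also $E_1(H_R(x))=H_R(x)$, combined with $E_1\circ H_R = H_R\circ E_1$-type reasoning to pin $H_R(x)$ down as scalar. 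The implication \ref{item:1ergodicity}$\Rightarrow$\ref{item:ergodicity} uses that $\CF_d^1$ generates $\CN$ as a von Neumann algebra together with $\la_R$: any fixed point $y\in\CN^{\la_R}$ has $H_R(y)=y$, and one shows $H_R$ is determined on $\CF_d^n$ by its values on $\CF_d^1$ via the commuting-square structure of Theorem~\ref{thm:commuting-squares} and $\phi_R=\phi_F$ on $\CL_R$ — more directly, $\CN^{\la_R}=\CL_R'\cap\CN$ by Prop.~\ref{proposition:generalfixedpointresults}, and using \eqref{eq:inclusions-of-relative-commutants} plus $H_R|_{\CF_d^1}=\tau$ one propagates triviality up the tower $\CF_d^n$.

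For \ref{item:ergodicity}$\Rightarrow$\ref{item:ergodicity-condition}, I would argue contrapositively: if $E_1(RxR^*)\neq\tau(x)$ for some self-adjoint $x\in\CF_d^1$, then $z:=E_1(RxR^*)-\tau(x)1$ is a nonzero self-adjoint element of $\CF_d^1$, and I would build from it a nonzero fixed point in $\CN^{\la_R}$ — for instance by showing $H_R(x)\neq\tau(x)1$, using that the first averaging step already separates, and that $H_R(x)\in\CF_d^1$ by the spectral-component argument above, so $\CF_d^1\cap\CN^{\la_R}\supsetneq\Cl$, contradicting ergodicity. Finally, for the cabling statement: by Prop.~\ref{prop:cabling-and-powers}, $c_n^{-1}\la_{R^{(n)}}c_n=\la_R^n$ on $\CN_d$, and $c_n$ is a trace-preserving isomorphism $\CN_d\to\CN_{d^n}$, so $\CN_{d^n}^{\la_{R^{(n)}}}\cong \CN_d^{\la_R^n}$; since $\la_R$ ergodic forces $\CN_d^{\la_R}=\Cl$, and the fixed points of $\la_R^n$ contain those of $\la_R$ but could a priori be larger, I would rule out the larger case by noting that ergodicity of $\la_R$ together with $\la_R$ preserving $\tau$ implies ergodicity of every power — this is a standard fact (a $\tau$-preserving endomorphism of a finite von Neumann algebra whose fixed-point algebra is trivial has all powers with trivial fixed-point algebra, because a $\la_R^n$-fixed point can be averaged over the $\la_R$-orbit of length $n$ to produce a $\la_R$-fixed point with the same trace, forcing it scalar; alternatively invoke that $\la_R^n$ satisfies condition \ref{item:ergodicity-condition} whenever $\la_R$ does, by a direct computation with ${}_nR$). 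Applying this with $\la_R^n$ in place of $\la_R$ and transporting back through $c_n$ yields ergodicity of $R^{(n)}$.

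The main obstacle I anticipate is the careful bookkeeping in \ref{item:ergodicity-condition}$\Rightarrow$\ref{item:ergodicity-condition+} and in showing $H_R(\CF_d^1)\subseteq\CF_d^1$: one must track exactly which tensor legs $\varphi^{n-1}(R)$ and the iterated conjugations ${}_nR$ act on, and confirm that the partial traces $E_n$ interact with them so that the reduction telescopes cleanly to the base case. The spectral-component argument guaranteeing that fixed points stay in $\CF_d^1$ (so that ergodicity can be tested one factor at a time) is the conceptual crux — it is essentially the same mechanism used in Prop.~\ref{prop:ergodicityFvsO}, namely that $\la_R$ commutes with the gauge action and the KMS condition forbids unitary spectral components, but here it must be combined with the finite-dimensionality and the trace to conclude scalarity rather than merely absence of off-diagonal parts.
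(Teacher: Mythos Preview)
Your cyclic scheme matches the paper's, and your sketches for \ref{item:ergodicity-condition}$\Rightarrow$\ref{item:ergodicity-condition+} and \ref{item:ergodicity}$\Rightarrow$\ref{item:ergodicity-condition} are essentially right. But there is a genuine gap in the middle, at \ref{item:1ergodicity}$\Rightarrow$\ref{item:ergodicity}, and a related error in the cabling argument.

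The step \ref{item:1ergodicity}$\Rightarrow$\ref{item:ergodicity} is the crux, and your plan does not work. You repeatedly invoke ``$H_R(x)\in\CF_d^1$ for $x\in\CF_d^1$'', but this is false in general: $\la_R(x)=RxR^*\in\CF_d^2$, $\la_R^k(x)\in\CF_d^{k+1}$, and the ergodic mean lands only in $\CN$. (Your intermediate claim ``$E_{n-1}\circ\la_R=E_{n-1}$ on $\CF_d^n$'' is also wrong; iterating \ref{item:ergodicity-condition+} gives $E_k\circ\la_R=\la_R\circ E_{k-1}$ on $\CF_d^n$, which telescopes to $E_1\circ\la_R=\tau$, not to $E_1\circ\la_R=E_1$.) So from \ref{item:1ergodicity} you only know $H_R$ is the trace on $\CF_d^1$, and you must somehow extend this to all $\CF_d^n$. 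Your suggestions---``$\CF_d^1$ generates $\CN$'', or using $\CN^{\la_R}=\CL_R'\cap\CN$ and propagating up the tower---do not yield this: a nontrivial fixed point in $\CN$ need not lie in any $\CF_d^n$, so knowing $(\CF_d^n)^{\la_R}=\Cl$ for all $n$ is insufficient.

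The paper's device for \ref{item:1ergodicity}$\Rightarrow$\ref{item:ergodicity} is precisely cabling, used in the opposite direction from how you deploy it. One first checks directly that if $R$ satisfies \ref{item:ergodicity-condition} then so does $R^{(n)}$ (the computation $E_n({}_nR_n\,x\,{}_n(R^*)_n)=\tau(x)$ for $x\in\CF_d^n$). Hence $R^{(n)}$ satisfies \ref{item:1ergodicity} in dimension $d^n$, i.e.\ $H_{R^{(n)}}=\tau$ on $\CF_{d^n}^1$. Transporting through $c_n$ and using $c_n^{-1}\la_{R^{(n)}}c_n=\la_R^n$, the conditional expectation onto $\CN^{\la_R^n}$ acts as $\tau$ on $\CF_d^n$. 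Since $\CN^{\la_R}\subset\CN^{\la_R^n}$, the expectation $H_R$ factors through it and hence $H_R|_{\CF_d^n}=\tau$ for every $n$, giving ergodicity by density. The cabling statement then comes for free from the same lemma (\ref{item:ergodicity-condition} for $R$ $\Rightarrow$ \ref{item:ergodicity-condition} for $R^{(n)}$). Note that your ``standard fact'' is false: for a $\tau$-preserving endomorphism, ergodicity of $\la$ does \emph{not} imply ergodicity of $\la^n$ by the averaging argument you give---averaging a $\la^n$-fixed point over the length-$n$ orbit yields a scalar, but that says nothing about the original element. Your parenthetical alternative (``$\la_R^n$ satisfies \ref{item:ergodicity-condition} whenever $\la_R$ does'') is the correct route, and is exactly what drives both \ref{item:1ergodicity}$\Rightarrow$\ref{item:ergodicity} and the cabling claim.
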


We will refer to the condition in part \ref{item:ergodicity-condition} as ``the ergodicity condition'' in the following. 

\pagebreak

\begin{remark}\leavevmode
 \begin{enumerate}
  \item In matrix notation, the ergodicity condition reads as follows: Let $(e_k)_{k=1}^d$ be the standard basis of $\Cl^d$, and let $R^{ij}_{kl}:=\langle e_i\ot e_j,R(e_k\ot e_l)\rangle$. Then the ergodicity condition is equivalent to
	\begin{align}\label{eq:ergodicity-matrixform}
		\sum_{n,m=1}^d R^{im}_{kn}  \overline{R_{ln}^{jm}}
		=
		\delta^i_j\,\delta^k_l\qquad i,j,k,l\in\{1,\ldots,d\},
	\end{align}
	as can be seen by choosing $x$ as the matrix unit ${\tt e}_{kl}\in M_d$. In the special case of involutive R-matrices equivalent to the flip, Wassermann have a proof of an analogue of Thm.~\ref{thm:ergodicity} already in \cite{Wassermann:1987}, also based on the condition \eqref{eq:ergodicity-matrixform}.
	
	\item The ergodicity condition is best understood in graphical notation. Noting that $E_1$ acts as a normalized right partial trace on $\CF_d^2$, we have the following graphical representation:
	\begin{figure}[h]
		\includegraphics[width=45mm]{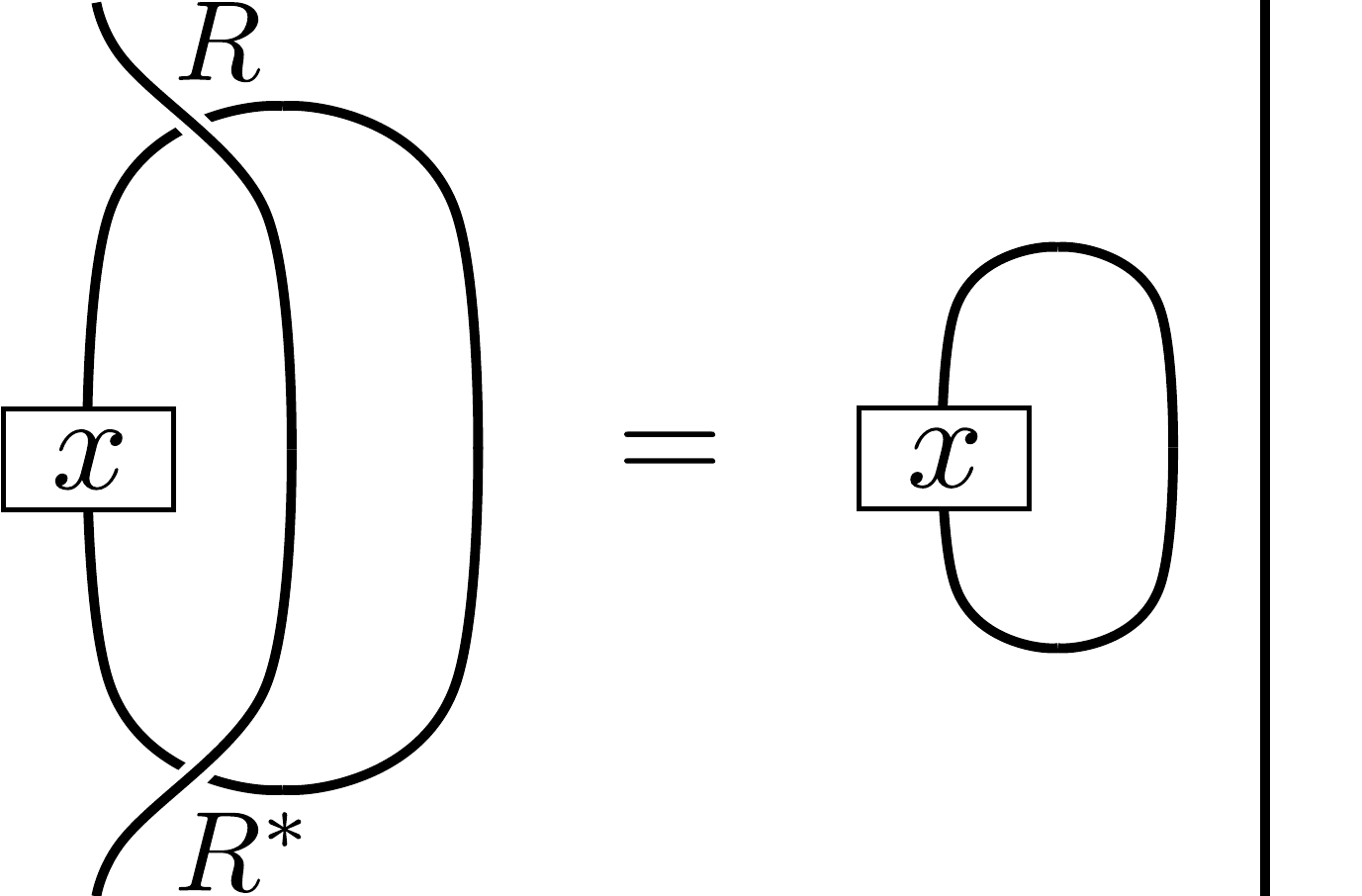}
		\caption{\small The ergodicity condition in graphical notation. Note that this is trivially satisfied for $R=F$, and trivially violated for $R=1$.}
	\end{figure}
	
	We also note the graphical representation of the (equivalent) condition in part~\ref{item:ergodicity-condition+}: Since $E_n$ acts as the normalized partial trace on the rightmost tensor factor of $\CF_d^{n+1}$, it is apparent that condition \ref{item:ergodicity-condition+} reads in graphical notation
	\begin{figure}[h]
		\includegraphics[width=70mm]{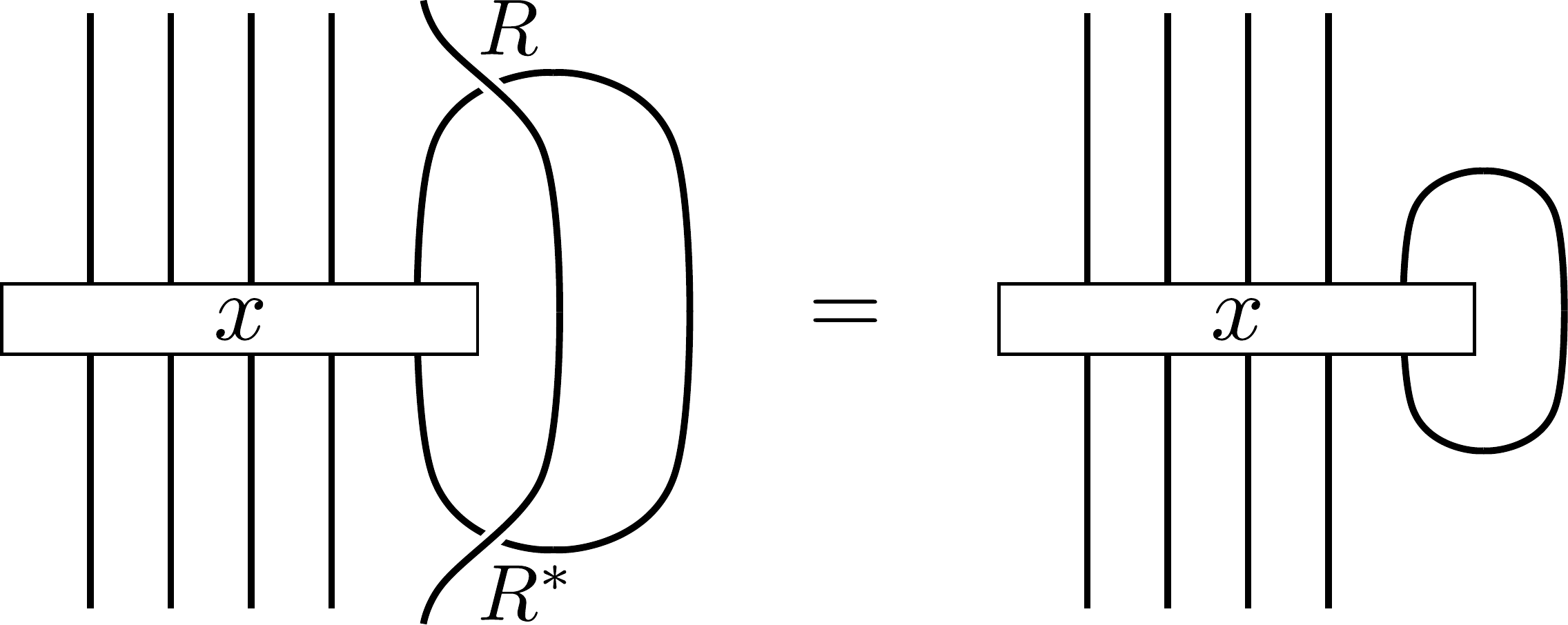}
	\end{figure}
	
	\item The ergodicity condition also appears in \cite{ContiPinzari:1996}, where it was shown to imply that the left inverse $\phi_R$ is localized in the sense that for any $n\in\Nl$ there exists a $k\in\Nl$ such that $\phi_R(\CF_d^n)\subset\CF_d^k$.
 \end{enumerate}

\end{remark}

\begin{proof}
	\ref{item:ergodicity-condition}$\implies$\ref{item:ergodicity-condition+} We give a proof by induction in $n$, the case $n=1$ being equivalent to~\ref{item:ergodicity-condition}. For the induction step, note that the definition of $E_n$ implies $S_i^*E_n(\cdot)S_j=E_{n-1}(S_i^*\,\cdot\,S_j)$ for any $i,j$. Thus we have, $i,j\in\{1,\ldots,d\}$, $x\in\CF_d^{n+1}$,
	\begin{align*}
		S_i^*E_{n+1}(\varphi^n(R)x\varphi^n(R^*))S_j
		&=
		E_{n}(S_i^*\varphi^n(R)x\varphi^n(R^*)S_j)
		\\
		&=
		E_{n}(\varphi^{n-1}(R)S_i^*xS_j\varphi^{n-1}(R^*))
		.
	\end{align*}
	As $S_i^*xS_j\in\CF_d^n$, this simplifies by induction assumption to $E_{n-1}(S_i^*xS_j)=S_i^*E_n(x)S_j$. Since $i,j$ were arbitrary, this finishes the proof.
	
	\ref{item:ergodicity-condition+}$\implies$\ref{item:1ergodicity} Let $x\in\CF_d^1$, $n\in\Nl$ and $y\in\CF_d^n$. Noting that $\varphi^{k-1}(R)$ commutes with $y$ for $k-1\geq n$, we calculate
    \begin{align}
        \nonumber
        \tau(yH_R(x))
        &=
        \lim_{m\to\infty}\frac{1}{m}\sum_{k=0}^{m-1}\tau(y\la^k_R(x))
        \\
        \nonumber
        &=
        \lim_{m\to\infty}\frac{1}{m}\sum_{k=0}^{m-1}\tau(y\varphi^{k-1}(R)\cdots RxR^*\cdots\varphi^{k-1}(R^*))
        \\
        \nonumber
        &=
        \lim_{m\to\infty}\frac{1}{m}\bigg\{
        \sum_{k=0}^{n}\tau(y\,{}_kRx({}_kR)^*)
        \\
        &\qquad\qquad\!+
        \sum_{k=n+1}^{m-1}\tau(y\varphi^{n-1}(R)\cdots RxR^*\cdots\varphi^{n-1}(R^*))
        \bigg\}
        \nonumber
        \\
        &=
        \tau(y\varphi^{n-1}(R)\cdots RxR^*\cdots\varphi^{n-1}(R^*))
        .
        \label{eq:ergodic-calculcation}
    \end{align}
    We now insert $E_n$ into the trace and use \ref{item:ergodicity-condition+} iteratively to arrive at 
    \begin{align*}
     \tau(yH_R(x))
     &=
     \tau(yE_n(\varphi^{n-1}(R)\cdots RxR^*\cdots\varphi^{n-1}(R)^*))
     \\
     &=
     \tau(yE_{n-1}(\varphi^{n-2}(R)\cdots RxR^*\cdots\varphi^{n-2}(R)^*))
     \\
     &=
     \tau(yE_{0}(x))
     \\
     &=
     \tau(y)\tau(x)
     .
    \end{align*}
	As $n$ was arbitrary and the trace is faithful, this implies $H_R(x)=\tau(x)$, i.e. we have shown \ref{item:1ergodicity}.
	
	\medskip
	
	\ref{item:1ergodicity}$\implies$\ref{item:ergodicity} To amplify \ref{item:1ergodicity} to ergodicity, we will use the cabling maps $c_n$ and cabling powers $R^{(n)}$, $n\in\Nl$. The first step is to realize that if $R$ satisfies the ergodicity condition, then so does $R^{(n)}$, i.e.
	\begin{align*}
		E_{d^n,1}(R^{(n)}c_n(x)(R^{(n)})^*)=\tau(x)
		,\qquad x\in\CF_d^n.
	\end{align*}
    Applying $c_n^{-1}$, this condition is seen to be equivalent to
    \begin{align*}
	 E_n({}_nR_n\cdot x\cdot{}_n(R^*)_n)=\tau(x),\qquad x\in\CF_d^n,
	\end{align*}
    which can be proven by induction in $n$ with the help of the ergodicity condition for $R$, expressed as in \ref{item:ergodicity-condition+} (and is obvious in graphical notation).

	Let $n\in\Nl$ and $x\in\CF_d^n$. Then $c_n(x)\in\CF_{d^n}^1$, and since $R^{(n)}$ satisfies \ref{item:ergodicity-condition} and thus also~\ref{item:1ergodicity}, we have $H_{R^{(n)}}(c_n(x))=\tau(c_n(x))=\tau(x)$ and  therefore
    \begin{align}\label{eq:muexpt}
     \tau(x) = (c_n^{-1}\circ H_{R^{(n)}}\circ c_n)(x), \qquad x\in\CF_d^n.
    \end{align}
    We now recall that $c_n^{-1}\circ\la_{R^{(n)}}\circ c_n=\la_R^n$ as endomorphisms of $\CN_d$ \eqref{eq:cabling-and-powers}. Expressing $H_{R^{(n)}}$ as an ergodic mean as in \eqref{eq:ergodic-mean}, we then see that $H_{R,n}:=c_n^{-1}\circ H_{R^{(n)}}\circ c_n$ is the $\tau$-preserving conditional expectation from $\CN_d$ onto its fixed point subalgebra $\CN_d^{\la_R^n}$.
    
	Eqn.~\eqref{eq:muexpt} states that $H_{R,n}$ acts as the trace on $\CF_d^n$. As clearly $\CN_d^{\la_R}\subset\CN_d^{\la_R^n}$, also the conditional expectation $H_R$ acts as the trace on $\CF_d^n$. In other words, $\tau(yH_R(x))=\tau(y)\tau(x)$ for all $y\in\CN_d$ and all $x$ in the algebraic infinite tensor product $\bigcup_n\CF_d^n$. By continuity, this extends to $\tau(yH_R(x))=\tau(y)\tau(x)$ for all $x,y\in\CN_d$, which is equivalent to ergodicity, $H_R=\tau$, by the faithfulness of $\tau$.
	
	\ref{item:ergodicity}$\implies$\ref{item:ergodicity-condition} 
	Let $x\in\CF_d^1$. According to the calculation \eqref{eq:ergodic-calculcation} in the proof of \ref{item:ergodicity-condition+}$\implies$\ref{item:1ergodicity}, specialized to $n=1$, we have for all $y\in\CF_d^1$
    \begin{align*}
		\tau(yH_R(x))
        &=
        \tau(yRxR^*)
        =
        \tau(yE_1(RxR^*)).
    \end{align*}
    If $\la_R$ is ergodic, we have $H_R(x)=\tau(x)$. As $E_1(RxR^*)$ is an element of $\CF_d^1$, and $y\in\CF_d^1$ was arbitrary, we see that $E_1(RxR^*)=\tau(x)$, i.e. \ref{item:ergodicity-condition} holds.       
\end{proof}

As an application of Thm.~\ref{thm:ergodicity}, we show that diagonal R-matrices (Def.~\ref{def:diagonalR}) are ergodic. 

\begin{corollary}\label{cor:diagonalsareergodic}
    Diagonal R-matrices are ergodic.
\end{corollary}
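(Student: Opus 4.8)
The plan is to reduce to the simplest representative of the $\Nsim$-class and then to verify the ergodicity criterion of Thm.~\ref{thm:ergodicity} by inspection. By Def.~\ref{def:diagonalR}~\ref{item:diagonalR}, every diagonal R-matrix has the form $R=\la_u(DF)$ with $u\in\CU(\CF_d^1)$ and $D=\sum_{i,j=1}^d c_{ij}\,S_iS_jS_j^*S_i^*$, $c_{ij}\in\T$. The operator $DF$ is itself a diagonal R-matrix (the one with $u=1$), and $R=\la_u(DF)\Nsim DF$ is a ``type~1'' equivalence (p.~\pageref{page:types}) implemented by the automorphism $\la_u$. Since such equivalences carry the fixed point algebra of $\la_R$ onto that of $\la_{DF}$ (Remark~\ref{remark:stability-of-ergodicity}~(1)), it is enough to show that $DF$ is ergodic, and for this I would invoke the equivalence ``$R$ ergodic $\iff$ $E_1(RxR^*)=\tau(x)$ for all $x\in\CF_d^1$'' from Thm.~\ref{thm:ergodicity}, in its matrix form \eqref{eq:ergodicity-matrixform}.

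For $R=DF$ one has $R(e_k\ot e_l)=D(e_l\ot e_k)=c_{lk}\,e_l\ot e_k$, hence $R^{ij}_{kl}=c_{ij}\,\delta^i_l\,\delta^j_k$; substituting this into \eqref{eq:ergodicity-matrixform} gives
\begin{align*}
 \sum_{n,m=1}^d R^{im}_{kn}\,\overline{R^{jm}_{ln}}
 =
 \sum_{n,m=1}^d c_{im}\,\overline{c_{jm}}\;\delta^i_n\,\delta^m_k\,\delta^j_n\,\delta^m_l
 =
 \delta^i_j\,\delta^k_l\,|c_{ik}|^2
 =
 \delta^i_j\,\delta^k_l,
\end{align*}
the last step using $c_{ik}\in\T$. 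Thus $DF$ satisfies the ergodicity condition, is ergodic, and therefore so is every diagonal R-matrix.

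There is no real obstacle here: the reduction to $DF$ rests only on the fact (already recorded in Remark~\ref{remark:stability-of-ergodicity}) that type~1 equivalences preserve ergodicity, and what remains is the elementary Kronecker-delta computation above — equivalently, the graphical observation that for $DF$ a ``crossing followed by a right cap'' closes up, because the two legs of the cap carry the same index and the phases $c_{ik}$ and $\overline{c_{ik}}$ cancel. If one preferred to avoid matrix indices, the same conclusion follows by computing $E_1(R\,S_kS_l^*\,R^*)$ directly in $\CF_d^2$ from the expansion $R=\sum_{i,j}c_{ij}\,S_iS_jS_i^*S_j^*$.
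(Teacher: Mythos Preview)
Your proof is correct and follows essentially the same approach as the paper's: reduce to the representative $DF$ via a type~1 equivalence (which preserves ergodicity by Remark~\ref{remark:stability-of-ergodicity}), then verify the ergodicity condition \eqref{eq:ergodicity-matrixform} directly from the matrix elements $R^{ij}_{kl}=c_{ij}\,\delta^i_l\,\delta^j_k$. The paper only asserts that this verification is ``a straightforward calculation,'' so your explicit Kronecker-delta computation simply fills in what the paper leaves to the reader.
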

\begin{proof}
	A diagonal R-matrix is of the form $R=\la_u(S)$ with $u\in\CU(\CD_d^1)$ and $S\in\CR(d)$ of the form $S^{ij}_{kl}=c_{lk}\delta^i_l\delta^j_k$, $i,j,k,l\in\{1,\ldots,d\}$ with parameters $c_{lk}\in\T$. It is a straightforward calculation to verify the ergodicity condition \eqref{eq:ergodicity-matrixform} for $S$. Since $R\Msim S$ (type 1), it follows that $R$ is ergodic as well.
\end{proof}

\begin{remark}
    Any non-trivial fixed point $x=\la_R(x)=RxR^*\in\CF_d^1$ satisfies $E_1(RxR^*)=x$ and therefore violates the ergodicity condition. Conversely, if some $x\in\CF_d^1$ violates the ergodicity condition, then the argument in the proof \ref{item:ergodicity}$\implies$\ref{item:ergodicity-condition} of Thm.~\ref{thm:ergodicity} shows that $H_R(x)\neq\tau(x)$. That is, we have a non-trivial fixed point $H_R(x)\in\CN^{\la_R}$ in this case. However, typically $H_R(x)$ will not lie in $\CF_d^1$ or even $\CF_d$, but only in its weak closure $\CN$.
    
    One might therefore expect that the condition that $\la_R$ admits no non-trivial fixed points in $\CF_d^1$, namely
    \begin{align}\label{eq:F1ergodicity}
        \Cl\stackrel{!}{=}(\CF_d^1)^{\la_R}=\{x\in\CF_d^1\,:\,RxR^*=x\},
    \end{align}
    is strictly weaker than the ergodicity condition for general $R$. We will prove this later by an example.
\end{remark}

In order to compare the fixed point algebras on the $C^*$- and von Neumann level, we add another result,  which shows that the fixed point algebra on the $C^*$-level is, in a sense, not too big when $R$ is not a scalar. Recall that if a unital $C^*$-algebra $A$ is simple and purely infinite then for every nonzero $x\in A$ there exist $y,z \in A$ such that $yxz=1$ \cite[Thm.~V.5.5]{Davidson:1996_2}.

\begin{proposition}\label{prop:simplepurelyinfinite}
	Let $R \in \CR(d)$. If $\CO_d^{\lambda_R}$ is simple and purely infinite then $R= \mu 1$, where $\mu \in {\mathbb T}$ is an $n$-th root of unity for some positive integer $n$. 
\end{proposition}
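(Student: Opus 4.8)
The plan is to argue by contraposition: assuming $R$ is not a root of unity times $1$, I will produce a nonzero element of $\CO_d^{\la_R}$ that cannot be made into a unit via multiplication, contradicting the characterization of simple purely infinite $C^*$-algebras recalled before the statement. The natural candidate for such an obstructing element is a nonzero projection in $\CO_d^{\la_R}$ of ``small'' trace, together with the KMS structure that constrains the fixed point algebra. First I would note that $\CO_d^{\la_R}$ is globally invariant under the gauge action $\alpha_t$ (since $\la_R$ commutes with it, $R\in\CF_d$), and that $\om$ restricts to a faithful KMS state on $\CO_d^{\la_R}$ with a nontrivial modular automorphism group unless $\CO_d^{\la_R}\subset\CF_d$. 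If $\CO_d^{\la_R}$ were simple and purely infinite, it would carry no nonzero trace and no nonzero lower-semicontinuous densely defined trace-like weight with good properties, so I would first want to force $\CO_d^{\la_R}\subset\CF_d$, i.e. $\CO_d^{\la_R}=\CF_d^{\la_R}$: a purely infinite simple $C^*$-algebra has no KMS state for a \emph{nontrivial} flow coming from an inner-in-the-bidual modular structure the way $\om|_{\CF_d}$ does — more carefully, the point is that on $\CF_d$ the state $\om=\tau$ is tracial, hence $\CF_d$ admits a faithful trace and is not purely infinite, while if a nonzero spectral component $x^{(k)}$ with $k\neq 0$ existed in $\CO_d^{\la_R}$ then, exactly as in the proof of Prop.~\ref{prop:ergodicityFvsO}, $x^{(k)}(x^{(k)})^*$ and $(x^{(k)})^*x^{(k)}$ would both be fixed points in $\CF_d$ hence positive scalars that must be equal, making $x^{(k)}$ a scalar multiple of a unitary, which contradicts the KMS condition for $\om$. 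Thus $\CO_d^{\la_R}=\CF_d^{\la_R}$, a unital subalgebra of the UHF algebra $\CF_d$, and in particular it carries the faithful trace $\tau|_{\CF_d^{\la_R}}$.

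Next I would exploit that a nonzero trace and pure infiniteness are incompatible: a simple purely infinite $C^*$-algebra admits no nonzero bounded trace (indeed no nonzero densely defined lower semicontinuous trace), because any nonzero positive element $x$ satisfies $yxy^*=1$ for suitable $y$, giving $\tau(x)\geq \|y\|^{-2}\tau(1)$-type contradictions with $\tau(1)<\infty$. Since $\tau$ restricts to a genuine faithful tracial state on $\CF_d^{\la_R}=\CO_d^{\la_R}$, the algebra $\CO_d^{\la_R}$ cannot be simple and purely infinite \emph{unless} $\CO_d^{\la_R}=\Cl 1$. So the hypothesis of the proposition actually forces $\CO_d^{\la_R}=\Cl$. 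It then remains to show that $\CO_d^{\la_R}=\Cl$ implies $R=\mu 1$ with $\mu$ a root of unity. By Prop.~\ref{proposition:generalfixedpointresults} (the $C^*$-version in the Remark following it), $\CF_d^{\la_R}=\CB_R'\cap\CF_d$, so $\CO_d^{\la_R}=\Cl$ gives $\CB_R'\cap\CF_d=\Cl$, hence in particular $\CB_R'\cap\CF_d^1=\Cl$. Now $R=\rho_R(b_1)\in\CB_R$, and I would examine the constraints: consider the left partial trace $\phi_R(R)=\phi_F(R)\in\CF_d^1$ (Thm.~\ref{theorem:phiR}). Actually the cleanest route is: ergodicity of $\la_R$ on $\CM$ is equivalent to ergodicity on $\CN$ (Prop.~\ref{prop:ergodicityFvsO}), and $\CN^{\la_R}=\CL_R'\cap\CN$; but we are assuming the stronger $C^*$-statement, so I would instead directly use that $\CB_R'\cap\CF_d^1=\Cl$ together with $R\in\CM_{R,2}$ and the structure of $\CM_{R,1}$.

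Here is where the main work and the main obstacle lie: deducing $R\in\Cl$ from $\CO_d^{\la_R}=\Cl$. The cleanest argument I foresee is to show that a non-scalar R-matrix always has $\CO_d^{\la_R}\neq\Cl$. If $R$ is non-scalar then $\CM_{R,1}\neq\Cl$ is \emph{not} automatic, but one can argue as follows: since $R$ is unitary and non-scalar, it has at least two spectral projections $q_1,q_2$ with $q_i\in\CM_{R,2}$; using $R\in(\la_R^2,\la_R^2)$ and the formula $\phi_R(\CM_{R,2})=\CM_{R,1}$ one obtains nontrivial elements of $\CM_{R,1}$ unless all of them collapse under $\phi_R$, i.e. unless $\phi_R(q_i)\in\Cl$; but $\sum\phi_R(q_i)=1$ and $\phi_R$ is faithful, so if $\phi_R(q_i)=\lambda_i 1$ then the $\lambda_i>0$ sum to $1$, which is consistent, so this alone does not conclude. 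The robust fix is to pass to $\CB_R$ itself: $\CO_d^{\la_R}\supseteq \CB_R'\cap\CO_d\cap\{$fixed points$\}$ and one shows $\CB_R'\cap\CO_d=\CB_R'\cap\CF_d$ by a gauge-invariance argument (spectral components again), so $\CO_d^{\la_R}=\CB_R'\cap\CF_d$. Now if $R\notin\Cl$, the $C^*$-algebra $\CB_R=C^*\{\varphi^n(R):n\geq 0\}$ is noncommutative-or-infinite in a way that guarantees a nontrivial relative commutant in $\CF_d$ — concretely, $\varphi(R)\in\CB_R$ is \emph{not} central in $\CF_d$ when $R$ is non-scalar because $R$ acts nontrivially on $\CF_d^2$, but that gives $R\notin\CB_R'\cap\CF_d$, not emptiness of the relative commutant. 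The genuine obstacle is therefore to rule out $\CB_R'\cap\CF_d=\Cl$ for non-scalar $R$; I expect the resolution to be that $\CB_R$ contains, for every $n$, the finite-dimensional algebra $C^*(\rho_R(B_n))\subseteq\CF_d^{n}$, whose relative commutant in $\CF_d^{n}$ is nontrivial for dimension reasons (a proper unital subalgebra of a full matrix algebra has nontrivial relative commutant unless it is the whole algebra), and $C^*(\rho_R(B_n))=\CF_d^n$ is impossible for $n$ large since $\dim C^*(\rho_R(B_n))$ grows at most like $|B_n/\ker|$-bounded-by-a-polynomial-in-$n$ in the cases where the image is finite, and otherwise one invokes $\CL_{R,n}\supsetneq C^*(\rho_R(B_n))$ from the text (the paragraph after the displayed tower (4.x)) combined with the trace to get a fixed nontrivial element. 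Finally, once $R=\mu 1$ is forced, $\CO_d^{\la_R}=\CO_d^{\sigma}$ where $\sigma=\la_{\mu 1}$ generates a one-parameter-subgroup-of-the-gauge-action; this is the whole simple purely infinite $\CO_d$ (hence the hypothesis holds) precisely when $\mu^n=1$ for some $n$, since then $\CO_d^{\la_R}=\CO_d$, whereas for $\mu$ not a root of unity the orbit is dense and $\CO_d^{\la_R}=\CF_d$, which is UHF, not purely infinite — giving the asserted conclusion. The one step I am least sure how to make fully rigorous in a short space is the claim that non-scalar $R$ forces $\CB_R'\cap\CF_d\neq\Cl$; if that turns out to be false one would have to weaken the statement, but I believe the finite-dimensional relative-commutant-plus-trace argument sketched above closes it.
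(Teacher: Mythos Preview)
Your argument has a genuine gap at the very first step, and the error propagates. You try to force $\CO_d^{\la_R}\subset\CF_d$ by invoking the proof of Prop.~\ref{prop:ergodicityFvsO}: from a nonzero spectral component $x^{(k)}$ you conclude that $x^{(k)}(x^{(k)})^*$ and $(x^{(k)})^*x^{(k)}$ are ``fixed points in $\CF_d$ hence positive scalars''. But that proof used the \emph{hypothesis} $\CF_d^{\la_U}=\Cl$ to make fixed points scalar; here no such hypothesis is available, so you only get $x^{(k)}(x^{(k)})^*\in\CF_d^{\la_R}$, which need not be scalar. In fact the conclusion you aim for is the \emph{opposite} of what happens: since a simple purely infinite unital $C^*$-algebra admits no tracial state while $\tau$ restricts faithfully to any unital subalgebra of $\CF_d$, the hypothesis forces $\CO_d^{\la_R}\not\subset\CF_d$. (Concretely, for $R=\mu 1$ with $\mu$ a primitive $n$-th root of unity one has $\CF_d^{\la_R}=\CF_d$ and $\CO_d^{\la_R}\cong\CO_{d^n}$, with $S_1^n$ a nonzero spectral component whose range projection $S_1^nS_1^{*n}$ is a nonscalar fixed point.) Your subsequent step, deducing $R\in\Cl$ from $\CO_d^{\la_R}=\Cl$, is an attempt to prove a false statement: the flip $F$ (or any diagonal R-matrix, cf.\ Cor.~\ref{cor:diagonalsareergodic} and Prop.~\ref{prop:ergodicityFvsO}) satisfies $\CO_d^{\la_F}=\Cl$ but is not scalar. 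This is why your relative-commutant argument could not be closed.

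The paper's proof runs in the other direction. From $\CO_d^{\la_R}\not\subset\CF_d$ one extracts a nonzero $x^{(n)}\in\CO_d^{\la_R}\cap\CO_d^{(n)}$ with $n>0$. Since fixed points commute with $\CB_R$ (Prop.~\ref{proposition:generalfixedpointresults}), one computes from $\la_R(x^{(n)})=x^{(n)}$ that $\|\varphi^k(R\varphi(R)\cdots\varphi^{n-1}(R))\,x^{(n)}-x^{(n)}\|\to 0$. Now the simple purely infinite hypothesis is used \emph{constructively}: choose $y,z\in\CO_d^{\la_R}$ with $yx^{(n)}z=1$ and sandwich to obtain $\|\varphi^k(R\cdots\varphi^{n-1}(R))-1\|\to 0$; as $\varphi$ is isometric this gives $R\varphi(R)\cdots\varphi^{n-1}(R)=1$. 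Hence $\la_{R^*}^n=\la_{({}_nR^*)}=\id$, so $\la_{R^*}$ is an automorphism, and Cor.~\ref{cor:no-auto} forces $R=\mu 1$ with $\mu^n=1$. The role of simplicity and pure infiniteness is thus not to rule out traces on the fixed point algebra but to provide the elements $y,z$ that let one strip $x^{(n)}$ off the asymptotic relation.
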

\begin{proof}
	Suppose that the fixed point algebra is simple purely infinite. Then it is not contained in $\CF_d$, and thus there exists some $x \in \CO_d^{\lambda_R}$ with $x^{(n)} \neq 0$ for some $n>0$. Now, from the equality $\lambda_R(x^{(n)}) = x^{(n)}$, taking into account the fact that $R$ is unitary and $x^{(n)}$ commutes with $\CB_R$, we get
	\begin{align*}
		\|R \varphi(R) \cdots &\varphi^{k+n-1}(R) x^{(n)} \varphi^{k-1}(R)^* \cdots \varphi(R)^* R^* - x^{(n)} \| 
		\\
		&= 
		\|\varphi^k(R)\cdots\varphi^{k+n-1}(R)x^{(n)}
		-
		\varphi^{k-1}(R^*)\cdots R^*x^{(n)}R\cdots\varphi^{k-1}(R)\|
		\\
		&= 
		\|\varphi^k(R) \cdots \varphi^{k+n-1}(R)  x^{(n)} - x^{(n)} \| 
		\to 0
	\end{align*}
	when $k \to \infty$. Pick $y,z \in\CO_d^{\lambda_R}$ such that $y x^{(n)} z = 1$. Then,
	\begin{align*}
		\| \varphi^k \big(R \cdots \varphi^{n-1}(R) \big) - 1 \| 
		& = \| \varphi^k(R) \cdots \varphi^{k+n-1}(R) - 1 \| \\
		& = \| y (\varphi^k(R) \cdots \varphi^{k+n-1}(R)  x^{(n)} - x^{(n)})z \| \\ 
		& \leq \| \varphi^k(R) \cdots \varphi^{k+n-1}(R)  x^{(n)} - x^{(n)} \| \; \|y\| \; \| z\| 
		\longrightarrow 0 
	\end{align*}
	as $k \to \infty$. Since $\varphi$ is unital and isometric, we get $R \cdots \varphi^{n-1}(R) =1$.
	However, $R^* \in \CR(d)$, implying that $\la_{R^*}$ is not surjective and $\lambda^n_{R^*} = \lambda_{\varphi^{n-1}(R^*) \cdots \varphi(R^*)R^*}$ is not the identity, unless $R = \mu 1$ with $\mu^n = 1$.
\end{proof}

Conversely, if $\mu \in {\mathbb T}$ is a primitive $n$-th root of 1 then it is not difficult to see that~$\CO_d^{\lambda_{\mu 1}}$ is isomorphic to $\CO_{d^n}$, while
if $\mu \in {\mathbb T}$ has infinite order one has $\CO_d^{\lambda_{\mu 1}} = \CF_d$.

So far, we have not ruled out completely  the possibility that $\CO_d^{\lambda_R} \not\subset \CF_d$, but we have already restricted the isomorphism class of the fixed point algebra. The next result shows that at least there are no {\em algebraic} fixed points outside $\CF_d$ if $R$ is non-trivial. It also shows that \eqref{eq:F1ergodicity} captures precisely the absence of non-trivial algebraic fixed points.

Here and in the following, we write ${}^0\CO_d\subset\CO_d$ for the algebraic part of $\CO_d$, i.e. the unital ${}^*$-algebra of polynomials in the generators $S_1,\ldots,S_d$ and their adjoints, and ${}^0\CF_d:={}^0\CO_d\cap\CF_d=\bigcup_{n\in\N}\CF_d^n={}^0\CN$ for the algebraic part of $\CF_d$. We also use the shorthand notations ${}^0\CO_d^{\la_R}:={}^0\CO_d\cap\CO_d^{\la_R}$ and ${}^0\CF_d^{\la_R}:={}^0\CF_d\cap\CF_d^{\la_R}$.

\begin{proposition}\label{prop:algebraicfixedpoints}
	Let $R\in\CR(d)$.
	\begin{enumerate}
		\item\label{item:algebraicfixedpointsareinUHF} If $R\not\in\C$, then all algebraic fixed points of $\la_R$ are contained in $\CF_d$, i.e.
		\begin{align}
			{}^0\CO_d^{\lambda_R}={}^0\CF_d^{\lambda_R}.
		\end{align}
		\item\label{item:fixedpointsinF1} ${}^0\CF_d^{\lambda_R}=\C$ if and only if $(\CF_d^1)^{\lambda_R}=\C$.
	\end{enumerate}
\end{proposition}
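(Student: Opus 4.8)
The plan is to analyze an algebraic fixed point by decomposing it into spectral components and showing that each nonzero component of nonzero degree forces a contradiction with $R\notin\C$. For part~\ref{item:algebraicfixedpointsareinUHF}, let $x\in{}^0\CO_d^{\la_R}$ and suppose $x\notin\CF_d$. Since $x$ is algebraic, only finitely many spectral components $x^{(n)}$, $n\in\Z$, are nonzero, each lying in ${}^0\CO_d\cap\CO_d^{(n)}$, and each is again a fixed point of $\la_R$ (because $\la_R$ commutes with the gauge action). Pick $n\neq 0$ with $x^{(n)}\neq 0$; replacing $x$ by $x^*$ if necessary we may take $n>0$. Write $x^{(n)}=yS_1^n$ with $y=E^n(x^{(n)})\in{}^0\CF_d$, say $y\in\CF_d^k$ for some $k$. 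Then $\la_R(x^{(n)})=x^{(n)}$ reads, using $\la_R(S_i)=RS_i$ and the limit formula \eqref{eq:lau-limit-formula} for $\la_R$ on $\CF_d$,
\begin{align*}
	\la_R(y)\cdot\la_R(S_1^n)=yS_1^n,
\end{align*}
i.e. $\la_R(y)\, {}_n(\text{something})\cdots = yS_1^n$ after expanding $\la_R(S_1^n)=R\varphi(R)\cdots\varphi^{n-1}(R)S_1^n={}_n(R^*)^*S_1^n$. The key point is that the left-hand side, being in $\CF_d^{k'}S_1^n$ for a suitable $k'$, yields (after multiplying on the right by $({S_1^*})^n$) an equation purely inside the UHF algebra that relates $\la_R(y)$ to $y$ twisted by a fixed word in $R$ and its shifts. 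I expect this to collapse to the statement that $R\cdots\varphi^{m}(R)$ acts trivially in a suitable sense, which combined with $y$ generating (via the pure-infiniteness-free but purely algebraic manipulation with the $S_i$'s) enough of $\CF_d$ forces $R\in\C$ — contradicting the hypothesis. A cleaner route, which I would actually pursue, is: from $\la_R(x^{(n)})=x^{(n)}$ and $x^{(n)}=yS_1^n$, apply $\phi_F^n$ (or rather the maps $E^n$ from \eqref{eq:Fn}) and the intertwining relation $S_ix=\varphi(x)S_i$ to reduce the degree, eventually landing on a nonzero fixed point of $\la_R$ of the form $zS_1^n$ with $z\in\CF_d^1$, and then show $z\in\C$ and hence $R\cdots\varphi^{n-1}(R)=1$; but by the argument at the end of the proof of Prop.~\ref{prop:simplepurelyinfinite}, $\lambda_{R^*}^n=\lambda_{\varphi^{n-1}(R^*)\cdots R^*}$ is then the identity, forcing $R^*=\mu 1$ and so $R\in\C$ — a contradiction. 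This establishes ${}^0\CO_d^{\la_R}\subset\CF_d$, and the reverse inclusion is trivial.

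For part~\ref{item:fixedpointsinF1}, one direction is immediate: if ${}^0\CF_d^{\la_R}=\C$ then in particular $(\CF_d^1)^{\la_R}=(\CF_d^1)\cap{}^0\CF_d^{\la_R}=\C$. For the converse, assume $(\CF_d^1)^{\la_R}=\C$ and let $x\in{}^0\CF_d^{\la_R}$, so $x\in\CF_d^m$ for some $m$ and $\la_R(x)=x$. Since $\la_R$ restricts to $\CF_d$ and preserves the tower $\CF_d^1\subset\CF_d^2\subset\cdots$ only up to a shift, the natural move is induction on $m$: using the intertwining relation $S_j^*\,\CF_d^m\,S_i\subset\CF_d^{m-1}$ together with Prop.~\ref{proposition:generalfixedpointresults}~\ref{item:FP-stability} (which gives $S_i^*\CF_d^{\la_R}S_j\subset\CF_d^{\la_R}$, noting its $C^*$-analogue holds), one descends from a fixed point in $\CF_d^m$ to fixed points in $\CF_d^{m-1}$, and ultimately to $\CF_d^1$, where the hypothesis forces scalars; threading these scalars back up shows $x\in\C$. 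The only subtlety is ensuring the descent is faithful, i.e. that $x$ is recovered from the collection $\{S_i^*xS_j\}$; but this is exactly the identity $x=\sum_{i,j}S_i(S_i^*xS_j)S_j^*$ used already in the proof of Lemma~\ref{lemma:fixedpointsandphi}, so if all $S_i^*xS_j$ are scalars then $x\in\CF_d^1$, and then $x\in(\CF_d^1)^{\la_R}=\C$.

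The main obstacle I anticipate is part~\ref{item:algebraicfixedpointsareinUHF}: controlling a fixed point of nonzero gauge degree requires carefully unwinding the product $\la_R(S_1^n)=R\varphi(R)\cdots\varphi^{n-1}(R)\,S_1^n$ inside the fixed-point equation and extracting from $\la_R(y)=y\cdot(\text{word in }R)$ the conclusion $R\in\C$ without circularity. The delicate point is that $\la_R$ on $\CF_d$ is only given by a norm-limit $\ad U_k\circ\varphi^{k}$, so the ``word in $R$'' that multiplies $y$ is really a limit; one must argue that the equation nevertheless pins down a finite relation, for which the algebraicity of $x$ (finite spectral support, $y\in\CF_d^k$) is essential. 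Once this finite relation is isolated, invoking the non-surjectivity argument from Prop.~\ref{prop:simplepurelyinfinite} closes the case cleanly. Part~\ref{item:fixedpointsinF1}, by contrast, should be a routine induction once the $S_i^*(\cdot)S_j$-stability of the fixed-point algebra is in hand.
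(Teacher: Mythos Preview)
Your argument for part~\ref{item:fixedpointsinF1} is correct and essentially identical to the paper's: descend via $S_i^*(\cdot)S_j$ using Prop.~\ref{proposition:generalfixedpointresults}\ref{item:FP-stability}, then reconstruct with $x=\sum_{i,j}S_i(S_i^*xS_j)S_j^*$.

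For part~\ref{item:algebraicfixedpointsareinUHF} there is a genuine gap. Suppose your descent succeeds and you land on a nonzero fixed point $T$ that is a linear combination of words $S_{i_1}\cdots S_{i_n}$ of length~$n$ (this is what $S_\alpha^*(\cdot)S_\beta$ with $|\alpha|=|\beta|=k$ actually produces --- not the specific form $zS_1^n$ with $z\in\CF_d^1$ that you claim). Such a $T$ is a nonzero multiple of an isometry, and the fixed-point equation $\la_R(T)=T$ reads $R_nT=T$ with $R_n=R\varphi(R)\cdots\varphi^{n-1}(R)$. But this only says the unitary $R_n\in\CF_d^{n+1}$ fixes the single rank-one projection $TT^*/\|T\|^2\in\CF_d^n$; it does \emph{not} give $R_n=1$, so the endgame from Prop.~\ref{prop:simplepurelyinfinite} cannot be invoked. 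The step ``$z\in\Cl$ and hence $R\cdots\varphi^{n-1}(R)=1$'' is where the argument breaks.

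The paper closes this gap with a different idea: since $T$ is a fixed point, $T$ \emph{commutes with $R$} by Prop.~\ref{proposition:generalfixedpointresults}\ref{item:FP-M}. Because $T\in\newspan\{S_\mu:|\mu|=n\}$ and $R\in\CF_d^2$, one has $(T^*)^2RT^2\in\Cl$ by direct inspection. Commuting $T^2$ past $R$ then yields $(T^*)^2T^2R\in\Cl$, and as $(T^*)^2T^2$ is a nonzero scalar this forces $R\in\Cl$. So the crucial ingredient you are missing is not the fixed-point equation $R_nT=T$ but the commutation $[T,R]=0$ together with the scalar computation $(T^*)^2RT^2\in\Cl$.
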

\begin{proof}
	\ref{item:algebraicfixedpointsareinUHF} Let $x\in{}^0\CO_d$ be an algebraic fixed point of $\la_R$ that is not contained in $\CF_d$, without loss of generality assumed to be selfadjoint. As $x\not\in\CF_d=\CO_d^{(0)}$, it has a non-zero spectral component $x^{(n)}$, $n>0$, which also lies in ${}^0\CO_d^{\la_R}$. We may therefore express it as $x^{(n)}=E^n(x)S_1^n$ with $E^n(x)\in\CF_d^k$ for some $k\in\N_0$. Then, for all multi indices $\alpha,\beta$ of length $|\alpha|=|\beta|=k$ we have $t_{\alpha,\beta}:=S_\alpha^* E^n(x) S_\beta\in\C$.
 
	Now define $T:=S_\alpha^* x^{(n)} S_\beta=S_\alpha^* E^n(x)S_1^nS_\beta$ where we have chosen $\alpha,\beta$ such that $T\neq0$; this is possible because $x^{(n)}\neq0$. By virtue of Prop.~\ref{proposition:generalfixedpointresults}~\ref{item:FP-stability}, $T$ is a fixed point. Furthermore, $T$ can be expressed as
	\begin{align*}
		T
		=
		S_\alpha^* E^n(x)S_1^nS_\beta=\sum_{\gamma:|\gamma|=k}S_\alpha^* E^n(x)S_\gamma\,S_\gamma^*S_1^nS_\beta
		=
		\sum_{\gamma:|\gamma|=k}t_{\alpha,\gamma}S_\gamma^*S_1^nS_\beta.
	\end{align*}
	As the multi indices $\beta$ and $\gamma$ have the same length $k$ for all terms in the sum, we see that $T$ is a linear combination of products of $n$ generators $S_{i_1}\cdots S_{i_n}$. In particular, $T$ is a (non-zero) multiple of an isometry.
  
	To conclude the proof, note that as a consequence of $R$ being an element of $\CF_d^2$, and in view of the form of $T$, we have $(T^*)^2RT^2\in\C$. But as a fixed point, $T$ commutes with $R$ (cf. Prop.~\ref{proposition:generalfixedpointresults}~\ref{item:FP-M}). Therefore 
	\begin{align*}
	\C\ni(T^*)^2RT^2=(T^*)^2T^2R,
	\end{align*}
	and as $(T^*)^2T^2$ is a non-zero scalar, the triviality of $R$ follows.
	
	\ref{item:fixedpointsinF1} The implication $\implies$ is trivial. For the reverse implication, let $x\in(\CF_d^k)^{\lambda_R}$ for some $k\in\N$. Then, by Prop.~\ref{proposition:generalfixedpointresults}~\ref{item:FP-stability}, $S_{i_1}^*\cdots S_{i_{k-1}}^*xS_{j_{k-1}}\cdots S_{j_1}\in(\CF_d^1)^{\lambda_R}=\C$ for all $i_l,j_l$. Thus 
	\begin{align*}
	 x=\sum_{l=1}^{k-1}\sum_{i_l,j_l=1}^dS_{i_{k-1}}\cdots S_{i_1}\left(S_{i_1}^*\cdots S_{i_{k-1}}^*xS_{j_{k-1}}\cdots S_{j_1}\right)S_{j_1}^*\cdots S_{j_{k-1}}^*\in\CF_d^{k-1},
	\end{align*}
	and inductively it follows that $x\in(\CF_d^1)^{\lambda_R}=\C$.
\end{proof}

We now compare the ergodicity condition and the condition $(\CF_d^1)^{\la_R}=\Cl$ in more detail. It turns out that they have quite different behavior with respect to taking box sums.

\begin{lemma}\label{lemma:boxplus-and-ergodicity}
    Let $R,S\in\CR$.
    \begin{enumerate}
     \item\label{item:ergodicity-boxplus} $R\boxplus S$ satisfies the ergodicity condition if and only if both $R$ and $S$ do.
     \item\label{item:F1ergodicity-boxplus} $\la_{R\boxplus S}$ has no non-trivial algebraic fixed points.
    \end{enumerate}
\end{lemma}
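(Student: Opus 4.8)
The plan is to reduce both parts to the block decomposition $\Cl^{d+\tilde d}=\Cl^d\oplus\Cl^{\tilde d}$ induced by the two summands of $R\boxplus S$. Write $V:=\Cl^d$, $W:=\Cl^{\tilde d}$, $T:=R\boxplus S$, and use the explicit action of $T=R\oplus S\oplus F$ from \eqref{eq:DefBoxplus} on the four blocks $V\ot V$, $V\ot W$, $W\ot V$, $W\ot W$ of $(V\oplus W)^{\ot 2}$: it acts as $R$ on $V\ot V$, as $S$ on $W\ot W$, and as the tensor swap $\sigma$ (with adjoint $\sigma^*$) interchanging $V\ot W$ and $W\ot V$. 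A general $x\in\CF^1_{d+\tilde d}=M_{d+\tilde d}$ is written in corners $a\in M_d$, $e\in M_{\tilde d}$ and off-diagonal parts $b,c$.

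For part \ref{item:ergodicity-boxplus} I would use the form of the ergodicity condition in Thm.~\ref{thm:ergodicity}~\ref{item:ergodicity-condition}, namely $E_1(T(x\ot1)T^*)=\tau(x)1$ for all $x$, where $E_1$ is the normalised right partial trace. A direct block computation gives the four diagonal corners of $T(x\ot1)T^*$: the $V\ot V$ corner is $R(a\ot 1)R^*$, the $W\ot W$ corner is $S(e\ot 1)S^*$, while on the mixed corners the swap collapses the conjugation to $1_V\ot e$ and $1_W\ot a$. Taking the normalised partial trace, which carries the common factor $(d+\tilde d)^{-1}$, the mixed corners contribute $\Tr(e)\,1_V$ and $\Tr(a)\,1_W$, and the pure corners contribute $(\id\ot\Tr)(R(a\ot1)R^*)$ and $(\id\ot\Tr)(S(e\ot1)S^*)$; the genuinely off-diagonal corners of $T(x\ot1)T^*$ vanish, so $E_1(T(x\ot1)T^*)$ is block diagonal. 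Comparing corner by corner with $\tau(x)1=(d+\tilde d)^{-1}(\Tr a+\Tr e)1$, the condition holds for all $x$ iff $(\id\ot\Tr)(R(a\ot1)R^*)=\Tr(a)1$ and $(\id\ot\Tr)(S(e\ot1)S^*)=\Tr(e)1$ for all $a,e$, which is exactly the ergodicity condition for $R$ and for $S$; necessity follows by specialising to $x=a\oplus0$ and $x=0\oplus e$.

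For part \ref{item:F1ergodicity-boxplus} the crucial point is that the naive candidate, the projection $p=1_V\oplus 0$ onto the first summand, is \emph{not} a fixed point: one computes $\la_{R\boxplus S}(p)=\varphi(p)\neq p$, since the flip component of $T$ carries the $V$-support from the first to the second tensor leg. By Prop.~\ref{prop:algebraicfixedpoints} it suffices to show $(\CF^1_{d+\tilde d})^{\la_{R\boxplus S}}=\Cl$: indeed $R\boxplus S\notin\Cl$ (its mixed part is a non-scalar flip), so \ref{item:algebraicfixedpointsareinUHF} confines all algebraic fixed points to $\CF_{d+\tilde d}$, and \ref{item:fixedpointsinF1} then reduces the claim to $\CF^1$. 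Now $x$ is fixed iff $x\ot1$ commutes with $T$, and I would read off $[T,x\ot1]=0$ block by block: the corners coupling $V\ot V$ with $W\ot V$ give $R(b\ot1)=0$ and $(c\ot1)R=0$, forcing $b=c=0$; the corner coupling $V\ot W$ with $W\ot V$ through the flip gives $\sigma^*(e\ot 1_V)=(a\ot 1_W)\sigma^*$, which rearranges to the operator identity $1_V\ot e=a\ot 1_W$. Compressing either tensor leg against a unit vector forces $a=\lambda 1_V$ and $e=\lambda 1_W$ for a common scalar $\lambda$, so $x=\lambda 1$. Hence $(\CF^1)^{\la_{R\boxplus S}}=\Cl$, and Prop.~\ref{prop:algebraicfixedpoints} yields the absence of non-trivial algebraic fixed points.

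I expect the main obstacle to be bookkeeping rather than anything conceptual: keeping the four-block matrix products and the swap maps $\sigma,\sigma^*$ consistent, and matching the normalisations of the partial traces in dimensions $d$, $\tilde d$ and $d+\tilde d$ in part \ref{item:ergodicity-boxplus}. The common mechanism driving both statements is the flip acting on the mixed tensors, which couples the two summands; it produces the cross terms $\Tr(e)1_V$, $\Tr(a)1_W$ in \ref{item:ergodicity-boxplus} and the rigidifying identity $1_V\ot e=a\ot 1_W$ in \ref{item:F1ergodicity-boxplus}. Correctly isolating this coupling — rather than mistaking $p$ for a fixed point — is what makes both parts go through.
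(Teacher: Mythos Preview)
Your proposal is correct and follows essentially the same approach as the paper's proof. Both arguments use the block decomposition of $(\Cl^d\oplus\Cl^{\tilde d})^{\ot 2}$ into the four summands $V\ot V$, $V\ot W$, $W\ot V$, $W\ot W$; the paper phrases the computations in terms of inner products with basis vectors and projections $p\ot p$, $p\ot p^\perp$, while you work directly with the block matrix of $[T,x\ot1]$ and of $T(x\ot1)T^*$, but the content is identical. Your reduction of part~\ref{item:F1ergodicity-boxplus} to $(\CF^1)^{\la_{R\boxplus S}}=\Cl$ via Prop.~\ref{prop:algebraicfixedpoints} (using $R\boxplus S\notin\Cl$) is exactly what the paper's ``cf.~Prop.~\ref{prop:algebraicfixedpoints}'' indicates, and your compression argument for $1_V\ot e=a\ot 1_W\Rightarrow a=\lambda1_V,\,e=\lambda1_W$ is the same as the paper's ``taking partial traces''.
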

\begin{proof}
    \ref{item:ergodicity-boxplus} Let us view $R\in\CR(d)\subset\End(V\ot V)$, $S\in\CR(d')\subset\End(W\ot W)$ with $\dim V=d$, $\dim W=d'$, and pick orthonormal bases $\{e_i:i=1,\ldots,d\}$ of $V$ and $\{f_j:j=1,\ldots,d'\}$ of $W$. We denote the orthogonal projection from $V\oplus W$ onto $V$ and $W$ by $p$ and $p^\perp$, respectively.
    
    Recall that $E_1$ acts as the normalized right partial trace on $\End((V\oplus W)\ot(V\oplus W))$. Writing $U:=R\boxplus S$ as a shorthand, we have, $x\in\End(V\oplus W)$,
    \begin{align*}
     (d+d')\langle e_i,E_1&(UxU^*)e_j\rangle
     \\
     &=
     \sum_{k=1}^d\langle e_i\ot e_k,UxU^*(e_j\ot e_k)\rangle
     +
     \sum_{l=1}^{d'}\langle e_i\ot f_l,UxU^*(e_j\ot f_l)\rangle
     \\
     &=
    \sum_{k=1}^d\langle e_i\ot e_k,RpxpR^*(e_j\ot e_k)\rangle
     +
     \delta^i_j\sum_{l=1}^{d'}\langle f_l,p^\perp xp^\perp f_l\rangle
     .
    \end{align*}
    The ergodicity condition demands that for every $x$, this equals 
    \begin{align*}
     (d+d')\langle e_i,\tau(x)e_j\rangle
     =
     \delta^i_j\sum_{k=1}^d\langle e_k,pxp e_k\rangle
     +
     \delta^i_j\sum_{l=1}^{d'}\langle f_l,p^\perp xp^\perp f_l\rangle
     .
    \end{align*}
    Comparing the expressions, we see that the ergodicity condition for $R\boxplus S$ implies the ergodicity condition for $R$. Analogously, one shows that ergodicity of $S$ is necessary for ergodicity of $R\boxplus S$.
    
    To check that this is sufficient, we also have to consider the ``mixed'' expectation values of $E_1(UxU^*)$ between vectors in $V$ and $W$, namely $\langle e_i,E_1(UxU^*)f_j\rangle$. But since $R\boxplus S$ acts as the flip on mixed tensors, it follows that these necessarily vanish, in agreement with the ergodicity condition. Hence ergodicity of $R$ and $S$ is also sufficient for ergodicity of $R\boxplus S$.

    \ref{item:F1ergodicity-boxplus} We need to show that the only $x\in\End(V\oplus W)$ commuting with $U=R\boxplus S$ are multiples of the identity (cf.~Prop.~\ref{prop:algebraicfixedpoints}). We have
    \begin{align*}
        UxU^*(p\ot p)
        &=
        U(pxp\ot p+p^\perp xp\ot p)R^*
        =
        R(pxp\ot p)R^*+(p\ot p^\perp xp)FR^*
        ,\\
        x(p\ot p)
        &=
        pxp\ot p+p^\perp xp\ot p.
    \end{align*}
    As $R$ commutes with $p\ot p$, this implies $p^\perp xp=0$, and analogously $pxp^\perp=0$.
    
    Similarly,
    \begin{align*}
     UxU^*(p\ot p^\perp)
     &=
     U(xp^\perp\ot p)F
     =
     U(p^\perp xp^\perp\ot p)F
     =
     p\ot p^\perp xp^\perp,
     \\
     x(p\ot p^\perp)
     &=
     pxp\ot p^\perp.
    \end{align*}
    Taking partial traces, we find $pxp=c\cdot p$, $p^\perp xp^\perp =c\cdot p^\perp$ with $c\in\Cl$. Thus $x=c\in\Cl$, and \eqref{eq:F1ergodicity} is satisfied.
\end{proof}

This result gives us many R-matrices that are not ergodic but do not have any non-trivial algebraic fixed points either. Consider an involutive R-matrix~$N$ of normal form, i.e. 
\begin{align}\label{eq:normal2}
 N=\bigboxplus_{i=1}^n\eps_i1_{d_i}
\end{align}
for some $n\in\Nl$, with signs $\eps_i\in\{\pm1\}$ and dimensions $d_i\in\Nl$, $\sum_{i=1}^nd_i=d$ (see Thm.~\ref{theorem:involutivecase}~\ref{item:normalform}). Then Lemma~\ref{lemma:boxplus-and-ergodicity}~\ref{item:F1ergodicity-boxplus} shows that $N$ has non-trivial fixed points if and only if it is trivial, namely $n=1$ and $N=\pm1$. We also know if $d_1=\ldots=d_n=1$, then $N$ is diagonal and hence ergodic (Cor.~\ref{cor:diagonalsareergodic}). But all other normal forms $N$, and in fact all R-matrices $R$ equivalent to them, are {\em not} ergodic, as we show next.

\begin{proposition}\label{proposition:ergodicity-and-characters}
    Let $R$ be ergodic. Then 
    \begin{align}
     \|\phi_R(R)\|_2^2=\tau(R^*\varphi(R))=\frac{1}{d^2}.
    \end{align}
	If $R$ is ergodic and involutive, it is of diagonal type, i.e. $R\sim N$ for a normal form \eqref{eq:normal2} with $d_1=\ldots=d_n=1$.
\end{proposition}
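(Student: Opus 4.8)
\noindent\emph{Proof idea.} The plan is to establish first the general identity $\|\phi_R(R)\|_2^2=\tau(R^*\varphi(R))$ for every $R\in\CR(d)$, then to read off the value $1/d^2$ from the ergodicity condition, and finally --- in the involutive case --- to transfer that value to an explicit normal form, exploiting that $\phi_R(R)$ is an invariant of the equivalence relation $\sim$. For the identity I would argue entirely with the left inverse, since $\tau$ is only a KMS (not a tracial) state on $\CO_d$. Using $\varphi(R)=\lambda_R(R)$ (Prop.~\ref{prop:ybe-od}~\ref{item:Cuntz-YBE}), the bimodule identity \eqref{eq:phiRproperty}, $\tau\circ\phi_R=\tau$, and the fact that $\phi_R$ is $*$-preserving, one gets
\[
\tau(R^*\varphi(R))=\tau\big(\phi_R(R^*\lambda_R(R))\big)=\tau\big(\phi_R(R^*)R\big)=\tau\big(\phi_R(R)^*R\big).
\]
Since $\phi_R(R)^*\in\CF_d^1$ (indeed $\phi_R(R)\in\CL_{R,1}$ by \eqref{eq:phiRRinfirstcommutant}) and $E_1\colon\CN\to\CF_d^1$ is the $\tau$-preserving conditional expectation with module property $E_1(aR)=aE_1(R)$ for $a\in\CF_d^1$, this equals $\tau(\phi_R(R)^*E_1(R))$; and by Thm.~\ref{theorem:phiR}~\ref{item:left-and-right} the right partial trace $E_1(R)=\phi_F(FRF)$ coincides with $\phi_R(R)$, so $\tau(R^*\varphi(R))=\tau(\phi_R(R)^*\phi_R(R))=\|\phi_R(R)\|_2^2$.

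Next, assume $R$ ergodic. By Thm.~\ref{thm:ergodicity} the ergodicity condition holds in the matrix form \eqref{eq:ergodicity-matrixform}, $\sum_{m,n}R^{im}_{kn}\overline{R^{jm}_{ln}}=\delta^i_j\delta^k_l$. Putting $i=k$, $j=l$ and summing over $k,l$ gives $\sum_{m,n}|\sum_k R^{km}_{kn}|^2=d$. Since $\phi_R(R)=\phi_F(R)=d^{-1}(\Tr\otimes\id)(R)$ has matrix entries $(\phi_R(R))_{mn}=d^{-1}\sum_k R^{km}_{kn}$, the left-hand side equals $d^2\sum_{m,n}|(\phi_R(R))_{mn}|^2=d^3\|\phi_R(R)\|_2^2$. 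Hence $\|\phi_R(R)\|_2^2=1/d^2$, which together with the first paragraph proves the first assertion.

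For the second assertion suppose moreover $R^2=1$. By Thm.~\ref{theorem:involutivecase}~\ref{item:normalform-invol} there is a normal form $N=\bigboxplus_{i=1}^n\eps_i 1_{d_i}$, $\sum_i d_i=d$, with $R\sim N$. As $R\sim N$ implies $\phi_R(R)\cong\phi_N(N)$ (Thm.~\ref{theorem:phiR}~\ref{item:phiRR-is-an-invariant}) and $\|\cdot\|_2$ is invariant under unitary conjugation inside $\CF_d^1$, we get $\|\phi_N(N)\|_2^2=1/d^2$. On the other hand $\phi_N(N)=\phi_F(N)$ (Thm.~\ref{thm:commuting-squares}, since $N\in\CL_N$) is the normalized left partial trace of $N$, and since the non-normalized partial trace sends $\boxplus$-sums to direct sums \cite[Lemma~4.2~iv)]{LechnerPennigWood:2019}, one has $\phi_N(N)=\bigoplus_{i=1}^n\tfrac{\eps_i d_i}{d}1_{d_i}$, so $\|\phi_N(N)\|_2^2=\sum_i(d_i/d)^2\tau(p_i)=d^{-3}\sum_i d_i^3$. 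Therefore $\sum_i d_i^3=d=\sum_i d_i$, which, since each $d_i\ge 1$ is an integer, forces $d_i=1$ for all $i$. Thus $N=\bigboxplus_{i=1}^d\eps_i 1_1$ is a box sum of $d$ one-dimensional trivial R-matrices, hence diagonal (Def.~\ref{def:diagonalR}~\ref{item:diagonalR}); that is, $R\sim N$ with $N$ of diagonal type.

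The one point I expect to be delicate is the identity $\tau(R^*\varphi(R))=\|\phi_R(R)\|_2^2$: it is tempting to write $\tau(R^*\varphi(R))=\sum_i\tau(S_i^*R^*S_iR)$ and invoke cyclicity, but $S_i\notin\CF_d$ and $\tau$ is not tracial on $\CO_d$, so the KMS twist inserts a spurious factor $d$. Routing the computation through $\phi_R$ and $E_1$ and using the coincidence of the left and right partial traces of $R$ (Thm.~\ref{theorem:phiR}~\ref{item:left-and-right}) circumvents this; everything else is routine bookkeeping with the cited results.
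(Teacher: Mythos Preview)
Your proof is correct and follows essentially the same line as the paper: the identity $\|\phi_R(R)\|_2^2=\tau(R^*\varphi(R))$ is obtained via the bimodule property of the left inverse and the coincidence of left and right partial traces, the value $1/d^2$ comes from specializing and summing the ergodicity condition \eqref{eq:ergodicity-matrixform}, and the involutive case is concluded from $\sum_i d_i^3=\sum_i d_i$. The only cosmetic difference is that for the involutive step the paper reads off $d^{-3}\sum_i d_i^3$ from the Thoma character formula \eqref{eq:ThomaOnCycle} on a $3$-cycle, whereas you compute $\|\phi_N(N)\|_2^2$ directly from the block decomposition of $\phi_N(N)$ --- the two computations are equivalent.
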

\begin{proof}
    We consider the ergodicity condition \eqref{eq:ergodicity-matrixform} with $i=k$ and $j=l$. Summing over $i,j$ gives
    \begin{align*}
        d^{-2}
        &=
        d^{-3}\sum_{i,j=1}^d\delta_j^i
        =
        d^{-3}\sum_{i,j,n,m=1}^d R^{im}_{in}  (R^*)^{jn}_{jm}
        =
        \tau(\phi_F(R)\phi_F(R^*)).
	\end{align*}
	Recalling that $\phi_F(R)=\phi_R(R)$, this gives $\|\phi_R(R)\|_2^2=d^{-2}$ as claimed. Furthermore,
	\begin{align*}
		\tau(\phi_R(R)\phi_F(R^*)).
        &=
        \tau(R\la_R(\phi_F(R^*)))
        =
        \tau(R\phi_F(R^*))
        =
        \tau(\varphi(R)R^*).
	\end{align*}
	We now specialize to the case that $R=R^*$ is involutive. Then we may express $\tau_R(b_1b_2)=\tau(\varphi(R)R)$, the value of a three-cycle in the character $\tau_R$, in terms of the Thoma parameters $\alpha_k,\beta_l$ of $R$. Recall that $d\alpha_k,d\beta_l\in\Nl$ are the dimensions~$d_i$ of the normal form of $R$, summing to $d$. Thus, by \eqref{eq:ThomaOnCycle},
    \begin{align*}
     d=d^3\tau(\varphi(R)R)=\sum_k(d\alpha_k)^3+\sum_l(d\beta_l)^3
     =
     \sum_{i=1}^n d_i^3
     \geq
     \sum_{i=1}^n d_i
     =d.
    \end{align*}
    It follows that $d_i=1$ for all $i$.
\end{proof}

We now want to demonstrate the fact hinted at earlier -- there exist R-matrices $R$ such that $\la_R$ is ergodic on the $C^*$-algebra $\CO_d$, but not on the von Neumann algebra $\CM$ (or, analogously, ergodic on $\CF_d$ but not on $\CN$). For this, we need a result that improves the absence of non-trivial algebraic fixed points (Prop.~\ref{prop:algebraicfixedpoints}) to absence of non-trivial fixed points in $\CO_d$.

The arguments in the following proof are generalisations of arguments given in \cite{MatsumotoTomiyama:1993}. Note that the Yang-Baxter equation is not used here.

\begin{proposition}\label{prop:SiGeneratorNoFixedPoints}
    Let $U\in\CU(\CF_d)$ and $v\in\CU(\CF_d^1)$ such that there exists $i\in\{1,\ldots,d\}$ with $vS_i=z\cdot S_i$ for some $z\in\T$. If $S_i\in(\la_v,\la_U)$, then $\CO_d^{\la_U}=\C$. 
\end{proposition}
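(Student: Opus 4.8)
The plan is to exploit the hypothesis $S_i \in (\lambda_v, \lambda_U)$, i.e. $S_i \lambda_v(x) = \lambda_U(x) S_i$ for all $x \in \CO_d$, to derive a contraction-type estimate forcing any fixed point of $\lambda_U$ to be scalar. First I would take an arbitrary $x \in \CO_d^{\lambda_U}$, so $\lambda_U(x) = x$, and observe that the intertwiner relation gives $x S_i = \lambda_U(x) S_i = S_i \lambda_v(x)$, hence $S_i^* x S_i = \lambda_v(x)$ (using $S_i^* S_i = 1$). Iterating, $S_i^{*n} x S_i^n = \lambda_v^n(x)$ for all $n \in \Nl$. Now $v \in \CU(\CF_d^1)$ is a quasi-free unitary, so $\lambda_v$ is an automorphism of $\CO_d$ of a very controlled kind; in particular its powers $\lambda_v^n$ stay bounded and, on each fixed-degree spectral subspace or on each $\CF_d^k$, behave by conjugation with the bounded sequence $v_n = v\varphi(v)\cdots\varphi^{n-1}(v)$.

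The key step is then to use the extra hypothesis $v S_i = z S_i$ with $z \in \T$. This means $S_i^* v^* = \bar z S_i^*$, so conjugation by $S_i^*$ interacts simply with $v$: concretely $S_i^{*} v = ?$ needs care, but the relation $v S_i = z S_i$ says $S_i$ is an eigenvector of $v$, which forces $\varphi(v) S_i S_i^* = S_i S_i^* \varphi(v)$-type simplifications and, more importantly, lets me compute $\lambda_v^n$ applied to the generators. I would compute $\lambda_v^n(S_j) = v_n S_j$ and examine $S_i^{*n} x S_i^n$ for $x$ ranging over a dense set. The approach I would take is to combine this with the standard fact that for any $x \in \CO_d$ and the conditional expectation $E^0 : \CO_d \to \CF_d$ (or the maps $E^n$ of \eqref{eq:Fn}), one can detect whether $x$ is scalar by testing $\|S_i^{*n} x S_i^n - (\text{something scalar})\| \to 0$. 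A cleaner route: show that $\lambda_v^n(x) = S_i^{*n} x S_i^n$ must converge (it is a bounded sequence whose norm is dominated by $\|x\|$), and then show that the limit, which lies in $\bigcap_n S_i^{*n}\CO_d S_i^n$, must be scalar using the Cuntz relations; but since $\lambda_v$ is an automorphism, $\|\lambda_v^n(x)\| = \|x\|$ for all $n$, and one gets a rigidity: the sequence $S_i^{*n} x S_i^n$ is norm-preserving, which together with $\sum_j S_j S_j^* = 1$ pins down $x$.

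Concretely, I would proceed as follows. Write $x = \sum_{k \in \Zl} x^{(k)}$ formally via spectral components \eqref{eq:spectral-components}; since $U \in \CF_d$, $\lambda_U$ commutes with the gauge action, so each $x^{(k)}$ is separately a fixed point of $\lambda_U$, and by Prop.~\ref{prop:algebraicfixedpoints}-type reasoning (or directly) it suffices to treat $x \in \CF_d$ and then $x \in \CF_d^k$ for each $k$. For $x \in \CF_d^k$, the relation $S_i^{*n} x S_i^n = \lambda_v^n(x) = \ad(v_n)(x)$ for $n \geq k$ (using \eqref{eq:lau-explicit}) holds; but also $v S_i = z S_i$ implies $\varphi^{m}(v) S_\mu = S_\mu$ whenever $\mu = (i,i,\ldots,i)$ has length $> m$, so $S_i^{*n} x S_i^n$ can be computed to equal $x$ itself modulo terms that are pushed to high tensor degree, and letting $n \to \infty$ forces $x$ into $\bigcap_m \CF_{d}$ beyond every degree — i.e. $x$ must be scalar. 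The main obstacle I anticipate is making the interaction between the two hypotheses ($S_i \in (\lambda_v,\lambda_U)$ and $v S_i = z S_i$) precise enough to get the needed asymptotic triviality — in particular handling the non-$\CF_d$ part of $x$, where the spectral-subspace decomposition and a KMS-type argument (as in Prop.~\ref{prop:ergodicityFvsO}) are needed to rule out fixed points with nonzero gauge charge: a nonzero $x^{(m)}$, $m > 0$, would after the above manipulations be forced to be a nonzero multiple of an isometry lying in $\CF_d$, contradicting the KMS condition for $\omega$. This last point is where I would follow the cited arguments of Matsumoto–Tomiyama \cite{MatsumotoTomiyama:1993} closely.
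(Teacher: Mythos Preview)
Your overall strategy---reduce to $\CF_d$ via the spectral decomposition and Prop.~\ref{prop:ergodicityFvsO}, derive an iterated identity from the intertwiner relation, then argue that fixed points of finite degree must be scalar and approximate---is the same as the paper's. But there is a real gap in your iteration step, and your endgame is more complicated than it needs to be.

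\textbf{The gap.} From $S_i^*xS_i=\lambda_v(x)$ you assert ``Iterating, $S_i^{*n}xS_i^n=\lambda_v^n(x)$.'' This does \emph{not} follow formally: to get $S_i^{*2}xS_i^2=\lambda_v^2(x)$ you would need $S_i^*\lambda_v(x)S_i=\lambda_v^2(x)$, which via the intertwiner relation requires $\lambda_v(x)$ to again be a $\lambda_U$-fixed point---something you have not shown. This is exactly where the eigenvector hypothesis $vS_i=zS_i$ enters. It gives $\lambda_v(S_i)=zS_i$, so applying $\lambda_v$ to both sides of $S_i^*xS_i=\lambda_v(x)$ yields $|z|^2\,S_i^*\lambda_v(x)S_i=\lambda_v^2(x)$, i.e.\ $S_i^*\lambda_v(x)S_i=\lambda_v^2(x)$, and now the induction runs. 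The paper does the mirror-image computation: it writes $x=\lambda_v^{-1}(S_i^*xS_i)$, uses $\lambda_v^{-1}(S_i)=z^{-1}S_i$ to commute $\lambda_v^{-1}$ past $\ad S_i^*$, and obtains $x=(S_i^*)^n\lambda_v^{-n}(x)S_i^n$.

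\textbf{The conclusion.} Once the iterated identity is in hand, the finish is much cleaner than your paragraph about ``$x$ itself modulo terms pushed to high tensor degree'' suggests. For $x\in\CF_d^k$ and $n\ge k$, the compression $(S_i^*)^nxS_i^n$ is a single matrix entry, hence a scalar; thus $\lambda_v^n(x)\in\C$, and since $\lambda_v$ is an automorphism, $x\in\C$. No asymptotics are needed at this stage. What \emph{is} still needed is the passage from $\bigcup_k\CF_d^k$ to all of $\CF_d$: the paper handles this by an explicit $\eps/3$ approximation (the scalars $(S_i^*)^n\lambda_v^{-n}(x_k)S_i^n$ form a Cauchy sequence), and you should not skip it.

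\textbf{Minor correction.} For $v\in\CU(\CF_d^1)$ one has $\lambda_v(v)=v$, hence $\lambda_v^n=\lambda_{v^n}$; so $\lambda_v^n(x)=\ad((v^n)_k)(x)$ for $x\in\CF_d^k$, not $\ad(v_n)(x)$ as you wrote.
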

\begin{proof}
    In view of Prop.~\ref{prop:ergodicityFvsO} it is enough to show that $\CF_d^{\la_U}=\C$. Let $x\in\CF_d^{\lambda_U}$ be a fixed point. Writing $T:=S_i$ for the intertwiner, the assumption $T\in(\la_v,\la_U)$ implies
    \begin{align}
        T\la_v(x)=\la_U(x)T=xT
        \;\implies\;
        x=\la_v^{-1}(T^*xT).
    \end{align}
    Since $\la_v^{-1}(T)=v^{-1}S_i=\frac{1}{z}\,T$, we see that $\la_v^{-1}$ commutes with $\ad T^*$. We therefore have $x=T^*\la_v^{-1}(x)T$, which we may iterate to 
    \begin{align}
     x=(T^*)^n\la_v^{-n}(x)T^n,\qquad n\in\N.
    \end{align}
    We now show that this implies $x\in\C$. Indeed, if $x$ lies in $\CF_d^m$ for some $m\in\N$, then so does $\lambda_v^{-n}(x)$, and thus ${T^*}^n\lambda_v^{-n}(x)T^n\in\C$ for all $n\geq m$. This already shows that $\la_U$ admits no non-trivial algebraic fixed points. 
    
    If $x\in\CF_d$ is a non-algebraic fixed point of $\la_U$, we consider a sequence $(x_k)_{k\in\N}\subset{}^0\CF_d$ converging in norm to $x$. For any $k$, there exists $n(k)\in\N$ such that for all $n\geq n(k)$, we have ${T^*}^n\lambda_v^{-n}(x_k)T^n=\mu_k\cdot1$ for an $n$-independent complex number $\mu_k$. Given $k,l\in\N$, we then have for $n\geq\max\{n(k),n(l)\}$
    \begin{align*}
     |\mu_k-\mu_l|=\|{T^*}^n\lambda_v^{-n}(x_k-x_l)T^n\|
     \leq
     \|x_k-x_l\|,
    \end{align*}
    and it follows that $\mu_k$ converges to a limit $\mu$ as $k\to\infty$. 
    
    To show that $x=\mu\cdot1$, let $n,k\in\N$ be arbitrary. We have
    \begin{align*}
     \|x-\mu\|
     &=
     \|{T^*}^n\la_v^{-n}(x)T^n-\mu\|
     \\
     &\leq
     \|{T^*}^n\la_v^{-n}(x-x_k)T^n\|+\|{T^*}^n\la_v^{-n}(x_k)T^n-\mu_k\|+|\mu_k-\mu|
     \\
     &\leq
     \|x-x_k\|+\|{T^*}^n\la_v^{-n}(x_k)T^n-\mu_k\|+|\mu_k-\mu|.
    \end{align*}
    Given $\eps>0$, we can choose $k$ large enough such that $\|x-x_k\|<\eps$ and $|\mu-\mu_k|<\eps$. Choosing $n>n(k)$, we also have ${T^*}^n\la_v^{-n}(x_k)T^n-\mu_k=0$ and conclude $\|x-\mu\|<2\eps$.
\end{proof}

We mention as an aside that this proposition still holds when $U$ is an arbitrary unitary in $\CO_d$. Since we will not need this stronger version, we refrain from giving the proof.

Let us now look at an explicit example.

\begin{example}
	Consider the normal form R-matrix $N:=1_2\boxplus1_1\in\CR(3)$. We claim that
	\begin{align}\label{eq:ergodicity-mismatch}
	  \CO_3^{\la_N}=\Cl,\qquad \CN^{\la_N}\neq\Cl.
	\end{align}
	The non-ergodicity of $\la_N$ on $\CN$, i.e. $\CN^{\la_N}\neq\Cl$, follows from Prop.~\ref{proposition:ergodicity-and-characters} because $N$ is an involutive normal form with dimensions $d_1=2,d_2=1$. 
	
	To demonstrate ergodicity of $\la_N$ on $\CO_3$, we will verify the conditions of Prop.~\ref{prop:SiGeneratorNoFixedPoints} with $v=1$ and $i=3$, i.e. show that $S_3$ is an intertwiner from $\id$ to $\la_N$. We have to show $S_3S_i=NS_iS_3$ and $S_3S_i^*=S_i^*NS_3$ for $i=1,2,3$ (note that $N=N^*$).
	
	The R-matrix is here $N=\sum_{j,k,l,m=1}^3 N^{jk}_{lm}S_jS_kS_m^*S_l^*$ and its matrix elements satisfy $N^{kj}_{3l}=\delta^j_3\delta^k_l=N^{jk}_{l3}$ by definition of $N$ (note that $N=FNF$). Thus, $i=1,2,3$,
	\begin{align*}
	 NS_iS_3 
	 =
	 \sum_{j,k,l,m=1}^3 N^{jk}_{lm}S_jS_kS_m^*S_l^*S_iS_3 = 
	 \sum_{j,k=1}^3 N^{jk}_{i3}S_jS_k
	 = S_3S_i
	\end{align*}
	and
	\begin{align*}
	 S_i^*NS_3 
	 =
	 S_i^*\sum_{j,k,l,m=1}^3 N^{jk}_{lm}S_jS_kS_m^*S_l^*S_3
	 = 
	 \sum_{k,m=1}^3 N^{ik}_{3m}S_kS_m^*,
	 = S_3S_i^*
	\end{align*}
	which finishes the proof. With a little more effort, one shows
	\begin{align*}
	\lambda_N(S_1)&=S_1S_1S_1^*+S_1S_2S_2^*+S_3S_1S_3^*,\\
	\lambda_N(S_2)&=S_2S_1S_1^*+S_2S_2S_2^*+S_3S_2S_3^*,\\
	\lambda_N(S_3)&=S_1S_3S_1^*+S_2S_3S_2^*+S_3S_3S_3^*.
	\end{align*}
	For completeness, we also mention that in this example, $\CM_{N,1}\cong\C\oplus\C$ (Cor.~\ref{corollary:relativecommutant}), i.e. $\lambda_N\cong\mu\oplus\id$ with some irreducible non-trivial endomorphism $\mu$. Since the intertwiner $T$ for $\mu\prec\lambda_N$ must generate together with $S_3$ a copy of $\CO_2$, which does not exist within $\CO_3$, this decomposition can only hold on the level of the associated von Neumann algebras, i.e. $T\in\CM\supset\CO_3$.
\end{example}

In Section~\ref{section:2dRmatrices}, we discuss another example in which the algebraic part of the fixed point algebra is infinite dimensional and can be described explicitly (Prop.~\ref{prop:fixedpointsinexample}).

\bigskip

To conclude this section, we compare ergodicity and irreducibility. Note that $(\CF_d^1)^{\la_R}$ and $\CM_{R,1}$ (or $\CN_{R,1}$) are commuting subalgebras of $\CF_d^1$ because trivially $(\CF_d^1)^{\la_R}\subset\la_R(\CF_d^1)$. This leads to the following observation, independent of the Yang-Baxter equation.

\begin{lemma}\label{lemma:dprime}
    Let $R\in\CU(\CF_d^2)$ with $d$ prime. Then either $\CM_{R,1}=\C$ or $(\CF_d^1)^{\lambda_R}=\C$.
\end{lemma}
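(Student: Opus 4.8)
The plan is to exploit the fact that $(\CF_d^1)^{\lambda_R}$ and $\CM_{R,1}$ are \emph{commuting} unital $*$-subalgebras of $\CF_d^1 \cong M_d$, and to combine this with a dimension count using primality of $d$. First I would record the commutation claim carefully: for $x \in (\CF_d^1)^{\lambda_R}$ we have $\lambda_R(x) = x$, hence $x \in \lambda_R(\CF_d^1) \subset \lambda_R(\CM)$, so $x$ commutes with every element of $\lambda_R(\CM)' \cap \CM \supset \lambda_R(\CM)' \cap \CF_d^1 = \CM_{R,1}$ (using $\CM_{R,1} \subset \CF_d^1$ from \eqref{eq:relativecommutantsinF1}). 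Thus $(\CF_d^1)^{\lambda_R} \subset \CM_{R,1}' \cap \CF_d^1$, i.e. the two algebras commute inside $M_d$.

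The next step is the representation-theoretic core. Both $A := (\CF_d^1)^{\lambda_R}$ and $B := \CM_{R,1}$ are finite-dimensional $C^*$-algebras acting on $V = \Cl^d$, and they commute, so the algebra $C := A \vee B$ they generate is a quotient of $A \otimes B$ and decomposes $V$ as a $C$-module into isotypic components. Write $A \cong \bigoplus_k M_{a_k}$ and $B \cong \bigoplus_l M_{b_l}$ as abstract $C^*$-algebras. Because $A$ and $B$ commute and together act on $V$, a standard argument (decompose $V$ under the commuting pair, Schur) shows that there is a sub-bimodule decomposition $V \cong \bigoplus_{(k,l) \in S} (\Cl^{a_k} \otimes \Cl^{b_l} \otimes \Cl^{m_{kl}})$ for some index set $S$ and multiplicities $m_{kl} \geq 1$; taking dimensions gives $d = \sum_{(k,l)\in S} a_k b_l m_{kl}$. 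In particular, fixing any $(k_0,l_0) \in S$, the product $a_{k_0} b_{l_0}$ divides... no — rather, $a_{k_0} b_{l_0} \leq a_{k_0} b_{l_0} m_{k_0 l_0} \leq d$, so $a_{k_0} b_{l_0} \leq d$. If both $A \neq \Cl$ and $B \neq \Cl$, then $A$ has a block of size $\geq 2$ \emph{or} at least two blocks, and likewise $B$; I need to turn "$A \neq \Cl$" into a genuine lower bound forcing $d$ to be composite. The cleanest route: if $A \neq \Cl$ then $\dim_{\Cl} A \geq 2$, and the identity of $A$ decomposes $1_V$ into the central projections of $A$; the point is that the commutant of $A$ in $M_d$, call it $A'$, satisfies $\dim A' = \sum_k (d_k/a_k)^2$ where $d_k = a_k m_k$ is the dimension of the $k$-th isotypic component... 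Let me instead argue more directly with primality.

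Here is the argument I would actually write. Suppose both $A := (\CF_d^1)^{\lambda_R} \neq \Cl$ and $B := \CM_{R,1} \neq \Cl$. Since $A$ is a nontrivial $C^*$-subalgebra of $M_d$, it contains a projection $p$ with $0 < \mathrm{rank}(p) < d$; indeed $A$ contains a minimal central projection $p$ that is not $1$ (if $A$ were a single matrix block $M_a$ with $a < d$ acting with multiplicity $m > 1$, take $p$ a rank-$a$ minimal projection of $A$ — in all cases a non-scalar element of $A$ yields, via spectral projections, a projection of intermediate rank, unless $A = \Cl 1$). Because $A$ and $B$ commute, $B$ preserves the subspace $pV$ and its complement, so $B \subset p M_d p \oplus p^\perp M_d p^\perp$ block-diagonally; similarly write the multiplicity structure. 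The module $pV$ is a module over the commuting pair $(pAp, pBp)$ and its dimension is $\mathrm{rank}(p)$, which by primality of $d$ is coprime to $d$ unless it is $1$ or $d$. The key divisibility: in the full decomposition $d = \sum_{(k,l)\in S} a_k b_l m_{kl}$, choose $(k_0,l_0)$; then $a_{k_0} \mid \dim(\text{$k_0$-isotypic component of } A) $ and that component has dimension a multiple of $a_{k_0}$ summing to $d$, so $a_{k_0} \mid d$ after summing over the blocks of $B$ within it — more precisely, the $k_0$-isotypic subspace for $A$ has dimension $a_{k_0} \cdot (\sum_{l: (k_0,l) \in S} b_l m_{k_0 l})$, a multiple of $a_{k_0}$; summing over $k_0$ recovers $d$, which does not immediately give $a_{k_0} \mid d$.

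I expect the \textbf{main obstacle} to be precisely packaging this divisibility cleanly: one wants to conclude that $\dim_{\Cl} A$ and $\dim_{\Cl} B$ cannot both exceed $1$ when $d$ is prime, and the honest statement is that the \emph{abstract} type $\bigoplus M_{a_k}$ of $A$ together with its multiplicities $m_k$ satisfies $\sum_k a_k m_k = d$ and, because $B$ commutes with $A$, $B$ embeds into $\bigoplus_k M_{m_k}$ (the commutant of $A$ in $\mathrm{End}(V)$ restricted appropriately). Hence $B \neq \Cl$ forces some $m_k \geq 2$, while $A \neq \Cl$ forces either some $a_k \geq 2$ or at least two blocks. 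If $A$ is a single block $M_a$ with multiplicity $m$, then $d = am$ with $a \geq 2$ (if $a \geq 2$) or $m \geq 2$ coming from $B \neq \Cl$; if $a = 1$ then $A = \Cl$, contradiction, so $a \geq 2$, and $B \hookrightarrow M_m$ nontrivial forces $m \geq 2$, whence $d = am$ with $a, m \geq 2$, contradicting primality. If $A$ has $\geq 2$ blocks $M_{a_1} \oplus \cdots$ with multiplicities $m_j$, then $B \hookrightarrow \bigoplus_j M_{m_j}$; $B \neq \Cl$ forces either some $m_j \geq 2$ or that $B$ mixes... but $B \subset \bigoplus_j M_{m_j}$ so $B \neq \Cl$ already needs some $m_j \geq 2$ or $\geq 2$ blocks, in which case one extracts a factorization $d = (\text{sum}) $ with a nontrivial factor — here I would do a short case analysis showing $d$ is forced to be composite. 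The cleanest uniform phrasing: $A \neq \Cl$ and $B \neq \Cl$ with $A, B$ commuting in $M_d$ implies $V$ splits as a nontrivial tensor product of a faithful $A$-module and a faithful $B$-module over a common multiplicity, forcing $\dim V$ to be a product of two integers $\geq 2$; since $d$ is prime this is impossible. I would write out this tensor-factorization lemma explicitly as the one substantive step and then the Lemma follows immediately. $\square$
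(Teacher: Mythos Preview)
Your approach has a genuine gap: the ``tensor-factorization lemma'' you formulate at the end is simply false. Two commuting nontrivial unital $C^*$-subalgebras of $M_d$ do \emph{not} force $d$ to be composite. For any $d\geq 2$, prime or not, take $A=B=\Cl p+\Cl(1-p)$ for a rank-$1$ projection $p\in M_d$; these are commuting, unital, and both $\neq\Cl$. (Your case analysis breaks down exactly here: when $A$ has several one-dimensional blocks, the commutant $A'\cong\bigoplus_k M_{m_k}$ can accommodate a nontrivial abelian $B$ without any $m_k\geq 2$ and without forcing a factorization of $d$.) So commutativity of $(\CF_d^1)^{\lambda_R}$ and $\CM_{R,1}$ inside $M_d$, which is all you extract, is not enough to conclude.

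The paper's argument uses strictly more than commutativity. For projections $p\in\CM_{R,1}$ and $q\in(\CF_d^1)^{\lambda_R}$ one has the structural relations $R^*pR=\varphi(p)$ (from \eqref{eq:SelfIntertwinersGeneral}) and $RqR^*=q$. Combining these gives $pq=R\varphi(p)R^*\cdot RqR^*=R\varphi(p)qR^*$, and taking the trace yields
\[
\tau(pq)=\tau(\varphi(p)q)=\tau(p)\tau(q),
\]
since $\varphi(p)\in\varphi(\CF_d^1)$ and $q\in\CF_d^1$ are tensor-independent under $\tau$. In terms of the matrix trace this reads $d\cdot\Tr(pq)=\Tr(p)\Tr(q)$. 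As $p$ and $q$ commute (this part of your argument is fine), $pq$ is again a projection, so all three traces are integers in $\{0,\ldots,d\}$; primality of $d$ then forces $\Tr(p)\in\{0,d\}$ or $\Tr(q)\in\{0,d\}$. The point is that the identity $\tau(pq)=\tau(p)\tau(q)$---a kind of independence relation---is the crucial extra input beyond commutativity, and it comes from the specific intertwining equations defining $\CM_{R,1}$ and $(\CF_d^1)^{\lambda_R}$.
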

\begin{proof}
    Let $p\in\CM_{R,1}$ and $q\in(\CF_d^1)^{\lambda_R}$ be orthogonal projections. Then $R^*pR=\varphi(p)$ \eqref{eq:SelfIntertwinersGeneral} and $q=\la_R(q)=RqR^*$, and therefore
    \begin{align*}
     pq
     &=
     R\varphi(p)R^*RqR^*
     =
     R\varphi(p)qR^*
     .
    \end{align*}
    As $p$ and $q$ commute, $pq=p\wedge q$. Evaluating in $\tau$ gives $\tau(p\wedge q)=\tau(R\varphi(p)qR^*)=\tau(p)\tau(q)$, which is equivalent to $d\,{\rm Tr}(p\wedge q)={\rm Tr}(p){\rm Tr}(q)$ with Tr the matrix trace of $\CF_d^1\cong M_d$. Taking into account that as selfadjoint projections, $p$, $q$, and $p\wedge q$ have traces in $\{0,\ldots,d\}$, and that $d$ is prime, it follows that ${\rm Tr}(p)\in\{0,d\}$ or ${\rm Tr}(q)\in\{0,d\}$. Thus either $p$ or $q$ has to be a trivial projection.
\end{proof}

If $d=n\cdot m$ is not prime, there exist R-matrices such that $\lambda_R$ is reducible and has non-trivial fixed points in $\CF_d^1$. Such R-matrices can be constructed as tensor products $R=S\boxtimes T$, where $R\in\CR(n)$ is chosen such that $\la_R$ is reducible (e.g., the flip) and $S\in\CR(m)$ is chosen such that $\la_S$ has non-trivial fixed points in~$\CF_m^1$ (see Section~\ref{section:tensorproducts}).

\bigskip

So far we do not know any R-matrices that are both irreducible and ergodic. It is possible that irreducibility implies the existence of non-trivial fixed points.

\section{Two-dimensional R-matrices}\label{section:2dRmatrices}

As a concrete family of examples, we consider in this section R-matrices in dimension $d=2$. In \cite{Hietarinta:1992_8}, all solutions to the Yang-Baxter equation have been computed, including non-unitary and non-involutive ones. In \cite{Dye:2003_2}, the unitary solutions have been singled out: $\CR(2)$ consists precisely of all those matrices $R$ which are of the form $R=(Q\ot Q)R_i(Q\ot Q)^{-1}$, where $R_i$, $i=1,\ldots,4$, is one of the following R-matrices and $Q\in\End\C^2$ is invertible and satisfies certain restrictions ensuring that $R$ is unitary\footnote{In this section (only), the notation $R_i$ refers to the specific R-matrices listed here, and not to \eqref{eq:Un-nU}.}.

\begin{align}
 R_1&=q\cdot1,\qquad q\in\T,\\
 R_2
 &=
 \left(
	\begin{array}{cccc}
	 p\\
	 &&q\\
	 &r\\
	 &&&s
	\end{array}
 \right),\qquad p,q,r,s\in\T,
 \\
 R_3
 &=
 \left(
	\begin{array}{cccc}
	 &&&p\\
	 &q\\
	 &&q\\
	 r
	\end{array}
 \right),\qquad q,p\cdot r\in\T,
 \\
 R_4
 &=
 \frac{q}{\sqrt{2}}
        \left(
            \begin{array}{rrrr}
            1 & 1 \\
            -1 & 1 \\
            &&1&-1\\
            &&1 & 1
            \end{array}
        \right),\qquad q\in\T.
		\label{R4}
\end{align}
Note that $R_3$ is not always unitary because only $|pr|=1$ is required, and also $Q$ is not necessarily unitary.

For our purposes, it is better to present the elements of $\CR(2)$ in the form $\la_u(R_i)\cong (u\ot u)R_i(u\ot u)^{-1}$, where both $u\in\CF_2^1$ and $R_i\in\CF_2^2$ are unitary.

\begin{theorem}\label{thm:2dRmatrices}
	A matrix $R\in\CF_2^2$ lies in $\CR(2)$ if and only if there exists $u\in\CU(\CF_2^1)$ and $i\in\{1,\ldots,4\}$ such that $R=\la_u(R_i)$, where all parameters $p,q,r,s$ appearing in the representatives $R_1,\ldots,R_4$ have modulus 1.
\end{theorem}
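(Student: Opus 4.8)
The plan is to reduce the statement to Dye's classification \cite{Dye:2003_2}, which says that every $R\in\CR(2)$ has the form $R=(Q\otimes Q)R_i(Q\otimes Q)^{-1}$ with $Q\in\GL(\C^2)$ and $R_i$ one of the four listed normal forms (with $Q$ and the parameters constrained so that $R$ is unitary). The only gap between that statement and ours is that we want the conjugating element to be a \emph{unitary} $u\in\CU(\CF_2^1)$ rather than an arbitrary invertible $Q$, and we want all the parameters $p,q,r,s$ in the representative to have modulus $1$ (in particular $R_3$ with $|pr|=1$ should be brought to a form with $|p|=|r|=1$). So the real content is a normalisation lemma: if $(Q\otimes Q)R_i(Q\otimes Q)^{-1}$ is unitary, then it already equals $\la_u(R_j)$ for some unitary $u$ and some representative $R_j$ with unimodular parameters.

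First I would handle the trivial case $R_1=q\cdot1$, which is already of the desired form with $u=1$. For $R_2$ (diagonal) and $R_3$ (off-diagonal / anti-diagonal), I would use the structure of the family. Write $Q=WP$ with $W$ unitary and $P$ positive (polar decomposition); then $(Q\otimes Q)R_i(Q\otimes Q)^{-1}=\la_W\big((P\otimes P)R_i(P\otimes P)^{-1}\big)$, and since $\la_W$ is a unitary, the question reduces to: for which positive invertible $P$ is $(P\otimes P)R_i(P\otimes P)^{-1}$ unitary, and what does it look like? Here I would diagonalise $P=\diag(a,b)$ with $a,b>0$ (after absorbing a further unitary into $W$) and compute $(P\otimes P)R_i(P\otimes P)^{-1}$ entrywise. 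For $R_2=\diag(p,\text{antidiag}(q,r),s)$ the only off-diagonal block is the $2\times2$ middle block $\begin{pmatrix}0&q\\ r&0\end{pmatrix}$ conjugated by $\diag(ab^{-1},a^{-1}b)$, giving $\begin{pmatrix}0&q(b/a)^2\\ r(a/b)^2&0\end{pmatrix}$; unitarity of the full matrix forces $|p|=|s|=1$ automatically and forces this middle block to be unitary, i.e. $|q|(b/a)^2=|r|(a/b)^2=1$. One then checks this is solvable and that the resulting matrix is again of the form $R_2$ with unimodular parameters, possibly after conjugating by a further diagonal unitary (which is $\la_v$ for $v\in\CU(\CF_2^1)$ and can be combined with $W$). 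The same computation for $R_3$ (where $q,pr\in\T$ but $p,r$ need not be separately unimodular) shows that conjugation by a suitable positive $\diag(a,b)$ rescales $p\mapsto p(b/a)^2$, $r\mapsto r(a/b)^2$ and unitarity forces $|p(b/a)^2|=|r(a/b)^2|=1$, which can be solved precisely because $|pr|=1$; the outcome is a representative with $|p|=|r|=1$.

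For $R_4$ I would argue that $R_4$ is already unitary, and that the constraint making $(Q\otimes Q)R_4(Q\otimes Q)^{-1}$ unitary forces $Q$ itself to be (a scalar times) a unitary --- intuitively because $R_4$ has no ``gauge direction'' to absorb a positive rescaling (unlike the diagonal/antidiagonal blocks of $R_2,R_3$, the mixing produced by the $\frac1{\sqrt2}\begin{pmatrix}1&1\\-1&1\end{pmatrix}$ blocks is rigid). Concretely, from $(P\otimes P)R_4(P\otimes P)^{-1}$ unitary with $P=\diag(a,b)$ positive, writing out the $2\times2$ blocks and imposing unitarity of each block yields $a=b$, so $P$ is scalar and $R=\la_u(R_4)$ with $u=W$ unitary. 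Then in every case $R=\la_u(R_i)$ with $u\in\CU(\CF_2^1)$ and $R_i$ having all parameters of modulus $1$, which is exactly the claim; the converse direction (that each such $\la_u(R_i)$ is a unitary solution of the YBE) is immediate since $\la_u$ preserves $\CR(2)$ and each $R_i$ with unimodular parameters is a unitary R-matrix.

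The main obstacle is the case analysis for $R_2$ and $R_3$: one has to verify that after conjugating by the positive part $P$ to achieve unitarity, the result can be brought \emph{back} into the listed normal form with unimodular parameters by a further \emph{unitary} conjugation, and that no genuinely new shape appears --- in other words that the Dye families are stable, up to $\la_u$ with unitary $u$, under passing to the unitary representatives. I expect this to be a finite but somewhat fiddly computation with $2\times2$ blocks and polar/diagonal decompositions; the $R_4$ rigidity argument is the subtlest point since one must rule out any positive rescaling preserving unitarity, which I would do by directly solving the block equations. Everything else (the reduction to Dye's theorem, the treatment of $R_1$, the converse) is routine.
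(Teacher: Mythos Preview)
Your overall strategy --- reduce to Dye's classification and use the polar decomposition of $Q$ to extract a unitary conjugation --- is exactly the paper's, and your preliminary step of rescaling $R_3$ by a diagonal $Q$ to make $|p|=|r|=1$ also matches. But your case-by-case analysis has a gap, and the paper avoids the case analysis altogether by a single observation you are missing.

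The gap is the step ``diagonalise $P=\diag(a,b)$ after absorbing a further unitary into $W$''. If $P=VDV^*$ with $V$ unitary and $D$ diagonal, then $(P\otimes P)R_i(P\otimes P)^{-1}=\la_V\bigl((D\otimes D)\,\la_{V^*}(R_i)\,(D\otimes D)^{-1}\bigr)$; the outer $\la_V$ can indeed be absorbed into $W$, but the inner matrix now involves $\la_{V^*}(R_i)$, which is in general \emph{not} of the form $R_i$ any more. So the entrywise computation you propose (with $R_i$ and a diagonal positive part) does not cover arbitrary $Q$. Your specific computation for $R_2$ is also off: with $P=\diag(a,b)$ the middle $2\times2$ block of $P\otimes P$ in the basis $(e_1\otimes e_2,e_2\otimes e_1)$ is $\diag(ab,ab)$, which is scalar and leaves $\begin{pmatrix}0&q\\r&0\end{pmatrix}$ unchanged rather than producing the factors $(b/a)^2$ and $(a/b)^2$ you wrote.

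The paper's key observation removes the need for any of this. Once $R_i$ is unitary (after the $R_3$ normalisation), unitarity of $R=(Q\otimes Q)R_i(Q\otimes Q)^{-1}$ is equivalent to $R_i$ commuting with $Q^*Q\otimes Q^*Q=(|Q|\otimes|Q|)^2$, and hence --- since $|Q|\otimes|Q|$ is the positive square root --- with $|Q|\otimes|Q|$ itself. Then with $Q=U|Q|$ the positive part cancels and $R=(U\otimes U)R_i(U\otimes U)^{-1}=\la_U(R_i)$ directly, with no case distinction and no separate rigidity argument for $R_4$.
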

\begin{proof}
	The ``if'' part of the statement follows by noting that when the parameters $p,q,r,s$ have modulus 1, then $R_1,\ldots,R_4\in\CR(2)$. For the ``only if'' statement, we first note that for $Q=\genfrac{(}{)}{0pt}{1}{1\;\;0}{0\;\;a}$ with $a=\sqrt{|p|}$, the transformed matrix $(Q\ot Q)R_3(Q\ot Q)^{-1}$ is of the same form as $R_3$, but with all parameters having unit modulus. We may therefore without loss of generality take all parameters to have unit modulus, i.e. all representatives $R_1,\ldots,R_4$ to be unitary.
	
	Let now $R=(Q\ot Q)R_i(Q\ot Q)^{-1}$ for some invertible $Q\in\End\C^2$ and $R_i$ unitary. Then $R^*=R^{-1}$ is equivalent to $R_i$ commuting with $|Q|^2\ot |Q|^2$, where $|Q|^2=Q^*Q$. Thus $R_i$ also commutes with $|Q|\ot |Q|$. Proceeding to the polar decomposition $Q=U|Q|$, $U\in\CU(\CF_2^1)$, we then have
	\begin{align*}
		R&=(Q\ot Q)R_i(Q\ot Q)^{-1}
		=
		(U\ot U)(|Q|\ot|Q|)R(|Q|^{-1}\ot|Q|^{-1})(U^{-1}\ot U^{-1})
		\\
		&=
		(U\ot U)R(U^{-1}\ot U^{-1})
		=
		\la_U(R_i).
	\end{align*}
	This establishes that $R$ is of the claimed form.
\end{proof}

In Cuntz algebra notation, the representatives $R_1,\ldots,R_4$ take the form
\begin{align}
	R_1
	&=q\cdot 1,\\
	R_2
	&=
	p\,S_1S_1S_1^*S_1^*+
	q\,S_1S_2S_1^*S_2^*+
	r\,S_2S_1S_2^*S_1^*+		
	s\,S_2S_2S_2^*S_2^*,
	\\
	R_3
	&=
	p\,S_1S_1S_2^*S_2^*+
	q\,S_1S_2S_2^*S_1^*+
	q\,S_2S_1S_1^*S_2^*+		
	r\,S_2S_2S_1^*S_1^*,
	\\
	R_4
	&=
	\frac{q}{\sqrt{2}}\left(1+(S_1S_1^*-S_2S_2^*)\varphi(-S_1S_2^*+S_2S_1^*)\right)
	.
\end{align}
By explicit calculations, one verifies that if $R=\la_u(R_i)$, then also its adjoint~$R^*$ and its flipped version $FRF$ are of this form, i.e. $R^*=\la_{u'}(R_i)$ and $FRF=\la_{u''}(R_i)$ for suitable $u',u''\in\CU(\CF_2^1)$, and the same\footnote{The only non-trivial thing to do is to find $u\in\CU(\CF_2^1)$ such that $FR_4F=\la_u(R_4)$; here $u=\frac{1}{\sqrt{2}}\genfrac{(}{)}{0pt}{1}{-1\;\;i}{-i\;\;1}$ works.} $i$. In particular, equivalences of type 1 and type 3 (see p.~\pageref{page:types}) leave the families $\{\la_u(R_i)\,:\,u\in\CU(\CF_d^1)\}$ invariant.

However, type 2 equivalences can change the representative $R_i$. Indeed, $\la_u(R_3)=R_3$ for $u=\genfrac{(}{)}{0pt}{1}{0\;\;a}{1\;\;0}$ with $a=\sqrt{p/q}$, but $\varphi(u)R_3\varphi(u)^*$ equals the second representative $R_2$ after suitable identification of parameters.

\medskip

Below we give a table summarizing key features of the endomorphisms corresponding to the R-matrices $R=\la_u(R_i)$, $i=1,\ldots,4$. Note that irreducibility and ergodicity of $R$ do not depend on $u$ as both properties are invariant under type 1 equivalences. The index in the third column is $[\CN:\la_R(\CN)]=\Ind_{E_R}(\la_R)$.

\begin{center}
{\small
\begin{tabular}{|c|clcl|}
\hline
  \# & Representative & $\CM_{R,1}$ & $\Ind.$ & Fixed point algebras
 \\
 \hline\hline
 1&$q\cdot1$ & $\C$ (automorphism) & $1$ & 
 \begin{tabular}{@{}l}
 	$\CO_2^{\la_R}\cong\CF_2$ \;\;${\rm ord}(q)=\infty$\\
	$\CO_2^{\la_R}\cong\CO_{2^{{\rm ord}(q)}}$ 
 \end{tabular}
 \\
 \hline
 2&${\left(
	\begin{array}{cccc}
	 p\\
	 &&q\\
	 &r\\
	 &&&s
	\end{array}
 \right)}$
 & 
 \begin{tabular}{@{}l@{}l}
	$M_2$ & \;$p=r$, $q=s$\\
	$\C\oplus\C$ & \;else
 \end{tabular}
 &$4$&$\CN^{\la_R}=\C$
 \\
 \hline
 3&${\left(
	\begin{array}{cccc}
	 &&&p\\
	 &q\\
	 &&q\\
	 r
	\end{array}
 \right)}$
 & \begin{tabular}{@{}l@{}l}
    $\C\oplus\C$ & \;$q^2=pr$\\
    $\C$ & \;$q^2\neq pr$
 \end{tabular}
 & $4$&$\CN^{\la_R}=\C$
 \\
 \hline
 4&${\frac{q}{\sqrt{2}}
        \left(
            \begin{array}{rrrr}
            1 & 1 \\
            -1 & 1 \\
            &&1&-1\\
            &&1 & 1
            \end{array}
        \right)}$
 & $\C$
 & $2$ & \begin{tabular}{@{}l}
	$\dim\CF_2^{\la_R}=\infty$\\
	see Prop.~\ref{prop:fixedpointsinexample}
 \end{tabular}
 \\
 \hline
\end{tabular}
}
\end{center}

\bigskip 

\noindent{\em Proof of the claims in the table:} We go through families 1--4. The R-matrices in family 1 define automorphisms (hence $\Ind\la_R=1$), and the form of the fixed point algebra has been commented on before (remark after Prop.~\ref{prop:simplepurelyinfinite}). 

For the diagonal R-matrices in family 2, Prop.~\ref{corollary:relativecommutant}~\ref{item:decomposelambdaR-fordiagonalR} shows that $\la_R$ decomposes into two quasi-free automorphisms which are either equivalent (if $p=r$ and $q=s$) or inequivalent (if $p\neq r$ or $q\neq s$). This implies the claimed form of the relative commutant and shows $\Ind\la_R=4$ in both cases. Since $R_2$ is diagonal, its ergodicity follows from Cor.~\ref{cor:diagonalsareergodic}.

For the ``anti-diagonal'' R-matrices in family 2, one computes
\begin{align*}
 \CM_{R_3,1}=
 \{x\in\CF_2^1\,:\,R_3^*xR_3=\varphi(x)\}
 =
 \begin{cases}
  \C & q^2 \neq pr\\
  \C\oplus\C & q^2 =pr
 \end{cases}.
\end{align*}
In the second case, $\la_R$ is equivalent to the direct sum of two inequivalent automorphisms, and $\Ind\la_R=4$. In the first case, $\la_R$ is irreducible and $R$ has the three distinct eigenvalues $q,\sqrt{pr},-\sqrt{pr}$. As the cardinality of the spectrum is a lower bound for $\Ind\la_R$ \eqref{eq:lowerindexbounds}, and in $d=2$, the index of $\la_R$ may only take the values $1,2$, or $4$ \cite[Prop.~9.9]{ContiPinzari:1996}, we see $\Ind\la_R=4$ also in this case. 

Each member of family 3 is type 2 equivalent to a member of family 2, i.e. $R_3\Nsim R_2$, and the equivalence relation $\Nsim$ preserves ergodicity (Remark~\ref{remark:stability-of-ergodicity}). Hence family 3 is ergodic as well.

Due to the block form of the representative $R_4$ for the last family, $S_1S_1^*\in\CF_2^1$ is seen to be a fixed point of $\la_{R_4}$. Its fixed point algebra will be described in more detail below. Since $d=2$ is prime, $\la_R$ is irreducible (Lemma~\ref{lemma:dprime}).\hfill$\qed$

\smallskip

The R-matrix $R_4$ \eqref{R4} is special from various points of view: Up to applying quasi-free automorphisms, $R_4$  is the unique non-trivial R-matrix in $\CR(2)$ for which $\la_R$ is not ergodic, and the unique R-matrix in $\CR(2)$ with index $2$. We also mention that $R_4$ generates a representation of the Temperley-Lieb algebra at loop parameter $\delta=\frac{1}{2}$, and satisfies $R_4^4\in\C$. Furthermore, $\la_{R_4}(\CO_2)$ is the fixed point algebra of an explicit order two automorphism $\alpha\in\Aut\CO_2$ \cite{ContiFidaleo:2000}. The images of the braid group representations $\rho_R(B_n)$ are described in \cite{FrankoRowellWang:2006} in terms of extraspecial 2-groups, and its relevance for topological quantum computing is discussed in \cite{KauffmanLomonacoJr:2004_2}. A variation of $R_4$ also appears in the exchange algebra of light-cone fields in the Ising model \cite{RehrenSchroer:1987}

In view of this interest in $R_4$, it might be useful to indicate how it can be obtained systematically from the results of this article. We look for a non-trivial matrix $R\in M_2\ot M_2\cong M_4$ that is a unitary solution of the Yang-Baxter equation such that $\la_R$ is irreducible and has non-trivial fixed points in~$\CF_d^1$. Then we know that a) $R$ has trivial left and right partial traces $\phi_F(R)=\phi_F(FRF)=\tau(R)$, and b) there is a one-dimensional projection $p\in\CF_d^1$ that commutes with~$R$. Choose a basis of $\Cl^2$ such that $p=\genfrac{(}{)}{0pt}{1}{1\;\;0}{0\;\;0}$ (this amounts to applying a quasi-free automorphism to $R$). Then a), b) imply that $R$ is of the form
\begin{align}
	\left(
            \begin{array}{rrrr}
            a & b \\
            c & d \\
            &&d&-b\\
            &&-c & a
            \end{array}
        \right),
\end{align}
with $a,b,c,d\in\Cl$. At this stage, it is not difficult to implement the requirements that $R$ is unitary and solves the Yang-Baxter equation. One finds that non-triviality requires $b,c\neq0$, and the YBE then implies $d=a$ and $c=-a^2/b$. Implementing unitarity yields the form \eqref{R4}.

\medskip

To conclude this discussion, we now describe the fixed points of $\la_{R_4}$ in $\CF_2$ in more detail. To this end, we use the standard Pauli matrices $\sigma_0,\ldots,\sigma_3$ as a basis for $M_2\cong\CF_2^1$, with $\sigma_0=1$.

\begin{proposition}\label{prop:fixedpointsinexample}
 An element $x\in\CF_2^n$, $n\in\N$, is a fixed point of $\la_{R_4}$ if and only if it is a linear combination of elements of the form $\sigma_{i_1}\varphi(\sigma_{i_2})\cdots\varphi^{n-1}(\sigma_{i_n})$, where the following three conditions are satisfied:
 \begin{enumerate}
  \item $i_n\in\{0,3\}$,
  \item If $i_k\in\{0,2\}$ for some $k\in\{2,\ldots,n\}$, then $i_{k-1}\in\{0,3\}$,
  \item If $i_k\in\{1,3\}$ for some $k\in\{2,\ldots,n\}$, then $i_{k-1}\in\{1,2\}$.
 \end{enumerate}
 We have $\dim(\CF_2^n)^{\la_R}=2^n$ and $\CN^{\la_R}=({}^0\CF_2^{\la_R})''$.
\end{proposition}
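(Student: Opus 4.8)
\textbf{Proof proposal for Proposition~\ref{prop:fixedpointsinexample}.}

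The plan is to work directly with the intertwiner relation characterizing fixed points of $\la_{R_4}$ at finite level. Recall that for $x\in\CF_2^n$ one has $\la_{R_4}(x)=(\ad\, ({R_4})_m)(x)$ for any $m\geq n$, but it is cleaner to use the single-step characterization: $x\in\CF_2^n$ is a fixed point of $\la_{R_4}$ iff $R_4\,\varphi(x)\,R_4^*=\varphi(x)$ fails to be the right condition in general — instead I would iterate the observation that $x=\la_{R_4}(x)$ means $x$ commutes with all $\varphi^k(R_4)$, $k\geq 0$ (Prop.~\ref{proposition:generalfixedpointresults}~\ref{item:FP-M} gives $x\in\CB_{R_4}'\cap\CO_2$, and conversely the limit formula shows such $x$ is fixed). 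For $x\in\CF_2^n$ only the relations $[x,\varphi^k(R_4)]=0$ for $k=0,\dots,n-1$ can be non-trivial; for $k\geq n-1$ one has $\varphi^k(R_4)\in\CF_2^{\geq n}$ acting on the ``new'' tensor legs, and a short computation using $\phi_F(R_4)=\tau(R_4)=0$ (entry $\frac{q}{\sqrt2}$ trace of the $4\times 4$ matrix is $\frac{q}{\sqrt2}(1+1+1+1)/4$ — wait, need to recompute: actually $\Tr R_4 = \frac{q}{\sqrt2}(1+1+1+1)=\frac{4q}{\sqrt 2}$, so $\tau(R_4)=\frac{q}{\sqrt2}\neq 0$; I must instead use that $\phi_F(R_4)$ as an element of $\CF_2^1$ is scalar, which is what matters) shows the $k=n-1$ relation forces a constraint only on the last Pauli factor. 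So the finite-level fixed point condition reduces to a finite system.

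The concrete steps are as follows. First, expand $x\in\CF_2^n$ in the Pauli basis $x=\sum c_{i_1\cdots i_n}\,\sigma_{i_1}\varphi(\sigma_{i_2})\cdots\varphi^{n-1}(\sigma_{i_n})$. Second, compute the commutator $[x,\varphi^{k}(R_4)]$ for each $k\in\{0,\dots,n-1\}$; since $R_4=\frac{q}{\sqrt2}\bigl(1+(\sigma_3\otimes(-\sigma_1 + i\sigma_2)/\!?)\bigr)$ — more precisely, from the Cuntz form $R_4=\frac{q}{\sqrt2}\bigl(1+(S_1S_1^*-S_2S_2^*)\varphi(-S_1S_2^*+S_2S_1^*)\bigr)=\frac{q}{\sqrt2}(\sigma_0\otimes\sigma_0+\sigma_3\otimes(i\sigma_2))$ up to normalization of Pauli conventions — the operator $\varphi^k(R_4)$ only couples the $(k{+}1)$-st and $(k{+}2)$-nd Pauli legs. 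So $[x,\varphi^k(R_4)]=0$ becomes, for each fixed choice of the legs $\neq k{+}1,k{+}2$, a two-site condition: writing $M=\sigma_0\otimes\sigma_0+\sigma_3\otimes(i\sigma_2)$ (with appropriate constant), the coefficient tensor restricted to legs $(k{+}1,k{+}2)$, viewed as an element of $M_2\otimes M_2$, must commute with $M$. A direct computation of $\{A\in M_2\otimes M_2: [A,M]=0\}$ in the Pauli basis yields exactly the local transition rules: if leg $k{+}2$ carries $\sigma_{i}$ with $i\in\{0,2\}$ then leg $k{+}1$ must carry $\sigma_{j}$ with $j\in\{0,3\}$, and if leg $k{+}2$ carries $i\in\{1,3\}$ then leg $k{+}1$ carries $j\in\{1,2\}$. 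Third, the $k=n-1$ relation, which involves $\varphi^{n-1}(R_4)\in\CF_2^{n+1}$ and the last leg $\sigma_{i_n}$ together with a ``virtual'' leg $n{+}1$ carrying $\sigma_0$, forces $i_n\in\{0,3\}$ (the commutant of $M$ with the second leg fixed to $\sigma_0$ restricts the first to $\{\sigma_0,\sigma_3\}$). Conditions (a), (b), (c) are precisely these three families of constraints.

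Fourth, counting: conditions (a)--(c) describe paths $i_1\to i_2\to\cdots\to i_n$ in a transition graph on $\{0,1,2,3\}$ where from state $i_{k}$ the allowed predecessors $i_{k-1}$ are determined by whether $i_k\in\{0,2\}$ (predecessor in $\{0,3\}$) or $i_k\in\{1,3\}$ (predecessor in $\{1,2\}$), with the endpoint constraint $i_n\in\{0,3\}$. At each step $k\to k-1$ there are exactly $2$ choices of $i_{k-1}$ given $i_k$, and there are $2$ choices for $i_n$, so the total is $2^n$, giving $\dim(\CF_2^n)^{\la_{R_4}}=2^n$. Finally, $\CN^{\la_{R_4}}=({}^0\CF_2^{\la_{R_4}})''$: since $\la_{R_4}$ is normal and $\om$-preserving, $H_{R_4}$ (or rather the conditional expectation onto the fixed-point algebra) is normal, and $\CN^{\la_{R_4}}=\overline{\bigcup_n (\CF_2^n)^{\la_{R_4}}}^{\,w}$ because any weakly convergent net of fixed points in $\bigcup_n\CF_2^n$ has weak limit a fixed point, and conversely $E_n(x)$ is a fixed point in $\CF_2^n$ whenever $x\in\CN^{\la_{R_4}}$ (because $E_n$ commutes with $\la_{R_4}$, as both act legwise and $\la_{R_4}$ preserves the grading up to the commuting-square property Thm.~\ref{thm:commuting-squares}), with $E_n(x)\to x$ weakly. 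This gives the weak density, i.e.\ $\CN^{\la_{R_4}}=({}^0\CF_2^{\la_{R_4}})''$.

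The main obstacle I anticipate is the bookkeeping in Step 3 (the boundary relation for the last leg) and making sure the commutant computation $\{A\in M_2\otimes M_2:[A,M]=0\}$ is done in a Pauli-basis convention consistent with the Cuntz-algebra form of $R_4$; a sign or a swap of $\sigma_1\leftrightarrow\sigma_2$ in the convention would scramble conditions (b) and (c). Also, one must verify that $E_n$ genuinely commutes with $\la_{R_4}$ on $\CL_{R_4}$ and more generally that $E_n(\CN^{\la_{R_4}})\subseteq(\CF_2^n)^{\la_{R_4}}$ — this follows from the graphical/legwise structure but deserves a careful line. Everything else is routine linear algebra over $M_2\otimes M_2$ and a transfer-matrix count.
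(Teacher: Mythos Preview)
Your approach is essentially the paper's: write $R_4=\frac{q}{\sqrt{2}}\bigl(1+i\,\sigma_3\varphi(\sigma_2)\bigr)$, use Prop.~\ref{proposition:generalfixedpointresults}\,\ref{item:FP-N} to identify fixed points with the commutant of $\{\varphi^m(S):m\geq0\}$ where $S:=\sigma_3\varphi(\sigma_2)$, observe that every Pauli tensor is an eigenvector of each $\ad\varphi^m(S)$ with eigenvalue $\pm1$, and read off conditions (a)--(c) and the count $2^n$. The paper carries this out with the sign rule $\sigma_i\sigma_j\sigma_i=\pm\sigma_j$ rather than an abstract commutant computation, but the content is the same.

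The one point that needs repair is the justification of $\CN^{\la_{R_4}}=({}^0\CF_2^{\la_{R_4}})''$. You write that ``$E_n$ commutes with $\la_{R_4}$'' and invoke Thm.~\ref{thm:commuting-squares}; neither works. The operators $E_n$ and $\la_{R_4}$ do \emph{not} commute (already for $x\in\CF_2^{n+1}$ one has $\la_{R_4}(E_n(x))\in\CF_2^{n+1}$ while $E_n(\la_{R_4}(x))\in\CF_2^n$), and Thm.~\ref{thm:commuting-squares} concerns $\CL_R$, not $\CN^{\la_R}$. What you actually need --- and correctly isolate as the key claim --- is the weaker inclusion $E_n(\CN^{\la_{R_4}})\subset(\CF_2^n)^{\la_{R_4}}$. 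This follows directly from your own Pauli analysis: since $S$ is a self-adjoint unitary and each Pauli tensor $\sigma_I$ satisfies $\varphi^m(S)\sigma_I\varphi^m(S)=\pm\sigma_I$, a fixed point $x\in\CN$ has Pauli coefficient $c_I=\tau(\sigma_I^*x)=0$ whenever the sign is $-1$ for some $m$ (conjugate the trace by $\varphi^m(S)$). Applying $E_n$ simply kills those $I$ with $i_k\neq0$ for some $k>n$; the surviving multi-indices still satisfy every commutation constraint, including the boundary one at $m=n-1$ (now $i_{n+1}=0\in\{0,2\}$ forces $i_n\in\{0,3\}$). This is exactly what the paper means by ``in view of the product form of $\sigma_3\varphi(\sigma_2)$''. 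With this fix, and once you pin down the Pauli form of $R_4$ cleanly (your hesitation there is the only other loose end), the argument is complete.
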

\begin{proof}
  The first step is to realise that the R-matrix $R_4$ has the form 
  \begin{align*}
   R_4=\frac{q}{\sqrt{2}}(1+i\sigma_3\varphi(\sigma_2)).
  \end{align*}
  Thus $x\in\CF_2$ is a fixed point of $\la_{R_4}$ if and only if it commutes with $\varphi^m(S)$, $m\in\N_0$, where $S:=\sigma_3\varphi(\sigma_2)$ (cf.~Prop.~\ref{proposition:generalfixedpointresults}~\ref{item:FP-N}). Recall that the Pauli matrices satisfy $\sigma_i=\sigma_i^*=\sigma_i^{-1}$ and
  \begin{align}\label{eq:pauliaction}
   \sigma_i\sigma_j\sigma_i=\begin{cases}
                               +\sigma_j & j\in\{0,i\}\\
                               -\sigma_j & \text{else}
                              \end{cases}.
  \end{align}
  Let $x$ be a linear combination of elements of the form $\sigma_{i_1}\varphi(\sigma_{i_2})\cdots\varphi^{n-1}(\sigma_{i_n})$. In view of the action \eqref{eq:pauliaction}, it follows that $x$ is a fixed point if and only if each term in its expansion into this basis is a fixed point, i.e. we may take $x=\sigma_{i_1}\varphi(\sigma_{i_2})\cdots\varphi^{n-1}(\sigma_{i_n})$ without loss of generality. 
  
  Since $\sigma_2^2=1$, we have
  \begin{align*}
   \ad \varphi^{n-1}(S)(x)=\sigma_{i_1}\varphi(\sigma_{i_2})\cdots\varphi^{n-1}(\sigma_3\sigma_{i_n}\sigma_3),
  \end{align*}
  which coincides with $x$ if and only if $\sigma_3\sigma_{i_n}\sigma_3=\sigma_{i_n}$, i.e. if and only if $i_n\in\{0,3\}$ as claimed in a). Similarly, 
  \begin{align*}
   \ad \varphi^{k-1}(S)(x)
   =
   \sigma_{i_1}\varphi(\sigma_{i_2})\cdots\varphi^{k-1}(\sigma_3\sigma_{i_{k}}\sigma_3)\varphi^k(\sigma_2\sigma_{i_{k+1}}\sigma_2)
   \cdots\varphi^{n-1}(\sigma_{i_n}),
  \end{align*}
  which coincides with $x$ if and only if either $\sigma_2\sigma_{i_{k+1}}\sigma_2=\sigma_{i_{k+1}}$ and $\sigma_3\sigma_{i_{k}}\sigma_3=\sigma_{i_{k}}$ or $\sigma_2\sigma_{i_{k+1}}\sigma_2=-\sigma_{i_{k+1}}$ and $\sigma_3\sigma_{i_{k}}\sigma_3=-\sigma_{i_{k}}$. By \eqref{eq:pauliaction} this gives the listed conditions b) and~c).
  
  A dimension count gives $\dim(\CF_2^n)^{\la_R}=2^n$. 
  
  In view of the product form of $\sigma_3\varphi(\sigma_2)$, it is easy to see that $\CN^{\la_R}$ is invariant under the $\tau$-preserving conditional expectations $E_n:\CN\to\CF_2^n$. This invariance implies that any $x\in\CN^{\lambda_R}$ can be approximated weakly by the sequence of fixed points $\{E_n(x)\}_{n\in\N}$, and hence $\CN^{\la_R}=({}^0\CF_2^{\la_R})''$.
\end{proof}

This result implies in particular that $[\CN:\CL_{R_4}]=\infty$.

\section*{Acknowledgements}

We gratefully acknowledge financial support by the London Mathematical Society (Research in Pairs grant 41729, awarded in 2018), Università di Roma La Sapienza, and the Simons Center for Geometry and Physics, Stony Brook University, during the 2019 programme ``Operator Algebras and Applications'', where some of the research for this paper was carried out.

\end{document}